\newtheorem{thm}{Theorem}[section]
\newtheorem*{thm*}{Theorem}
\newtheorem{prop}[thm]{Proposition}
\newtheorem{cor}[thm]{Corollary}
\newtheorem{lem}[thm]{Lemma}
\theoremstyle{definition}
\newtheorem{define}[thm]{Definition}
\theoremstyle{remark}
\newtheorem{rem}[thm]{Remark}
\newtheorem{example}[thm]{Example}
\newtheorem{question}[thm]{Question}
\newcommand{\ve}[1]{\boldsymbol{\mathbf{#1}}}
\newcommand{\R}{\mathbb{R}}
\newcommand{\Z}{\mathbb{Z}}
\newcommand{\F}{\mathbb{F}}
\newcommand{\N}{\mathbb{N}}
\newcommand{\Q}{\mathbb{Q}}
\newcommand{\C}{\mathbb{C}}
\newcommand{\iso}{\cong}
\newcommand{\scU}{\mathscr{U}}
\newcommand{\scV}{\mathscr{V}}
\renewcommand{\a}{\alpha}
\renewcommand{\b}{\beta}
\newcommand{\g}{\gamma}
\newcommand{\veps}{\varepsilon}
\renewcommand{\d}{\partial}
\renewcommand{\subset}{\subseteq}
\renewcommand{\tilde}{\widetilde}
\renewcommand{\bar}{\overline}
\DeclareMathOperator{\Char}{{Char}}
\DeclareMathOperator{\gr}{{gr}}
\DeclareMathOperator{\id}{{id}}
\newcommand{\Inc}{Q}
\DeclareMathOperator{\im}{{im}}
\DeclareMathOperator{\lk}{{lk}}
\DeclareMathOperator{\Spin}{{Spin}}
\DeclareMathOperator{\Span}{{Span}}
\DeclareMathOperator{\Sym}{{Sym}}
\DeclareMathOperator{\coker}{{coker}}
\newcommand{\bA}{\mathbb{A}}
\newcommand{\bB}{\mathbb{B}}
\newcommand{\bC}{\mathbb{C}}
\newcommand{\bD}{\mathbb{D}}
\newcommand{\bF}{\mathbb{F}}
\newcommand{\bH}{\mathbb{H}}
\newcommand{\bP}{\mathbb{P}}
\newcommand{\bT}{\mathbb{T}}
\newcommand{\bU}{\mathbb{U}}
\newcommand{\bX}{\mathbb{X}}
\newcommand{\bk}{\mathbf{k}}
\newcommand{\bx}{\mathbf{x}}
\newcommand{\cA}{\mathcal{A}}
\newcommand{\cC}{\mathcal{C}}
\newcommand{\cF}{\mathcal{F}}
\newcommand{\cG}{\mathcal{G}}
\newcommand{\cK}{\mathcal{K}}
\newcommand{\cM}{\mathcal{M}}
\newcommand{\cR}{\mathcal{R}}
\newcommand{\scR}{\mathcal{R}}
\newcommand{\cU}{\mathcal{U}}
\newcommand{\cV}{\mathcal{V}}
\newcommand{\cW}{\mathcal{W}}
\newcommand{\cX}{\mathcal{X}}
\newcommand{\cZ}{\mathcal{Z}}
\newcommand{\frA}{\mathfrak{A}}
\newcommand{\frs}{\mathfrak{s}}
\newcommand{\frt}{\mathfrak{t}}
\newcommand{\frz}{\mathfrak{z}}
\newcommand{\scF}{\mathscr{F}}
\newcommand{\scK}{\mathscr{K}}
\newcommand{\scP}{\mathscr{P}}
\newcommand{\scQ}{\mathscr{Q}}
\newcommand{\xs}{\ve{x}}
\newcommand{\ys}{\ve{y}}
\newcommand{\zs}{\ve{z}}
\newcommand{\cCFL}{\mathcal{C\!F\!L}}
\newcommand{\cCFK}{\mathcal{C\hspace{-.5mm}F\hspace{-.3mm}K}}
\newcommand{\cHFL}{\mathcal{H\!F\! L}}
\newcommand{\CF}{\mathit{CF}}
\newcommand{\HF}{\mathit{HF}}
\newcommand{\HFK}{\mathit{HFK}}
\newcommand{\CFL}{\mathit{CFL}}
\newcommand{\HFL}{\mathit{HFL}}
\newcommand{\PD}{\mathit{PD}}
\DeclareMathOperator{\Cone}{{Cone}}
\newcommand{\wt}[1]{\widetilde{#1}}
\numberwithin{equation}{section}
\newcommand{\ar}{\mathrm{a.r.}}
\def\XG{X_G}
\def\YG{Y_G}
\title{Lattice homology, formality, and plumbed L-space links}
\author{Maciej Borodzik}
\address{Institute of Mathematics of Polish Academy of Sciences, ul. \'Sniadeckich 8,
00-656 Warsaw, Poland}
\email{mcboro@mimuw.edu.pl}
\author{Beibei Liu}
\address{Department of Mathematics\\ The Ohio State University \\ Columbus, OH, USA}
\email{liu.11302@osu.edu}
\author{Ian Zemke}
\address{Department of Mathematics\\Princeton University\\  Princeton, NJ, USA}
\email{izemke@math.princeton.edu}
\subjclass[2020]{primary: 57K18, secondary: 14J17, 32S05, 57R58 } 
\keywords{link lattice homology, link Floer homology, plumbed L-space links}
\thanks{MB was supported by OPUS 2019/B/35/ST1/01120 grant
  of the Polish National Science Center. BL is partially supported by NSF grant DMS-2203237.  IZ was partially supported by NSF grants DMS-1703685 and DMS-2204375.}
\begin{document}

\begin{abstract}
  We define a link lattice complex for plumbed links, generalizing constructions
  of Ozsv\'ath, Stipsicz and Szab\'o, and of Gorsky and N\'emethi. We prove that for all plumbed links in rational homology 3-spheres, the link lattice complex is homotopy equivalent to the  link Floer complex as an $A_\infty$-module. Additionally, we prove that the  link Floer complex of a plumbed L-space link is a free resolution of its homology. As a consequence, we give an algorithm to compute the link Floer complexes of plumbed L-space links, in particular of algebraic links, from their multivariable Alexander polynomial.
\end{abstract}
\maketitle

\tableofcontents

\section{Introduction}
\subsection{Overview}

Lattice homology is a combinatorial invariant of plumbed 3-manifolds defined by N\'emethi \cite{NemethiAR,NemethiLattice}, see also \cite{Nemethi_opus_magnum}*{Chapter 11}. N\'{e}methi's construction is a formalization of earlier work of Ozsv\'{a}th and Szab\'{o} \cite{OSPlumbed}, which computes the Heegaard Floer homology groups of many plumbed 3-manifolds. If $Y$ is a plumbed 3-manifold, we write $\bH \bF(Y)$ for its lattice homology, which is a module over the power series ring $\bF[[U]]$.

Work of N\'{e}methi, Ozsv\'{a}th, Stipsicz and Szab\'{o} \cite{NemethiAR,NemethiLattice,OSPlumbed,OSSLattice} proves that
\[
\mathbb{HF}(Y)\iso \ve{\HF}^-(Y)
\]
for many important families of plumbed 3-manifolds. More recently,  the third author proved the isomorphism in general \cite{ZemHFLattice}.

Given a knot $K$ in $S^3$, Ozsv\'{a}th--Szab\'{o} \cite{OSKnots}, and Rasmussen  \cite{RasmussenKnots} defined a refinement of Heegaard Floer homology, called knot Floer homology. There are several equivalent formulations of this invariant. For our purposes, it is most convenient to consider knot Floer homology as taking the form of a free, finitely generated chain complex $\cCFK(K)$ over the 2-variable polynomial ring $\bF[\scU,\scV]$.

Ozsv\'{a}th and Szab\'{o} also defined a version of Heegaard Floer theory for links in 3-manifolds \cite{OSLinks}. For a link $L\subset S^3$, we will focus on the description of the link Floer complex as a finitely generated free chain complex $\cCFL(L)$ over the polynomial ring $\bF[\scU_1,\scV_1,\dots, \scU_{\ell},\scV_\ell]$, where $\ell=|L|$. For our purposes, it is also helpful to consider the knot and link Floer complexes over the completed ring $\bF[[\scU_1,\scV_1,\dots, \scU_\ell,\scV_{\ell}]]$, denoted
\[
\ve{\cCFL}(L):=\cCFL(L)\otimes_{\bF[\scU_1,\scV_1,\dots, \scU_\ell,\scV_\ell]} \bF[[\scU_1,\scV_1,\dots, \scU_\ell,\scV_\ell]].
\]

Their construction also applies more generally when $L$ is a link in a rational homology 3-sphere $Y$. In this case, we denote the link Floer complex $\cCFL(Y,L)$.

 A relative version of lattice homology for plumbed knots is defined by Ozsv\'{a}th, Stipsicz and Szab\'{o}  \cite{OSSKnotLatticeHomology}. Modulo notational differences, their version of knot lattice homology is analogous to the complex $\cCFK(K)$. They proved that for a plumbed L-space knot in $S^3$, knot lattice homology coincides with the knot Floer complex $\cCFK(K)$.

 Gorsky and N\'emethi \cite{GorskyNemethiLattice} defined a relative version of lattice homology for L-space links. (Note, their construction does not require the link to be plumbed). They constructed a spectral sequence from their version of link lattice homology to a version of link Floer homology, which is  the homology of  the quotient complex $\cCFL(L)/(\scV_1,\dots, \scV_\ell)$, 
and proved that it degenerates for all algebraic links. 
In particular, their version of link lattice homology is isomorphic (as a graded group) to the version of link Floer homology for algebraic links in $S^3$.

In our paper, we construct a new version of link lattice homology. Our link lattice complex is more closely related to the construction of Ozsv\'{a}th, Stipsicz and Szab\'{o} \cite{OSSKnotLatticeHomology}, and is modeled on the full link Floer complex $\cCFL(L)$. We use this link lattice complex to study plumbed L-space links, a family which includes all algebraic links in $S^3$.

\subsection{The  link lattice complex}\label{sub:link_lattice_complex}

 Suppose that $L$ is a plumbed link in a plumbed 3-manifold $Y$. Such a pair $(Y,L)$ is presented by a weighted graph $\Gamma$, whose vertices are partitioned into two sets
 \[
 V_\Gamma=V_G\cup V_\uparrow.
 \] 
 The vertices $V_G$ are equipped with integral weights. The vertices in $V_\uparrow$ have no weights, and we refer to them as \emph{arrow} vertices. Unless specified explicitly otherwise, we will always assume that $\Gamma$ is a tree.

   From the tree $\Gamma$, we obtain a partitioned link $L_\Gamma=L_G\cup L_{\uparrow}$ in $S^3$. This link may be described as a connected sum of Hopf links, with one unknotted component for each vertex of $\Gamma$, and one clasp for each edge of $\Gamma$. The manifold $Y$ is the result of surgery on $L_G$, with framing $\Lambda$ obtained from the weights. Inside of $Y\iso S^3_{\Lambda}(L_G)$, the link $L$ is identified with $L_{\uparrow}$.
 
 Given a plumbing tree $\Gamma$ presenting a plumbed link $(Y,L)$, we will construct a chain complex
 \[
 \mathbb{CFL}(\Gamma,V_\uparrow).
 \]
 Given an orientation of $L_\uparrow$,  we will equip the chain complex $\mathbb{CFL}(\Gamma,V_\uparrow)$  with the structure of a module over the ring $\bF[[\scU_1,\scV_1,\dots, \scU_{\ell},\scV_{\ell}]]$, where $\ell=|V_{\uparrow}|$, as well as with a Maslov grading and a $\Q^\ell$-valued Alexander grading.

  It is helpful to view $\mathbb{CFL}(\Gamma,V_\uparrow)$ as an $A_\infty$-module over $\bF[[\scU_1,\scV_1,\dots, \scU_{\ell},\scV_{\ell}]]$ with only $m_1$ and $m_2$ non-vanishing. We note that the complex $\mathbb{CFL}(\Gamma,V_\uparrow)$ is not free over this ring unless $|V_\uparrow|=1$. 

A central result of the paper is the following:
\begin{thm}
\label{thm:equivalence-intro}
Suppose that $\Gamma$ is a plumbing link diagram which is a tree, and write $(Y,L)$ for the associated 3-manifold and link. If $Y$ is a rational homology sphere, then $\ve{\cCFL}(Y,L)$ is homotopy equivalent to $\mathbb{CFL}(\Gamma,V_\uparrow)$ as an absolutely graded  $A_\infty$-module over $\bF[[\scU_1,\scV_1,\dots, \scU_\ell,\scV_{\ell}]]$.
\end{thm}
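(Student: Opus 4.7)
The plan is to reduce to the closed-3-manifold case, where Zemke has proven $\mathbb{HF}(Y) \simeq \ve{\HF}^-(Y)$ in general, via a large surgery argument on the arrow vertices combined with the link surgery formula. First, I would convert each arrow vertex $v \in V_\uparrow$ into an ordinary weighted vertex by assigning it a large integer framing $n$; call the resulting tree $\Gamma_n$, which has no arrow vertices. The plumbed 3-manifold it presents is $Y_n$, obtained from $Y$ by large integer surgery on the components of $L_\uparrow$ inside $Y$. Zemke's theorem applied to $\Gamma_n$ then gives an absolutely graded $\bF[[U]]$-module equivalence $\mathbb{HF}(\Gamma_n) \simeq \ve{\HF}^-(Y_n)$, functorially in $n$.

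On the Floer side I would invoke the Manolescu--Ozsv\'ath link surgery formula in its $A_\infty$-module enhanced form, which expresses $\ve{\HF}^-(Y_n)$ as a hyperbox-of-surgeries total complex built from $\ve{\cCFL}(Y, L)$ together with the link Floer complexes of sublinks $L' \subset L$. For $n$ sufficiently large, the edge maps of this hyperbox that involve cabling through the arrow vertices are controlled by the large surgery theorem, and hence $\ve{\cCFL}(Y, L)$ appears as an $A_\infty$-subquotient of the hyperbox that depends functorially on $n$. On the lattice side I would establish a strictly parallel combinatorial decomposition of $\mathbb{HF}(\Gamma_n)$ that exhibits $\mathbb{CFL}(\Gamma, V_\uparrow)$ via the same extraction procedure; in practice this should be essentially the \emph{definition} of the link lattice complex together with a routine verification, since the complex $\mathbb{CFL}(\Gamma, V_\uparrow)$ is modeled precisely so as to sit inside such a surgery decomposition.

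With the two decompositions arranged in parallel and Zemke's closed-case equivalence plugged in at each framing $n$, one concludes $\ve{\cCFL}(Y,L) \simeq \mathbb{CFL}(\Gamma, V_\uparrow)$ as $A_\infty$-modules by passing to the inverse limit as $n \to \infty$ in the completed setting. The Alexander and Maslov gradings can be tracked through both surgery formulas in the standard way, and the fact that $Y$ is a rational homology sphere is used to ensure that the $\mathrm{Spin}^c$-structure bookkeeping is finite and the absolute gradings are well-defined. An equivalent and perhaps cleaner incarnation of the argument would proceed by induction on $|V_\uparrow|$, filling in one arrow vertex at a time using a mapping-cone formula that is known on both sides of the equivalence.

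The main obstacle is twofold. First, matching the lattice-side decomposition of $\mathbb{HF}(\Gamma_n)$ to the Floer-side link surgery formula must be done not merely as bigraded groups but as $A_\infty$-modules; this forces one to identify certain hyperbox edge maps combinatorially and to verify that the $A_\infty$-homotopy-transfer performed on the lattice side commutes (up to the required higher homotopies) with the one performed on the Heegaard side. Second, Zemke's closed-case equivalence must be sufficiently natural to respect the auxiliary data encoding the large surgeries on each arrow vertex, uniformly in $n$, so that the inverse limit over framings produces a well-defined equivalence over the completed ring $\bF[[\scU_1,\scV_1,\dots,\scU_\ell,\scV_\ell]]$ rather than only over finite truncations.
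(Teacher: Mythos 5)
Your proposal takes a genuinely different route from the paper, and unfortunately the difference exposes a gap that I don't think can be patched without essentially redoing the paper's argument. The paper never takes a large-surgery limit over closed manifolds. Instead it uses a \emph{relative} version of the Manolescu--Ozsv\'ath surgery formula (Theorem~\ref{thm:Manolescu-Ozsvath-subcube}, citing \cite{ZemBorderedProperties}) to identify $\ve{\cCFL}(\YG,L_\uparrow)$ directly with the subcube $\cC_\Lambda(L_G,L_\uparrow)$ of the link surgery hypercube, and then compares that subcube to $\mathbb{CFL}(\Gamma,V_\uparrow)$ by writing $L_\Gamma$ as a connected sum of Hopf links and running a homological perturbation argument that is controlled by a ``Hopf grading'' coming from the iterated tensor decomposition. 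The $\ve{\scR}_\ell$-equivariance of the resulting $A_\infty$-equivalence is verified by inspecting the perturbation data, not deduced from any closed-manifold statement.

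The gap in your scheme is the inference from ``$\mathbb{HF}(\Gamma_n)\simeq\ve{\HF}^-(Y_n)$ for all large $n$'' to ``$\ve{\cCFL}(Y,L)\simeq\mathbb{CFL}(\Gamma,V_\uparrow)$ as $A_\infty$-modules over $\ve{\scR}_\ell$.'' Zemke's closed-case theorem provides, for each $n$, a chain homotopy equivalence of $\bF[[U]]$-modules (equivalently, of total complexes of the two hyperboxes). An equivalence of total complexes does \emph{not} descend to an equivalence of chosen subquotients of a cube filtration: filtered complexes with equivalent total complexes can easily have inequivalent filtered pieces. To extract the relative statement you would need the closed-case equivalence to be compatible with the $\{0,1\}^{|L_\Gamma|}$-filtration, to respect the $\ve{\scR}_\ell$-module structure on the $\veps=0$ corner, and to be natural in $n$ so that an inverse limit makes sense. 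None of this is supplied by the theorem you are invoking, and establishing it would amount to re-proving the relative surgery formula together with the Hopf-grading perturbation argument --- i.e.\ you would still need the content of the paper's proof, plus an additional layer of uniformity-in-$n$ bookkeeping. You name this as a potential obstacle in your second-to-last paragraph, but in fact it is fatal rather than merely technical: without that compatibility the inverse limit produces no conclusion about $\ve{\cCFL}(Y,L)$ at all. The cleaner path is the one the paper follows: prove the relative surgery identification in Theorem~\ref{thm:Manolescu-Ozsvath-subcube} once, show via Lemma~\ref{lem:relate-gradings} and Proposition~\ref{prop:chain-isomorphism=H-lattice-CFL} that the length-$1$ truncation of the hypercube reproduces $\mathbb{CFL}(\Gamma,V_\uparrow)$ with its $\ve{\scR}_\ell$-action, and then control the higher-length maps with the Hopf filtration.
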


See Theorem~\ref{thm:lattice=HFL} for further details.
In the above, we are writing $\ve{\cCFL}(Y,L)$ for the full link Floer complex $\cCFL(Y,L)$ completed over the power series ring $\bF[[\scU_1,\scV_1,\dots, \scU_\ell,\scV_\ell]]$.

\subsection{Algebraic and plumbed L-space links}

 We recall that a rational homology 3-sphere $Y$ is called an \emph{L-space} if 
\[
\widehat{\HF}(Y,\frs)\iso \bF
\]
for each $\frs\in \Spin^c(Y)$.  A link $L\subset S^3$ is called an \emph{L-space link} if all sufficiently positive surgeries are L-spaces. We note that since Dehn surgery does not depend on the orientation of the link, the property of being an L-space link is independent of orientations.

An important family of plumbed links in $S^3$ are algebraic links, which are the links of complex plane curve singularities. According to  Gorsky and N\'{e}methi \cite{GorskyNemethiAlgebraicLinks}, algebraic links in $S^3$ are L-space links.

There is a useful characterization of L-space links in terms of the link Floer complex. If $L\subset S^3$, then $L$ is an L-space link if and only if the homology group $\cHFL(L)$ is torsion free as an $\bF[U]$-module, where $U$ acts by $\scU_i\scV_i$ for any $i$. (Since $\scU_i\scV_i$ and $\scU_j\scV_j$ have chain homotopic actions for all $i$ and $j$, the definition is independent of the choice of $i$).

Ozsv\'{a}th and Szab\'{o} \cite{OSlens} proved the knot Floer complex of an L-space knot is a staircase complex. A very natural question is whether an analog of Ozsv\'{a}th and Szab\'{o}'s result holds for L-space links. L-space links have also been extensively studied in the literature, see e.g. \cite{BorodzikGorskyImmersed, CavalloLiu, GorskyHom, GLMoore, GorskyNemethiLattice, LiuSurgeries,LiuLspace, LiuB}.  Despite the interest in L-space links and many interesting results concerning them, there is to date no result which characterizes the structure of the link Floer complex of an L-space link in parallel with Ozsv\'{a}th and Szab\'{o}'s result for L-space knots.

We prove the following:
\begin{thm}\label{thm:free resolution-intro}
 Suppose that $L\subset S^3$ is a plumbed L-space link. Then the link Floer complex $\cCFL(L)$ is homotopy equivalent to a free resolution of its homology over $\bF[\scU_1,\scV_1,\dots, \scU_\ell,\scV_{\ell}]$. Equivalently, the complex $\cCFL(L)$ is homotopy equivalent to its homology $\cHFL(L)$ as an $A_\infty$-module over $\bF[\scU_1,\scV_1,\dots, \scU_\ell,\scV_{\ell}]$, where we equip $\cHFL(L)$ with the $A_\infty$-module structure which has $m_j=0$ unless $j=2$. 
\end{thm}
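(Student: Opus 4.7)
The plan is to reduce the claim to a formality statement for the lattice complex, and then derive formality from the L-space link hypothesis. By Theorem \ref{thm:equivalence-intro}, after completion $\ve{\cCFL}(L)$ is $A_\infty$-equivalent to $\mathbb{CFL}(\Gamma, V_\uparrow)$ over the ring $\bF[[\scU_1,\scV_1,\dots, \scU_\ell,\scV_\ell]]$, so it is enough to prove the formality statement for the lattice complex. The statement over the polynomial ring $\bF[\scU_1,\scV_1,\dots,\scU_\ell,\scV_\ell]$ can then be recovered by a standard finite-generation argument: since $\cCFL(L)$ is finitely generated, $A_\infty$-equivalences over the completion can be lifted to the uncompleted ring. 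The essential algebraic input to be exploited is the L-space link characterization, namely that $\cHFL(L)$ is torsion-free as an $\bF[U]$-module, where $U$ acts as any $\scU_i\scV_i$.

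The lattice complex $\mathbb{CFL}(\Gamma, V_\uparrow)$ carries a natural hypercube structure indexed by subsets of $V_\uparrow$, or equivalently an iterated mapping cone decomposition over the arrow vertices. I would analyze the associated spectral sequence and argue that torsion-freeness of the homology over $\bF[U]$ forces it to degenerate on an early page. Equivalently, after transferring the $A_\infty$-module structure to $\cHFL(L)$ via Kadeishvili's theorem, I would show by induction on $k$ that all operations $m_k$ with $k \geq 3$ vanish. The multi-graded Alexander structure localizes each $m_k$ in a prescribed region of the homology, and the L-space link hypothesis implies that any nonvanishing operation would represent a torsion class over $\bF[U]$, contradicting the hypothesis.

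The main obstacle I anticipate is bridging the gap between a single-variable torsion-free condition (over $\bF[U]$) and the multi-variable $A_\infty$-structure (over the full polynomial ring). The argument requires careful bookkeeping with the Alexander multi-grading so that each higher operation $m_k$ can be forced to land in a component where $\bF[U]$-torsion would be detected. With this bookkeeping in place, the formality of $\cCFL(L)$, equivalently that it is a free resolution of its homology, follows from the vanishing of all higher operations.
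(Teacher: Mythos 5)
Your reduction to the lattice complex via Theorem~\ref{thm:equivalence-intro} and the lift from the completed ring back to $\bF[\scU_1,\scV_1,\dots,\scU_\ell,\scV_\ell]$ both match the paper. Beyond that, the proposal diverges from the actual argument in two ways, one minor and one substantial.

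The minor point: the filtration on $\mathbb{CFL}(\Gamma,V_\uparrow)$ that the paper exploits is the cube grading by $|E|$, i.e., by the \emph{non-arrow} vertices $V_G$. The arrow vertices have already been frozen in the definition (one only keeps $[K,E]$ with $V_\uparrow\subset E$), so there is no hypercube structure over $V_\uparrow$ inside $\mathbb{CFL}(\Gamma,V_\uparrow)$; the differential strictly decreases $|E\cap V_G|$. The Koszul-style cube over arrow vertices that you have in mind is the ingredient of Gorsky--N\'emethi's complex $\scK(L)$, which is a different object (and is treated in Section~\ref{sec:comparison}, \emph{after} Theorem~\ref{thm:free resolution-intro} has been established).

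The substantial gap is the claimed mechanism for killing the higher $A_\infty$-operations. Every operation $m_k$ on $\cHFL(L)$ is homogeneous with respect to the Alexander multi-grading (shifted only by the algebra inputs), so the Alexander grading cannot distinguish $m_2$ from higher $m_k$, and ``localizing $m_k$ in a prescribed region'' via this grading does not force anything. The assertion that ``any nonvanishing operation would represent a torsion class over $\bF[U]$'' is not justified and, as far as I can see, is not true in general. A telling sanity check: your proposed argument would only use the $\bF[U]$-torsion-freeness of $\cHFL(L)$ (i.e., the L-space condition) together with the cube filtration, so it would apply to any L-space link for which the lattice model exists, and the plumbedness would play no role beyond providing the model. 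But whether formality holds for general (non-plumbed) L-space links is stated as an open question in the paper (Question~1.4), which signals that additional structure must be exploited.

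What the paper actually does is different in flavor. The torsion-freeness of $\cHFL(L)$ is used only to conclude that the localization map $\cHFL(L)\to S^{-1}\cHFL(L)$ at the multiplicative set of monomials is injective. Since $S^{-1}\cHFL(L)\cong S^{-1}\ve{\cR}_\ell/(\scU_i\scV_i-\scU_j\scV_j)$ is indecomposable (cyclic), this rules out any direct-sum splitting of $\cHFL(L)$ over $\ve{\cR}_\ell$. Because the cube differential shifts the $V_G$-cube grading by exactly one, the homology decomposes over cube gradings as an $\ve{\cR}_\ell$-module, so it must be concentrated in a single cube grading $Z_i$. One then inductively strips off acyclic bottom layers to reduce to $i=0$, at which point the homological perturbation lemma with the cube filtration (where $h$ strictly \emph{raises} cube grading and $m_2$ preserves it) manifestly yields $m_j=0$ for $j>2$, and Corollary~\ref{cor:free resolution=simplest-action} gives the free-resolution statement. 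The spectral sequence for the cube filtration is essentially two-step, so ``degeneration'' is not the obstruction; the real work is in pinning down the cube grading in which the homology lives and then controlling the $A_\infty$-transfer through the homological perturbation lemma, neither of which is achieved by your Alexander-grading argument.
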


\begin{figure}[ht]
$T(3,4)$\begin{tikzcd}[row sep =.3cm, column sep=2cm,labels=description]
&\bullet\\
\bullet 
	\ar[ur, "\scV"]
	\ar[dr, "\scU^2"]
&\\
&\bullet\\
\bullet
	\ar[ur, "\scV^2"]
	\ar[dr, "\scU"]
\\
&\bullet
\end{tikzcd}
\qquad \qquad
\begin{tikzcd}[labels=description,row sep=1.5cm, column sep=3cm]
\bullet \ar[r, "\scU_2"] \ar[dr, "\scV_1", pos=.2]& \bullet\\
\bullet \ar[ur, "\scU_1", pos=.2,crossing over] \ar[r, "\scV_2"]& \bullet
\end{tikzcd} $T(2,2)$
\\
$T(2,4)$\begin{tikzcd}[row sep =.4cm, column sep=2cm,labels=description]
&&\bullet
\\
&\bullet
	\ar[ur, "\scU_2"]
	\ar[ddr, "\scV_1"]
\\
&\bullet
	\ar[uur,crossing over, "\scU_1"]
	\ar[dr, "\scV_2"]
\\
\bullet
	\ar[ruu, "\scU_2"]
	\ar[ru, "\scU_1"]
	\ar[dr, "\scV_2"]
	\ar[ddr, "\scV_1"]
&&\bullet
\\
&\bullet
	\ar[ur, "\scU_2"]
	\ar[ddr, "\scV_1"]
\\
&\bullet
	\ar[uur,crossing over, "\scU_1"]
	\ar[dr, "\scV_2"]
\\
&&\bullet
\end{tikzcd}
\caption{The knot and link Floer complexes of $T(3,4)$, $T(2,2)$ and $T(2,4)$. Each dot denotes a generator in a free basis. The horizontal direction indicates the grading of the free resolution.}
\label{fig:free-resolutions}
\end{figure}
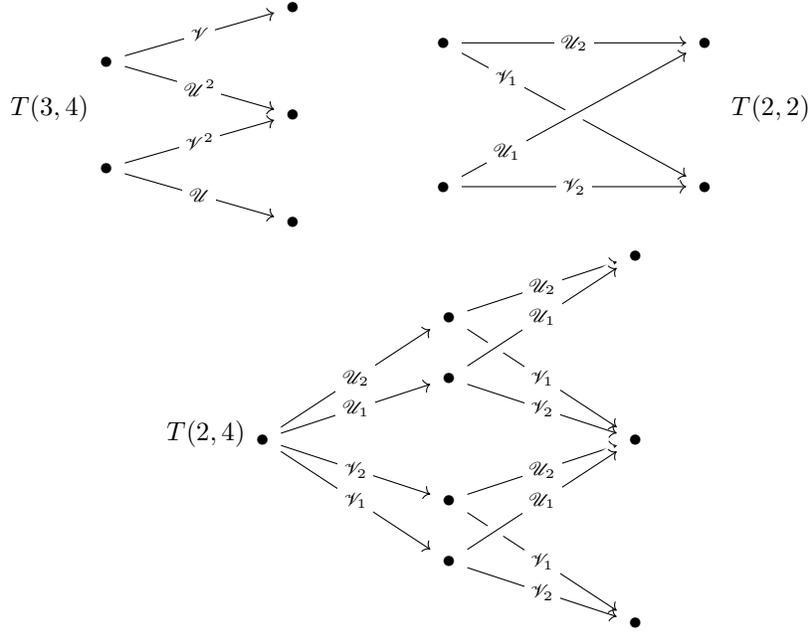

\begin{rem}
\begin{enumerate}
\item Theorem~\ref{thm:free resolution-intro} is a natural generalization of the result of Ozsv\'{a}th and Szab\'{o} for L-space knots because the staircase complex of an L-space knot is easily seen to be a free resolution of its homology over $\bF[\scU,\scV]$.
\item Theorem~\ref{thm:free resolution-intro} also holds more generally for plumbed L-space links $L$ in plumbed 3-manifolds $Y$ which are themselves L-spaces. For details, see Section \ref{sec:plumbed_l_space}. 
\item A $dg$-module $M$ over a ring $A$ such that $(H_*(M),m_2)$ is homotopy equivalent to $M$ as an $A_\infty$-module over $A$ is called \emph{formal}. Hence, we may restate Theorem~\ref{thm:free resolution-intro} by saying that plumbed L-space links have formal link Floer complexes.
\end{enumerate}
\end{rem}
We do not know whether Theorem~\ref{thm:free resolution-intro} holds for non-plumbed L-space links. We state the following open question:
\begin{question} Are there non-plumbed L-space links $L\subset S^3$ for which $\cCFL(L)$ is not homotopy equivalent to a free-resolution of $\cHFL(L)$?
\end{question}

From Theorem~\ref{thm:free resolution-intro}, we obtain an algorithm to compute the link Floer complex of a plumbed L-space link. Namely, we observe that for an L-space link $L$, the $\bF[\scU_1,\scV_\ell,\dots,\scU_\ell, \scV_{\ell}]$-module $\cHFL(L)$ is completely determined by the $H$-function (or equivalently the $d$-invariants of large surgeries). According to \cite{GorskyNemethiLattice}, the $H$-function of an L-space link in $S^3$ is determined by the multivariable Alexander polynomials  of $L$ and its sublinks. After determining $\cHFL(L)$, one may compute $\cCFL(L)$ by computing its free resolution. Finding such a resolution
is algorithmic, see e.g. \cite{PeevaBook}, and may be done using computer algebra software such as Macaulay2 \cite{M2}. We carry this out for $T(3,3)$ and $T(4,4)$ in Section~\ref{sec:examples}.

Our algorithm to compute the full link Floer complexes enables us to compute the minus and the hat version of link Floer homology of plumbed L-space links by setting  respectively $\scV_i=0$ and $\scU_i=\scV_i=0$ for all $i$. Moreover, the hat version of link Floer homology detects the Thurston norm of the link complement in the three sphere \cite{ozsvath2008link}. Hence, the algorithm also gives a way to compute the Thurston norm of plumbed L-space links in $S^3$.  

%

\begin{cor}
The link Floer complex of a plumbed L-space link $L$ in $S^3$ is computable from the multivariable Alexander polynomials of  the link $L$ and its sublinks.
\end{cor}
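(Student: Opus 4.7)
The plan is to assemble the three ingredients already flagged in the text immediately preceding the corollary. First, by Theorem~\ref{thm:free resolution-intro}, for a plumbed L-space link $L\subset S^3$ the complex $\cCFL(L)$ is homotopy equivalent (as an $A_\infty$-module over $\bF[\scU_1,\scV_1,\dots,\scU_\ell,\scV_\ell]$) to a free resolution of its homology $\cHFL(L)$. Hence it suffices to (i) reconstruct $\cHFL(L)$ as a graded $\bF[\scU_1,\scV_1,\dots,\scU_\ell,\scV_\ell]$-module from combinatorial data, and (ii) observe that producing a free resolution of a finitely generated graded module over a polynomial ring is algorithmic.

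For step (i), I would invoke the fact recalled above that $\cHFL(L)$ of an L-space link is $\bF[U]$-torsion free, with $U=\scU_i\scV_i$, so the Alexander-graded piece in each multivariable Alexander grading $s\in\tfrac12\Z^\ell + \text{const}$ is a free $\bF[U]$-module whose rank is pinned down by the value of the Alexander grading and the $H$-function (via the formula relating $H(L,s)$ with the dimensions of graded pieces of $\cHFL(L)$). Because the $\scU_i$ and $\scV_i$ actions each shift the Alexander multigrading by fixed integer vectors, a torsion-free graded module with prescribed graded ranks is determined up to isomorphism; its action maps are forced by the gradings. Thus $\cHFL(L)$, together with its graded $\bF[\scU_1,\scV_1,\dots,\scU_\ell,\scV_\ell]$-module structure, is recovered from the $H$-function.

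Next I would substitute the Gorsky--N\'emethi inclusion--exclusion formula which expresses the $H$-function of an L-space link in $S^3$ as an explicit combination of the multivariable Alexander polynomials of $L$ and of all its sublinks (together with the normalizing shift). Feeding these Alexander polynomials into that formula computes $H$, hence computes $\cHFL(L)$ as a graded module over $\bF[\scU_1,\scV_1,\dots,\scU_\ell,\scV_\ell]$ by the previous paragraph.

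Finally, I would compute a (minimal) free resolution of the finitely generated graded module $\cHFL(L)$ over the Noetherian polynomial ring $\bF[\scU_1,\scV_1,\dots,\scU_\ell,\scV_\ell]$. By standard commutative algebra (see e.g.\ \cite{PeevaBook} and Macaulay2 \cite{M2}) this resolution is algorithmic and unique up to homotopy equivalence. By Theorem~\ref{thm:free resolution-intro} the resulting complex is homotopy equivalent to $\cCFL(L)$ as an $A_\infty$-module, completing the algorithm. The only subtle point is (i): one has to check that the graded ranks really do determine the module structure up to graded isomorphism, which uses crucially the torsion-freeness of $\cHFL(L)$ over $\bF[U]$ that characterizes L-space links.
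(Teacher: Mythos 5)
Your proposal is correct and follows essentially the same route the paper takes (spelled out fully in Theorem~\ref{thm:alexchain_rational} and its proof): apply Theorem~\ref{thm:free resolution-intro}, recover $\cHFL(L)$ as a graded $\scR_\ell$-module from the $H$-function, use the Gorsky--N\'emethi inclusion--exclusion formula (Lemma~\ref{lem:is_determined}) to compute $H$ from the Alexander polynomials of $L$ and its sublinks, and then compute a free resolution algorithmically. The one place you should sharpen the wording is the claim that torsion-freeness plus prescribed graded dimensions determines a graded module up to isomorphism, which is false in that generality; what actually pins down the $\scR_\ell$-action is that for an L-space link each Alexander-graded piece of $\cHFL(L)$ is a single $\bF[U]$-tower, so that the maps $\scU_i\colon \cHFL(L,\ve{s})\to\cHFL(L,\ve{s}-e_i)$ are multiplication by $U^{\varepsilon}$ with $\varepsilon\in\{0,1\}$ because $\scU_i\scV_i=U$, and then $\varepsilon$ is forced by the $\gr_w$-shift encoded in $H$ --- exactly the argument in the paper's proof of Theorem~\ref{thm:alexchain_rational}.
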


\begin{rem} Computing free resolutions is in general a challenging task. For many purposes (e.g. taking tensor products), it is more practical to understand the homology group $\cHFL(L)$ and use the fact that $\cCFL(L)$ is homotopy equivalent as an $A_\infty$-module to $\cHFL(L)$ with only $m_2$ non-trivial. Often the homology group $\cHFL(L)$ has a much more concise description than its free resolution. For example in Section~\ref{sec:examples}, we give a simple description of the module $\cHFL(T(n,n))$. We describe free resolutions for $T(3,3)$ and $T(4,4)$, which are considerably larger in complexity.
\end{rem}


We note that for algebraic links in $S^3$ (which are particular examples of plumbed L-space links) the fact that the link Floer complex is determined by the Alexander polynomial may be seen indirectly using the fact that the Alexander polynomial of an algebraic link determines the link.
 See work of Yamamoto \cite{Yamamoto}. This is not the case for
algebraic links in other $3$-manifolds, or other plumbed L-space links in $S^3$; see \cite{CDG} and also Proposition~\ref{prop:non_isotopic}. Nonetheless, our techniques give a concrete algorithm for computing the link Floer complex based on its Alexander polynomial. Although foundational, Yamamoto's work does not give a practical algorithm for computing link Floer complexes of algebraic links in $S^3$.

\subsection{Gorsky and N\'{e}methi's link lattice homology}

We also consider Gorsky and N\'{e}methi's link lattice homology, which is defined for all L-space links. If $L$ is an L-space link, then they described a chain complex $\scK(L)$, which they called the link lattice complex. They proved that if $L$ is an algebraic link, then
\begin{equation}
H_*(\scK(L))\iso H_*(\cCFL(L)/(\scV_1,\dots, \scV_\ell)),\label{eq:GN-isomorphism-intro}
\end{equation}
as graded groups, where the right-hand side is identified with the minus version of link Floer homology.

 In our paper, we give an alternate perspective on the complex $\scK(L)$. Namely, we show that $\scK(L)$ is homotopy equivalent as an $A_\infty$-module to the derived tensor product
\[
 \scK(L)\simeq \cHFL(L) \mathrel{\tilde{\otimes}_{\bF[\scU_1,\scV_1,\dots, \scU_\ell,\scV_{\ell}]}} \bF[\scU_1,\scV_1,\dots, \scU_{\ell},\scV_{\ell}]/(\scV_1,\dots, \scV_{\ell})
\]
where we equip $\cHFL(L)$ with the $A_\infty$-module structure which has only $m_2$ non-trivial. As a corollary of Theorem~\ref{thm:free resolution-intro}, we  obtain the following result:

\begin{cor}\label{cor:GN-lattice} If $L$ is a plumbed L-space link, then Gorsky and N\'{e}methi's link lattice complex $\scK(L)$ is homotopy equivalent to $\cCFL(L)/(\scV_1,\dots, \scV_\ell)$ as a graded chain complex. 
\end{cor}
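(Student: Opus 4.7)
The plan is to deduce Corollary~\ref{cor:GN-lattice} directly from the formality result of Theorem~\ref{thm:free resolution-intro} together with the alternate description of Gorsky--N\'emethi's complex $\scK(L)$ stated in the subsection above, which identifies $\scK(L)$ as a derived tensor product. Set $R=\bF[\scU_1,\scV_1,\dots,\scU_\ell,\scV_\ell]$ and $Q=R/(\scV_1,\dots,\scV_\ell)$. The goal is to realize both $\scK(L)$ and $\cCFL(L)/(\scV_1,\dots,\scV_\ell)$ as models for the derived tensor product $\cHFL(L) \mathrel{\tilde{\otimes}_R} Q$, and then compare them.

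The first step is to invoke Theorem~\ref{thm:free resolution-intro}, which gives that $\cCFL(L)$ is homotopy equivalent, as an $A_\infty$-module over $R$, to a free resolution of $\cHFL(L)$. In particular, the ordinary tensor product
\[
\cCFL(L) \otimes_R Q \;=\; \cCFL(L)/(\scV_1,\dots,\scV_\ell)
\]
computes the derived tensor product $\cHFL(L)\mathrel{\tilde\otimes_R} Q$. Since homotopy equivalences of $A_\infty$-modules over $R$ descend to homotopy equivalences of their derived tensor products with any $R$-module $Q$, we conclude
\[
\cCFL(L)/(\scV_1,\dots,\scV_\ell) \;\simeq\; \cHFL(L)\mathrel{\tilde\otimes_R} Q
\]
as $A_\infty$-modules over $Q$, hence in particular as graded chain complexes.

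The second step is to apply the identification, stated and used earlier in this subsection, that
\[
\scK(L) \;\simeq\; \cHFL(L)\mathrel{\tilde\otimes_R} Q
\]
as $A_\infty$-modules over $Q$, where $\cHFL(L)$ is regarded as a formal $A_\infty$-module. Composing this with the equivalence of the previous step yields the desired homotopy equivalence
\[
\scK(L) \;\simeq\; \cCFL(L)/(\scV_1,\dots,\scV_\ell)
\]
of graded chain complexes.

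The main obstacle is not the comparison itself, which is essentially formal once the two ingredients are in place, but rather ensuring that the quasi-isomorphisms are compatible with all of the extra structure carried by $\scK(L)$ and $\cCFL(L)$, in particular the Maslov and Alexander gradings. One must check that the $A_\infty$-equivalence from Theorem~\ref{thm:free resolution-intro} is graded, and that the identification of $\scK(L)$ with the derived tensor product preserves gradings; both should follow from the explicit descriptions of these equivalences established earlier in the paper. With this bookkeeping in hand, Corollary~\ref{cor:GN-lattice} follows immediately.
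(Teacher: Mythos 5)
Your proposal is correct and takes essentially the same approach as the paper: both realize the two sides as models for the derived tensor product $\cHFL(L)\mathrel{\tilde\otimes}_{\scR_\ell}\scR_\ell/(\scV_1,\dots,\scV_\ell)$, using the formality result Theorem~\ref{thm:free resolution-intro} together with the fact that the Koszul complex defining $\scK(L)$ is a free resolution of $\scR_\ell/(\scV_1,\dots,\scV_\ell)$. The paper's proof of the refined statement, Theorem~\ref{thm:rai_GN}, just phrases this using the box tensor product $\boxtimes$ of type-$D$ and type-$A$ modules rather than derived-tensor language, and then upgrades the identification to an equivalence of $A_\infty$-modules by passing to $DA$-bimodules; the underlying argument is the same.
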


We note Gorsky and N\'{e}methi only prove the isomorphism in Equation~\eqref{eq:GN-isomorphism-intro} at the level of graded groups. Our proof of Corollary~\ref{cor:GN-lattice} improves on their result additionally because it equips $\scK(L)$ with 
a $dg$-module structure over $\bF[\scU_1,\scV_1,\dots, \scU_\ell, \scV_\ell]$ (i.e. an $A_\infty$-module structure with only $m_1$ and $m_2$ non-trivial) and proves the isomorphism at the level of $A_\infty$-modules. 


Gorsky and N\'{e}methi also  constructed a spectral sequence from $\cHFL(L)\otimes \Lambda^*_{\cR_\ell}(\xi_1,\dots, \xi_\ell)$ to the homology of $\cCFL(L)/(\scV_1,\dots, \scV_n)$. Here, $\Lambda^*_{\cR_\ell}(\xi_1,\dots, \xi_\ell)$ is the exterior algebra on $\ell$-generators and $\cR_\ell=\bF[\scU_1,\scV_1,\dots, \scU_\ell, \scV_{\ell}]$. Our Corollary~\ref{cor:GN-lattice} naturally recovers this spectral sequence, as follows. We first recall that the module $\bF[\scU_1,\scV_1,\dots, \scU_\ell, \scV_\ell]/(\scV_1,\dots, \scV_\ell)$ has free resolution over $\cR_\ell$ equal to the Koszul complex $\Lambda^*_{\cR_\ell}(\xi_1,\dots, \xi_\ell)$, where we equip each $\xi_i$ with differential $d(\xi_i)=\scV_i$ (extended via the Leibniz rule). The chain complex $(\Lambda^*_{\cR_\ell}(\xi_1,\dots, \xi_\ell), d)$ has a cube filtration, and the spectral sequence associated to the induced cube filtration on $\cHFL(L)\otimes \Lambda^*_{\cR_\ell}(\xi_1,\dots, \xi_\ell)$ coincides with Gorsky and N\'{e}methi's spectral sequence.

\subsection{Structure of the paper}
Section~\ref{sec:algebraic_background} describes background on $A_\infty$-modules and the homological perturbation lemma. We give
a basic example from knot Floer theory of two complexes with isomorphic homology groups, such that one of them is chain homotopy equivalent
to its homology (regarded as a chain complex with trivial differentials), while the other one is not.

Next, we recall some topological background in Section~\ref{sec:topological}. We provide the definitions of plumbed manifolds and plumbed links.
We state the result of Gorsky and N\'emethi that a plumbed link in a rational analytic singularity is an L-space link
\cite{GorskyNemethiAlgebraicLinks}.
In particular, we show that plumbed L-space links are a natural generalization of algebraic links in $S^3$.
In Section~\ref{sec:lattice}, we define our link lattice complex. We describe the gradings and the $\bF[\scU_1,\scV_1,\dots, \scU_\ell,\scV_\ell]$-module structure.

Section~\ref{sec:equivalence} proves the equivalence of the link lattice complex and the link Floer complex, stated above in Theorem~\ref{thm:equivalence-intro}. This is the main technical result of the paper.

In Section~\ref{sec:plumbed_l_space} we focus on plumbed L-space links.  Using Theorem~\ref{thm:lattice=HFL} and the homological perturbation
lemma, we prove Theorem~\ref{thm:free resolution-intro}, 
which states that the link Floer complex of a plumbed L-space link is a free resolution of its homology.  As a consequence, we show that the link Floer complex of a plumbed L-space link is computable from its multivariable Alexander polynomial. See Theorem~\ref{thm:alexchain_rational}. Additionally, we describe how our link lattice
complex recovers the theory described by Gorsky and N\'emethi in the case of plumbed L-space links. See Theorem~\ref{thm:rai_GN}.

Section~\ref{sec:examples} describes some algorithms and examples. 
We provide a concrete way of presenting the $\bF[\scU_1,\scV_1,\dots,\scU_n,\scV_n]$-module $\cHFL(L)$ for an L-space link $L$,
from its $H$-function. The algorithm of Lemma~\ref{lemma:generating} provides a presentation of $\cHFL(T(n,n))$ 
compatible with the description of Gorsky and Hom \cite{GorskyHom}*{Section 5}.
Next, we compute $\cCFL(T(3,3))$ and $\cCFL(T(4,4))$ by explicitly constructing free resolutions. 

\subsection*{Acknowledgments.} The authors would like to thank Eugene Gorsky, Chuck Livingston, Andr\'as N\'emethi and Lorenzo Traldi 
for stimulating discussions. We are grateful
to Marco Marengon for the help with the complex of the $T(3,3)$ torus link. We have benefited a lot from Karol Palka's explanations of a current state-of-art on computing resolutions of modules.

\section{Algebraic background}\label{sec:algebraic_background}

In this section we recall the notion of an $A_\infty$-module (Subsection~\ref{sub:ainfty_modules}). Then, in Subsection~\ref{sub:hpl},
we state the homological perturbation lemma in $A_\infty$-category. In Subsection~\ref{sub:free}, we consider the homological perturbation lemma in the context of free-resolutions of modules. In Subsection~\ref{sub:modules}, we present two complexes over $\bF[\scU,\scV]$
with isomorphic homology, but which are not chain homotopy equivalent. These two complexes appear naturally in
knot Floer homology.

\subsection{$A_\infty$-modules}\label{sub:ainfty_modules}

Throughout the paper, we make use of the category of $A_\infty$-modules. The motivation is that $A_\infty$-module structures may be transferred along homotopy equivalences of groups. Suppose that $\cA$ is a ring that is an algebra over a field $\ve{k}$. Given a finitely generated chain complex $C$ over $\cA$, in very general circumstances, one may pick a homotopy equivalence over $\ve{k}$ between $C$ and $H_*(C)$. Given such a homotopy equivalence, the homotopy transfer lemma (cf. \cite{Kadeishvili_Ainfinity}) equips $H_*(C)$ with the structure of an $A_\infty$-module over $\cA$, such that $C$ and $H_*(C)$ are homotopy equivalent as $A_\infty$-modules. Note that unless $\cA$ is a field, it is rarely the case that $C$ and $H_*(C)$ are homotopy equivalent as $dg$-modules over $\cA$. When the homotopy equivalence between $C$ and $H_*(C)$ is suitably simple, the $A_\infty$-module maps on $H_*(C)$ are usually computed using a version of the homological perturbation lemma, stated in Lemma~\ref{lem:homological-perturbation-modules}. 

We now recall the basics of $A_\infty$-modules. We mostly
follow the notation of Lipshitz, Ozsv\'{a}th and Thurston \cite{LOTBimodules, LOTBordered}.

Let $\cA$ be an associative algebra with unit over a ring
$\bk$. We will assume that $\bk=\F_2$. We write $\mu_2$ for the multiplication on $\cA$.

\begin{define}
  A left $A_\infty$-module ${}_{\cA}M$ over $\cA$ is a left $\bk$-module
  equipped with $\bk$-module maps
  \[ m_{j+1}\colon\cA^{\otimes j}\otimes_{\bk}M \to M, \quad j\ge 0\]
  such that $m_1\circ m_1=0$, and   for each $n$ and any $a_1,\dots,a_n\in\cA$, $\bx\in M$,
  the following holds.
  \[
  \begin{split}
    &\sum_{i=0}^n m_{n-i+1}(a_n,a_{n-1},\dots,a_{i+1},
    m_{i+1}(a_i,\dots,a_1,\bx))\\
    +&\sum_{k=1}^{n-1}
    m_n(a_n,a_{n-1},\dots,\mu_2(a_{k+1}, a_k),\dots,a_1,\bx)=0.
    \end{split}
\]
\end{define}
Lipshitz, Ozsv\'{a}th and Thurston refer to $A_\infty$-modules as \emph{type-$A$} modules,
in contrast to \emph{type-D} modules, which we now introduce.
\begin{define}
  A right \emph{type-D module} $N^{\cA}$ over $\cA$ is a right $\bk$-module $N$,
  together with a $\bk$-linear structure map
  \[\delta^1\colon N\to N\otimes_{\bk}\cA,\]
  such that
  \[(\id_N\otimes \mu_2)\circ(\delta^1\otimes\id_{\cA})\circ\delta^1=0.\]
\end{define}

\subsection{The homological perturbation lemma}\label{sub:hpl}

It is a general fact that $A_\infty$-algebra structures may be transferred along homotopy equivalences of the chain complex underlying an $A_\infty$-algebra. This was proved by Kadeishvili \cite{Kadeishvili_Ainfinity}. Homological perturbation theory gives concrete formulas for the resulting $A_\infty$-module structure under certain restrictions on the chain homotopy equivalence.  See \cite{KontsevichSoibelman}*{Theorem~3}. An exposition of the technique may be found in Ph.D. thesis of Lef\`evre--Hasegawa \cite{Kenji}.

\begin{lem}\label{lem:homological-perturbation-modules}
Suppose that $\cA$ is an associative algebra over a ground ring $\ve{k}$,
${}_{\cA} M=(M,m_j)$ is an $A_\infty$-module over $\cA$,  $(Z,\d)$ is a chain complex over $\ve{k}$, and that we have three maps of left $\ve{k}$-modules
\[
i\colon Z\to M,\quad \pi\colon M\to Z\quad \text{and} \quad h\colon M\to M
\]
satisfying the following:
\begin{enumerate}
\item $i$ and $\pi$ are chain maps.
\item $\pi\circ i=\id_Z$.
\item $i\circ \pi=\id_{M}+\d(h),$ where $\d(h):=m_1\circ h+h\circ m_1$.
\item $h\circ i=0$.
\item $\pi\circ h=0$.
\item $h\circ h=0$.
\end{enumerate}
 Then there are $A_\infty$-module structure maps $m_j^Z$ on $Z$, as well as $A_\infty$-module morphisms 
 \[
 I_*\colon {}_{A} Z\to {}_{A} M, \quad \Pi_*\colon {}_{A} M\to {}_{A} Z\quad \text{and} \quad H_*\colon {}_A M\to {}_A M
 \]
 satisfying $m_1^Z=\d$, $I_1=i$, $\Pi_1=\pi$ and $H_1=h$, and such that the analogs of relations (1)--(6) are also satisfied as by the $A_\infty$-module morphisms $I_*$, $\Pi_*$ and $H_*$.
\end{lem}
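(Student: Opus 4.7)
The plan is to give explicit formulas for the transferred structure maps $m_n^Z$ and the morphism components $I_n, \Pi_n, H_n$ via sums over decorated planar trees, and then verify the $A_\infty$-relations by a standard cancellation argument. Concretely, for $n \geq 2$ define
\[
m_n^Z(a_{n-1},\dots,a_1,z) = \sum_{T} \pi \circ \Phi_T\bigl(a_{n-1},\dots,a_1,i(z)\bigr),
\]
where the sum is over planar rooted trees $T$ having a distinguished ``spine'' from the module input at the bottom to the output at the top; along the spine, vertices are labeled by the operations $m_{j}^M$ (with $j-1$ algebra inputs $a_k$ attached in order and one module input), and each internal spine edge carries the homotopy $h$. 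Define $I_n$ by the same tree sum but omitting the final $\pi$ (so the output lives in $M$); define $\Pi_n$ by omitting the initial $i$; define $H_n$ by omitting both $\pi$ and $i$ and instead placing an extra $h$ at the root. For $n=1$ set $m_1^Z = \partial$, $I_1 = i$, $\Pi_1 = \pi$, $H_1 = h$; the sums are finite because in each term the number of internal $h$'s is bounded by $n-1$.

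Next I would verify the $A_\infty$-module relation for ${}_\cA Z$. Expanding the relation yields a sum over trees $T$ in which one of the following occurs: either an extra $m_1^M$ is inserted at a spine vertex (producing terms from $m_1 \circ m_j + m_j \circ m_1$ applied inside $M$), or an algebra multiplication $\mu_2(a_{k+1},a_k)$ occurs at an incoming leaf. Using the $A_\infty$-relation on ${}_\cA M$, all such contributions reorganize into terms where an internal edge is collapsed or expanded. Using condition (3), $\partial h = i\pi - \id_M$, each ``collapsed'' edge produces a piece with $i\pi$ (which cancels against the corresponding ``non-collapsed'' boundary term of the neighboring tree via the side condition) plus a piece with $-\id_M$ (which matches the boundary of a smaller tree). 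Conditions (4), (5), (6) ensure the boundary terms involving $h \circ i$, $\pi \circ h$, and $h \circ h$ at the two ends of the spine vanish, so the net sum telescopes to zero. This is the classical Kadeishvili/Kontsevich--Soibelman calculation adapted to modules.

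For the $A_\infty$-morphism relations, an analogous cancellation proves that $I_* \co {}_\cA Z \to {}_\cA M$ and $\Pi_* \co {}_\cA M \to {}_\cA Z$ are morphisms; the analog of (1) is simply that $I_1$ and $\Pi_1$ are chain maps, which was assumed. The analog of (2), namely $\Pi_* \circ I_* = \id$, reduces after unpacking the tree sums to the identity $\pi \circ i = \id_Z$ together with $h \circ i = 0$ and $\pi \circ h = 0$, which force every tree with at least one internal $h$-edge to vanish. For the analog of (3), one shows $I_* \circ \Pi_* = \id + \partial(H_*)$ by a parallel tree argument: differentiating $H_*$ yields pairs of trees in which an interior $h$-edge is either collapsed (contributing $i\pi$, which assembles into $I_* \circ \Pi_*$) or replaced by $\id_M$ (which assembles into the identity morphism). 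The analogs of (4)--(6) follow from the same vanishing principle, since attaching $I_*$ or $\Pi_*$ to an $H_*$-component produces at least one $h \circ i$, $\pi \circ h$, or $h \circ h$ composition somewhere along the spine.

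The main obstacle is purely combinatorial: carefully bookkeeping the sign-free (since $\bk = \F_2$) pairing of trees that cancel under differentiation. The structural input from the side conditions is minimal once the tree formalism is in place, and the finiteness of each sum is automatic from the fact that every non-leaf vertex consumes at least one algebra input. I would organize the verification around a single ``master'' combinatorial identity that expresses $\partial$ of a tree sum as a boundary sum over trees with one distinguished decoration (either a collapsed edge, an inserted $m_1$, or a merged multiplication), and then deduce all six analog identities as special cases.
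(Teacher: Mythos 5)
Your overall blueprint — transferred operations defined by sums over ``spine trees'' whose vertices carry $m_{>1}^M$ and whose internal edges carry $h$, with the $A_\infty$-relations verified by a cancellation argument driven by the $A_\infty$-relation on ${}_\cA M$ together with the side conditions (4)--(6) — is the standard approach, and it is exactly the route taken by the references the paper cites (Kadeishvili, Kontsevich--Soibelman, Lef\`evre--Hasegawa). The paper itself does not reproduce the verification; it records the resulting formulas in Figure~\ref{fig:homological-perturbation} and sends the reader to Lef\`evre--Hasegawa for details.

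However, your explicit formulas for $I_n$, $\Pi_n$, and $H_n$ are incorrect, and the error is not a matter of convention. In the correct construction one does not simply \emph{omit} the terminal $\pi$ (resp.\ the initial $i$): one \emph{replaces} it with $h$. Concretely, $I$ should end in $h$ (not terminate at the last $m_{>1}$), $\Pi$ should begin with $h$ (not begin at the first $m_{>1}$), and $H$ should have $h$ at \emph{both} ends, whereas your prescription for $H_n$ has $h$ only at the root. One can see the failure already at the first nontrivial stage: with your $I_2(a,z)=m_2^M(a,iz)$, the $n=2$ morphism relation
\[
m_1^M I_2(a,z)+m_2^M(a,I_1 z)+I_2(a,m_1^Z z)+I_1\bigl(m_2^Z(a,z)\bigr)
\]
reduces, after using $m_1^M m_2^M(a,x)=m_2^M(a,m_1^M x)$, that $i$ is a chain map, and $i\pi=\id_M+\partial h$, to the leftover
\[
m_1^M\, h\, m_2^M(a,iz)+h\, m_2^M(a,m_1^M i z),
\]
which is nonzero in general. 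With the correct $I_2(a,z)=h\,m_2^M(a,iz)$ the same computation collapses to zero. Analogous $n=2$ checks show that $\Pi_2(a,x)=\pi\,m_2^M(a,x)$ (your version) fails, while $\Pi_2(a,x)=\pi\,m_2^M(a,hx)$ works, and likewise for $H_2$. The fix is purely in the termination rule for the spine: the map entering $M$ must always be $h$ when it is not the final $\pi$, and the map leaving $Z$ must always be $h$ when it is not the initial $i$; otherwise the telescoping of $i\pi=\id+\partial h$ that you invoke has nothing to cancel against at the two ends of the spine. Once you replace ``omit'' by ``replace with $h$'' in all three prescriptions, the remainder of your cancellation sketch goes through.
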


\begin{rem}
  It is important to note that the maps $i,\pi,h$
  in the assumption of homological perturbation lemma are
  required to be only $\bk$-module maps, not 
  necessarily $\cA$-module maps. We refer the interested reader to \cite[Section 1.4]{Kenji}
  for a detailed proof.
\end{rem}

The  extended $A_{\infty}$-module maps in the homological perturbation lemma have a concrete description below. The structure maps on $Z$ are given by the diagrams shown in Figure~\ref{fig:homological-perturbation}. Therein $m_{>1}$ denote the $A_\infty$ structure maps of $M$, with $m_1$ excluded, and $\Delta$ is the comultiplication on the tensor algebra $T^{\ast}\cA$.

\begin{figure}[ht]
\[
m^Z(\ve{a},\zs)=\begin{tikzcd}[column sep=.1cm,row sep=.4cm]
\ve{a}\ar[d, Rightarrow]& \ve{z} \ar[d]\\
 \Delta\ar[dr,bend right=10, Rightarrow]\ar[dd,Rightarrow]&i\ar[d]\\
\,& m_{>1} \ar[d]\\
\Delta\ar[dr,bend right=10, Rightarrow]\ar[dd,Rightarrow]&h\ar[d]\\
\,&m_{>1}\ar[d]\\
\vdots \ar[ddr,bend right=20,Rightarrow] & \vdots \ar[d]\\
\,&h\ar[d]\\
\,&m_{>1}\ar[d]\\
\,&\pi\ar[d] \\
\, &\, 
\end{tikzcd}
\quad 
I(\ve{a}, \zs)=\begin{tikzcd}[column sep=.1cm,row sep=.4cm]
\ve{a}\ar[d, Rightarrow]& \ve{z} \ar[d]\\
 \Delta\ar[dr,bend right=10, Rightarrow]\ar[dd,Rightarrow]&i\ar[d]\\
\,& m_{>1} \ar[d]\\
\Delta\ar[dr,bend right=10, Rightarrow]\ar[dd,Rightarrow]&h\ar[d]\\
\,&m_{>1}\ar[d]\\
\vdots \ar[ddr,bend right=20,Rightarrow] & \vdots\ar[d]\\
\,&h\ar[d]\\
\,&m_{>1}\ar[d]\\
\,&h\ar[d] \\
\, &\, 
\end{tikzcd}
\quad
\Pi(\ve{a}, \xs)=\begin{tikzcd}[column sep=.1cm,row sep=.4cm]
\ve{a}\ar[d, Rightarrow]& \ve{x} \ar[d]\\
 \Delta\ar[dr,bend right=10, Rightarrow]\ar[dd,Rightarrow]&h\ar[d]\\
\,& m_{>1} \ar[d]\\
\Delta\ar[dr,bend right=10, Rightarrow]\ar[dd,Rightarrow]&h\ar[d]\\
\,&m_{>1}\ar[d]\\
\vdots \ar[ddr,bend right=20,Rightarrow] & \vdots\ar[d]\\
\,&h\ar[d]\\
\,&m_{>1}\ar[d]\\
\,&\pi\ar[d] \\
\, &\, 
\end{tikzcd}
\quad H(\ve{a},\xs)=
\begin{tikzcd}[column sep=.1cm,row sep=.4cm]
\ve{a}\ar[d, Rightarrow]& \ve{x} \ar[d]\\
 \Delta\ar[dr,bend right=10, Rightarrow]\ar[dd,Rightarrow]&h\ar[d]\\
\,& m_{>1} \ar[d]\\
\Delta\ar[dr,bend right=10, Rightarrow]\ar[dd,Rightarrow]&h\ar[d]\\
\,&m_{>1}\ar[d]\\
\vdots \ar[ddr,bend right=20,Rightarrow] & \vdots\ar[d]\\
\,&h\ar[d]\\
\,&m_{>1}\ar[d]\\
\,&h\ar[d] \\
\, &\, 
\end{tikzcd}
\]
\caption{The maps appearing in the homological perturbation lemma for $A_\infty$-modules. The notation is introduced in \cite{LOTBordered}*{Section 2}. Shortly, a single arrow represents an element of ${}_\cA M$, while a double arrow represents an element of $\bigoplus_{i}\cA^{\otimes i}$.}
\label{fig:homological-perturbation}
\end{figure}
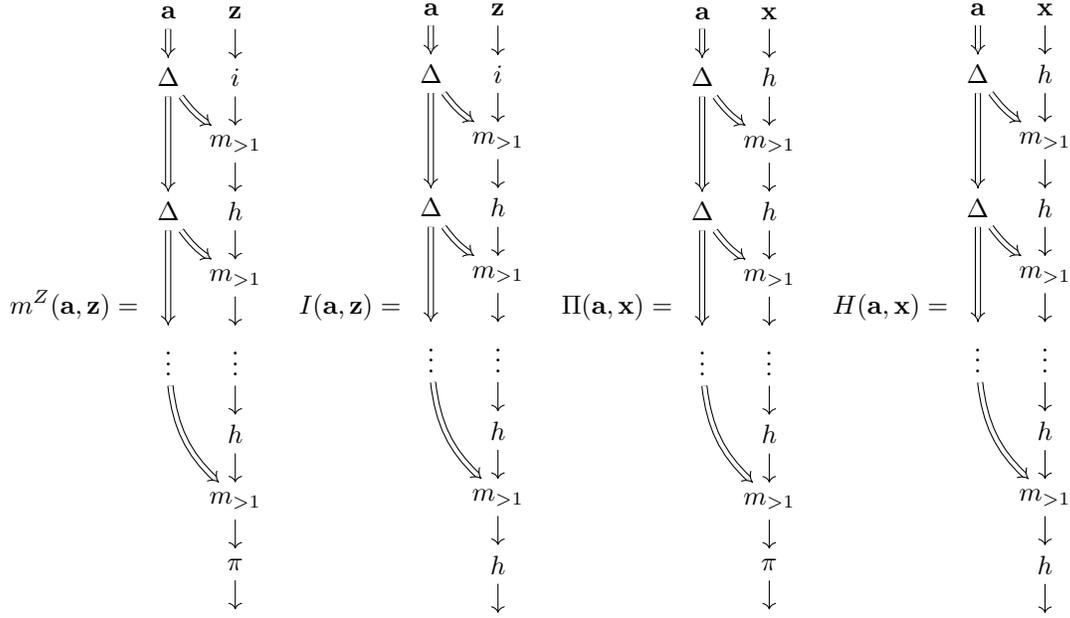

\subsection{Free resolutions and $A_\infty$-actions}\label{sub:free}

In this section, we describe a useful relation between free resolutions and $A_\infty$-module structures. We assume that $\cA$ is an algebra over $\bF=\Z/2$. Suppose that $({}_\cA M,m_j)$ is a type-$A$ module which has $m_j=0$ for $j\neq 2$. That is, $M$ is a left $\cA$-module in the ordinary sense. A \emph{free resolution} of $M$ is a collection of free $\cA$-modules $(F_i,f_{i})_{i\in \N}$ and $\cA$-linear maps, which form an exact sequence of the following form:
\[
\begin{tikzcd}
\cdots \ar[r]&F_i\ar[r, "f_i"]& F_{i-1} \ar[r, "f_{i-1}"]& \cdots \ar[r, "f_2"]& F_1 \ar[r, "f_1"]& F_0\ar[r, "f_0"] & M\ar[r]& 0. 
\end{tikzcd}
\]
For such a free resolution, write $\scF$ for the chain complex which is the direct sum of the $F_i$.

By definition, a free resolution ${}_{\cA} \scF$ is quasi-isomorphic to ${}_{\cA} M$, since the canonical projection map from $\scF$ to $M$ is a chain map which induces an isomorphism on homology. In the category of $A_\infty$-modules, quasi-isomorphisms are always invertible as $A_\infty$ morphisms. An exposition of this principle may be found in \cite{KellerNotes}*{Section~4}. In our present case, it is also helpful to construct explicitly the homotopy equivalence, since we will use it and similar homotopy equivalences later.

\begin{prop}\label{prop:used_for_free_resolution}
Suppose that ${}_\cA \scF$ is a free resolution of a left $\cA$-module ${}_\cA M$. Then ${}_\cA \scF$ and ${}_\cA M$ are homotopy equivalent as $A_\infty$-modules over $\cA$. 
\end{prop}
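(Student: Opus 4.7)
The plan is to apply the homological perturbation lemma (Lemma~\ref{lem:homological-perturbation-modules}), taking ${}_\cA\scF$ as the $A_\infty$-module and ${}_\cA M$ as the chain complex $Z$. First I would set $\pi \colon \scF \to M$ to be the augmentation $f_0$ composed with the projection $\scF \twoheadrightarrow F_0$, so that $\pi$ is $\cA$-linear and vanishes on $F_j$ for $j \geq 1$. Since $\bk = \bF$ is a field, I can choose a $\bk$-linear (but in general not $\cA$-linear) section $i \colon M \to F_0 \subset \scF$ of $f_0$, yielding $\pi \circ i = \id_M$. The acyclicity of $\scF$ in positive homological degrees, together with the fact that every short exact sequence of $\bk$-vector spaces splits, allows the construction of a $\bk$-linear contracting homotopy $h \colon \scF \to \scF$ of degree $+1$ with $i\pi - \id_\scF = m_1 h + h m_1$; this $h$ can be further adjusted to satisfy the side conditions $h \circ i = 0$, $\pi \circ h = 0$, $h \circ h = 0$ by the standard procedure (see for instance \cite[Section~1.4]{Kenji}).

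With these data in hand, Lemma~\ref{lem:homological-perturbation-modules} produces a transferred $A_\infty$-structure $(m_n^{\mathrm{tr}})_{n \geq 1}$ on $M$ together with mutually inverse $A_\infty$ quasi-isomorphisms between ${}_\cA \scF$ and $M$ equipped with this transferred structure. The key claim is that $m_n^{\mathrm{tr}}$ agrees with the canonical $A_\infty$-structure on $M$: namely $m_n^{\mathrm{tr}} = 0$ unless $n = 2$, and $m_2^{\mathrm{tr}}$ is the ordinary action. The latter is immediate because $m_2^{\mathrm{tr}}(a, z) = \pi(a \cdot i(z)) = a \cdot \pi(i(z)) = a \cdot z$ by $\cA$-linearity of $\pi$, and $m_1^{\mathrm{tr}}$ is the induced differential on $H_*(\scF) = M$, which is zero.

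The main point to verify is therefore the vanishing of $m_n^{\mathrm{tr}}$ for $n \geq 3$, and this is where the free resolution hypothesis enters essentially. By Figure~\ref{fig:homological-perturbation}, $m_n^{\mathrm{tr}}$ is the composition
\[
\pi \circ m_2(a_{n-1}, -) \circ h \circ m_2(a_{n-2}, -) \circ h \circ \cdots \circ h \circ m_2(a_1, -) \circ i,
\]
with $n-1$ instances of $m_2$ and $n-2$ instances of $h$. The action $m_2$ preserves the resolution grading on $\scF$ since each $F_j$ is an $\cA$-submodule, whereas $h$ raises the resolution degree by $1$. Starting from $i(z) \in F_0$, the composition therefore lands in $F_{n-2}$ before the final application of $\pi$, which annihilates $F_j$ for $j \geq 1$; hence $m_n^{\mathrm{tr}} = 0$ for $n \geq 3$. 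The only nontrivial ingredient is arranging the side conditions on $h$; everything else is a direct degree count using the grading built into the free resolution.
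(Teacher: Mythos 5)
Your argument is correct and follows essentially the same route as the paper: both take $\pi$ to be the augmentation (nonzero only on $F_0$), $i$ a $\bk$-linear section of $f_0$, and a degree-$+1$ contracting homotopy $h$, and both deduce the vanishing of the transferred $m_n$ for $n\geq 3$ from the resolution-degree count ($m_2$ preserves the degree, $h$ raises it, $\pi$ annihilates $F_j$ for $j\geq 1$), so that the homological perturbation lemma leaves only $m_2$ on $M$. The sole difference is cosmetic: the paper builds $h$ explicitly by splitting the resolution one level at a time, which makes the homogeneity of $h$ and the side conditions automatic, whereas you produce $h$ more abstractly from acyclicity and then appeal to the standard normalization procedure; since those normalizations ($h\mapsto h m_1 h$, $h\mapsto (1-i\pi)h$, etc.) preserve the resolution-degree-$+1$ property, your version goes through as well.
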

\begin{proof} There is a canonical projection map $\pi\colon \scF\to M$, which is just $f_0$ on $F_0$, and $0$ on every other summand. We pick any map of $\bF$-vector spaces $i\colon M\to F_0$ such that $f_0\circ i=\id_M$.  The map $i$ induces a direct sum splitting (of $\bF$-vector spaces) $F_0=\im f_1\oplus \im i$. The map $f_1\colon F_1\to \im f_1$ is surjective, so we pick a section of $\bF$-vector space maps, which we denote by $h_0$. The map $h_0$ induces a splitting of $F_1$ into $\im h_0\oplus \im f_2$. We define a map $h_1\colon \im f_2\to F_2$ to be a splitting of the map $f_2$. We proceed in this manner to split the entire free resolution to obtain a diagram
\[
\begin{tikzcd}[column sep=.2cm]
\cdots
	\ar[r]
&\im f_{i+1}\oplus \im h_{i-1}
	\ar[r, bend left, "f_i"]
& \im f_{i}\oplus \im h_{i-2}
	 \ar[l, bend left, "h_{i-1}"]
	 \ar[r, bend left=40, pos=.45, "f_{i-1}"]
&\cdots
	\ar[l, bend left=40, pos=.55, "h_{i-2}"] 
	\ar[r, bend left=40, pos=.55,"f_2"]
&\im f_2\oplus \im h_0
	\ar[l, bend left=40, pos=.45, "h_{1}"]
	\ar[r, bend left, "f_1"]
&
\im f_1\oplus \im i
	 \ar[l, bend left, "h_0"]
	 \ar[r, bend left=40, pos=.48,"f_0"] 
	 &
M
	\ar[r]
	\ar[l, bend left=40, pos=.53,"i"]& 0. 
\end{tikzcd}
\]

Clearly the maps $i$ and $\pi$ are chain maps. Furthermore, $\pi\circ i=\id_M$, and
\[
i\circ \pi=\id_{\scF}+[\d, h]
\]
where $h$ is the direct sum of the $h_i$. Additionally,
\[
h\circ i=0\quad h\circ h=0\quad \text{and}\quad \pi\circ h=0.
\]
In particular, the homological perturbation lemma induces an $A_\infty$-module structure on $M$, which we denote by ${}_\cA \tilde{M}$, such that ${}_\cA \tilde{M}$ and ${}_{\cA} \scF$ are homotopy equivalent as $A_\infty$-modules. 
We claim that the higher actions on 
${}_\cA \tilde{M}$, vanish.
Indeed, the map $h$ always maps $F_i$ to $F_{i+1}$, while $m_2^{\scF}$ preserves the index $F_i$, and $\pi$ is only non-vanishing on $F_0$.
A quick inspection of the left-most map in Figure~\ref{fig:homological-perturbation} shows that $m_j=0$ for $j>2$.

From the claim it follows that ${}_\cA \tilde{M}={}_\cA M$, completing the proof.
\end{proof}

The following is a helpful restatement of the above result:
\begin{cor}\label{cor:free resolution=simplest-action} Suppose $\cA$ is an algebra over $\bF=\Z/2$. Let ${}_\cA M$ be an $A_\infty$-module over $\cA$, and let $\scF$ be the total complex of a free resolution of $H_*(M)$. Then ${}_\cA M$ is homotopy equivalent to ${}_{\cA}\scF$ as an $A_\infty$-module if and only if ${}_\cA M$ is homotopy equivalent as an $A_\infty$-module to $H_*(M)$, equipped with vanishing $m_1$ and vanishing $m_j$ for $j>2$. 
\end{cor}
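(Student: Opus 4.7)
The plan is to deduce this corollary as a purely formal consequence of Proposition~\ref{prop:used_for_free_resolution} combined with the fact that $A_\infty$-homotopy equivalence is an equivalence relation on the collection of $A_\infty$-modules over $\cA$. Essentially all the work has been done in establishing Proposition~\ref{prop:used_for_free_resolution}; the corollary merely repackages it in a convenient two-sided form.

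First, I would apply Proposition~\ref{prop:used_for_free_resolution} to the $\cA$-module $H_*(M)$, regarded as an $A_\infty$-module over $\cA$ with $m_j=0$ for $j\neq 2$. By hypothesis, $\scF$ is the total complex of a free resolution of this module, so Proposition~\ref{prop:used_for_free_resolution} produces an explicit $A_\infty$-homotopy equivalence
\[
{}_\cA \scF \simeq {}_\cA H_*(M),
\]
where the target carries only the $m_2$-action. Note that this equivalence is constructed concretely via the homological perturbation lemma in the proof of Proposition~\ref{prop:used_for_free_resolution}, so in particular both the morphism and a quasi-inverse are available to us as $A_\infty$-module maps.

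Next, I would invoke transitivity of $A_\infty$-homotopy equivalence: homotopy equivalences of $A_\infty$-modules compose, and quasi-isomorphisms in the $A_\infty$ category admit quasi-inverses, an exposition of which may be found in \cite{KellerNotes}*{Section~4}. Combining this with the equivalence displayed above yields the iff. Explicitly, if ${}_\cA M\simeq {}_\cA\scF$, composing with the equivalence ${}_\cA\scF\simeq{}_\cA H_*(M)$ gives ${}_\cA M\simeq{}_\cA H_*(M)$ with only $m_2$ nontrivial. Conversely, given ${}_\cA M\simeq{}_\cA H_*(M)$, composing with the quasi-inverse ${}_\cA H_*(M)\simeq{}_\cA\scF$ produces ${}_\cA M\simeq{}_\cA \scF$.

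There is no real obstacle here: the entire substance lies in the already-proved Proposition~\ref{prop:used_for_free_resolution}, and the remaining argument is the standard formal manipulation of equivalences in the $A_\infty$ category. The only mild point to be careful about is to record that the equivalence produced by the homological perturbation lemma can be inverted up to $A_\infty$-homotopy, which follows from the general theory and which is moreover transparent in our setting because the maps $i$, $\pi$, $h$ of Proposition~\ref{prop:used_for_free_resolution} are built so that both directions of the equivalence are produced simultaneously.
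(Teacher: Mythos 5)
Your proof is correct and takes the same approach the paper intends: the corollary is presented in the paper as a direct restatement of Proposition~\ref{prop:used_for_free_resolution} with no separate argument, and your deduction via that proposition plus transitivity of $A_\infty$-homotopy equivalence is exactly the implicit reasoning.
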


\subsection{Example of non-formal chain complexes}\label{sub:modules}
As we mentioned in Subsection~\ref{sub:ainfty_modules} 
if $\cA$ is a field, then any finitely generated chain complex is homotopy equivalent to its homology. If $\cA$ is a PID, then it is not hard to show that any finitely generated free chain complex is quasi-isomorphic to its homology. For general rings, this is not always the case.
In this section, we illustrate the case $\cA=\bF[\scU,\scV]$ with examples from the theory of knot Floer homology.

We consider the two free $\bF[\scU,\scV]$-complexes $C$ and $D$ with generators $\ve{u},\ve{v},\ve{x},\ve{y},\ve{z}$,
respectively $\ve{a},\ve{b}$, and~$\ve{c}$, and with differential represented by arrows as shown below:
\[
C=\ve{u}\oplus \begin{tikzcd}[labels=description] \ve{w}\ar[r, "\scU"] \ar[d, "\scV"]& \ve{x} \ar[d,"\scV"]\\
\ve{y}\ar[r, "\scU"]&\ve{z}
\end{tikzcd}
\qquad \text{and} \qquad D=\begin{tikzcd}[labels=description] \,& \ve{a}\ar[d, "\scV"]\\
\ve{c}\ar[r, "\scU"]& \ve{b}
\end{tikzcd}
\]
For appropriate choices of gradings, there is an isomorphism $H_*(C)\iso H_*(D)$, since both are isomorphic to $\bF[\scU,\scV]\oplus \bF$, where $\bF$ has vanishing action of $\scU$ and $\scV$. On the other hand, it is easy to see that $C$ and $D$ are not homotopy equivalent over $\bF[\scU,\scV]$. For example, if we tensor both with the module $\bF[\scU,\scV]/(\scU,\scV)$ and take homology, we obtain vector spaces of different rank over $\bF$.

We now equip both $H_*(C)$ and $H_*(D)$ with $A_\infty$-actions by applying the homological perturbation lemma. Since $C$ is a free resolution of its homology, the induced $A_\infty$-action has only $m_2$ non-trivial.

 For $D$, the homology is the $\bF$ span of $\scU^i\scV^j(\scU\ve{a}+\scV \ve{c})$ and $\ve{b}$, where $i,j\in \N$. We may define a homotopy equivalence of chain complexes over $\bF$ with $D$ and the complex $\bF[\scU,\scV]\oplus \bF$ (with vanishing differential). Write $\ve{e}$ for the generator of $\bF[\scU,\scV]$ and write $\ve{f}$ for the generator of $\bF$.  The inclusion map $i\colon H_*(D)\to D$ is given by
\[
i(\scU^n\scV^m\ve{e})=\scU^n\scV^m(\scU\ve{a}+\scV \ve{c}),  \quad i(\ve{f})=\ve{b},
\]
for all $n,m\ge 0$.
  We define a projection map $\pi\colon D\to H_*(D)$ by setting
\[
    \pi(\scU^n\scV^m \ve{a})=\begin{cases}\scU^{n-1}\scV^m \ve{e}& \text{ if } n>0\\
 0& \text{ otherwise},
 \end{cases}\qquad \text{and}  \qquad
 \pi(\scU^n \scV^m\ve{b})=\begin{cases}
\ve{f}& \text{ if } n=m=0\\
0&\text{ otherwise}.
\end{cases}
\]
The map $\pi$ vanishes on multiples of $\ve{c}$. The maps $\pi$ and $i$ are clearly chain maps, and $\pi\circ i=\id$. We define a homotopy $h\colon D\to D$, by setting $h(\ve{a})=h(\ve{c})=0$, and 
 \[
 h(\scU^n\scV^m\ve{b})=\begin{cases} \scU^{n-1}\scV^m \ve{c}& \text{ if } n>0\\
 \scV^{m-1}\ve{a}& \text{ if } n=0, m>0\\
0& \text{ otherwise}.
 \end{cases}
 \]
 It is straightforward to see that $i\circ \pi=\id+[\d, h]$. Furthermore, $h\circ i$, $h\circ h$ and $\pi\circ h$ vanish. In particular, the maps $i$, $\pi$ and $h$ induce an $A_\infty$-module structure maps $m_j^{H(D)}$ on $H_*(D)$.

 We claim that $m_3^{H(D)}(\scU,\scV,\ve{f})=\ve{e}$. To this end, we use the formula on the left of Figure~\ref{fig:homological-perturbation}. In the present situation, $\ve{a}=\scU\otimes\scV$, $\ve{z}=\ve{f}$. Note that $\Delta(\scU\otimes \scV)=(\scU)\otimes (\scV)$, so we compute
\[
m_3^{H(D)}(\scU,\scV,\ve{f})=\pi( m_2^D(\scU,h(m_2^D(\scV,i (\ve{f})))))=\pi(m_2^D(\scU, \ve{a}))=\ve{e}. 
\]

 \begin{rem} In Heegaard Floer theory, it is common to also consider the ring $\bF[\scU,\scV]/\scU\scV$ (see, e.g., \cite{DHSTmore}).
 A similar computation as above shows that $H_*(C/\scU\scV)\iso H_*(D/\scU\scV)$ as $\bF[\scU,\scV]/\scU\scV$-modules, but that the complexes $C/\scU\scV$ and $D/\scU\scV$ are not homotopy equivalent.
\end{rem}

\section{Plumbed manifolds and plumbed links}\label{sec:topological}

The goal of this section is to recall notions like plumbed manifolds, resolution graphs, and plumbed links. We pay particular attention to algebraic links, which are L-space links by work of Gorsky and N\'{e}methi \cite{GorskyNemethiAlgebraicLinks}. Additionally, in Proposition~\ref{prop:GN_algebraic} we describe a slightly wider class of plumbed links which are also L-space links.

%

\subsection{Review of plumbed manifolds}\label{sub:plumbing_calculus}
To set up the notation, we recall the constructions of 3-manifolds via plumbing. We refer the reader
to \cite{NeumannCalculus} for a detailed exposition.

Suppose $G$ is a finite graph. We let $V_G$ be the set of its vertices. We assume that each $v\in V_G$ has an associated 
weight $\lambda_v\in\Z$. 
From $V_G$ we construct a real four-manifold $\XG$, as follows. For each $v\in V_G$, we take $T_v$, the oriented disk bundle over $S^2$ with Euler number $\lambda_v$. 
The manifold
$\XG$ is obtained by taking a disjoint union  of all the $T_v$ and gluing them using the following recipe.
Whenever two vertices $v,v'\in V_G$ are connected by an edge~$e$, we trivialize
the bundles $T_v$ and $T_{v'}$ over chosen disks in the base. Then, we glue these bundles together by an orientation-presenting
diffeomorphism that swaps the base and the fiber. Refer to \cite{GompfStipsicz}*{Example 4.6.2} or \cite{NeumannCalculus}
for more details.

By convention, if $G$ is not connected, we take a boundary connected sum of manifolds ${\XG}_i$ corresponding to connected components $G_i$
of $G$.
\begin{define}
The manifold $\XG$ is called the \emph{plumbed $4$-manifold} associated with $G$. The boundary $\YG=\partial \XG$ is the 
\emph{plumbed $3$-manifold associated with $G$.}
\end{define}

\smallskip
The construction of a plumbed manifold can be done in a relative setting, providing a pair consisting of a three-manifold and a link
contained in it. The starting data is a graph $\Gamma$ with vertices partitioned into two sets $V_G\sqcup V_\uparrow$. We call $V_\uparrow$ the \emph{arrow} vertices, and we call $V_G$ the \emph{non-arrow} vertices. We do not add weights to $V_{\uparrow}$.
\begin{rem}

From a topological perspective, it is most natural to require each vertex of $V_\uparrow$ to have valence 1. However, in the combinatorial
  construction of link lattice homology, we do not need to make this assumption.
\end{rem}

We write $G\subset \Gamma$ for the full subgraph spanned by the non-arrow vertices. 
The vertices $V_\uparrow$ determine a link $L_\uparrow$ in $\YG$ as follows. Suppose $v\in V_{\uparrow}$ is adjacent
to a non-arrow vertex $w\in V_G$. We let $L_v$ be a circle fiber of the $S^1$-bundle $\partial T_{w}\to S^2$,
such that the projection of $L_v$ onto $S^2$ is disjoint from all the disks used to plumb the disk bundles of other non-arrow vertices. 
If more than one arrow vertex is adjacent to the same non-arrow vertex $w$, we require each of the corresponding components of $L_\uparrow$ to be fibers of $\d T_w\to S^2$ over distinct points.

We define $L_{\uparrow}$ as the union of the circle fibers $L_v$ ranging over $v\in V_\uparrow$.

\begin{define} The link $L_{\uparrow}\subset Y_G$ is called the \emph{plumbed link associated with $\Gamma$}. We say that a link $L\subset Y$ is a \emph{plumbed link} if there exists an arrow-decorated plumbing graph $\Gamma$ and a diffeomorphism
$(Y,L)\iso (\YG,L_\uparrow)$.
\end{define}

\begin{rem}\label{rem:not_a_tree}
One can also consider more general plumbings of disk bundles over higher genus surfaces. To do so, one considers a plumbing graph where each vertex  $v\in V_G$ is assigned an additional weight, corresponding to the genus of the base space of disk bundle.
However, the resulting manifold $Y_G$ is not a rational homology sphere if at least one surface has positive genus. In the present paper, we are mostly concerned with rational homology spheres, so we restrict the discussion to the case where all surfaces are spheres.
 We refer to \cite{NeumannCalculus} for more details.
\end{rem}

\begin{define}\label{def:incidence_matrix}
Suppose $G$ is a plumbing tree with no arrow vertices. We define the \emph{incidence matrix} $\Inc_G$, as follows. The diagonal entries are the
weights associated to vertices, while the off-diagonal terms are $1$ or zero, depending on whether the two vertices
are connected by an edge.
\end{define}

By construction, $\Inc_G$ represents the intersection form on $\XG$. As the intersection form on $\XG$ determines the homology
of $\YG$, we have:
\begin{lem}
  There is an isomorphism $H_1(\YG;\Z)\cong \coker \Inc_G$. In particular, $\YG$ is a rational homology sphere if and only if $\det \Inc_G\neq 0$.
\end{lem}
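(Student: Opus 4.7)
The plan is to compute $H_1(Y_G;\Z)$ via the long exact sequence of the pair $(X_G,Y_G)$ combined with Poincaré--Lefschetz duality, and then identify the resulting connecting map with $\Inc_G$ using the observation, stated immediately before the lemma, that $\Inc_G$ represents the intersection form on $X_G$.

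First I would note that, since $G$ is a tree, the $4$-manifold $X_G$ deformation retracts onto a tree-like wedge of $2$-spheres (the zero sections $S_v$ of the disk bundles $T_v$, plumbed according to $G$). Hence $X_G$ is simply connected, $H_1(X_G)=0$, and $H_2(X_G)\cong \Z^{|V_G|}$ is free abelian with basis $\{[S_v]\}_{v\in V_G}$. The long exact sequence of the pair $(X_G,Y_G)$ then has the relevant segment
$$H_2(X_G)\xrightarrow{\;j_*\;} H_2(X_G,Y_G)\longrightarrow H_1(Y_G)\longrightarrow H_1(X_G)=0.$$

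Next I would invoke Poincaré--Lefschetz duality to identify $H_2(X_G,Y_G)\cong H^2(X_G)$ and then apply the universal coefficient theorem, which (using $H_1(X_G)=0$) yields $H^2(X_G)\cong \Hom(H_2(X_G),\Z)$. Under this chain of identifications, the map $j_*$ becomes the adjoint of the intersection pairing on $H_2(X_G)$. By the sentence preceding the lemma, this pairing is represented in the basis $\{[S_v]\}_v$ by the matrix $\Inc_G$ (whose diagonal entries are the Euler numbers $\lambda_v=[S_v]\cdot[S_v]$ and whose off-diagonal entries record, via $[S_v]\cdot[S_w]=1$, whether $v$ and $w$ are joined by an edge in $G$). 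Exactness of the long exact sequence therefore gives $H_1(Y_G)\cong \coker\bigl(\Inc_G\colon\Z^{|V_G|}\to \Z^{|V_G|}\bigr)$.

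For the ``in particular'' clause, I would tensor the above isomorphism with $\Q$: the manifold $Y_G$ is a rational homology sphere if and only if $H_1(Y_G;\Q)=0$, if and only if $\coker(\Inc_G)\otimes_\Z \Q=0$, which is equivalent to $\Inc_G$ being invertible as a $\Q$-linear endomorphism of $\Q^{|V_G|}$, i.e.\ $\det\Inc_G\neq 0$. The only real subtlety is verifying that under Poincaré--Lefschetz duality and universal coefficients the composition $H_2(X_G)\to H_2(X_G,Y_G)\to \Hom(H_2(X_G),\Z)$ really does coincide with the adjoint of the intersection form; this is a standard identification (cf.\ \cite{GompfStipsicz}*{Section~4.5}), and once accepted the matrix representation $\Inc_G$ provided just before the lemma closes the argument.
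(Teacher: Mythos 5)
Your proof is correct, and it is exactly the standard argument the paper implicitly invokes: the paper states the lemma with only the remark that $\Inc_G$ represents the intersection form on $X_G$, leaving the long exact sequence of $(X_G,Y_G)$ together with Poincar\'e--Lefschetz duality and universal coefficients (valid here since $G$ is a tree, so $X_G$ is simply connected) as the tacit justification, which is precisely what you spelled out.
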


There is a well known description of $(\YG,L_\uparrow)$ in terms of Dehn surgery. See \cite{GompfStipsicz}*{Example~4.6.2}.  
We form a partitioned link $L_\Gamma=L_G\sqcup L_\uparrow$ in $S^3$, as follows. For each vertex of $\Gamma$, we add an unknotted component to $L_\Gamma$, and for each edge, we add a clasp between the corresponding components. The link $L_\Gamma$ may alternatively be described as an iterated connected sum of Hopf links. The weights on the vertices in $V_G$ determine an integral framing $\Lambda$ on $L_G$, as in Definition~\ref{def:incidence_matrix}. Then
\[
(Y_G,L_\uparrow)\iso (S^3_{\Lambda}(L_G),L_\uparrow).
\]

We remark a slight abuse of notation: $L_\uparrow$ denotes both the link in $S^3$ (as a part of $L_\Gamma\subset S^3$) and its
image in the plumbed manifold $\YG$.

\subsection{Plumbed manifolds and resolutions of analytic singularities}

One of the main motivations for introducing plumbed manifolds comes from resolutions of singularities. We give now a short account on plumbed
manifolds obtained from surface singularities. We refer the reader to introductory lectures of N\'emethi \cite{NemethiFive},
or to \cite[Section 3.3]{Nemethi_opus_magnum}, \cite{LooijengaBook}, \cite[Chapter 4]{NemethiSzilard} for more details and references.

First we focus on the absolute case corresponding to graphs with no arrow vertices. 
Later on we discuss the relative case of embedded resolutions, leading to plumbed links. 

Suppose $(X,x_0)$ is (a germ of) a normal complex analytic surface. 
The word `normal' refers to the property of the local ring $\mathcal{O}_{x_0}(X)$ being integrally closed, see \cite{Hartshorne}*{Exercise I.3.7}. It implies, among other things, that $x_0$ is an isolated singular point. See \cite{Laufer,NemethiFive} for more details.
The surface $(X,x_0)$ can be analytically embedded into $(\C^N,0)$ for $N$ sufficiently
large. Let $B_\varepsilon$ be a ball in $\C^N$ with center at $0$ and radius $\varepsilon>0$. 
It is known, see \cite{Milnor_singular}, that the diffeomorphism type of the intersection $L_X:=X\cap\partial B_\varepsilon$
is independent of $\varepsilon$ and of the embedding of $X$ into $\mathbb{C}^N$, provided $\varepsilon>0$ is small enough.   
Moreover, the pair $(B_\varepsilon,X\cap B_\varepsilon)$ 
is topologically a cone over $(\partial B_\varepsilon,L_X)$. The space $L_X$ is a smooth real 3-dimensional manifold.  We call it the \emph{link of the surface singularity} $(X,x_0)$. 

We stress that we study local behavior of $X$ near $x_0$. From the perspective of algebraic geometry, this is emphasized by saying
that $X$ is a germ of a surface. The reader unfamiliar with this notion, might assume that we replace $X$ by $X\cap B_\varepsilon$,
where $B_\varepsilon$ is as above.

The manifold $L_X$ admits another description.
We let $(\wt{X},E)$ be a resolution of $(X,x_0)$, that is, a smooth complex analytic
surface together with a map $\pi\colon(\wt{X},E)\to (X,x_0)$, which is one-to-one except on $\pi^{-1}(x_0)=E$. Now $E=\sum E_i$
is a union of smooth complex curves (Riemann surfaces) intersecting transversally. Each of the $E_i$ is assigned a number $\lambda_i$
which is its self-intersection. The curves $E_i$ are referred to as the \emph{exceptional components} of
the map $\pi$.

With the resolution we can assign two objects. One is the \emph{dual graph} $G_X$
of the resolution.  
Its vertices correspond to divisors $E_i$. Each
vertex is assigned a weight $\lambda_i$. There is an additional weight of a vertex by the genus of $E_i$ (in the present paper we will consider only
the case where each of the $E_i$ is a sphere, compare Remark~\ref{rem:not_a_tree}). We add  $|E_i\cap E_j|$ edges between vertices $v_i$ and $v_j$, when $i\neq j$. We add no self-edges. 
 There is a matrix $\Inc_G$ associated with $G_X$ as in Definition~\ref{def:incidence_matrix}. We have the following result.
\begin{prop}\
  \begin{itemize}
    \item[(a)] The link $L_X$ of singularity $(X,x_0)$ is diffeomorphic to $Y_{G_X}$.
    \item[(b)] $L_X$ is a rational homology sphere if each of the $E_i$ is a sphere and $G_X$ is a tree.
  \end{itemize}
\end{prop}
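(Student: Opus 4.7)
The plan is to reduce both parts to topological facts about the resolution $\pi\colon (\wt{X},E)\to (X,x_0)$, and then to invoke a classical algebro-geometric input for~(b).

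For (a), I would first observe that since $\pi$ is a biholomorphism away from $E$ and $\pi(E)=\{x_0\}$, for sufficiently small $\varepsilon>0$ the preimage $\wt{X}_\varepsilon\defeq \pi^{-1}(X\cap B_\varepsilon)$ is an open neighborhood of $E$ in $\wt{X}$, and $\pi$ restricts to a diffeomorphism on boundaries
\[
\partial \wt{X}_\varepsilon \xrightarrow{\;\cong\;} L_X=X\cap \partial B_\varepsilon.
\]
Moreover, $\wt{X}_\varepsilon$ deformation retracts onto $E$ and may be chosen to be a regular tubular neighborhood $N(E)$. It remains to identify $N(E)$ with $X_{G_X}$ as a smooth four-manifold. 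Each component $E_i$ is a smooth compact complex curve, so a tubular neighborhood $N(E_i)$ in $\wt{X}$ is diffeomorphic to the total space of its normal disk bundle over $E_i$; the Euler number of this bundle equals the self-intersection $\lambda_i=E_i\cdot E_i$, matching the factor $T_{v_i}$ in the plumbing construction. At each transverse intersection point $p\in E_i\cap E_j$, local holomorphic coordinates $(z,w)$ on $\wt{X}$ with $E_i=\{w=0\}$ and $E_j=\{z=0\}$ identify the two normal directions with each other's base directions, and gluing the two local disk bundles according to these coordinates realizes exactly the plumbing operation along the edge $v_iv_j$ of $G_X$. Carrying this out at each node gives $N(E)\cong X_{G_X}$, so $\partial N(E)\cong Y_{G_X}$, which together with the previous sentence yields $L_X\cong Y_{G_X}$.

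For (b), once (a) is established it suffices to show that $Y_{G_X}$ is a rational homology sphere. The hypotheses that each $E_i$ is a sphere and $G_X$ is a tree ensure that $X_{G_X}$ is simply connected (it deformation retracts onto a simply connected 2-complex built from spheres glued along a contractible tree), so the lemma preceding the proposition applies and gives $H_1(Y_{G_X};\Z)\cong \coker \Inc_{G_X}$. The classical theorem of Mumford--Grauert asserts that the intersection form on the exceptional divisor of a resolution of a normal surface singularity is negative definite; in our language, $\Inc_{G_X}$ is negative definite and in particular has nonzero determinant. Hence $\coker \Inc_{G_X}$ is a finite abelian group, so $H_1(Y_{G_X};\Q)=0$ and $L_X\cong Y_{G_X}$ is a rational homology sphere.

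I expect the main obstacle to be part (a): the careful identification of $N(E)$ with the plumbing $X_{G_X}$ requires making compatible choices of tubular neighborhoods of the $E_i$ near the normal-crossing points and verifying that the resulting transition maps agree, up to isotopy, with the local model defining the plumbing. This is a standard but nontrivial piece of differential topology, worked out in \cite{NeumannCalculus} and \cite{GompfStipsicz}*{Example 4.6.2}. The input for (b), by contrast, is a direct appeal to Mumford's negative definiteness theorem combined with the homological lemma already stated.
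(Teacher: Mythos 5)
The paper offers no proof of this proposition; it is presented as a classical fact with pointers to \cite{NeumannCalculus}, \cite{NemethiFive}, and related references. So there is no argument in the paper to compare yours against directly. That said, your proof is correct and is essentially the standard one that those references give.

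Two small remarks. First, the negative definiteness of the intersection matrix of the exceptional divisor is Mumford's theorem; what the present paper attributes to Grauert (its Theorem~\ref{thm:grauert}) is the \emph{converse}, namely that any negative definite plumbing arises from an analytic singularity. Calling the fact you use ``Mumford--Grauert'' is therefore a slight misattribution, though the fact itself is exactly what you need. Second, in part (b) you are right to flag that the hypotheses (each $E_i$ a sphere, $G_X$ a tree) are what make $X_{G_X}$ simply connected so that the lemma $H_1(Y_{G_X};\Z)\cong\coker\Inc_{G_X}$ applies; the paper's Remark~\ref{rem:not_a_tree} makes the same point, since plumbings over higher-genus surfaces never give rational homology spheres. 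Your appeal to the local model at a node of $E$ for the plumbing identification in part (a) is the genuinely substantive point, and as you note it is handled carefully in \cite{NeumannCalculus} and \cite{GompfStipsicz}*{Example~4.6.2}, so citing those is appropriate rather than reproving them.
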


The manifold $L_X$ determines the graph $G_X$ up to a precisely described equivalence relation; see \cite{NeumannCalculus}.
The way the resolution is constructed implies that $\Inc_G$ is negative definite. A deep theorem of Grauert \cite{Grauert}
shows that this characterizes links of analytic
singularities among all plumbed links.

\begin{thm}\label{thm:grauert}
  If $\Inc_G$ is negative definite,
  then $\YG$ is a link of an analytic singularity. 
\end{thm}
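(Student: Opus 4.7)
The strategy is to upgrade the topological plumbing $\XG$ to a complex analytic manifold containing a suitable configuration of rational curves, and then invoke Grauert's contractibility theorem to collapse that configuration to a normal singular point whose link is $\YG$.

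For the first step, I would replace each oriented real disk bundle $T_v\to S^2$ used in the construction of $\XG$ by a small disk subbundle of the holomorphic line bundle $\cO_{\CP^1}(\lambda_v)$, and replace each plumbing identification at an edge by a biholomorphism of neighborhoods that swaps base and fiber coordinates (the standard local model being a bidisk in $\C^2$). Gluing these charts produces a complex surface $\tilde X$ whose underlying smooth four-manifold is diffeomorphic to $\XG$, and which contains a compact configuration $E=\bigcup_{v\in V_G} E_v$ of holomorphically embedded copies of $\CP^1$ meeting transversely according to $G$, with self-intersections prescribed by the weights. In particular, the intersection matrix of $E$ in $\tilde X$ coincides with $\Inc_G$, and $\d \tilde X\iso \YG$.

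For the second step, negative definiteness of $\Inc_G$ places $E\subset \tilde X$ squarely in the setting of Grauert's theorem \cite{Grauert}: there is an open neighborhood $\tilde X'$ of $E$ in $\tilde X$, a normal complex analytic surface germ $(X,x_0)$, and a proper holomorphic map $\pi\colon \tilde X'\to X$ which restricts to a biholomorphism $\tilde X'\smallsetminus E\to X\smallsetminus \{x_0\}$ and contracts $E$ to $x_0$. Then $(X,x_0)$ is an analytic singularity, $\pi$ is a resolution of it, and its link $L_X=X\cap \d B_\varepsilon$ is diffeomorphic to the boundary of a regular neighborhood of $E$ in $\tilde X$. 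Such a regular neighborhood is diffeomorphic to $\XG$ with boundary $\YG$, so $L_X\iso \YG$ as required.

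The main obstacle is Grauert's theorem itself, which is considerably deeper than the surrounding plumbing calculus. The idea of its proof is to use negative definiteness of the intersection matrix to construct a strongly plurisubharmonic exhaustion function on a neighborhood of $E$, and then to contract $E$ via Remmert reduction applied to the resulting holomorphically convex neighborhood. I would not reproduce this argument, but instead cite \cite{Grauert} directly along with the textbook treatments of Laufer and N\'emethi. The only work genuinely required of us is the complex-analytic upgrade of the plumbing described above, which is a local construction and presents no difficulty beyond bookkeeping of the gluing data.
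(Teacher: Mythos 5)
Your proposal is correct and is the standard argument that the paper's citation to \cite{Grauert} is pointing at; the paper itself gives no proof beyond that citation. You correctly isolate the two ingredients: a local complex-analytic upgrade of the topological plumbing (replacing each real oriented disk bundle by a tubular neighborhood of the zero section in $\cO_{\CP^1}(\lambda_v)$ and each plumbing identification by the holomorphic bidisk swap $(z,w)\mapsto(w,z)$), producing a holomorphic configuration $E$ of rational curves with intersection matrix $\Inc_G$; and Grauert's contractibility criterion, which uses negative definiteness to contract $E$ to a normal point $x_0$ so that the boundary of a regular neighborhood of $E$, which is $\YG$, is the link $L_X$. You are also right that the only step one should actually carry out here is the bookkeeping of the holomorphic gluing; the contraction itself is Grauert's theorem and is appropriately cited rather than reproved.
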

We stress that the statement is far from true if the word `analytic' is replaced by `algebraic'. Moreover,
in general, there is no uniqueness. While $G_X$ determines the diffeomorphism type of $L_X$, there might be analytically different 
singularities with the same link $L_X$. There is a vast research area concerning which invariants of $X$ depend on the link $L_X$,
and which depend on the analytic structure. We refer the reader to the book \cite{Nemethi_opus_magnum}. 
Examples of invariants depending on the analytic structure of $X$ include
geometric genus $p_g$ (see \cite{Nemethi_opus_magnum}*{Section 6.8}), the embedding dimension (minimal $N$ for which $X$ embeds into $\C^N$,
see \cite{Nemethi_opus_magnum}*{Example 6.7.17}), and the Hilbert-Samuel function \cite{Nemethi_opus_magnum}*{Section 5.1.40}). 

\subsection{Embedded resolutions}

We now consider embedded singularities, which are pairs of analytic spaces $Z\subset X$, with a point $x_0\in Z$, where $X$ and $Z$ are possibly singular at $x_0$.
We restrict our attention to the case where $\dim_{\C}X=2$ and $\dim_{\C} Z=1$. For an introduction to embedded
singularities, we refer to \cite{Nemethi_opus_magnum}, especially Section 2.2. An overview
of singularity theory is given in \cite{NemethiSzilard}*{Section 4.3}. Graph links, and their connection
to singularity theory, are described in \cite{EisenbudNeumannGraphLinks}.

\begin{define}
  An \emph{embedded singularity} is a triple $(X,Z,x_0)$, where $(X,x_0)$ is a (germ of a) normal complex analytic surface and $Z\subset X$
  is a complex analytic curve passing through $x_0$.
\end{define}
\begin{example}
  If $X=\C^2$, an embedded singularity is precisely a plane curve singularity.
\end{example}
Embed $X$ analytically in $\C^N$ with $x_0$ mapped to $0$. Take a small ball $B_\varepsilon$ around $x_0$ in $\C^N$ as above.
For sufficiently small $\varepsilon>0$, the triple $(B_\varepsilon,X\cap B_\varepsilon,Z\cap B_\varepsilon)$
is topologically a cone over $(\partial B_\varepsilon, L_X,L_Z)$, where $L_X$ and $L_Z$ are, respectively,
intersections of $X$ and $Z$ with $\partial B_\varepsilon$. 
The diffeomorphism type of the pair $(L_X,L_Z)$ 
depends on neither the choice of~$\varepsilon$ nor the choice of embedding.
\begin{define}
The pair $(L_X,L_Z)$
is called the \emph{link of the embedded singularity}.
\end{define}
\begin{example}
  Suppose $(X,x_0)=(\C^2,0)$, and $Z$ is a plane algebraic curve passing through $0$. Then, $L_X=S^3$, and the link $L_Z$ is precisely the algebraic link in the ordinary sense.
\end{example} 
Since the study of singularities is local, we consider only the germ of the singularity. We note that, by definition, $(X,Z)$ and $(X\cap B_{\veps}, Z\cap B_{\veps})$ have the same germ. In particular, we may and will assume that $(X,Z)$ is a topologically a cone over $(L_X,L_Z)$.

We can recover $(L_X,L_Z)$ from an embedded resolution. By an \emph{embedded resolution} of $(X,Z,x_0)$ we mean
the triple $(\wt{X},\wt{Z},E)$ together with a proper analytic
map $\pi\colon(\wt{X},\wt{Z},E)\to(X,Z,x_0)$ with the following conditions
\begin{itemize}
  \item $\pi\colon\wt{X}\to X$ is one-to-one away from $E$. I in particular, the restriction $\pi|_{\wt{Z}}$ is one-to-one
    away from $\wt{Z}\cap E$; 
  \item $\wt{X}$ is a smooth surface and $\wt{Z}$ is a smooth complex curve;
  \item Each algebraic component of $E$ is a projective (that is, closed) smooth complex curve;
  \item The union $E\cup\wt{Z}$ has only transverse double points as singularities; 
\end{itemize}
As in the non-embedded case, the smooth complex curves whose union in $E$ are referred  as the exceptional components.

Given the embedded resolution, we can create a dual graph of the resolution. The construction is in two steps.
First, out of $E$, we construct the graph $G_X$ as above. Next, if $v_i\in V_{G_X}$ and $E_i$ is the corresponding component of $E$, we adjoin $|E_i\cap \wt{Z}|$ arrow vertices to $v_i$. 
We denote the resulting
graph $\Gamma_{X,Z}$. 
Recall that we work locally (topologically, we have replaced $X$ by $X\cap B_\varepsilon$). Therefore,
$\wt{Z}$ is the union of disks, each intersecting the graph $E$ precisely at one point. 
That is, every arrow vertex of $\Gamma_{X,Z}$ corresponds to a connected
component of $\wt{Z}$. 
The following result is classical. 
\begin{prop}[see e.g. \cite{Nemethi_opus_magnum}*{Proposition 3.3.8}]
  The pair $(Y_{G_X},L_{\uparrow})$ is diffeomorphic to $(L_X,L_Z)$.
\end{prop}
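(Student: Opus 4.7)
The plan is to realize $L_X$ as the boundary of a regular neighborhood of the exceptional divisor in the embedded resolution, and then to identify this boundary, together with the trace of $\wt Z$, with the plumbing manifold $(Y_{G_X}, L_\uparrow)$.

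First I would fix a sufficiently small regular tubular neighborhood $N \subset \wt X$ of the exceptional divisor $E$. Because $\pi$ is a proper biholomorphism off $E$, the restriction $\pi|_{\wt X \setminus \Int N}$ is a diffeomorphism onto its image, which (for $N$ small enough) lies inside $X \cap B_\varepsilon \setminus \{x_0\}$. Since $(X \cap B_\varepsilon, Z \cap B_\varepsilon)$ is a cone over $(L_X, L_Z)$, the pair $(\partial N, \wt Z \cap \partial N)$ is isotopic (through level sets of a proper strictly plurisubharmonic exhaustion of $X\setminus\{x_0\}$) to $(L_X, L_Z)$. This produces a candidate diffeomorphism $(\partial N, \wt Z \cap \partial N) \cong (L_X, L_Z)$.

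Next I would identify $(\partial N, \wt Z \cap \partial N)$ with $(Y_{G_X}, L_\uparrow)$ via the plumbing construction. Each exceptional component $E_i$ is a smooth projective curve of genus zero whose holomorphic normal bundle in $\wt X$ has Euler number $\lambda_i$; a tubular neighborhood $N(E_i)$ is therefore diffeomorphic to the model disk bundle $T_{v_i}$ used in the definition of $X_{G_X}$. At each transverse double point $p \in E_i \cap E_j$, holomorphic coordinates on $\wt X$ near $p$ realize $E_i$ and $E_j$ as the two coordinate axes in a bidisk, so the neighborhoods $N(E_i)$ and $N(E_j)$ are glued near $p$ by exchanging the base and fiber directions. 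This is exactly the plumbing operation defining $X_{G_X}$, and iterating over all intersection points yields a diffeomorphism $N \cong X_{G_X}$ and hence $\partial N \cong Y_{G_X}$.

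Finally I would match $\wt Z \cap \partial N$ with $L_\uparrow$. Each component of $\wt Z$ is a smooth disk meeting exactly one $E_i$ transversely at a single point $q$, and near $q$ this disk is a holomorphic section of the normal bundle to $E_i$. Its intersection with $\partial N(E_i)$ is therefore a circle fiber of $\partial T_{v_i} \to E_i$ over a point of $E_i$ chosen away from the plumbing disks; distinct components of $\wt Z$ meeting the same $E_i$ correspond to distinct points of $E_i \cap \wt Z$, hence lie in distinct fibers. This matches verbatim the recipe defining $L_\uparrow$ from the arrow vertices of $\Gamma_{X,Z}$ attached to $v_i$, so the diffeomorphism $\partial N \cong Y_{G_X}$ carries $\wt Z \cap \partial N$ onto $L_\uparrow$. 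Composed with the identification in the first step, this gives $(L_X, L_Z) \cong (Y_{G_X}, L_\uparrow)$.

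The main technical point is the local step in the second paragraph: one must check that the holomorphic gluing of the bundle neighborhoods at a transverse intersection really agrees, up to orientation-preserving diffeomorphism, with the combinatorial plumbing operation. This is a local computation in a bidisk and is standard, but it is the place where the passage from the geometric resolution picture to the combinatorial plumbing picture actually occurs; everything else is a matter of careful bookkeeping of neighborhoods and sections.
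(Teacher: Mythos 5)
The paper does not give a proof of this statement: it records it as a classical fact and points the reader to N\'emethi's book (Proposition 3.3.8 of \cite{Nemethi_opus_magnum}), so there is no "paper's own proof" to compare against. Your sketch is the standard argument for this fact and is correct in outline, with the three steps you single out (boundary of a tubular neighborhood of $E$ as the link, holomorphic tubular neighborhoods of the $E_i$ glued at transverse double points realize the abstract plumbing, and strict transform components cut out fiber circles on the boundary) being exactly the ones a textbook proof would take. You also correctly flag the one substantive technical step, namely the local identification in a bidisk of the holomorphic gluing with the combinatorial plumbing, including the orientation check.

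One small imprecision worth fixing: near the point $q\in E_i\cap\wt Z$, the strict transform $\wt Z$ is not a section of the normal bundle of $E_i$ (a section would be a graph over the base, i.e.\ a curve close to $E_i$); rather, in coordinates $(z,w)$ with $E_i=\{w=0\}$ and $q=(0,0)$, the strict transform is a holomorphic curve transverse to $E_i$, say $\{z=f(w)\}$ with $f(0)=0$, hence isotopic rel $q$ to the fiber disk $\{z=0\}$ of the tubular neighborhood. After that straightening, $\wt Z\cap\partial N(E_i)$ is a fiber of $\partial T_{v_i}\to E_i$ over a point away from the plumbing disks, which is precisely the link component $L_v$ assigned by the recipe for $L_\uparrow$. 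With that correction the argument is complete.
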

Our next aim is to explain the relative analog of Grauert's Theorem~\ref{thm:grauert}. As the statement is slightly technical,
we give some extra explanation. Let $g\colon X\to\C$ be a reduced analytic map such that $g^{-1}(0)=Z$. Here, \emph{reduced}
means that $g$ is not divisible by a square of a non-invertible analytic function on $X$. Then, $g$ induces an analytic map
$\wt{g}\colon\wt{X}\to\C$ via $\wt{g}=g\circ\pi$. Let $v$ be a vertex of $\Gamma:=\Gamma_{X,Z}$. The vertex $v$ corresponds either to
an exceptional component $E_v$ (if $v$ is a non-arrow vertex), or to a component $\wt{Z}_v$ of $\wt{Z}$, if $v$
is an arrow vertex. In both cases, $\wt{g}$ vanishes on that component. We let $m_v>0$ denote the order of vanishing.
This quantity is called the \emph{multiplicity} of the vertex $v$.
Note that since $g$ is  reduced, $m_v=1$ for all arrow vertices;
see \cite[Section 4.3.2]{NemethiSzilard}.
The multiplicities and the weights satisfy the following compatibility relation (see \cite[Equation (4.1.5)]{NemethiSzilard}):
\begin{equation}\label{eq:compat}
  \lambda_v m_v+\sum_{w\in V_v} m_w=0,
\end{equation}
for each non-arrow vertex $v\in V_G$, where $V_v$ denote the set of all vertices in $V_\Gamma$ adjacent to $v$.  Note that \eqref{eq:compat},
together with the condition $m_v=1$ for all arrow vertices,
determines uniquely all other multiplicities.
However, unless $\Inc_G$ is unimodular,
the multiplicities need not be integral. If that is the case, such a plumbed link cannot be realized as an embedded link of an analytic
singularity.

We now state a relative analog of Grauert's Theorem~\ref{thm:grauert}. For reference, see \cite[Corollaire 5.5]{Pichon}.
\begin{prop}\label{prop:Winter}
  Let $\Gamma$ be a graph with vertices $V=V_G\cup V_\uparrow$ such that $\Inc_G$ is negative definite. If assigning multiplicity $1$
  to each arrow vertex of $\Gamma$ leads to integral positive multiplicities on all vertices of $V_G$ via the compatibility relation~\eqref{eq:compat}, 
  then $(\YG,L_\uparrow)$ is a link
  of an embedded analytic singularity.
\end{prop}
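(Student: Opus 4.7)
The plan is to build the embedded singularity on top of the absolute one supplied by Theorem~\ref{thm:grauert}. Applied to the non-arrow subgraph $G\subset \Gamma$, that theorem yields a germ $(X,x_0)$ of a normal analytic surface singularity together with a resolution $\pi\colon(\tilde X,E)\to (X,x_0)$ whose dual graph is $G$. The task reduces to producing an analytic curve $Z\subset X$ whose embedded resolution under $\pi$ realizes the arrow decoration of $\Gamma$. Writing $Z = g^{-1}(0)$, this amounts to finding a holomorphic function $g$ on (a representative of) $X$ whose pullback has divisor
\[
\mathrm{div}(g\circ\pi) \;=\; \sum_{v\in V_G} m_v\, E_v \;+\; \sum_{w\in V_\uparrow} \tilde Z_w,
\]
where each $\tilde Z_w$ is a smooth analytic disk meeting the corresponding $E_{v(w)}$ transversely at a single smooth point of $E$, with the $\tilde Z_w$ pairwise disjoint and disjoint from the singular locus of $E$.

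The next step is a numerical check. For $D := \sum_v m_v E_v + \sum_w \tilde Z_w$ and any $u\in V_G$,
\[
D\cdot E_u \;=\; m_u\lambda_u + \sum_{v\in V_u\cap V_G} m_v + \#(V_u\cap V_\uparrow) \;=\; m_u\lambda_u + \sum_{w\in V_u} m_w,
\]
using $m_w = 1$ for arrow vertices. By the compatibility relation~\eqref{eq:compat}, this vanishes for every $u\in V_G$, so $\mathcal{O}_{\tilde X}(D)$ has trivial first Chern class on $E$. Granting the existence of $\tilde g$ with $\mathrm{div}(\tilde g) = D$ on a neighborhood of $E$, the rest is descent: since $\pi$ is a biholomorphism over $X\setminus\{x_0\}$, the function $\tilde g$ corresponds to a bounded holomorphic function on $X\setminus\{x_0\}$, which extends to $x_0$ by normality; the extension $g$ has $Z = g^{-1}(0)$ with strict transform $\bigsqcup_w \tilde Z_w$, so $\pi$ restricts to an embedded resolution of $(X,Z,x_0)$ with dual graph $\Gamma$, identifying $(L_X,L_Z) \cong (Y_G,L_\uparrow)$.

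The main obstacle is the analytic existence of $\tilde g$ realizing $D$. Vanishing of $D\cdot E_u$ for all $u$ only forces triviality of the Chern class of $\mathcal{O}_{\tilde X}(D)$; via the exponential sequence the residual obstruction to triviality of the line bundle itself lies in the image of $H^1(U,\mathcal{O}_U)$ in $\mathrm{Pic}(U)$, which can be nonzero when $(X,x_0)$ is not rational (its size being controlled by the geometric genus $p_g$). Overcoming this requires exploiting the freedom in choosing an analytic model for the topological germ supporting $G$: one modifies the Grauert singularity to a representative on which the $H^1$-obstruction vanishes and $\tilde g$ can be constructed directly. This is the content of \cite[Corollaire~5.5]{Pichon}, and is where the purely combinatorial hypotheses (negative definiteness together with positive integral multiplicities) are promoted to genuine analyticity; the remainder of the argument is bookkeeping.
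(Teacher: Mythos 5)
The paper does not give its own proof of Proposition~\ref{prop:Winter}; it simply cites \cite[Corollaire~5.5]{Pichon}. Your argument is a careful and correct unpacking of the strategy underlying that reference: apply Theorem~\ref{thm:grauert} to $G$, form the candidate divisor $D=\sum_{v\in V_G}m_vE_v+\sum_{w\in V_\uparrow}\tilde Z_w$, verify $D\cdot E_u=0$ for all $u\in V_G$ from \eqref{eq:compat}, and then reduce to the existence of a holomorphic $\tilde g$ with $\div\tilde g=D$, which descends by normality. Your identification of the residual obstruction in $H^1(U,\mathcal O_U)$ (of size $p_g$), and of the need to vary the analytic structure on the Grauert germ to kill it, is exactly the nontrivial analytic content — and there, like the paper, you ultimately cede the step to Pichon rather than proving it. So this is essentially the same route, fleshed out with useful intermediate reasoning; the only caveat is that the argument is not self-contained at the point where the paper also is not.
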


\subsection{Rationality}\label{sub:ratio}

Suppose $(X,x_0)$ is an analytic singularity. We define the \emph{geometric genus} $p_g=h^1(\mathcal{O}_{\wt{X}})$;
see \cite[Section 2]{NemethiFive}. Many properties of geometric genus are given in various chapters of \cite{Nemethi_opus_magnum}. 
The definition of $p_g$ does not depend on the choice of resolution. Geometric genus
is an invariant of the analytic structure of $X$; there are known examples of singularities with the same link, but different geometric
genus, see \cite[Paragraph 4.8]{NemethiFive}. Put differently, in general $p_g$ cannot be read off from the combinatorics of the resolution graph $G$.

An exception is the case of \emph{rational} singularities, which are characterized by the property that $p_g=0$; see \cite[Section 7.1]{Nemethi_opus_magnum}. Given a graph $G$,
we can determine, whether it represents a rational 
singularity; this result is due to Artin \cite{Artin}, see also \cite[Theorem 3.8]{NemethiFive} and \cite{Nemethi_opus_magnum}*{Theorem 7.1.2}.
For instance, if $Y_G$ is a link of a rational singularity, then $b_1(Y_G)=0$.

  By studying the relation between $p_g$ and combinatorial invariants of $X_G$ encoded by $G$, N\'{e}methi proves the following groundbreaking result:

\begin{thm}[\cite{Nemethi_LO_and_Links}]\label{thm:nemethiLO}
  The singularity $(X,x_0)$ is rational if and only if $L_X$ is an L-space.
\end{thm}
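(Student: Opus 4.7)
The plan is to use lattice (co)homology as a combinatorial bridge between the analytic notion of rationality and the Floer-theoretic notion of an L-space. By the identification $\mathbb{HF}(Y_G,\mathfrak{s})\cong \ve{HF}^-(Y_G,\mathfrak{s})$ (due to N\'emethi, Ozsv\'ath--Szab\'o, and in full generality Zemke), the L-space property for $L_X=Y_{G_X}$ translates into the statement that, for each Spin$^c$ structure $\mathfrak{s}$, the lattice cohomology $\mathbb{H}^*(G_X,\mathfrak{s})$ is isomorphic up to a grading shift to $\bF[U]$; equivalently $\mathbb{H}^q(G_X,\mathfrak{s})=0$ for all $q>0$. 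So it suffices to prove that this vanishing holds in every Spin$^c$ structure if and only if $p_g=0$.

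Next I would encode rationality combinatorially using Artin's criterion: $(X,x_0)$ is rational iff for every non-zero effective cycle $l>0$ supported on $E$ one has $\chi(l)\geq 1$, where $\chi(l):=-\tfrac{1}{2}(l,l)+\tfrac{1}{2}(Z_K,l)$ and $Z_K$ is the canonical cycle. The weight function driving lattice cohomology is essentially $\chi_k(l):=\tfrac{1}{2}((l,l)+(k,l))$ for a characteristic element $k$, and the sublevel sets of $\chi_k$ on the lattice $L=H_2(X_G;\Z)$ define the cubical chain complex computing $\mathbb{H}^*(G_X,\mathfrak{s}_k)$. Thus the whole theorem reduces to: sublevel sets of $\chi_k$ are contractible for every $k$ iff Artin's inequality holds for every effective $l>0$.

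For the direction \emph{rational $\Rightarrow$ L-space}, I would argue through Laufer's algorithm. Artin's inequality $\chi(l)\geq 1$ guarantees that, starting from any $l$, one can always decrease $\chi_k$ by moving one coordinate at a time; by induction this produces a deformation retraction of each sublevel set of $\chi_k$ onto a single $0$-cell. Hence the cubical complex is chain-homotopy equivalent to $\bF[U]$ for every $\mathfrak{s}$, and $L_X$ is an L-space.

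The converse, \emph{L-space $\Rightarrow$ rational}, is the main obstacle: one must upgrade the global vanishing of positive-degree lattice cohomology into the pointwise inequality $\chi(l)\geq 1$. My approach would be contrapositive. Assuming $(X,x_0)$ is not rational, choose an effective $l_0>0$ minimizing $\chi$ subject to $\chi(l_0)\leq 0$, and pick a characteristic $k$ so that the Spin$^c$ structure $\mathfrak{s}_k$ is aligned with $l_0$. The idea is that $l_0$ together with its nearest lattice neighbors produces a non-trivial $1$-cycle in the cubical complex computing $\mathbb{H}^1(G_X,\mathfrak{s}_k)$: any attempted nullhomology would force a Laufer-style descending path through the obstruction cycle $l_0$, which by minimality is impossible. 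Making this construction precise, and in particular controlling the possible null-homologies across different Spin$^c$ strata, is the delicate step; it is here that one must invoke the structure of graded roots (or path lattice cohomology) to organize the argument across all characteristics $k$ simultaneously.
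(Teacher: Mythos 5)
Your overall plan coincides with the alternative proof the paper sketches immediately below the theorem statement: translate the L-space property into vanishing of reduced lattice cohomology via Zemke's equivalence, and match that combinatorial condition to rationality. The paper, however, does not re-derive the second translation; it cites it wholesale as N\'emethi's earlier theorem --- namely that for a negative-definite plumbing graph $G$, the singularity is rational if and only if the reduced lattice homology of $Y_G$ vanishes --- and then stacks on Zemke's identification $\mathbb{HF}(Y_G)\cong\ve{HF}^-(Y_G)$. So the paper's entire ``proof'' is a two-citation reduction.

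You instead attempt to re-prove the cited N\'emethi step from scratch via Artin's criterion, and there is a real gap. Even in the forward direction (rational $\Rightarrow$ lattice cohomology concentrated in $\bF[U]$) your sketch needs care: Artin's inequality $\chi(l)\geq 1$ is stated for the \emph{canonical} characteristic $Z_K$, whereas you must prove contractibility of sublevel sets of $\chi_k$ for \emph{every} characteristic $k$, i.e. in every $\Spin^c$ structure. Bridging from the single canonical inequality to all $k$ is not immediate; N\'emethi's argument uses additional structure (Laufer's result that subgraphs of rational graphs are rational, and the behavior of the minimal cycle and computation sequences under translation of $k$). More seriously, your converse direction is, as you yourself acknowledge, an open-ended program rather than a proof: producing a non-trivial class in $\mathbb{H}^1$ (or in reduced $\mathbb{H}^0$) from a single effective cycle $l_0$ with $\chi(l_0)\leq 0$, and then ruling out null-homologies coming from other $\Spin^c$ strata, is precisely the nontrivial content of N\'emethi's theorem. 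That is the step the paper avoids by citation, and it cannot be filled in by the minimality argument alone --- one needs the structural analysis of graded roots or path lattice cohomology that makes this a theorem rather than a heuristic. In short: right architecture, but the load-bearing wall is missing.
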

The proof of Theorem~\ref{thm:nemethiLO} can also be done using recently proved
equivalence of lattice and Heegaard Floer homology \cite{ZemHFLattice}.
As the first step, one uses
N\'emethi's theorem stating that if $G$ is negative definite, then $X$ is a rational singularity if and only if
the reduced lattice homology of $Y_G$ is zero; see \cite{NemethiAR,NemethiLattice}.
Next, one refers to \cite{ZemHFLattice} to show the reduced lattice homology of $Y_G$ vanishes
if and only if $Y_G$ is an L-space.

\begin{rem}
   Theorem~\ref{thm:nemethiLO} characterizes only graphs representing an L-space among graphs with negative definite incidence matrix $\Inc_G$. 
   There are indefinite graphs representing L-spaces. For example,
   if $G$ is a linear plumbing such that $\Inc_G$ is non-degenerate, then $\YG$ is a lens space, regardless of
  whether $\Inc_G$ is definite or not.
   To the best of our knowledge, there is not a generalization of Theorem~\ref{thm:nemethiLO} for indefinite graphs.
\end{rem}

\subsection{Algebraic links and L-space links}\label{sub:algebraic}

Let us recall the following definition.
\begin{define}\label{def:rai} Let $L$ be a link in a rational homology $3$-sphere. We say
    $L$ is an \emph{$L$-space link} if all sufficiently large positive  surgeries on $L$ are $L$-spaces. 
\end{define}

Though usually one considers L-space links in $S^3$ or an integer homology sphere (see e.g. \cite{LiuSurgeries,GorskyNemethiAlgebraicLinks}), we note that the same definition can be applied to links in rational homology 3-spheres. Suppose $K$ is a rationally null-homologous knot in $Y$, then Morse framings on $K$ can be identified with an affine $\Z$ subspace of $\Q$ by taking the intersection number of the framing (viewed as a parallel longitude of $K$) with a rational Seifert surface. Hence, large surgeries on rationally null-homologous links are surgeries with Morse framings which are sufficiently large in $\Q^{\ell}$ with respect to this identification.

In \cite{GorskyNemethiAlgebraicLinks} Gorsky and N\'emethi studied which plumbed links in plumbed manifolds
are L-space links. Their main result is that an algebraic link in $S^3$ is an L-space link, \cite{GorskyNemethiAlgebraicLinks}*{Theorem 2}. 
Their proof 
works in a more general setting, leading
to the following statement, which is given in \cite{GorskyNemethiAlgebraicLinks}*{Theorem~12} and the ensuing discussion.

\begin{prop}\label{prop:GN_algebraic}
  Suppose $\Gamma$ is a graph such that $\Inc_G$ is negative definite and 
  $\YG$ is a link of a rational singularity. Then $L_\uparrow$ is an $L$-space link.
\end{prop}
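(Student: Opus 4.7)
The plan is to adapt the strategy of Gorsky and N\'emethi from \cite{GorskyNemethiAlgebraicLinks}*{Theorem~12}, which already treats this more general setting. First I would use Grauert's Theorem~\ref{thm:grauert} together with the rationality hypothesis to represent $\YG$ as the link $L_X$ of a rational analytic surface singularity $(X,x_0)$. In favorable cases (when the compatibility multiplicities from \eqref{eq:compat} are integral), Proposition~\ref{prop:Winter} upgrades this to an embedded singularity $(X,Z,x_0)$ whose link is $(\YG,L_\uparrow)$; but in general the argument can be carried out purely in terms of the plumbing data on $\Gamma$, since only the combinatorial part of rationality will be used.

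The approach is to verify the L-space link condition through the multivariate $H$-function. Recall that a link is an L-space link precisely when $\cHFL$ is torsion-free over $\bF[U]$, equivalently when the $H$-function satisfies the jump inequalities
\[
H(\ve{s})-H(\ve{s}-\ve{e}_i)\in\{0,1\} \quad \text{for all } \ve{s}\in\Z^\ell,\ i\in\{1,\dots,\ell\}.
\]
The key computational input is a lattice-theoretic formula expressing $H(\ve{s})$ as a minimum of a Riemann--Roch-type function $\chi_{\ve{s}}$ on integer cycles supported on $V_G$. Such a formula is obtained via the large surgery formula combined with N\'emethi's identification of lattice homology with $\HFhat$ (now known in full generality by \cite{ZemHFLattice}), and it reduces the L-space link question to a purely combinatorial problem about $\chi_{\ve{s}}$ on the lattice associated with $G$.

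The heart of the argument is to exploit Artin's combinatorial characterization of rationality, which states that $(X,x_0)$ is rational exactly when $\chi(Z)\ge 1$ for every effective integer cycle $Z$ supported on $E$. Using the Laufer computation sequence between minimizers of $\chi_{\ve{s}}$ and $\chi_{\ve{s}-\ve{e}_i}$, Artin's inequality translates directly into the desired bound on $H(\ve{s})-H(\ve{s}-\ve{e}_i)$. For algebraic links in $S^3$, this is carried out in detail in \cite{GorskyNemethiAlgebraicLinks}; the extension to a rational singularity as ambient space requires only minor adjustments, since the only property of $G$ actually used is the combinatorial rationality of the underlying weighted graph.

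I expect the main obstacle to be the last step, namely producing an explicit interpolating sequence of cycles between minimizers of $\chi_{\ve{s}-\ve{e}_i}$ and $\chi_{\ve{s}}$ whose Euler characteristics jump by at most $1$. This is precisely where the rationality hypothesis is consumed, and it is the technical core of \cite{GorskyNemethiAlgebraicLinks}. Once this is in place, the $H$-function inequality is immediate and Proposition~\ref{prop:GN_algebraic} follows.
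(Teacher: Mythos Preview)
Your approach is genuinely different from the paper's, and it contains a real gap.

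The paper's argument (following Gorsky--N\'emethi's proof of their Theorem~2) is purely graph-theoretic and avoids the $H$-function entirely. One replaces each arrow vertex of $\Gamma$ by a long chain of $(-2)$-weighted vertices terminated by a $(-1)$-vertex, obtaining an enlarged graph $\wt G_0$. This $\wt G_0$ blows down to $G$, so it still presents $Y_G$ and is therefore rational. A sufficiently large positive surgery on $L_\uparrow$ is then realized as the plumbing on a \emph{subgraph} of $\wt G_0$. Since subgraphs of rational graphs are rational (Laufer's criterion), and rational graphs give L-spaces (N\'emethi), the large surgery is an L-space. This directly verifies Definition~\ref{def:rai}.

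Your route through the $H$-function is in principle workable, but the specific characterization you invoke is wrong. You assert that $L$ is an L-space link if and only if the $H$-function satisfies
\[
H(\ve s)-H(\ve s-e_i)\in\{0,1\}.
\]
But this jump bound (with the correct sign, $H(\ve s-e_i)-H(\ve s)\in\{0,1\}$) holds for \emph{every} link: the maps $\scU_i,\scV_i$ on $\cHFL$ have $\gr_w$-degree $-2$ and $0$ respectively and satisfy $\scU_i\scV_i=U$, which forces the free-part generators at adjacent Alexander gradings to differ in $\gr_w$ by $0$ or $-2$. So establishing the jump inequality tells you nothing about whether $\cHFL$ is $U$-torsion-free. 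The Laufer computation sequence part of Gorsky--N\'emethi does prove such jump bounds for the lattice-theoretic minimum, but that is not where the L-space property comes from; one still needs to identify the lattice minimum with the full $H_*\frA^-$, not merely with its free summand, and that identification is essentially equivalent to what you are trying to prove. Compared with the paper's two-line blow-down argument, fixing this would require substantially more machinery (large-surgery formulas for links in rational homology spheres and the lattice $=$ HF isomorphism) to reach the same conclusion.
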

We stress that the assumption on multiplicities as in Proposition~\ref{prop:Winter}
is never used in the proof of Proposition~\ref{prop:GN_algebraic}. 
That is, Proposition~\ref{prop:GN_algebraic} does not require that the link $L_\uparrow$ be a link
of an analytic singularity; it only makes a restriction on the graph $G$ being the graph representing a rational singularity.

To see this, we quickly sketch the argument of \cite{GorskyNemethiAlgebraicLinks}*{proof of Theorem~2} proving Proposition~\ref{prop:GN_algebraic}.
One first extends the graph $G$ to another graph, $\wt{G}_0$,
which replaces all arrowhead vertices of $\Gamma$ by chains of $-2$  weighted vertices ended by a $-1$ weighted vertex.  By construction,
$\wt{G}_0$ can be contracted to $G$ by successive blow-downs, that is $\wt{G}_0$ represents the manifold $\YG$. Hence, it is a rational graph. Next, a sufficiently large positive surgery on $L_\uparrow$
can be presented as a subgraph of $\wt{G}_0$ as long as the chains of $-2$ weighted vertices are long enough. Since any subgraph of a rational graph is rational by Laufer's criterion, large positive
surgeries on $L_\uparrow$ are represented by rational graphs.  Rational graphs represent L-spaces by \cite{NemethiAR}*{Theorems 6.3 and 8.3}.
That is to say,
a sufficiently large positive surgery on $L_\uparrow$ is an L-space. This means, that $L_\uparrow$ is an L-space link.


It is well-known that an algebraic knot is determined by its Alexander polynomial. A natural question is whether this result generalizes to
 plumbed L-space links.
The following well-known fact is due to Yamamoto.
\begin{prop}[see \cite{Yamamoto}]
  Two algebraic links in $S^3$ with the same Alexander polynomial are equal.
\end{prop}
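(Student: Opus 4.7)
My plan is to reconstruct the embedded resolution graph $\Gamma_{X,Z}$ of an algebraic link $L\subset S^3$ from its multivariable Alexander polynomial $\Delta_L(t_1,\dots,t_\ell)$, and then invoke the classical theorem (Zariski, Burau, Eisenbud--Neumann \cite{EisenbudNeumannGraphLinks}) that $\Gamma_{X,Z}$, equipped with its multiplicities, determines the isotopy class of $L$ in $S^3$. Thus everything comes down to extracting $\Gamma_{X,Z}$ from $\Delta_L$.

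The key tool for the reconstruction is the A'Campo--Eisenbud--Neumann product formula
\[
\Delta_L(t_1,\dots,t_\ell) \;\doteq\; \prod_{v \in V_\Gamma} \Bigl(t_1^{m_{v,1}}\cdots t_\ell^{m_{v,\ell}} - 1\Bigr)^{\delta_v - 2},
\]
where $m_{v,i}$ is the order of vanishing along the exceptional divisor $E_v$ of a local defining equation for the $i$-th component $L_i$ (equivalently, the linking number in $S^3$ of $L_i$ with a meridian of $v$), and $\delta_v$ is the valence of $v$ in $\Gamma$. Valence-$2$ vertices contribute trivially, rupture vertices (valence $\geq 3$) appear in the numerator, and arrow vertices appear in the denominator. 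The strategy is to factor $\Delta_L$ into binomials of the shape $t_1^{a_1}\cdots t_\ell^{a_\ell}-1$, and read off the multiset $\{(\ve{m}_v, \delta_v)\}_v$ of multiplicity vectors and (signed) exponents.

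Once this multiset is in hand, I would reconstruct $\Gamma$ inductively: the arrow vertices correspond to multiplicity vectors of the form $(0,\dots,1,\dots,0)$, the adjacent rupture vertices are identified as the minimal multiplicity vectors dominating a given standard basis vector, and one proceeds outward through successive rupture vertices, attaching valence-$2$ chains to realize the prescribed increases in multiplicities. The integer weights $\lambda_v$ are then recovered from the $m_v$ via the compatibility relation~\eqref{eq:compat}. This produces $\Gamma_{X,Z}$, and hence $L$.

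The hardest step will be the uniqueness of this reading: one must verify that the multiplicity vectors $\ve{m}_v$ at distinct vertices of $\Gamma$ are pairwise distinct, so that no rupture factor coincides with another rupture factor or with an arrow factor, and no cancellation occurs in the Eisenbud--Neumann product. This is where algebraicity is essential: $\Inc_G$ is negative definite, the multiplicities are positive integers, and they are strictly monotone along any path in $\Gamma$ moving toward an arrow vertex. These facts, together with the tree structure of $\Gamma$, should force the multiplicity vectors at distinct vertices to be distinct, making the binomial factorization of $\Delta_L$ unique and the reconstruction well-defined.
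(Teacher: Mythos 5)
The paper does not prove this proposition; it cites Yamamoto \cite{Yamamoto} and remarks that his argument rests on Zariski's classification of plane curve singularities, referring to \cite{EisenbudNeumannGraphLinks}. Your attempt is therefore an independent one, and its framework is broadly in the right spirit: the Eisenbud--Neumann/A'Campo product formula and the reconstruction of the resolution or splice data from it is indeed close to how the actual argument is run. But there is a genuine gap at exactly the step you flag as hardest. You need the factorization of $\Delta_L$ into binomial factors $t^{\ve{m}}-1$ (with signed exponents) to be recoverable from the Laurent polynomial alone, and you propose to deduce this from the distinctness of multiplicity vectors at distinct vertices of $\Gamma$. That only shows that the Eisenbud--Neumann product attached to the given resolution graph has no internal cancellation; it does not exclude a \emph{different} graph's product from equaling the same Laurent polynomial. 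Such coincidences are in fact generic, since $t^{\ve{m}}-1$ is reducible and the same product can be reassembled in many ways.

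The counterexample is already in the paper. Proposition~\ref{prop:non_isotopic}(a) observes that the $(2,3)$-cable of the trefoil is a graph knot (so it too has an Eisenbud--Neumann-type factorization of its Alexander polynomial coming from a splice diagram with positive integer weights and a tree structure) with the same Alexander polynomial as $T(3,4)$. The ambient facts you invoke --- the tree shape, positivity, monotonicity of multiplicities along paths toward arrow vertices --- do not distinguish these two splice diagrams. What does distinguish them is the system of arithmetic inequalities characterizing splice diagrams of \emph{algebraic} links (Puiseux/semigroup conditions in Zariski's classification); the $(2,3)$-cable fails them because $3 < 2\cdot 2\cdot 3$. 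The content of Yamamoto's theorem is precisely that among all Eisenbud--Neumann-type factorizations of a given $\Delta_L$, at most one satisfies these algebraicity constraints, and your sketch asserts this uniqueness without supplying the argument. That argument is where the whole difficulty of the theorem lives, so as written the proposal does not go through.
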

The result of Yamamoto relies on the classification of algebraic links in $S^3$, due to Zariski \cite{Zariski};  we refer to \cite{EisenbudNeumannGraphLinks}
for this characterization. In particular, this result does not admit direct generalizations to links in other 3-manifolds. 
We give now a few counterexamples for some naive attempts to generalize the result. Proposition~\ref{prop:non_isotopic} is not used
in the present paper. Rather it indicates that algebraic links cannot be distinguished by Alexander polynomials, hence, they cannot
be distinguished by Heegaard Floer homology.
\begin{prop}\ \label{prop:non_isotopic}
  \begin{itemize}
    \item[(a)] There exist non-isotopic plumbed L-space links in $S^3$ with the same Alexander polynomial;
    \item[(b)] There exist non-isotopic knots that are links of embedded analytic surface singularities with the same Alexander polynomial.
  \end{itemize}
\end{prop}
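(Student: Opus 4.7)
Both (a) and (b) are pure existence statements, so the proof should proceed by exhibiting concrete examples and verifying the required properties. I would handle the two parts separately, since the obstruction in $S^3$ (Yamamoto's theorem) forces very different constructions.

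For (a), the plan is to produce two plumbed L-space links $L, L' \subset S^3$ with distinct link types but the same multivariable Alexander polynomial. A natural search space is provided by iterated torus/cable links and their Hopf-style unions, since these are always plumbed and their L-space property can be verified via Proposition~\ref{prop:GN_algebraic}. For such links the multivariable Alexander polynomial is computed combinatorially from the plumbing graph (the Eisenbud--Neumann/Torres style formula expressing $\Delta_L$ as a product over vertices of the graph with multiplicities), so one can hunt for pairs of plumbing trees where these product expressions collapse to the same polynomial but the underlying links are clearly different. Distinguishability is then established either by a coarser invariant (sublink Alexander polynomials, linking number patterns, or component knot types) or, if necessary, by comparing the full link Floer complexes, which are now computable from the graph via Theorem~\ref{thm:equivalence-intro} and Theorem~\ref{thm:free resolution-intro}.

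For (b), Yamamoto's theorem rules out counterexamples when the ambient manifold is $S^3$, so I would work inside the link of a non-trivial rational surface singularity. A convenient setting is a cyclic quotient (Hirzebruch--Jung) singularity, whose link is a lens space; more generally, any rational singularity $X$ whose link $L_X$ is a small Seifert fibered L-space would work by Theorem~\ref{thm:nemethiLO}. Inside such an $X$, I would construct two analytic curves $Z_1, Z_2$ through $x_0$ whose embedded resolution graphs yield the same multivariable Alexander polynomial while giving non-isotopic knots in $L_X$. The point is that an embedded analytic germ is governed by its full splice/resolution diagram, which retains far more information than the Alexander polynomial once the ambient singularity is non-trivial, so coincidences in the Alexander polynomial across non-equivalent splice diagrams can be engineered. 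Non-isotopy can be verified via the $H$-function, the lattice homology computation of Section~\ref{sec:lattice}, or by direct comparison of the splice diagrams.

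\textbf{Main obstacle.} In both cases, the core difficulty is the combinatorial search: finding pairs of plumbing or resolution graphs whose rather rigid Alexander-type outputs coincide, while still furnishing a verifiable obstruction to isotopy. For (a) there is the additional task of confirming the L-space property, for which Proposition~\ref{prop:GN_algebraic} is the natural tool; for (b) one must simultaneously ensure realizability as an embedded analytic germ (via the integrality condition on multiplicities in Proposition~\ref{prop:Winter}) and rationality of the ambient singularity. Once candidate examples are chosen, the verifications — computing Alexander polynomials from the graphs and distinguishing the links by a finer invariant — are mechanical.
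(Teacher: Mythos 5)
Your proposal describes a search strategy rather than a proof: for an existence statement, you must actually exhibit the examples and verify their properties, and your write-up acknowledges that ``the core difficulty is the combinatorial search'' without carrying it out. Neither (a) nor (b) is proved.

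For (a), you do not need a combinatorial hunt at all: the paper uses the classical pair consisting of the torus knot $T(3,4)$ and the $(2,3)$-cable of the right-handed trefoil, which share the same Alexander polynomial. Both are plumbed knots, the cable is an L-space knot by Hedden's cabling criterion (\cite{HeddencablingII}), and they are obviously non-isotopic (distinct iterated torus knots). This is a single, well-known example, with no search required.

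For (b), your framing (``work in a non-trivial rational surface singularity'') points in the right direction, but you never name a candidate pair, and the final, nontrivial step --- how to certify non-isotopy --- is left unresolved. The paper uses a concrete example of Campillo--Delgado--Gussein-Zade \cite{CDG}: two knots in the Poincar\'e sphere (the link of $E_8$) described by explicit plumbing graphs (one with an arrow on the $E_8$ chain, one obtained by adding an $A_4$-curve transverse to a different component), which by construction have the same Alexander polynomial. The knots are distinguished not by the $H$-function or lattice homology (which you suggest, but which are in fact \emph{determined} by the Alexander polynomial for plumbed L-space knots by the results of this very paper, so cannot separate them), but by converting the plumbing graphs to graph links as in \cite{EisenbudNeumannGraphLinks} and computing the Tristram--Levine signature functions, which differ. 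This last point is a substantive gap in your plan: you propose distinguishing invariants that, given the paper's own theorems, are a priori insufficient here, and you omit the invariant that actually works.
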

\begin{proof}
  Item (a) is classical. We know that the $(2,3)$-cable on the positive trefoil is a plumbed knot, and it is an L-space knot by \cite{HeddencablingII}*{Theorem~1.10}. 
  However, its Alexander
  polynomial is the same as that of the $T(3,4)$ torus knot.

  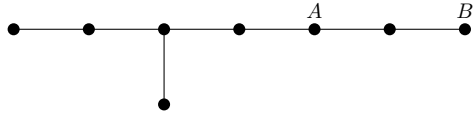
\begin{figure}
    \begin{tikzpicture}
      \draw (-2,0) -- (4,0);
      \draw (0,0) -- (0,-1);
      \foreach \x in {-2,-1,0,1,2,3,4} \fill[black] (\x,0) circle (0.08);
      \fill[black] (0,-1) circle (0.08);
      \draw (2,0.25) node [scale=0.8] {$A$};
      \draw (4,0.25) node [scale=0.8] {$B$};
    \end{tikzpicture}
    \caption{Resolution graph of the $E_8$ singularity. All vertices correspond to spheres with self-intersection $-2$. The meaning
    of components $A$ and $B$ is explained in the text.}\label{fig:plumbing}
  \end{figure}
  Item (b) expands on results of Campillo, Delgado and Gussein-Zade \cite{CDG}. 
  In fact, in \cite{CDG}*{Section 3, Example 2} there are two knots in the Poincar\'e sphere with the same Alexander polynomial, represented by two plumbing diagrams. We quickly recall their construction. The starting point is the resolution of the $E_8$ singularity given by the plumbing
  graph in Figure~\ref{fig:plumbing}. The first knot is obtained by taking the plumbing diagram of the $E_8$ singularity and adding to it
  an arrowhead vertex at the component marked $A$ in Figure~\ref{fig:plumbing}. 

  The second knot is obtained by drawing an $A_4$-singularity
  (i.e. with local equation $x^5-y^2=0$) transversally to a point at the $B$ component. The resolution of that singularity yields the plumbing graph drawn in Figure~\ref{fig:plumbing2}, compare~\cite{CDG}*{Figure 2}.
  \begin{figure}
    \begin{tikzpicture}
      \draw (-2,0) -- (8,0);
      \draw (0,0) -- (0,-1);
      \draw[->] (7,0) -- ++(0.8,0.8);
      \foreach \x in {-2,-1,...,8} \fill[black] (\x,0) circle (0.08);
      \fill[black] (0,-1) circle (0.08);
      \draw (4,-0.3) node [scale=0.7] {$-3$};
      \draw (7,-0.3) node [scale=0.7] {$-1$};
      \draw (8,-0.3) node [scale=0.7] {$-3$};
    \end{tikzpicture}
    \caption{Plumbing graph of the second knot in Proposition~\ref{prop:non_isotopic}(b). All weights that are not explicitly marked have
    value $-2$.}\label{fig:plumbing2}
  \end{figure}
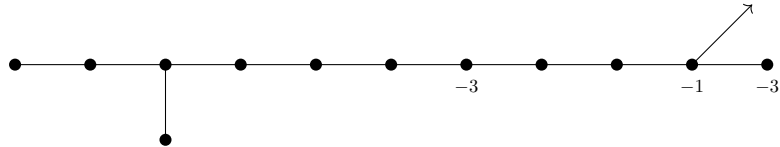

  An explicit
  algorithm described in \cite{EisenbudNeumannGraphLinks}*{Chapter 20} transforms these two plumbing graphs into
  graph links, which are presented in Figure~\ref{fig:two_links}.
  Using the algorithm of \cite{NeumannInvariants}, we compute the signature functions of these links, and we present them in Figure~\ref{fig:first_graph}, omitting straightforward calculations. The signatures are different, so the knots are different.
\begin{figure}
  \begin{tikzpicture}
\def\plusnode#1#2{\draw(#1,#2) circle (0.2); \draw(#1,#2) node[scale=0.7] {$+$};}
\def\vertex#1#2{\fill[black] (#1,#2) circle(0.08);}
    \begin{scope}[xshift=-2.5cm]
      \plusnode{0}{0}
      \plusnode{1.5}{0}
      \vertex{-1.5}{0}
      \vertex{0}{-1.5}
      \vertex{1.5}{-1.5}
      \draw(-1.5,0) -- node[near end,above,scale=0.7] {$3$} (-0.2,0);
      \draw(0,-1.5) -- node[near end,right,scale=0.7] {$2$} (0,-0.2);
      \draw(1.5,-1.5) -- node[near end,right,scale=0.7] {$2$} (1.5,-0.2);
      \draw[->] (1.7,0) -- (3,0);
      \draw(0.2,0) -- node[near start,above,scale=0.7] {$5$} node[near end,above,scale=0.7] {$9$} (1.2,0);
    \end{scope}
    \begin{scope}[xshift=2.5cm]
      \plusnode{0}{0}
      \plusnode{1.5}{0}
      \vertex{-1.5}{0}
      \vertex{0}{-1.5}
      \vertex{1.5}{-1.5}
      \draw(-1.5,0) -- node[near end,above,scale=0.7] {$3$} (-0.2,0);
      \draw(0,-1.5) -- node[near end,right,scale=0.7] {$2$} (0,-0.2);
      \draw[->] (1.7,0) -- (3,0);
      \draw(1.5,-1.5) -- node[near end,right,scale=0.7] {$3$} (1.5,-0.2);
      \draw(0.2,0) -- node[near start,above,scale=0.7] {$5$} node[near end,above,scale=0.7] {$4$} (1.2,0);
    \end{scope}
  \end{tikzpicture}
  \caption{The two links of \cite{CDG} represented as graph links of \cite{EisenbudNeumannGraphLinks}.}\label{fig:two_links}
\end{figure}
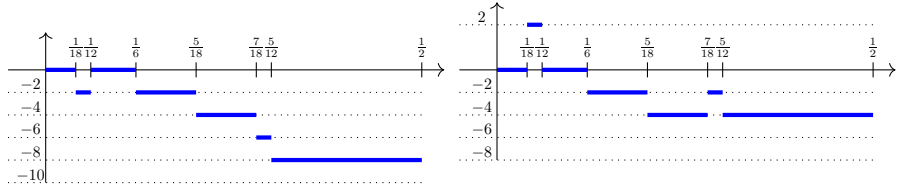
\begin{figure}
  \begin{tikzpicture}
    \draw[->](-0.5,0) -- (5.3,0);
    \draw[->](0,-1.5) -- (0,0.5);
    \draw[thin] (0.4,-0.1) -- (0.4,0.1); \draw (0.4,0.3) node[scale=0.6] {$\frac{1}{18}$};
    \draw[thin] (0.6,-0.1) -- (0.6,0.1); \draw (0.6,0.3) node[scale=0.6] {$\frac{1}{12}$};
    \draw[thin] (1.2,-0.1) -- (1.2,0.1); \draw (1.2,0.3) node[scale=0.6] {$\frac{1}{6}$};
    \draw[thin] (2,-0.1) -- (2,0.1); \draw (2,0.3) node[scale=0.6] {$\frac{5}{18}$};
    \draw[thin] (2.8,-0.1) -- (2.8,0.1); \draw (2.8,0.3) node[scale=0.6] {$\frac{7}{18}$};
    \draw[thin] (3,-0.1) -- (3,0.1); \draw (3,0.3) node[scale=0.6] {$\frac{5}{12}$};
    \draw[thin] (5,-0.1) -- (5,0.1); \draw (5,0.3) node[scale=0.6] {$\frac12$};
    \draw[thin,dotted] (-0.5,-0.3) -- (5,-0.3); \draw(-0.2,-0.2) node[scale=0.6] {$-2$};
    \draw[thin,dotted] (-0.5,-0.6) -- (5,-0.6); \draw(-0.2,-0.5) node[scale=0.6] {$-4$};
    \draw[thin,dotted] (-0.5,-0.9) -- (5,-0.9); \draw(-0.2,-0.8) node[scale=0.6] {$-6$};
    \draw[thin,dotted] (-0.5,-1.2) -- (5,-1.2); \draw(-0.2,-1.1) node[scale=0.6] {$-8$};
    \draw[thin,dotted] (-0.5,-1.5) -- (5,-1.5); \draw(-0.2,-1.4) node[scale=0.6] {$-10$};
    \draw[ultra thick,blue](0,0) -- (0.4,0);
    \draw[ultra thick,blue](0.4,-0.3) -- (0.6,-0.3);
    \draw[ultra thick,blue](0.6,0) -- (1.2,0);
    \draw[ultra thick,blue](1.2,-0.3) -- (2,-0.3);
    \draw[ultra thick,blue](2,-0.6) -- (2.8,-0.6);
    \draw[ultra thick,blue](2.8,-0.9) -- (3,-0.9);
    \draw[ultra thick,blue](3,-1.2) -- (5,-1.2);
    \begin{scope}[xshift=6cm]
    \draw[->](-0.5,0) -- (5.3,0);
    \draw[->](0,-1.2) -- (0,0.9);
    \draw[thin] (0.4,-0.1) -- (0.4,0.1); \draw (0.4,0.3) node[scale=0.6] {$\frac{1}{18}$};
    \draw[thin] (0.6,-0.1) -- (0.6,0.1); \draw (0.6,0.3) node[scale=0.6] {$\frac{1}{12}$};
    \draw[thin] (1.2,-0.1) -- (1.2,0.1); \draw (1.2,0.3) node[scale=0.6] {$\frac{1}{6}$};
    \draw[thin] (2,-0.1) -- (2,0.1); \draw (2,0.3) node[scale=0.6] {$\frac{5}{18}$};
    \draw[thin] (2.8,-0.1) -- (2.8,0.1); \draw (2.8,0.3) node[scale=0.6] {$\frac{7}{18}$};
    \draw[thin] (3,-0.1) -- (3,0.1); \draw (3,0.3) node[scale=0.6] {$\frac{5}{12}$};
    \draw[thin] (5,-0.1) -- (5,0.1); \draw (5,0.3) node[scale=0.6] {$\frac12$};
    \draw[thin,dotted] (-0.5,0.6) -- (5,0.6); \draw(-0.2,0.7) node[scale=0.6] {$2$};
    \draw[thin,dotted] (-0.5,-0.3) -- (5,-0.3); \draw(-0.2,-0.2) node[scale=0.6] {$-2$};
    \draw[thin,dotted] (-0.5,-0.6) -- (5,-0.6); \draw(-0.2,-0.5) node[scale=0.6] {$-4$};
    \draw[thin,dotted] (-0.5,-0.9) -- (5,-0.9); \draw(-0.2,-0.8) node[scale=0.6] {$-6$};
    \draw[thin,dotted] (-0.5,-1.2) -- (5,-1.2); \draw(-0.2,-1.1) node[scale=0.6] {$-8$};
    \draw[ultra thick,blue](0,0) -- (0.4,0);
    \draw[ultra thick,blue](0.4,0.6) -- (0.6,0.6);
    \draw[ultra thick,blue](0.6,0) -- (1.2,0);
    \draw[ultra thick,blue](1.2,-0.3) -- (2,-0.3);
    \draw[ultra thick,blue](2,-0.6) -- (2.8,-0.6);
    \draw[ultra thick,blue](2.8,-0.3) -- (3,-0.3);
    \draw[ultra thick,blue](3,-0.6) -- (5,-0.6);
  \end{scope}
  \end{tikzpicture}
  \caption{The signature functions $x\to \sigma(e^{2\pi i x})$, $x\in[0,1/2]$ for links of Figure~\ref{fig:two_links}.}\label{fig:first_graph}
\end{figure}
\end{proof}
\begin{rem}
  The signatures might be different, but the discontinuities of the signature function appear at the same places. This is
 consistent with the fact that the Alexander polynomials of the two knots are equal.  (It is well-known that the jumps of the signature functions occur only at roots of the Alexander polynomial.)
  The fact that the signatures of the two knots in Proposition~\ref{prop:non_isotopic}(b) are different means not only that the knots
  are not isotopic, but also that they are not concordant.
\end{rem}

\section{Link lattice homology}\label{sec:lattice}

In this section, we recall some basics about Heegaard Floer homology, and subsequently define our link lattice complex.

\subsection{Background on link Floer homology}
To fix the notation and terminology, we give some necessary
background on link Floer homology. We assume some familiarity
with basics of Heegaard Floer homology and its refinements for
knots, see \cite{OSDisks,OSKnots,OSLinks,RasmussenKnots}.

Let $L$ be an $\ell$-component link in a 3-manifold $Y$.
Recall \cite{OSLinks}*{Section 3.5} 
that the pair $(Y,L)$ can be encoded in a multi-pointed
Heegaard link diagram $(\Sigma,\ve{\alpha},\ve{\beta},\ve{w},\ve{z})$, as follows:
\begin{enumerate}
  \item $\Sigma$ is a closed oriented genus $g$ surface;
  \item $\ve{\alpha}=\{\alpha_1,\dots,\alpha_{g+\ell-1}\}$ and $\ve{\beta}=\{\beta_1,\dots,\beta_{g+\ell-1}\}$ are collections of simple closed curves on $\Sigma$. The curves $\alpha_i$ are pairwise non-intersecting. Also, the curves $\beta_i$ are pairwise non-intersecting. Moreover,
    $\ve{\alpha}$ and $\ve{\beta}$ each span a $g$-dimensional
    subspace of $H_1(\Sigma;\Z)$;
  \item $\ve{w}=\{w_1,\dots,w_\ell\}$, $\ve{z}=\{z_1,\dots,z_\ell\}$. Each component of $\Sigma\setminus\ve{\alpha}$ (respectively of $\Sigma\setminus\ve{\beta}$) contains a single point of $\ve{w}$ and a single point of $\ve{z}$.
  \end{enumerate}
 It is not hard to see there exists a Heegaard link diagram for any pair $(Y,L)$.  Furthermore, any two diagrams can be connected by sequence of Heegaard moves for link diagrams. See \cite{OSLinks}*{Theorem~4.7}.

Given a Heegaard link diagram, we consider Lagrangian tori
\[\bT_\alpha=\alpha_1\times\dots\times\alpha_{g+\ell-1},\quad 
\bT_\beta=\beta_1\times\dots\times\beta_{g+\ell-1}\]
in the symmetric product $\Sym^{g+\ell-1}(\Sigma)$. 
The link Floer chain complex, $\cCFL(Y,L)$, is a free chain complex
over
\[
\scR_\ell=\bF[\scU_1,\scV_1,\dots,\scU_\ell,\scV_\ell]
\]
generated
by intersection
points $\ve{x}\in\bT_\alpha\cap\bT_\beta$ with the differential
counting pseudo-holomorphic curves in $\Sym^{g+\ell-1}(\Sigma)$
via:
\[\partial \ve{x}=\sum_{\ve{y}\in\bT_\alpha\cap\bT_\beta}
\sum_{\substack{\phi\in\pi_2(\ve{x},\ve{y})\\ \mu(\phi)=1}} (\#\cM(\phi)/\R) \scU_1^{n_{w_1}(\phi)}\cdots\scU_\ell^{n_{w_\ell}(\phi)}\scV_1^{n_{z_1}(\phi)}\cdots\scV_\ell^{n_{z_\ell}(\phi)}\ve{y}.\]
Here the sum is taken over all homotopy classes $\pi_2(\ve{x},\ve{y})$
of maps $\phi$ of a unit disk $\bD\subset \bC$ to $\Sym^{g+\ell-1}(\Sigma)$,
where $\phi(-1)=\ve{x}$, $\phi(1)=\ve{y}$, $\phi(\partial \bD \cap \{\im(z)\le 0\})\subset\bT_\alpha$, $\phi(\d \bD \cap \{\im(z)\ge 0\})\subset\bT_\beta$. Here, $\mu(\phi)$ denotes the Maslov index of the class $\phi$.  The
space $\cM(\phi)$ consists of all pseudo-holomorphic curves
representing the class $\phi$, for a generic 1-parameter family of almost complex structures on $\Sym^{g+\ell-1}(\Sigma)$. 
 For $x\in\Sigma\setminus(\ve{\alpha}\cup\ve{\beta})$, we denote by 
$n_x(\phi)$ the intersection number of $\{x\}\times\Sym^{g+\ell-2}(\Sigma)\subset \Sym^{g+\ell-1}(\Sigma)$ with $\phi(\bD)$. We refer to
\cite{OSDisks}
for more details.

There is a map $\frs_w$ from $\bT_\alpha\cap\bT_\beta$ to
the set of $\Spin^{c}$ structures on $Y$. The component of the map $\partial$ from $\ve{x}$ to $\ve{y}$ can be non-trivial only if $\frs_w(\ve{x})=\frs_w(\ve{y})$. That is to say, the chain complex $\cCFL(Y,L)$ splits as a direct sum over complexes $\cCFL(Y,L,\frs)$, for $\frs\in\Spin^{c}(Y)$.

There is completed version of $\cCFL(Y,L)$ regarded as a module over the ring of power series
\[
\ve{\scR}_\ell:=\bF[[\scU_1,\scV_1,\dots, \scU_\ell,\scV_\ell]],
\]
namely we set
\[\ve{\cCFL}(Y,L):=\cCFL(Y,L)\otimes_{\scR_\ell}\ve{\scR}_\ell.\]
In other words, $\ve{\cCFL}(Y,L)$ has the same generators as $\cCFL(Y,L)$ and the same differential, except that  we work over a larger ring. The
completed version appears in the surgery formula.

When $Y$ is a rational homology 3-sphere, the link Floer homology groups have several gradings. Firstly, there is a $\Q\times \Q$-valued Maslov bigrading, denoted $(\gr_w,\gr_z)$, as well as a $\Q^\ell$-valued Alexander grading $A$. Furthermore
\[
(\gr_w,\gr_z)(\scU_i)=(-2,0)\quad (\gr_w,\gr_z)(\scV_i)=(0,-2) \quad \text{and} \quad A(\scV_i)=-A(\scU_i)=e_i,
\]
where $e_i$ is the standard $i$-th coordinate vector in $\Q^\ell$. Note also that
\[
\gr_w-\gr_z=2\sum_{i=1}^\ell A_i.
\]

\begin{rem} \label{rem:grading}In this paper,  we normalize $\gr_w$ so that the isomorphism 
\[
H_*(\cCFL(Y,L)/(\scV_1-1,\dots, \scV_\ell-1))\iso \HF^-(Y)
\]
 is grading preserving. In the above, we are writing $\HF^-(Y)$ for the Heegaard Floer homology computed with a singly pointed Heegaard diagram for $Y$. We make a similar normalization for $\gr_z$. Equivalently, our grading convention is that $\HF^-$ of a 3-manifold is invariant under adding extra basepoints as a graded module; compare \cite{OSLinks}*{Section~6.1}. We note that some authors normalize the Maslov gradings so that $H_*(\cCFL(Y,L)/(\scV_1-1,\dots, \scV_\ell-1)$ is isomorphic instead to $\HF^-(Y)[(\ell-1)/2]$.
\end{rem}

\subsection{Lattice homology}

We recall the definition of lattice homology \cite{NemethiLattice}. We use the notation of Ozsv\'{a}th, Stipsicz and Szab\'{o} \cite {OSSLattice}, since our construction of link lattice homology is slightly easier to describe using their notation. Let $G$ be a plumbing tree, and write $V_G$ for the vertices of $G$. Write $\mathbb{P}(V_G)$ for the power set of $V_G$ (i.e. the set of all subsets of $V_G$). The lattice complex is the $\bF[[U]]$ module
\[
\bC\bF(G):=\prod_{[K,E]\in \Char(G)\times \bP(V_G)} \bF[[U]]\otimes \langle [K,E]\rangle,
\]
where $\Char(G)\subset H^{2}(\XG, \mathbb{Z})$ denotes the set of characteristic elements of $H^2(\XG,\Z)$ on the 4-manifold $\XG$. Recall that $K\in H^2(\XG,\mathbb{Z})$ is \emph{characteristic}, if $K(x)\equiv x\cdot x\bmod 2$ for all $x\in H_2(\XG,\Z)$.

We now define the differential on $\mathbb{CF}(G)$. Note that each vertex $v\in V_G$ determines an element of $H_2(\XG)$, which is class in $H_2(\XG)$ of the base space
of the disc bundle $T_v$ used in the plumbing construction. For $I\subset E$, one defines
\[
2 f(K,I)=\left( \sum_{v\in I} K(v)\right)+\left(\sum_{v\in I} v \right)\cdot \left( \sum_{v\in I} v\right).
\]
Note that the right-hand side of the above equation is always an even integer, because $K$ is characteristic. 
In particular, $f(K,I)$ is an integer.
We set $g(K,E)=\min\{f(K,I): I\subset E\}$.  Next, one defines
\[
A_v(K,E)=g(K,E-v)\qquad B_v(K,E)=\min\{f(K,I):v\in I\subset E\}.
\]
Set
\[
a_v(K,E)=A_v(K,E)-g(K,E)\quad \text{and} \quad b_v(K,E)=B_v(K,E)-g(K,E).
\]
By the definition of $g(K, E)$, one can see that $g(K, E-v)\geq g(K, E)$. Similarly, $B_{v}(K, E)\geq g(K, E)$. Hence, $a_v(K, E)$ and $b_v(K, E)$ are both nonnegative integers. 

The differential on $\mathbb{CF}(G)$ is defined by the formula,
\begin{equation}\label{eq:differential_on_CF}
\d [K,E]=\sum_{v\in E} U^{a_v(K,E)}\otimes [K,E-v]+\sum_{v\in E} U^{b_v(K,E)} \otimes [K+2v^*, E-v],
\end{equation}
where $v^*$ is the Poincar\'e dual to $v$. Note that because of the factor $2$, $K+2v^*$ is characteristic if and only if $K$ is.
Equation~\eqref{eq:differential_on_CF} is
extended linearly over $\bF[[U]]$. We will refer to the first summand in \eqref{eq:differential_on_CF} as the \emph{A-terms} in the differential, and we will refer to the second summand as the \emph{B-terms} of the differential.

\subsection{The link lattice complex}

We now suppose that $\Gamma$ is a plumbing tree, whose vertex set is partitioned into two sets:
\[
V_\Gamma=V_G\cup V_\uparrow.
\]
Recall that the components of $V_G$ are equipped with a framing, while those of $V_\uparrow$ are not.

The vertices $V_\uparrow$ determine a link $L_\uparrow$ in the 3-manifold $\YG$. We assume that each component of $L_\uparrow$ is rationally null-homologous in $\YG$. This occurs, for example, when the incidence matrix $\Inc_G$ is non-singular.

To define the link lattice complex, we first pick a framing on the components of $V_\uparrow$ arbitrarily. In Proposition~\ref{prop:ind-framing}, we will show that the choice of framing on $V_\uparrow$ does not affect the link lattice complex.

We define the link lattice complex $\mathbb{CFL}(\Gamma,V_\uparrow)$ as the quotient of $\mathbb{CF}(\Gamma)$ by the subspace generated over $\bF[[U]]$ by tuples $[K,E]$ where $V_\uparrow\not\subset E$. Equivalently, we may view $\mathbb{CFL}(\Gamma,V_\uparrow)$ as being generated by $[K,E]$ where $V_\uparrow\subset E$, equipped with quotient complex differential. We think of the differential on $\mathbb{CFL}(\Gamma,V_\uparrow)$ as being given by the same formula as Equation~\eqref{eq:differential_on_CF}, except with the sums being taken over only $v\in E\cap V_G$.

\subsection{The module structure}
\label{sec:module-structure}

Recall that
\[
\scR_\ell=\bF[\scU_1,\scV_1,\dots, \scU_\ell,\scV_\ell].
\]
We now describe the action of $\scR_\ell$ on link lattice homology, where $\ell=|V_\uparrow|$. Write $V_\uparrow=\{v_1,\dots, v_\ell\}$. For each $i\in \{1,\dots, \ell\}$, there is an induced element $\mu_i^*\in H_2(X_{\Gamma};Y_{\Gamma})\iso H^2(X_{\Gamma})$. This element is dual to the class $v_i\in H_2(X_\Gamma)$  in the sense that $\mu^*_i(v_i)=1$, and $\mu^*_i(w)=0$ for all $w\in V_\Gamma\setminus \{v_i\}$. The class $\mu^*_i$ is represented by the co-core disk of the 2-handle corresponding to $v_i$.

Define the quantities:
\[
\delta^+_i(K,E)=g(K+2\mu_i^*,E)-g(K,E)\quad \text{and} \quad \delta_i^-(K,E)=g(K-2\mu_i^*,E)-g(K,E).
\]
An easy computation shows that
\[
f(K\pm 2\mu^*_i,I)=f(K,I)\pm 1
\]
if $v_i\in I$, and $f(K\pm 2\mu^*_i,I)=f(K,I)$ if $v_i\not \in I$. 
In particular, we have that
\[
\delta^{+}_i(K,E)\in \{0, 1\}\quad \text{and} \quad \delta^{-}_i(K,E)\in \{0, -1\}
\]
for all $i$.

For $i\in \{1,\dots, \ell\}$, we define
\begin{equation}
\scU_i\cdot [K,E]=
\begin{cases}
U[K-2\mu^*_i,E]&\text{ if } \delta^-_i(K,E)=0\\
[K-2\mu^*_i,E]& \text{ if } \delta^-_i(K,E)=-1,
\end{cases}
\label{eq:def:U}
\end{equation}
and
\begin{equation}
\scV_i\cdot [K,E]=
\begin{cases}
U[K+2\mu^*_i,E]&\text{ if } \delta^+_i(K,E)=1 \\
[K+2\mu^*_i,E]& \text{ if } \delta^+_i(K,E)=0.
\end{cases}
\label{eq:def:V}
\end{equation}
We extend $\scU_i$ and $\scV_i$ to the entire link lattice complex by declaring them to be $\bF[U]$ equivariant. 
Equivalently, we set
\begin{equation}
\begin{split}\scU_i \cdot [K,E]&=U^{g(K-2\mu^*_i,E)-g(K,E)+1}[K-2\mu^*_i,E]
\quad \text{and} \\
\scV_i\cdot [K,E]&= U^{g(K+2\mu^*_i,E)-g(K,E)}[K+2\mu^*_i,E].
\end{split}
\label{eq:UV-def}
\end{equation}

\begin{lem}
 If $v_i\in V_\uparrow$, then the endomorphisms $\scU_i$ and $\scV_i$ are chain maps.
\end{lem}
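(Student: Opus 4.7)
The plan is to verify $\d\circ\scU_i=\scU_i\circ\d$ (and similarly $\d\circ\scV_i=\scV_i\circ\d$) on each generator $[K,E]$ by expanding both sides using the definitions and matching coefficients of each basis element of $\mathbb{CFL}(\Gamma,V_\uparrow)$. A key structural observation is that the effective differential sums only over $v\in E\cap V_G$ while $v_i\in V_\uparrow$, so $v\neq v_i$ for every term. In particular, $\mu_i^*(v)=0$ for all $v\in V_G$ by the defining property of $\mu_i^*$, and this vanishing is what drives the cancellations.

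First I would handle the $A$-terms. Using $a_v(K,E)=g(K,E-v)-g(K,E)$ and \eqref{eq:UV-def}, the coefficient of $[K-2\mu_i^*,E-v]$ in $\scU_i(\d[K,E])$ equals $U^{a_v(K,E)+g(K-2\mu_i^*,E-v)-g(K,E-v)+1}$, which simplifies to $U^{g(K-2\mu_i^*,E-v)-g(K,E)+1}$. The coefficient of the same generator in $\d(\scU_i[K,E])$ equals $U^{g(K-2\mu_i^*,E)-g(K,E)+1+a_v(K-2\mu_i^*,E)}$, which also simplifies to $U^{g(K-2\mu_i^*,E-v)-g(K,E)+1}$. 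So the $A$-parts match formally, using only the definitions.

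The $B$-terms require a nontrivial identity. Matching the coefficients of $[K+2v^*-2\mu_i^*,E-v]$ on both sides reduces to the equality
\[
B_v(K-2\mu_i^*,E)-B_v(K,E)=g(K+2v^*-2\mu_i^*,E-v)-g(K+2v^*,E-v).
\]
To establish this, I would first prove the standard lattice identity
\[
B_v(K,E)=g(K+2v^*,E-v)+\chi_K(v),\qquad \chi_K(v):=\tfrac12(K(v)+v\cdot v),
\]
by writing $I=I'\cup\{v\}$ with $I'\subset E-v$, computing from the definition of $f$ that $f(K,I'\cup\{v\})=f(K,I')+\chi_K(v)+v\cdot\sum_{w\in I'}w$, and recognizing the last summand as $f(K+2v^*,I')-f(K,I')$; this step uses that $v^*$ is the Poincar\'e dual of $v$, so $\langle v^*,w\rangle=v\cdot w$. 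Applying this formula to both $B_v(K-2\mu_i^*,E)$ and $B_v(K,E)$ and using $\chi_{K-2\mu_i^*}(v)=\chi_K(v)-\mu_i^*(v)=\chi_K(v)$ for $v\in V_G$ yields the required identity.

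The case of $\scV_i$ is entirely parallel, with $-2\mu_i^*$ replaced by $+2\mu_i^*$. The main obstacle throughout is bookkeeping: one must keep track carefully of two different notions of dual class, the Poincar\'e dual $v^*$ of a non-arrow vertex (for which $v^*(v)=\lambda_v$, generally not $1$) and the Kronecker-type dual $\mu_i^*$ of an arrow vertex. The argument works precisely because $V_\uparrow\cap V_G=\emptyset$, so the shift by $\pm 2\mu_i^*$ does not affect $\chi_K(v)$ for any $v\in V_G$, and hence commutes cleanly with the lattice differential.
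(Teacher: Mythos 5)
Your proof is correct and follows essentially the same strategy as the paper's: match $U$-powers coefficient by coefficient, handle the $A$-terms by a direct telescope, and for the $B$-terms reduce to the identity $B_v(K,E)=g(K+2v^*,E-v)+\tfrac12(K(v)+v\cdot v)$, closing with the observation that the shift by $\pm2\mu_i^*$ leaves $K(v)+v\cdot v$ unchanged for $v\in V_G$. You give an explicit derivation of that $B_v$-identity where the paper merely invokes it, but this is a minor expansion rather than a different route.
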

\begin{proof}
  The differential of $\mathbb{CFL}(\Gamma,V_\uparrow)$ is given by modifying ~\eqref{eq:differential_on_CF} to sum over only $v\in V_G$. 
 Clearly the summands of $\scV_i\d [K,E]$ are in bijection with the summands of $\d \scV_i[K,E]$. It remains to show that the powers of $U$ coincide. We consider the $A$-terms of the differential first. The power of $U$ from $\scV_i \d[K,E]$ of $[K+2\mu^*_i,E-v]$ is
 \[
 g(K+2\mu^*_i,E-v)-g(K,E-v)+g(K,E-v)-g(K,E)
 \]
 whereas the power of $U$ from $\d (\scV_i [K,E])$ is
 \[
 g(K+2\mu^*_i,E-v)-g(K+2\mu^*_i,E)+g(K+2\mu^*_i,E)-g(K,E).
 \]
 These are obviously equal. Similarly, for the $B$-terms of the differential we use the equality
 \[
 B_v(K,E)=(K(v)+v\cdot v)/2+g(K+2v^*,E-v).
 \]
 From here, the argument is similar to the case of type-$A$ terms. We note that 
 \begin{equation}\label{eq:Kvv}
(K+2\mu^*_i)(v)+v\cdot v=K(v)+v\cdot v
 \end{equation}
 if $v\neq v_i$.  For the $B$-terms of the differential, the $U$ power of the term from $\d \scV_i [K,E]$ is
 \[
 B_v(K+2\mu^*_i,E)-g(K+2\mu^*_i,E)+g(K+2\mu^*_i,E)-g(K,E).
 \]
 The $U$-power from $\scV_i \d [K,E]$ is
 \[
 g(K+2v^*+2\mu^*_i, E-v)-g(K+2v^*,E-v)+B_v(K,E)-g(K,E).
 \]
 The difference between these terms is $((K+2\mu^*_i)(v)+v\cdot v)/2-(K(v)+v\cdot v)/2$ which vanishes by \eqref{eq:Kvv}.
 
 The claim about the map $\scU_i$ follows from essentially the same logic.
\end{proof}

\begin{lem}\,
\begin{enumerate}
\item For each $i$, we have $\scU_i\scV_i=\scV_i\scU_i=U$.
\item For all $i,j$, the commutators $[\scU_i,\scV_j]$, $[\scU_i,\scU_j]$ and $[\scV_i,\scV_j]$ vanish. 
\end{enumerate}
\end{lem}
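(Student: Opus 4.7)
The plan is to verify both identities by a direct computation from the unified formula \eqref{eq:UV-def}. The key observation is that each of $\scU_i$ and $\scV_i$ shifts the characteristic vector by a fixed element and multiplies by a power of $U$ whose exponent is of the form $g(K',E)-g(K,E)+\delta$, where $\delta\in\{0,1\}$ and the $g$-terms will telescope under composition.

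Concretely, I would introduce the unified notation
\[
X_i^{\epsilon}\cdot [K,E] = U^{g(K+2\epsilon \mu_i^*,E)-g(K,E)+\delta}\,[K+2\epsilon\mu_i^*,E],
\]
where $(\epsilon,\delta)=(+1,0)$ corresponds to $\scV_i$ and $(\epsilon,\delta)=(-1,1)$ corresponds to $\scU_i$. Then for any two such operations $X_{i_1}^{\epsilon_1}$ and $X_{i_2}^{\epsilon_2}$ (with associated shifts $\delta_1,\delta_2$), applying the formula twice gives
\[
X_{i_2}^{\epsilon_2}X_{i_1}^{\epsilon_1}\cdot[K,E]=U^{N}\,[K+2\epsilon_1\mu_{i_1}^*+2\epsilon_2\mu_{i_2}^*,E],
\]
with exponent
\[
N=\bigl(g(K+2\epsilon_1\mu_{i_1}^*,E)-g(K,E)+\delta_1\bigr)+\bigl(g(K+2\epsilon_1\mu_{i_1}^*+2\epsilon_2\mu_{i_2}^*,E)-g(K+2\epsilon_1\mu_{i_1}^*,E)+\delta_2\bigr).
\]
The intermediate $g$-term cancels, leaving $N=g(K+2\epsilon_1\mu_{i_1}^*+2\epsilon_2\mu_{i_2}^*,E)-g(K,E)+\delta_1+\delta_2$, which is manifestly symmetric under exchange of the two operations. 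This symmetry immediately proves part (2): the commutators $[\scU_i,\scU_j]$, $[\scV_i,\scV_j]$, and $[\scU_i,\scV_j]$ (including the case $i=j$ for the first two) all vanish.

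For part (1), I take $(\epsilon_1,\delta_1)=(+1,0)$ and $(\epsilon_2,\delta_2)=(-1,1)$ (for $\scU_i\scV_i$), so that $2\epsilon_1\mu_i^*+2\epsilon_2\mu_i^*=0$ and the total exponent is $g(K,E)-g(K,E)+0+1=1$, giving $\scU_i\scV_i\cdot[K,E]=U\cdot[K,E]$. The reverse composition $\scV_i\scU_i$ uses $(\epsilon_1,\delta_1)=(-1,1)$ and $(\epsilon_2,\delta_2)=(+1,0)$ and yields the same exponent.

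There is no real obstacle here beyond careful bookkeeping; the whole argument is a telescoping computation. The only point worth checking explicitly is that the conventions in \eqref{eq:def:U}--\eqref{eq:def:V} really do repackage into the single formula \eqref{eq:UV-def} with the claimed values of $\delta$, which is immediate from the fact that $\delta_i^\pm(K,E)\in\{0,\mp 1\}$.
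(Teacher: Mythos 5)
Your telescoping computation is correct and matches the paper's approach: the paper's own proof is simply the one-line statement that all relations follow from Equation~\eqref{eq:UV-def}, which is exactly the unified formula you spell out. Well done making the telescoping and symmetry explicit.
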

\begin{proof}
All of the stated relations are easily derived from Equation~\eqref{eq:UV-def}.
\end{proof}

\begin{lem}
\label{lem:power-series-rings}
  The action of $\scR_\ell$ on $\mathbb{CFL}(\Gamma,V_\uparrow)$ extends to an action of
  the ring of power series 
  \[\ve{\scR}_\ell= \bF[[\scU_1,\scV_1,\dots, \scU_\ell,\scV_\ell]].\]
\end{lem}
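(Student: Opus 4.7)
The plan is to exploit the fact that $\mathbb{CFL}(\Gamma,V_\uparrow)$ is already complete by construction: it is defined as a product of copies of $\bF[[U]]$, one for each admissible pair $[K,E]$, so a formal sum converges as soon as its coefficient at each basis vector is a convergent element of $\bF[[U]]$. Given an arbitrary power series $P = \sum_{a,b\in\N^\ell}c_{a,b}\,\scU^a\scV^b\in \ve{\scR}_\ell$, where $\scU^a=\scU_1^{a_1}\cdots\scU_\ell^{a_\ell}$ and analogously for $\scV^b$, the strategy is to define $P\cdot[K_0,E_0]$ by applying each monomial via the polynomial action of Section~\ref{sec:module-structure} and then to verify that at each target basis vector the resulting contributions assemble into a well-defined element of $\bF[[U]]$.

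First I would identify which monomials can contribute. From Equation~\eqref{eq:UV-def}, applying $\scU^a\scV^b$ to $[K_0,E_0]$ produces a scalar multiple of the basis vector $[K_0+2\sum_i(b_i-a_i)\mu_i^*,E_0]$. Fix a shift $k\in\Z^\ell$; the monomials landing on $[K_0+2\sum_i k_i\mu_i^*,E_0]$ are exactly those with $b-a=k$, which are parametrized by $t\in\N^\ell$ via $a_i=\max(0,-k_i)+t_i$ and $b_i=\max(0,k_i)+t_i$. The previous lemma, which established $\scU_i\scV_i=U$ together with the commutativity relations, then yields the operator identity
\[
\scU^{a(t)}\scV^{b(t)} \;=\; U^{\sum_i t_i}\,\scU^{\max(0,-k)}\scV^{\max(0,k)}
\]
on $\mathbb{CFL}(\Gamma,V_\uparrow)$, whose right-hand side is independent of $t$.

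Combining these observations, the coefficient of $[K_0+2\sum_i k_i\mu_i^*,E_0]$ in the would-be sum $P\cdot[K_0,E_0]$ equals
\[
U^{N_0}\sum_{t\in\N^\ell}c_{a(t),b(t)}\,U^{\sum_i t_i},
\]
where $N_0=N_0(K_0,E_0,k)$ is the (finite) $U$-power produced by applying $\scU^{\max(0,-k)}\scV^{\max(0,k)}$ via~\eqref{eq:UV-def}. Since only finitely many $t\in\N^\ell$ have any prescribed value of $\sum_i t_i$, the inner series converges in $\bF[[U]]$, so $P\cdot[K_0,E_0]$ is a well-defined element of $\mathbb{CFL}(\Gamma,V_\uparrow)$. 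Bilinearity, compatibility with multiplication in $\ve{\scR}_\ell$, and agreement with the polynomial action on $\scR_\ell$ then all follow by checking each target coefficient separately, which reduces to the already-established corresponding facts for polynomial inputs.

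I do not anticipate any serious technical obstacle beyond the bookkeeping above, since the completeness of $\mathbb{CFL}(\Gamma,V_\uparrow)$ is hard-wired into the definition of $\mathbb{CF}(\Gamma)$ as a product rather than a direct sum. The conceptual heart of the argument is the convergence step: without the relation $\scU_i\scV_i=U$, the infinite family $\{\scU^{a(t)}\scV^{b(t)}\}_{t\in\N^\ell}$ would remain an infinite collection of independent operators, and no extension to $\ve{\scR}_\ell$ could be expected. It is precisely the collapse of this family into a single operator times a geometric series in $U$ that makes the power-series action well-defined.
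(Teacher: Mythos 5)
Your computation that $P\cdot[K_0,E_0]$ is well-defined for each basis element $[K_0,E_0]$ is correct, and the collapse $\scU_i\scV_i=U$ is indeed one of the central mechanisms here; the paper packages the same observation as a $U$-weight inequality. However, this only defines the action on basis elements, and $\mathbb{CFL}(\Gamma,V_\uparrow)$ is a direct \emph{product} $\prod_{[K,E]}\bF[[U]]$, not a direct sum. Specifying $P\cdot[K_0,E_0]$ for each $[K_0,E_0]$ does not by itself produce an endomorphism of the whole module: for a general element $\xi=(\xi_{[K,E]})$ of the product, the coefficient of $P\cdot\xi$ at a fixed target $[K_1,E_1]$ is a formal sum over all sources $[K,E]$, and for that sum to converge in $\bF[[U]]$ you would need the quantity $N_0(K,E,k)$ in your formula to tend to infinity as the shift $k$ does, with the target held fixed. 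Equivalently, you need that for each basis element $U^n[K_1,E_1]$ of the target, only finitely many pairs consisting of a monomial $a\in\scR_\ell$ and a basis element $U^m[K,E]$ satisfy $a\cdot U^m[K,E]=U^n[K_1,E_1]$. This second finiteness is not a formal consequence of $\scU_i\scV_i=U$: in a hypothetical module satisfying $\scU_i\scV_i=U$ in which the powers $N_0(k)$ stayed bounded, an expression such as $\left(\sum_{n\ge 0}\scU_1^{\,n}\right)\cdot\sum_{n\ge 0}[K_1+2n\mu_1^*,E_1]$ would have a divergent coordinate at $[K_1,E_1]$, even though every $P\cdot[K_0,E_0]$ would converge.

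The missing fact is a genuine property of the lattice $g$-function, not of the abstract relations. Since $f(K+2\mu_s^*,I)=f(K,I)$ if $v_s\notin I$ and $f(K,I)+1$ if $v_s\in I$, the difference $g(K+2n\mu_s^*,E)-g(K,E)$ stabilizes once $|n|$ is large, which forces the $U$-exponent acquired when $\scU_s^{\,n}$ (or $\scV_s^{\,n}$) carries $[K+2n\mu_s^*,E]$ back to $[K,E]$ to grow linearly in $n$. The paper's proof supplies exactly this: it treats the monomial $a$ and the source generator $U^m[K',E]$ simultaneously, combining a $U$-weight estimate (playing the role of your collapse) with an Alexander-grading argument showing that the set of monomials reaching a fixed target with bounded $U$-power is finite. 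In your write-up this step is passed over under ``bilinearity follows by checking each target coefficient separately,'' but the convergence over sources is precisely where additional work is required; without it the power-series action is not defined on the product.
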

\begin{proof}We use the following fact about direct products. Suppose that $(A_i)_{i\in I}$ and $(B_j)_{j\in J}$ are families of vector spaces. Suppose that we have a function on indices $\g\colon I\times J\to J$ as well as a family of maps
\[
f_{i,j}\colon A_i\otimes B_j\to B_{\g(i,j)}.
\]
If $f_{i,j}$ and $\g$ have the property that for each $j'\in J$, there are only finitely many $i$ and $j$ so that $\g(i,j)=j'$ and $f_{i,j}\neq 0$, then there is a well-defined map 
\[
\sum_{i,j} f_{i,j}\colon \prod_{i\in I} A_i\to \prod_{j\in J} B_j,
\]
whose component functions are $f_{i,j}$.

Note that both the lattice complex and the power series ring $\bF[[\scU_1,\scV_1,\dots, \scU_\ell, \scV_{\ell}]]$ may be viewed as infinite direct products of copies of $\bF$. Hence, it is sufficient to show that for each $y=U^n\otimes [K,E]$, there are at most finitely many monomials $a\in \scR_\ell$ and generators $x=U^m\otimes [K',E]$ such that
$a\cdot x=y$. If $a=\scU_1^{i_1}\dots\scU_{\ell}^{i_\ell}\scV_1^{j_1}\dots \scV_\ell^{j_\ell}\in \scR_\ell$ is a monomial, we define $\nu(a)\in H^2(X_\Gamma)$  by the formula
\[
\nu(a)=(j_1-i_1) \mu^*_{v_1}+\cdots+ (j_\ell-i_\ell)\mu^*_{v_\ell}.
\]
 Monomials in $\scR_\ell$ are equipped with a $U$-weight
\[
w_U(a)=\min(i_1,j_1)+\cdots+\min(i_\ell,j_\ell).
\]
A generator $U^n\otimes [K,E]$ also has a $U$-weight $w_U(U^n\otimes [K,E])=n$. It is straightforward to verify that $w_U(a\cdot x)\ge w_U(a)+w_U(x)$ for any $a$ and $x$. In particular, if $y=U^n\otimes [K,E]$ is fixed, then there are only finitely many possible $U$ weights of $a$ and $x$ such that $y=a\cdot x$. 

Monomials in $\scR_\ell$ also have an Alexander grading
$A(a)=(j_1-i_1,\dots, j_\ell-i_{\ell})$.
Since $H^2(X_\Gamma)$ is torsion free and of rank $|L_\Gamma|$, we have that $\nu(a)=\nu(a')$ if and only if $A(a)=A(a')$. Also, if $a\cdot [K',E]=U^n \otimes[K,E]$, then it must be the case that $K'=K-2\nu(a)$.

Let $s\in \{1,\dots, \ell\}$.  Note that if $A_s(a)=j_s-i_s$ is sufficiently negative, then $\scV_s[K-2\nu(a),E]=U[K-2\nu(a)+2\mu^*_s,E]$ by the observation that  $(K-2\nu(a))(v_s)<0$ and the minima involving $g(K-2\nu(a),E)$ and $g(K-2\nu(a)+2\mu^*_s,E)$ will both be attained at some $I\subset E$ containing $v_s$. A similar argument shows that if $A_s(a)$ is sufficiently positive, then $\scU_s[K-2\nu(a),E]=U [K-2\nu(a)-2\mu^*_s,E]$.

In particular, it follows from the above reasoning that the set of monomials $a$ such that there is an $i$ satisfying $a\cdot U^i\otimes  [K-2\nu(a),E]=U^n\otimes [K,E]$ is bounded in $U$-weight and Alexander grading. However, it is easy to verify that the set of elements in $\scR_\ell$ in bounded Alexander grading and $U$-weight is finite, completing the proof.
\end{proof}

\subsection{Maslov gradings}
\label{sec:maslov-gradings}

If the intersection form of $X_\Gamma$ is non-singular, 
then the lattice complex $\mathbb{CFL}(\Gamma,V_\uparrow)$ inherits a Maslov grading from $\mathbb{CF}(\Gamma)$. We recall the formula
\begin{equation}
\gr(U^i\otimes [K,E])=-2i+2g(K,E)+|E|+\frac{1}{4}(K^2-3\sigma(X_{\Gamma})-2\chi(X_{\Gamma})).
\label{eq:Maslov-grading}
\end{equation}
Compare \cite{OSSKnotLatticeHomology}*{Section~2.3}. Here, $\sigma(X_\Gamma)$ and $\chi(X_\Gamma)$ are the signature and Euler characteristic, respectively. The number $K^2$ is obtained by factoring $K$ from $H^2(X_\Gamma)$ to $H^2(X_\Gamma, \d X_\Gamma)$, squaring using the cup product, and then evaluating on the fundamental class $[X_\Gamma, \d X_\Gamma]\in H_4(X_\Gamma, \d X_\Gamma)$.

In the setting of link lattice homology, it is more natural to 
define the Maslov grading via the formula
\begin{equation}
\gr_{w}(U^i\otimes [K,E])=-2i+2g(K,E)+|E|-|V_\uparrow|+\frac{1}{4}(K|_{\XG}^2-3\sigma(\XG)-2\chi(\XG)). \label{eq:grading-gr-w-lattice-link}
\end{equation}
This grading is defined when the intersection form $Q_G$ of $X_G$ is non-singular. More generally, this grading may also be defined when $\Inc_G$ is singular, as long as we restrict to torsion $\Spin^c$ structures on $\YG$.

\begin{lem}\label{lem:gr-w-properties} The differential $\d$ on $\mathbb{CFL}(\Gamma,V_\uparrow)$ decreases $\gr_w$ by 1. Furthermore, if $v_i\in V_\uparrow$, then $\scV_i$ preserves $\gr_{w}$ and $\scU_i$ decreases $\gr_w$ by $2$.
\end{lem}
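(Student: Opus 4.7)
The plan is to verify each claim by direct substitution into the grading formula \eqref{eq:grading-gr-w-lattice-link}. Throughout, every summand that appears keeps $V_\uparrow \subset E$, so the $-|V_\uparrow|$ contribution is constant, and the topological invariants $\sigma(X_G), \chi(X_G)$ depend only on $G$.

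For the $A$-term $U^{a_v(K,E)} \otimes [K, E-v]$ of $\d$ (with $v \in V_G$), the $U$-power contributes $-2 a_v$, the $g$-term changes by $+2 a_v$ by the definition of $a_v$, and $|E|$ drops by $1$, summing to $-1$. For the $B$-term $U^{b_v(K,E)} \otimes [K + 2v^*, E-v]$, interpreting $v^*$ as the Poincar\'e dual of $v \in H_2(X_G)$ (so $v^*(w) = v \cdot w$), the expansion $I = \{v\} \cup J$ with $J \subset E - v$ in the definition of $B_v$ yields
\[
B_v(K, E) = \tfrac{1}{2}(K(v) + v \cdot v) + g(K + 2v^*, E - v).
\]
This gives $-2 b_v + 2(g(K + 2v^*, E - v) - g(K, E)) = -(K(v) + v \cdot v)$. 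The change in the $K^2$-term equals $\frac{1}{4}((K + 2v^*)^2 - K^2) = K \cdot v^* + (v^*)^2 = K(v) + v \cdot v$, which cancels the previous contribution. Together with the $-1$ from $|E|$, the $B$-term also drops $\gr_w$ by $1$.

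For the actions $\scU_i, \scV_i$ with $v_i \in V_\uparrow$, the essential topological input is $\mu_i^*|_{X_G} = 0$: the class $\mu_i^*$ is represented by the cocore disk of the $2$-handle attached along $v_i$, which lies entirely in $X_\Gamma \setminus X_G$, so $\mu_i^*$ lifts to $H^2(X_\Gamma, X_G)$ and its restriction to $X_G$ vanishes. Consequently $(K \pm 2\mu_i^*)|_{X_G}^2 = K|_{X_G}^2$, so the $K^2$-term plays no role. For $\scV_i$, the $-2(g(K + 2\mu_i^*, E) - g(K, E))$ from the $U$-power cancels the identical contribution from the $g$-term, and $|E|, |V_\uparrow|$ are unchanged, giving $\Delta \gr_w = 0$. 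For $\scU_i$, the $U$-power carries an extra $+1$ relative to the $g$-change, contributing an additional $-2$, so $\Delta \gr_w = -2$.

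The only conceptual subtlety I anticipate is disentangling the two different meanings of a starred vertex: for $v \in V_G$, the class $v^*$ in the $B$-term is the Poincar\'e dual in the cup-product sense (yielding $K \cdot v^* = K(v)$ and $(v^*)^2 = v \cdot v$), while $\mu_i^*$ for $v_i \in V_\uparrow$ is the dual basis element with $\mu_i^*(v_j) = \delta_{ij}$. The former identity is what makes the $B$-term grading calculation collapse, while the vanishing of the latter on $X_G$ drives the module-action calculation.
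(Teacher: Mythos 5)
Your proof is correct and follows essentially the same approach as the paper: direct substitution into the grading formula~\eqref{eq:grading-gr-w-lattice-link}, using the identity $B_v(K,E)=\tfrac12(K(v)+v\cdot v)+g(K+2v^*,E-v)$ for the $B$-terms and the vanishing of $\mu_i^*|_{X_G}$ for the module actions. The only difference is that for the claim $\gr_w(\d)=-1$ the paper defers to \cite{OSSLattice}*{Lemma~3.1} rather than repeating the calculation you carried out.
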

\begin{proof}The proof that $\gr_w(\d)=-1$ is essentially identical to the proof in \cite{OSSLattice}*{Lemma~3.1}  (cf. \cite{NemethiLattice}*{}) so we will not repeat it.

We now consider the actions of $\scV_i$ and $\scU_i$. Note that $(K\pm 2\mu^*_i)|_{\XG}^2=K|_{\XG}^2$, since $\mu^*_i$ has trivial restriction to $\XG$. Hence the grading changes are entirely due to the powers of $U$ and the $g(K,E)$ terms. With this in mind, the stated grading changes follow immediately from the formulas defining the action of $\scV_i$ and $\scU_i$ in Equations~\eqref{eq:def:V} and~\eqref{eq:def:U}.
\end{proof}

\begin{rem} 
Note that $\gr_w$ and $\gr$ do not in general differ by a constant. Also $\d$ is only homogeneously graded on $\mathbb{CFL}(\Gamma,V_\uparrow)$, and not on the entire complex $\mathbb{CF}(\Gamma)$. The reader should compare the lattice complex to the Heegaard Floer mapping cone complex of Ozsv\'{a}th and Szab\'{o} \cite{OSIntegerSurgeries}. For $K\subset S^3$, with integer framing $n$, this takes the form of a mapping cone $\bX_n(K)=\Cone(v+h_n\colon \bA(K)\to \bB(K))$, which is homotopy equivalent to $\ve{\CF}^-(S^3_n(K))$. The complex $\bA(K)$ is isomorphic to the full knot Floer complex $\cCFK(K)$ (a finitely generated chain complex over $\bF[\scU,\scV]$), and hence admits a grading $\gr_w$ on $\cCFK(K)$. This does not coincide with the Maslov grading on $\bX_n(S^3_n(K))$ (see \cite{OSIntegerSurgeries}*{Section~4}) induced by the isomorphism with $\ve{\CF}^-(S^3_n(K))$.
\end{rem}

\subsection{Alexander gradings}
\label{sec:Alexander-gradings}

We now define the Alexander multi-grading. If $|V_\uparrow|=\ell$, our Alexander grading will take values in $\Q^{\ell}$. We assume that $\YG$ is a rational homology 3-sphere. 

If $v\in V_\uparrow$, we may only view $v$ as a class in $H_2(\XG,\d \XG)$, and not necessarily $H_2(\XG)$. Concretely, $[v]\in H_2(\XG,\d \XG)$ is obtained by capping one end of the link cobordism $[0,1]\times L_v\subset \XG$ with a disk in $S^3$. Since $\YG$ is a rational homology 3-sphere, the knot $L_v\subset \YG$ is rationally null-homologous, so we may lift
the class $[v]\in H_2(\XG,\d \XG)$ to a class  $[\hat v]\in H_2(\XG;\Q)$. Such a lift is given by capping with a rational Seifert surface for $L_v\subset \YG$.

\begin{define}
\label{def:Alexander-grading} Let $\Gamma$ be an arrow decorated plumbing tree, as above, and suppose $U^i \cdot[K,E]$ is a generator of $\mathbb{CFL}(\Gamma,V_\uparrow)$. Write $V_\uparrow=\{v_1,\dots, v_\ell\}$. We define the \emph{Alexander multi-grading} of $U^i\cdot [K,E]$ to be
\[
A\left(U^i [K,E]\right)=
\left(
 \frac{K(v_1-\hat v_1)+\sum_{v\in V_\uparrow}v\cdot (v_1-\hat v_1)}{2},\dots, \frac{K(v_\ell-\hat v_\ell)+\sum_{v\in V_\uparrow}v\cdot(v_\ell-\hat v_\ell)}{2}
 \right),
\]
which lies in $\Q^\ell$. 
\end{define}
Note that the Alexander multi-grading of $U^i [K,E]$ depends on $K$, but not on $E$ or $i$.

\begin{rem} More generally, if $\YG$ is not a rational homology 3-sphere, we can construct the Alexander grading $A_i$ when the corresponding component of $L_\uparrow$ is rationally null-homologous, as long as a rational Seifert surface is chosen. For our purposes, such a choice is equivalent to a choice of lift $\hat{v}_i$ of $v_i$ under the map $H_2(\XG;\Q)\to H_2(\XG,\d \XG;\Q)$. Our grading may be defined in this context as well.
\end{rem}

\begin{lem}
\label{lem:alexandergrading}
 The differential on $\mathbb{CFL}(\Gamma,V_\uparrow)$ preserves the Alexander multi-grading. Suppose that $v_i,v_j\in V_\uparrow$. The action of $\scU_i$ drops the Alexander grading $A_j$ by $\delta_{i,j}$ (Kronecker delta) and the action of $\scV_i$ increases the Alexander grading $A_j$ by $\delta_{i,j}$.
\end{lem}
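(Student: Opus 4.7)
The plan is to split $A_j$ into a $K$-dependent piece and a constant, and then compute the effect of each operation on the $K$-dependent piece.

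First, since the summand $\tfrac{1}{2}\sum_{v\in V_\uparrow} v\cdot(v_j-\hat v_j)$ in Definition~\ref{def:Alexander-grading} involves neither $K$, $E$, nor the $U$-exponent, I would denote it $C_j$ and write
\[
A_j(U^i[K,E]) \;=\; \tfrac{1}{2}\,K(v_j - \hat v_j) \;+\; C_j.
\]
It then suffices to track how $K(v_j - \hat v_j)$ transforms under the differential and the module actions.

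For the differential, the A-terms leave $K$ fixed and so automatically preserve $K(v_j-\hat v_j)$, modulo the $U$-prefactor that I handle below. The B-terms replace $K$ by $K + 2v^*$ with $v\in V_G$, where $v^*$ is the Poincar\'e dual of $v$, characterised by $v^*(u)=v\cdot u$ for every $u\in H_2(X_\Gamma;\Q)$. The resulting change is
\[
\tfrac{1}{2}\,\Delta K(v_j - \hat v_j) \;=\; v\cdot v_j \;-\; v\cdot \hat v_j,
\]
and the key input is the defining property of the lift $\hat v_j$: namely $j_*(\hat v_j)=v_j$ in $H_2(X_G,\partial X_G;\Q)$. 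Since $Y_G$ is a rational homology $3$-sphere, the intersection pairing $H_2(X_G)\otimes H_2(X_G,\partial X_G)\to\Z$ is nondegenerate, so this lifting property is equivalent to $u\cdot\hat v_j = u\cdot v_j$ for every $u\in H_2(X_G;\Q)$. Taking $u=v$ shows that the B-terms preserve $A_j$.

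For the module actions I would use the formulas $\scU_i\cdot[K,E]=U^{k}[K-2\mu_i^*,E]$ and $\scV_i\cdot[K,E]=U^{k'}[K+2\mu_i^*,E]$ of Section~\ref{sec:module-structure}. Because $\mu_i^*$ is the dual-basis element of $v_i\in V_\uparrow$, we have $\mu_i^*(v_j)=\delta_{i,j}$, while $\mu_i^*(\hat v_j)=0$ because $\hat v_j$ is a rational combination of elements of $V_G$, on which $\mu_i^*$ vanishes. Hence $\Delta K(v_j - \hat v_j)=\mp 2\delta_{i,j}$, yielding the claimed shifts $\mp\delta_{i,j}$ in $A_j$. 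Finally, the $U$-prefactors appearing throughout preserve $A_j$ because $U=\scU_i\scV_i$ shifts $A_j$ by $-\delta_{i,j}+\delta_{i,j}=0$.

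The only delicate point---and I do not expect it to be a serious obstacle---is keeping straight the two different ``dual'' conventions in play: the Poincar\'e dual $v^*$ in the B-term, satisfying $v^*(u)=v\cdot u$, versus the dual-basis element $\mu_i^*$ in the module action, satisfying $\mu_i^*(u)=\delta_{u,v_i}$. (One can sanity-check the former by verifying that it makes the identity $B_v(K,E)=\tfrac{K(v)+v\cdot v}{2}+g(K+2v^*,E-v)$ used in Lemma~\ref{lem:gr-w-properties} hold on the nose.) Once that distinction is clear, both halves of the lemma reduce to the single intersection identity $v\cdot\hat v_j=v\cdot v_j$ for $v\in V_G$, which is precisely the statement that $\hat v_j$ lifts $v_j$.
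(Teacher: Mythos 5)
Your proposal is correct and follows essentially the same route as the paper's proof: the $A$-terms are handled by noting $K$ is unchanged, the $B$-terms by the identity $w\cdot v_j = w\cdot \hat v_j$ for $w\in V_G$ (which the paper attributes to $w^*$ being Poincar\'e dual to a class in $H_2(\XG;\Z)$, and which you justify more explicitly via nondegeneracy of the intersection pairing on $X_G$), and the module actions by $\mu_i^*(v_j)=\delta_{i,j}$ together with $\mu_i^*(\hat v_j)=0$. The only stylistic difference is in the last vanishing: the paper argues geometrically that $\hat v_j$ and the cocore disk representing $\mu_i^*$ are disjoint, whereas you argue algebraically that $\hat v_j$ is a rational combination of the sphere classes $w\in V_G$, each annihilated by $\mu_i^*$; both are valid. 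Your remark about the $U$-prefactor being irrelevant is unnecessary (the Alexander grading is declared independent of the $U$-exponent by definition, so no check is needed), but it causes no harm, and you correctly flagged the distinction between the Poincar\'e-dual convention $v^*$ used in the $B$-terms and the dual-basis convention $\mu_i^*$ used for the module structure.
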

\begin{proof} 
The $A$-terms of the differential obviously preserve the Alexander multi-grading since they do not change $K$. The $B$-terms send $[K,E]$ to a sum of $U$-multiples of $[K+2w^*,E-w]$, where $w$ ranges over the non-arrow components. Note that since $w\in V_G$, $w^*$ is the Poincar\'{e} dual of the element $[w]\in H_2(\XG;\Z)$, so
\[
(K+2w^*)(v_j-\hat v_j)=K(v_j-\hat v_j).
\]
Hence the $B$-terms also preserve Alexander grading.

Next, we consider the actions of $\scU_i$ and $\scV_i$. We note that $(K+2 \mu^*_i)(\hat v_j)=K(\hat v_j)$, since $\hat v_j$ and $\mu^*_i$ are represented by disjoint rational 2-chains.  On the other hand, we have
\[
(K+2 \mu^*_i)(v_j)=K(v_j)+2\delta_{i,j}.
\]
Hence, the conclusion follows from these observations. 
\end{proof}

\subsection{Conjugation symmetry}
As with the original construction of lattice homology \cite{NemethiLattice}*{Remark 3.2.7}, the link lattice complex admits a conjugation symmetry. Compare \cite{OSSKnotLatticeL}*{Section~2.2}. This takes the form of a map
\[
J(U^i[K,E])=U^i[-K-\sum_{v\in E} 2 v^*, E].
\]
We now observe that the $J$-map is skew Alexander graded:
\begin{lem}
\label{lem:Alex-J} The $J$ map satisfies
\[
A(J([K,E]))=-A([K,E]),
\]
\end{lem}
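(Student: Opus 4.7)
My plan is a direct expansion followed by one geometric observation. Writing $K' = -K - 2\sum_{v \in E} v^*$ for the $K$-coordinate of $J([K,E])$ and applying Definition~\ref{def:Alexander-grading} to both $A_j(J([K,E]))$ and $-A_j([K,E])$, the two expressions share the common terms involving $-K(v_j - \hat{v}_j)$, so the identity $A_j(J([K,E])) = -A_j([K,E])$ is equivalent to
\[
\sum_{v \in E} v^*(v_j - \hat{v}_j) = \sum_{w \in V_\uparrow} w \cdot (v_j - \hat{v}_j).
\]
Since $v^*$ denotes the Poincar\'e dual of $v$, pairings of cohomology classes with homology classes coincide with intersection numbers, so the left-hand side equals $\sum_{v \in E} v \cdot (v_j - \hat{v}_j)$. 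Because every generator of $\mathbb{CFL}(\Gamma, V_\uparrow)$ satisfies $V_\uparrow \subseteq E$, I would split this sum into its $V_\uparrow$ and $E \cap V_G$ parts. The first part coincides with the right-hand side after relabeling the dummy index, so the identity reduces to showing
\[
v \cdot (v_j - \hat{v}_j) = 0 \quad \text{for every } v \in V_G.
\]

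The key geometric observation is the following. The lift $\hat{v}_j \in H_2(X_G; \Q)$ is characterized as the preimage of $v_j \in H_2(X_G, \partial X_G; \Q)$ under the map induced by inclusion, which is an isomorphism because $Y_G$ is a rational homology $3$-sphere. Equivalently, at the chain level $v_j - \hat{v}_j$ is (up to sign) represented by a rational Seifert surface $\Sigma_j$ supported entirely in $\partial X_G = Y_G$. For $v \in V_G$, the class $[v] \in H_2(X_G; \Z)$ is absolute and can be represented by a cycle pushed into the interior of $X_G$, hence disjoint from a representative of $v_j - \hat{v}_j$ supported in $\partial X_G$; equivalently, by naturality of the intersection pairing with respect to $H_2(X_G; \Q) \to H_2(X_G, \partial X_G; \Q)$ one has $v \cdot v_j = v \cdot \hat{v}_j$. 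Either way, the algebraic intersection vanishes and the proof concludes.

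I do not foresee a serious obstacle. The main care point is distinguishing the two roles of starred notation --- $v^*$ as the Poincar\'e dual appearing in the lattice differential and in the definition of $J$, versus $\mu^*_i \in H^2(X_\Gamma)$ as the basis-dual element used for the module action of $\scU_i$ and $\scV_i$ --- and applying the intersection pairing in the correct combination of absolute and relative homology on $X_G$. Once these identifications are in place, the computation is purely formal and parallels the computation in Lemma~\ref{lem:alexandergrading}.
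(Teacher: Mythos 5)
Your proof is correct and takes essentially the same route as the paper's: both reduce the identity to the observation that for $v \in E \cap V_G$ one has $v^*(v_j - \hat v_j) = 0$ because $v$ is a closed class in $H_2(X_G;\Z)$ represented disjointly from the difference cycle $v_j - \hat v_j$, and then cancel the remaining $V_\uparrow$-sums. The paper states the vanishing more tersely (``since $v^*$ is Poincar\'e dual to a class in $H_2(X_G;\Z)$'') whereas you spell out the geometric disjointness; beyond that the arguments coincide.
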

\begin{proof}
By construction of the link lattice complex, $V_\uparrow\subset E$. We observe that if $v\in E\setminus V_\uparrow$ and $v_i\in V_\uparrow$, then $2v^*(v_i-\hat v_i)=0$
since $v^*$ is Poincar\'{e} dual to a class in $H_2(\XG; \mathbb{Z})$. Hence, if $v_i\in V_\uparrow$, then
\[
\begin{split}(-K-\sum_{v\in E} 2v^*)(v_i-\hat v_i)+\sum_{v\in V_\uparrow} v\cdot(v_i-\hat v_i)
&=(-K-\sum_{v\in V_\uparrow} 2v^*)(v_i-\hat v_i)+\sum_{v\in V_\uparrow} v\cdot(v_i-\hat v_i)\\
&=-K(v_i-\hat v_i)-\sum_{v\in V_\uparrow} v\cdot (v_i-\hat v_i).
\end{split}
\]
Therefore, $A(J([K,E]))=-A([K,E])$.
\end{proof}

We recall from \cite{OSSKnotLatticeHomology}*{Section~2.3} that $J$ preserves the grading $\gr$ from Equation~\eqref{eq:Maslov-grading}. On the other, its interaction with $\gr_w$ is more interesting. Define
\[
\gr_z=\gr_w-2(A_1+\cdots+A_\ell),
\]
where $(A_1,\dots, A_\ell)$ is the Alexander grading. Note that by Lemmas~\ref{lem:gr-w-properties} and~\ref{lem:alexandergrading},
the action of $\scU_i$ preserves the $\gr_z$-grading, while the action of $\scV_i$ drops the $\gr_z$-grading by $2$.

\begin{lem} The map $J$ interchanges the $\gr_w$ and $\gr_z$ gradings. 
\end{lem}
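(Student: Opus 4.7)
The plan is to reduce everything to a single algebraic identity. First I observe that $J$ is an involution: from the defining formula, $J^2(U^i[K,E]) = U^i[K,E]$ by cancellation. Hence $\gr_w\circ J = \gr_z$ if and only if $\gr_z\circ J = \gr_w$ (once the first equality is known, $\gr_z\circ J = \gr_w\circ J^2 = \gr_w$ using Lemma~\ref{lem:Alex-J}), so it suffices to prove
\[
\gr_w(J[K,E]) - \gr_w([K,E]) = -2\sum_{i=1}^\ell A_i([K,E]).
\]
Using the formula \eqref{eq:grading-gr-w-lattice-link}, this amounts to verifying
\[
2\bigl(g(K',E) - g(K,E)\bigr) + \tfrac{1}{4}\bigl(K'|_{X_G}^2 - K|_{X_G}^2\bigr) = -2\sum_{i=1}^{\ell} A_i([K,E]),
\]
where $K' := -K - 2\sum_{v\in E} v^*$.

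For the first term I would establish the algebraic identity $f(K', I) = f(K, E\setminus I) - f(K, E)$ for every $I\subset E$ by direct expansion, using $v^*(w) = v\cdot w$. Taking the minimum over $I$, together with the involution $I\mapsto E\setminus I$ on $\bP(E)$, yields $g(K', E) - g(K, E) = -f(K, E)$.

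For the second term I would work over $\Q$, using the non-degeneracy of $\Inc_G$ to identify $H^2(X_G;\Q)\cong H_2(X_G;\Q)$ via the intersection pairing. Under this identification $K'|_{X_G} - K|_{X_G}$ corresponds to $-2\tilde V$, where $\tilde V\in H_2(X_G;\Q)$ is characterised by $\tilde V\cdot w = \sum_{v\in E} v\cdot w$ (intersection in $X_\Gamma$) for every $w\in V_G$. Splitting $E$ into its arrow and non-arrow parts, I would verify $\tilde V = E_G + \hat U$, where $E_G := \sum_{v\in E\cap V_G} v$ and $\hat U := \sum_{i=1}^\ell \hat v_i$. The key geometric input is the identity $\hat v_i\cdot w = \delta_{w, w_i}$ in $X_G$, where $w_i\in V_G$ is the unique non-arrow neighbour of $v_i$; this follows by representing $\hat v_i$ as a fiber disk capping $L_{v_i}$ in $T_{w_i}$ minus a rational Seifert chain for $L_{v_i}$ supported on $Y_G$, the latter being disjoint from the $2$-handle cores $w\in V_G$. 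Expanding $K'|_{X_G}^2 - K|_{X_G}^2 = (K'|_{X_G} + K|_{X_G})(K'|_{X_G} - K|_{X_G})$ then yields
\[
\tfrac{1}{4}(K'|_{X_G}^2 - K|_{X_G}^2) = K(E_G + \hat U) + (E_G + \hat U)^2.
\]

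Substituting the two computations and expanding $-2\sum_i A_i = -K(W - \hat U) - W\cdot(W - \hat U)$ with $W := \sum_i v_i$, the desired equality collapses, after cancellations (using $E_G\cdot W = E_G\cdot\hat U$, both of which equal $\sum_{v\in E\cap V_G} a_v$, where $a_v$ denotes the number of arrow vertices adjacent to $v$), to the single scalar identity $\hat U \cdot \hat U = W \cdot \hat U$ in $X_\Gamma$. Since $\hat U \in H_2(X_G;\Q)$ is a $\Q$-linear combination of the $w\in V_G$, this identity follows from $(W - \hat U) \cdot w = 0$ for every $w \in V_G$, which in turn holds because both $W\cdot w$ and $\hat U\cdot w$ equal $a_w$. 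The main obstacle is the geometric step: identifying the rational lift $\hat U$ with the combinatorial class $\tilde V - E_G$ coming from the restriction of $V^*$ to $X_G$. The remaining manipulations are algebraic bookkeeping.
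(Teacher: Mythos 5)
Your proposal is correct and follows essentially the same route as the paper's proof. The core steps are identical: you reduce to showing $\gr_w - \gr_w\circ J = 2\sum_i A_i$ (using $A\circ J = -A$ from Lemma~\ref{lem:Alex-J}), you establish $g(-K-2v_E^*,E) - g(K,E) = -f(K,E)$, and you handle the $K|_{X_G}^2$ difference by restricting to $X_G$ and identifying the restriction of $v^*_i$ (for $v_i \in V_\uparrow$) with the rational capping class $\hat v_i$. The paper cites the $g$-identity from Ozsv\'ath--Stipsicz--Szab\'o while you rederive it via $f(K',I) = f(K,E\setminus I) - f(K,E)$, which is fine. Your closing argument pins down $\hat v_\uparrow$ in $H_2(X_G;\Q)$ by computing $\hat v_i \cdot w = \delta_{w,w_i}$ for $w \in V_G$; the paper takes the slightly slicker route of observing that $v_i - \hat v_i$ is a cycle supported in $Y_G = \partial X_G$, so it pairs trivially with any interior class $w \in V_G$, giving $\hat v_\uparrow \cdot (\hat v_\uparrow - v_\uparrow) = 0$ directly. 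This is the same identity you derive, just reached by noting that the rational Seifert surface and the disk glue to a closed surface in the boundary, rather than by explicitly tabulating linking numbers. Both variants are fine; the boundary-cycle observation avoids having to verify the orientation conventions in the intersection count you sketch.
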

\begin{proof} We recall the useful identity (see \cite{OSSKnotLatticeHomology}*{Equation~2.2}) that
  \begin{equation}\label{eq:gJ}
    g(K,E)-g(-K-\sum_{v\in E} 2v^*,E)=f(K,E).
  \end{equation}
Let us write
\[
f([K,E])=\frac{1}{2}(K(v_E)+v_E^2)
\]
where $v_E=\sum_{v\in E} v$. 
Therefore, 
\[
\begin{split}
\gr_w([K,E])-\gr_w(J[K,E])&=2f([K,E])+\frac{1}{4}(K|_{X_G}^2-(-K-2v_E^*)|_{X_G}^2)\\
&=K(v_E)+v_E^2-(K\cup v_E^*+v_E^*\cup v_E^*)[X_G,Y_G]
\end{split}
\]
Let us write $\hat{v}_E^*$, for the restriction of $v_E^*$ to $X_G$, pulled back from $H^2(X_G, Y_G)\to H^2(X_G)$. Use similar notation for individual vertices. Note that if $v_i\in v_\uparrow$, then $v_i^*|_{X_G}=\hat{v}_i^*$, where $\hat{v}_i^*\in H_2(X_G;\Q)$ is the 2-chain appearing in the definition of the Alexander grading.  Observe that if $v\in E\setminus V_{\uparrow}$, then $\hat{v}^*$ is still the Poincar\'{e} dual of the 2-sphere represented by $v$, which is contained in $X_G$. Let us write $v_\uparrow$ for $\sum_{v\in V_{\uparrow}} v$, and define $\hat{v}_{\uparrow}$ similarly. We compute that
\[
\begin{split}
f([K,E])+\frac{1}{4}(K|_{X_G}^2-(-K-2v_E^*)|_{X_G}^2)&=K(v_E)+v_E^2-(K\cup \hat{v}_E^*+\hat{v}_E^*\cup \hat{v}_E^*)[X_G,Y_G]\\
&=K(v_\uparrow-\hat{v}_\uparrow)+v_\uparrow^2-\hat{v}_\uparrow^2\\
&=K(v_\uparrow-\hat{v}_\uparrow)+v_{\uparrow}(v_\uparrow-\hat{v}_\uparrow). 
\end{split}
\]
Combining the above with Lemma~\ref{lem:Alex-J} we see that
\[
\gr_w([K,E])=\gr_w(J([K,E]))+2(A_1+\cdots +A_\ell)([K,E])=\gr_z(J[K,E]).
\]
The same argument shows $\gr_z([K,E])=\gr_w(J[K,E])$. 
\end{proof}

\begin{lem}
  The map $J$ skew-commutes with $\scU_i$ and $\scV_i$, i.e.
\[
J\circ \scV_i=\scU_i\circ J \quad \text{and} \quad J\circ\scU_i=\scV_i\circ J.
\]
\end{lem}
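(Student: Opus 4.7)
The plan is to apply both sides to a generator $[K,E]$ of $\mathbb{CFL}(\Gamma,V_\uparrow)$ (recalling that $V_\uparrow \subseteq E$ by construction) and verify that the resulting elements agree on the nose.

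First I would unpack the relation $J\circ \scV_i = \scU_i\circ J$. Writing $K' = -K - \sum_{v\in E}2v^*$ for convenience, a direct substitution using the definitions of $J$, $\scU_i$ and $\scV_i$ from Equation~\eqref{eq:UV-def} gives
\[
J(\scV_i[K,E]) = U^{a}\bigl[-(K+2\mu_i^*)-\tsum_{v\in E}2v^*,\,E\bigr],\quad a := g(K+2\mu_i^*,E)-g(K,E),
\]
and
\[
\scU_i(J[K,E]) = U^{b}\bigl[K' - 2\mu_i^*,\,E\bigr],\quad b := g(K'-2\mu_i^*,E)-g(K',E)+1.
\]
Since $K'-2\mu_i^* = -(K+2\mu_i^*)-\sum_{v\in E}2v^*$, the characteristic classes already agree, so all that remains is to check $a = b$.

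To equate the $U$-powers, I would invoke the conjugation identity~\eqref{eq:gJ} twice: once with $K$ and once with $K+2\mu_i^*$ in place of the argument. Substituting the resulting expressions for $g(K',E)$ and $g(K'-2\mu_i^*,E)$ into the formula for $b$ reduces the claim $a = b$ to the purely combinatorial identity
\[
f(K+2\mu_i^*,E) - f(K,E) = 1.
\]
This is where the hypothesis $v_i \in V_\uparrow \subseteq E$ is used: from the definition $f(K,E) = \tfrac{1}{2}\bigl(\sum_{v\in E}K(v) + v_E\cdot v_E\bigr)$ one immediately computes
\[
f(K+2\mu_i^*,E) - f(K,E) = \tsum_{v\in E}\mu_i^*(v) = \mu_i^*(v_i) = 1,
\]
since $\mu_i^*$ is dual to $v_i$ and vanishes on all other vertices.

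Finally I would repeat the same argument to verify $J\circ \scU_i = \scV_i\circ J$. This time the analogous calculation produces an extra $+1$ from the formula for $\scU_i$ (see Equation~\eqref{eq:def:U}) which is absorbed by the opposite-signed relation $f(K-2\mu_i^*,E) - f(K,E) = -1$. I don't expect any genuine obstacle: once the key combinatorial identity for $f$ is in hand, the rest is bookkeeping with Equations~\eqref{eq:UV-def} and~\eqref{eq:gJ}.
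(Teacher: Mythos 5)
Your proof is correct and follows essentially the same route as the paper's: expand both sides via Equations~\eqref{eq:def:U}--\eqref{eq:def:V}, reduce the matching of $U$-powers to an identity involving the $g$-function, apply the conjugation identity~\eqref{eq:gJ} twice (once for $K$ and once for $K+2\mu_i^*$), and conclude because $f(K+2\mu_i^*,E)-f(K,E)=\mu_i^*(v_E)=1$ since $v_i\in E$. The only difference is cosmetic bookkeeping in how you name the two $U$-exponents before equating them.
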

\begin{proof}
  We compute from \eqref{eq:def:U} and~\eqref{eq:def:V}:
\[
\begin{split}
\scU_i \cdot J([K,E])=& U^{1+\delta_i^-(J [K,E])} \cdot [-K-\sum_{v\in E} 2 v^*-2\mu_i^*, E]\\
J( \scV_i\cdot [K,E])=&U^{\delta_i^+([K,E])} [-K-\sum_{v\in E} 2 v^* -2\mu_i^*, E]. 
\end{split}
\]
Therefore, it suffices to show that
\[
1+\delta_i^-(J[K,E])= \delta_i^+([K,E]). 
\]
Rewriting in terms of the $g$ function, the above is equivalent to
\[
1= g(K+2\mu_i^*, E)-g(K,E)+g(-K-\sum_{v\in E} 2v^*)-g(-K-2\mu_i^*-\sum_{v\in E} 2v^*, E)
\]
By Equation~\eqref{eq:gJ}, we see that the right-hand side of the above equation is
\[
f(K+2\mu_i^*,E)-f(K,E)=\mu_i^*(\sum_{v\in E} v)=1,
\]
as claimed. (Recall that $v_i\in E$ by definition of the lattice link complex).  An analogous argument shows that $\scV_i\circ J=J\circ\scU_i$.
\end{proof}

%
%
%
%

\subsection{\texorpdfstring{$\Spin^c$}{Spinc}-structures}
\label{sec:spin^c}
We describe how the link lattice complex $\mathbb{CFL}(\Gamma,V_\uparrow)$ naturally splits over $\Spin^c(\YG)$ as a module  over $\scR_\ell$. Recall the isomorphism
\[
\Spin^c(\YG)\iso \Spin^c(\XG)/H_2(\XG).
\]
Also, the Chern class map $c_1\colon \Spin^c(\XG)\to \Char(\XG)$ is an isomorphism of affine $H^2(\XG)$-sets (where $C\in H^2(\XG)$ acts on $\Char(\XG)$ by $K\mapsto K+2C$).

In the link lattice complex, we associate the generator $[K,E]\otimes U^i$ with the $\Spin^c$ structure $[K|_{\XG}]$, viewed as an element of $\Char(\XG)/H_2(\XG)\iso \Spin^c(Y_G)$. This gives the decomposition of the $\F[[U]]$-modules
\begin{equation}\label{eq:decompose}
  \mathbb{CFL}(\Gamma,V_\uparrow)=\bigoplus_{\mathfrak{s}\in \Spin^c(Y_G)}\mathbb{CFL}(\Gamma,V_\uparrow,\mathfrak{s}).
\end{equation} 

Since the differential on $\mathbb{CFL}(\Gamma,V_\uparrow)$ is constructed by modifying \eqref{eq:differential_on_CF} to sum over only $v\in V_G$, the decomposition \eqref{eq:decompose} is preserved by $\d$.
The actions of $\scU_i$ and $\scV_i$ also preserve this decomposition, because they change $K$ to $K\pm 2\mu^*_i$, and $\mu^*_i\in H^2(X_\Gamma)$ has trivial restriction to $H^2(\XG)$. That is to say, \eqref{eq:decompose} yields the decomposition of chain complexes
of $\F[[\scU_1,\scV_1,\dots,\scU_\ell,\scV_\ell]]$-modules over the $\Spin^c$ structures of $Y_{G}$.

\subsection{Independence from the framing on arrow components}

We now show that our chain complex $\mathbb{CFL}(\Gamma,V_\uparrow)$ is independent of the choice of framing on the arrow components, up to canonical isomorphism.

Suppose that $\cG$ is a weighted plumbing tree obtained by weighting the arrow vertices of $\Gamma$ by (any) integral weights, and using the weights from $\Gamma$ on $V_G$. We obtained a model of the link lattice complex in the previous section, which we denote by $\mathbb{CFL}_{\cG}(\Gamma,V_\uparrow)$. In this section, we describe a canonical isomorphism
\[
F_{\cG,\cG'}\colon \mathbb{CFL}_{\cG}(\Gamma,V_\uparrow)\to \mathbb{CFL}_{\cG'}(\Gamma,V_\uparrow)
\]
for any two extensions $\cG$ and $\cG'$.

Let $L_{\Gamma}\subset S^3$ denote the link associated to $\Gamma$ as in Subsection~\ref{sub:plumbing_calculus}. 
 We write $L_\cG$ to denote $L_\Gamma$, equipped with the framing from $\cG$.  
 Write $n$ for $|V_\Gamma|$. Following the notation of Manolescu and Ozsv\'{a}th \cite{MOIntegerSurgery}, we define the \emph{linking lattice} $\bH(L_{\cG})$ to be the affine $\Z^{n}$ subspace of $\Q^{n}$ consisting of vectors $\ve{s}=(s_1,\dots, s_n)$ such that $s_i\in \Z+\lk(L_i,L_\Gamma-L_i)/2$. As sets, we clearly have
\[
\bH(L_\cG)=\bH(L_{\cG'}).
\]
The lattices $\bH(L_\cG)$ and $\bH(L_{\cG'})$ are distinguished by their natural actions of $H_2(X_\cG)\iso H_2(X_{\cG'})\iso \Z^n$.
The action of $H_2(X_\cG)$ on $\bH(L_\cG)$ is as follows. Given $v\in V_\Gamma$, write $\lambda_v$ for the longitude of $K_v$ determined by the framing of $K_v$. By writing  $ H_1(S^3\setminus L_\Gamma)\iso\Z^n$, we can identify $\lambda_v$ as an element in $\Z^n$, and the action of $v$ on $\bH(L_{\cG})$ can be identified as a translation by this corresponding element in $\Z^n$.

Next, there is a canonical isomorphism
\[
\Phi_{\cG}\colon \Char(X_{\cG})\to \bH(L_{\cG})
\]
given by the formula
\begin{equation}
\Phi_{\cG}(K)= \left( \frac{K(v_1)+v_\cG\cdot v_1}{2},\dots,\frac{K(v_n)+v_\cG \cdot v_n}{2} \right).\label{eq:Spin-c-lattice-iso}
\end{equation}
In the above, we write $v_\cG=v_1+\cdots+v_n\in H_2(X_\cG)$. Note that $\Phi_{\cG}$ is equivariant with respect to the action of $H_2(X_\cG)$. 

Given two weight-extensions $\cG$ and $\cG'$ of $\Gamma$, we define the group isomorphism 
\[
F_{\cG , \cG'}\colon \mathbb{CFL}_{\cG}(\Gamma,V_\uparrow)\to \mathbb{CFL}_{\cG'}(\Gamma,V_\uparrow),
\]
via the formula
\[
F_{\cG,\cG'}([K,E]\otimes U^i)=[(\Phi_{\cG'}^{-1}\circ \Phi_{\cG})(K),E]\otimes U^i.
\]
The map $F_{\cG,\cG'}$ is clearly an isomorphism of $\bF[[U]]$-modules. In fact, we have the following:

\begin{prop}\label{prop:ind-framing} The map $F_{\cG,\cG'}$ is a $(\gr_w,A)$-grading preserving chain isomorphism between $\mathbb{CFL}_{\cG}(\Gamma,V_\uparrow)$ and $\mathbb{CFL}_{\cG'}(\Gamma,V_\uparrow)$.
\end{prop}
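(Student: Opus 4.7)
The plan is to derive all three claims (chain map, preservation of $\gr_w$, preservation of $A$) from a single basic identity describing how the bijection $\Phi_{\cG'}^{-1}\circ\Phi_\cG$ acts on characteristic covectors. Setting $K'=\Phi_{\cG'}^{-1}(\Phi_\cG(K))$ and comparing \eqref{eq:Spin-c-lattice-iso} for $\cG$ and $\cG'$, I expect to extract the identity
\[
K(v_i)+v_i\cdot_\cG v_i \;=\; K'(v_i)+v_i\cdot_{\cG'}v_i
\]
for every $v_i\in V_\Gamma$. For $v_i\in V_G$ this forces $K(v_i)=K'(v_i)$, since the two intersection forms agree off of $V_\uparrow$; for $v_i\in V_\uparrow$ the identity records a compensating cancellation between $K(v_i)-K'(v_i)$ and the framing discrepancy $\lambda_{v_i}^\cG-\lambda_{v_i}^{\cG'}$.

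With this identity in hand, the first step is to show $f_\cG(K,I)=f_{\cG'}(K',I)$ for every $I\subset V_\Gamma$. Expanding
\[
2f(K,I)\;=\;\sum_{v\in I}\bigl(K(v)+v\cdot v\bigr)\;+\;2\sum_{\{v,w\}\subset I,\ v\neq w}v\cdot w,
\]
the diagonal terms are invariant by the identity above, while the off-diagonal terms involve only pairings between distinct vertices, which depend solely on the edges of $\Gamma$. Taking minima yields $g_\cG(K,E)=g_{\cG'}(K',E)$. Since $v^*\in H^2(X_\Gamma)$ is the coordinate dual of $v$ (independent of the intersection form), a brief check using $\Phi$ shows that $(K+2v^*)'=K'+2v^*$ whenever $v\in V_G$. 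Combined with the observation that $(K(v)+v\cdot v)/2$ for $v\in V_G$ is independent of the framings on $V_\uparrow$ (so the $B$-term powers of $U$ match), these facts yield the chain map property for $F_{\cG,\cG'}$.

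Next I would handle the gradings. The formula \eqref{eq:grading-gr-w-lattice-link} for $\gr_w$ depends only on $g(K,E)$, on quantities intrinsic to $G$ (namely $|E|$, $|V_\uparrow|$, $\sigma(\XG)$, and $\chi(\XG)$), and on $K|_{\XG}^2$, which is determined entirely by $K(v)$ for $v\in V_G$. Each of these is preserved by $F_{\cG,\cG'}$, so $\gr_w$ is preserved. For the Alexander grading, the crucial observation is that $\hat{v}_j\in H_2(\XG;\Q)$ is a $\Q$-linear combination of classes in $V_G$, hence $K(\hat{v}_j)=K'(\hat{v}_j)$ and the pairings $v\cdot\hat{v}_j$ agree in both intersection forms. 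Unpacking Definition~\ref{def:Alexander-grading}, the only potentially discrepant contributions come from $K(v_j)$ and from $v_j\cdot v_j$ for $v_j\in V_\uparrow$, and the basic identity above ensures these cancel in pairs.

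The main obstacle is purely bookkeeping: one must carefully separate the quantities that depend on the framings of $V_\uparrow$ (namely $v_j\cdot v_j$ for $v_j\in V_\uparrow$ and the values $K(v_j)$ themselves) from those that are intrinsic to $\Gamma$, and then verify the expected cancellations. Once the key identity relating $K$ and $K'$ is isolated, each of the three verifications reduces to a short algebraic computation.
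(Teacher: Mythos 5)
Your strategy matches the paper's: both arguments reduce the chain-map and grading verifications to the two facts that $K|_{\XG}=K'|_{\XG}$ and $f_{\cG}(K,I)=f_{\cG'}(K',I)$ for all $I$ (hence $g_{\cG}(K,E)=g_{\cG'}(K',E)$), extracted from the defining equation $\Phi_{\cG}(K)=\Phi_{\cG'}(K')$; your per-vertex identity $K(v_i)+v_i\cdot_{\cG}v_i=K'(v_i)+v_i\cdot_{\cG'}v_i$ is just a pointwise repackaging of the same comparison, and the rest of the bookkeeping for $\gr_w$ and $A$ proceeds as in the paper. One small correction: $v^*$ in the $B$-terms of the differential is the \emph{Poincar\'e dual} of $v$, not the coordinate dual $\mu_v^*$, so it is \emph{not} a priori independent of the intersection form; your conclusion $(K+2v^*)'=K'+2v^*$ nonetheless holds because the differential of $\mathbb{CFL}(\Gamma,V_\uparrow)$ only involves $v\in V_G$, for which $\PD_{\cG}[v]$ and $\PD_{\cG'}[v]$ have identical coordinates under evaluation on $V_\Gamma$ (the row $v\cdot v_j$ does not involve any arrow framing).
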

\begin{proof} The differentials on $\mathbb{CFL}_{\cG}(\Gamma,V_\uparrow)$ and $\mathbb{CFL}_{\cG'}(\Gamma,V_\uparrow)$ are similar to Equation~\eqref{eq:differential_on_CF}, except that we take the sum only over the vertices $v\in V_G$. Note that there are no summands in the differential for $v\in V_\uparrow$.  We consider the restricted action of $H_2(\XG)\subset H_2(X_\Gamma)$ on $\Char(X_{\cG})$, $\Char(X_{\cG'})$, $\bH(L_{ \cG})$ and $\bH(L_{\cG'})$. Since the framings of the vertices of $V_G$ coincide on $X_\cG$ and $X_{\cG'}$, the identification $\bH(L_\cG)\iso \bH(L_{\cG'})$ is equivariant with respect to the action of $H_2(\XG)$.

  It is straightforward to verify that the map $\Phi_{\cG}$ is equivariant with respect to the action of $H_2(\XG)$. The same argument applies to show that $\Phi^{-1}_{\cG'}$ is equivariant as well.

Let $K\in \Char(X_{\cG})$ and write $K'=(\Phi_{ \cG'}^{-1}\circ \Phi_{\cG})(K)$. The remainder of the proof follows from the following two claims:
\begin{enumerate}
\item $K|_{\XG}=K'|_{\XG}$.
\item For all $E$, we have $f(K,E)=f(K',E). $
\end{enumerate}
We verify these two claims presently. 

The first claim follows from the proof of \cite{OSSLattice}*{Lemma~4.6}, which we repeat for the benefit of the reader using our present notation. Write $\ve{s}=\Phi_{\cG}(K)=\Phi_{\cG'}(K')$. Since $H^2(\XG)$ is torsion free, it suffices to show that $K|_{\XG}(v)=K'|_{\XG}(v)$ for each $v\in V_G$. Note that if $v_i\in V_G$, then $K(v_i)=2s_i-v_\cG\cdot v_i$. This quantity only depends on $\ve{s}$, on the framing of $K_i$ (the link component represented by the vertex $v_i$), and on the linking numbers of $K_i$ with  other link components. 
 In particular, $K(v_i)=K'(v_i)$, completing the proof of the first claim.

We now consider the second claim. By definition,
\[
2f(K,E)=K(v_E)+v_E\cdot v_E
\]
where $v_E$ is the sum of $v$ for $v\in E$. We may rearrange the above expression to obtain
\[
2f(K,E)=(v_E-v_\cG)\cdot v_E+\sum_{v\in E} (K(v)+v_\cG\cdot v)=(v_E-v_\cG)\cdot v_E+\sum_{v_i\in E} s_i.
\]
The right-hand side depends only on $\ve{s}$, the framing of $L_G$, and the linking numbers of the components of $L_\cG$, but not on the framings of $V_\uparrow$. This establishes the second claim.

From these considerations, it follows that $F_{\cG,\cG'}$ preserves the $\gr_w$-grading, and is also a chain map.

We now establish that $F_{\cG,\cG'}$ preserves the Alexander grading. Suppose that $v_i\in V_\uparrow$. The corresponding component of the Alexander grading is half of
\[
\begin{split}
K(v_i-\hat v_i)+\sum_{v\in V_\uparrow}v\cdot (v_i-\hat v_i)&=K(v_i)+\sum_{v\in V_\uparrow} v\cdot v_i-K(\hat v_i)-\sum_{v\in V_\uparrow} v\cdot \hat v_i\\
&=K(v_i)+\sum_{v\in V_\Gamma} v \cdot v_i-\sum_{v\in V_G} v\cdot v_i-K(\hat v_i)-\sum_{v\in V_\uparrow} v\cdot \hat v_i.
\end{split}
\]
The first two terms above sum to $2s_i$. The last three terms may be rewritten as follows:
\[
-\sum_{v\in V_G} \PD[v](v_i)-K(\hat v_i)-\sum_{v\in V_\uparrow} \PD[v](\hat v_i).
\]
In particular each term is the evaluation of an element of $H^2(X_\Gamma)$ on an element of $H_2(\XG;\Q)$. We observe that $\PD[v]|_{X_G}$, for $v\in \Gamma$, is independent of the framing on $V_\uparrow$. Furthermore, $K|_{X_G}=K'|_{X_G}$. Hence  $A_i([K,E])=A_i([K',E])$.

\end{proof}

\subsection{Freeness of the lattice complex}

Since $\scU_i\scV_i=U$ for all $i$, for $\ell>1$ the link lattice complex is not free over $\bF[[\scU_1,\scV_1,\dots, \scU_{\ell},\scV_{\ell}]]$. Nonetheless, we prove that the link lattice complex is free over $\bF[[\scU_i,\scV_i]]$, for each index $i$. 

\begin{prop}\label{prop:free}
Suppose that $\Gamma$ is an arrow decorated plumbing graph with a chosen vertex $v_i\in V_\uparrow$. Write $\scU_i$ and $\scV_i$ for the variables associated to $v_i$. Then the module $\mathbb{CFL}(\Gamma,V_\uparrow)$ is a completion of a free module over $\bF[[\scU_i,\scV_i]]$.
\end{prop}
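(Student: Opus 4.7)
The plan is to exhibit $\mathbb{CFL}(\Gamma, V_\uparrow)$ as a direct product, over $(\scU_i, \scV_i)$-orbits of its standard $\bF[[U]]$-generators, of completed free rank-one $\bF[[\scU_i, \scV_i]]$-submodules. Since $\scU_i$ and $\scV_i$ shift $K$ by $\mp 2\mu_i^*$ while preserving $E$ (see \eqref{eq:def:U}--\eqref{eq:def:V}), the orbits are naturally indexed by pairs $(E, [K])$ with $V_\uparrow \subset E$ and $[K] \in \Char(\Gamma) / 2\Z\mu_i^*$. Within each orbit I would write $x_k := [K + 2k\mu_i^*, E]$ for a chosen representative $K$.

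The first real step is to analyze $g_k := g(K + 2k\mu_i^*, E)$ as a function of $k$. The identity $f(K + 2k\mu_i^*, I) = f(K,I) + k$ when $v_i \in I$, and $f(K + 2k\mu_i^*, I) = f(K,I)$ otherwise, yields $g_k = \min(a,\, b_0 + k)$, where $a = \min\{f(K,I) : I \subset E,\, v_i \notin I\}$ and $b_0 = \min\{f(K,I) : v_i \in I \subset E\}$. In particular $g_k$ is piecewise linear with a unique transition point $k_0$ such that $g_k = b_0 + k$ for $k \leq k_0$ and $g_k = a$ for $k \geq k_0$. I would take $x_{k_0}$ as the distinguished generator of the orbit.

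With this choice, the action formulas simplify: $\scV_i x_k$ equals $U x_{k+1}$ for $k < k_0$ and $x_{k+1}$ for $k \geq k_0$, and symmetrically $\scU_i x_k$ equals $x_{k-1}$ for $k \leq k_0$ and $U x_{k-1}$ for $k > k_0$. A straightforward induction on $a+b$ then yields the key formula
\[
\scU_i^a \scV_i^b \cdot x_{k_0} = U^{\min(a,b)} \cdot x_{k_0 + b - a} \quad \text{for all } a,b \geq 0.
\]
Since the map $(a,b) \mapsto (\min(a,b),\, b-a)$ is a bijection $\Z_{\geq 0}^2 \to \Z_{\geq 0} \times \Z$, the elements $\scU_i^a \scV_i^b \cdot x_{k_0}$ are in bijection with the $\bF[[U]]$-basis $\{U^j x_k : j \geq 0,\, k \in \Z\}$ of the orbit. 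Hence the $\bF[\scU_i, \scV_i]$-submodule generated by $x_{k_0}$ is free of rank one, and passing to the $(\scU_i, \scV_i)$-adic completion recovers precisely the full submodule $\prod_{k \in \Z} \bF[[U]] \cdot x_k$ of $\mathbb{CFL}(\Gamma, V_\uparrow)$ contributed by the orbit.

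Summing (as a direct product) over all orbits identifies $\mathbb{CFL}(\Gamma, V_\uparrow)$ with the completion of the free $\bF[[\scU_i, \scV_i]]$-module on the collection $\{x_{k_0}\}$ of distinguished generators, as desired. The main point requiring care is the piecewise-linear analysis of $g_k$ and the well-definedness of the transition point $k_0$; once these are in hand, the module-theoretic identification is essentially a bookkeeping exercise. The other subtlety is clarifying the completion: the $\bF[[U]]$-product built into the definition of $\mathbb{CFL}(\Gamma, V_\uparrow)$ corresponds, via the displayed formula, to the $(\scU_i, \scV_i)$-adic completion of each rank-one summand in the direct product over orbits.
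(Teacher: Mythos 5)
Your proof is correct, and it uses the same orbit decomposition as the paper (into classes $\{[K+2k\mu_i^*,E]\}_{k\in\Z}$) but establishes the freeness of each orbit by a genuinely different route. The paper's proof proceeds through three qualitative claims: each adjacent pair of steps has exactly one of $\scU_i,\scV_i$ acting by $1$; the generator is eventually hit by $U$ in both directions; and a ``monotonicity'' statement (labeled ($f$-3)) asserting that once $\scV_i$ acts by $1$ it continues to do so. From these it argues abstractly that a unique free generator must exist. You instead compute the function $g_k=g(K+2k\mu_i^*,E)$ directly, observing that it splits as $\min(a,\,b_0+k)$ with $a=A_{v_i}(K,E)$ and $b_0=B_{v_i}(K,E)$, which makes the piecewise-linear structure and the transition point $k_0=a-b_0$ explicit, and then reads off $\scU_i^a\scV_i^b\cdot x_{k_0}=U^{\min(a,b)}x_{k_0+b-a}$ and the bijection with $\Z_{\geq 0}\times\Z$. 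Your approach packages the paper's three claims into a single, transparent formula, and buys you an explicit formula for the free generator ($k_0=a_{v_i}(K,E)-b_{v_i}(K,E)$ in the paper's notation), which would be convenient if one actually needed to exhibit the free basis underlying $\cX(G)^{\cK}$ in Section~\ref{sec:bordered-lattice}; the paper's version is slightly more abstract and avoids choosing a normalization of $k$. One minor terminological nit: you call $\{U^j x_k : j\geq 0,\ k\in\Z\}$ an ``$\bF[[U]]$-basis of the orbit''; it is the set of $\bF$-linear topological generators of the product $\prod_k \bF[[U]]x_k$. This does not affect the argument.
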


\begin{proof}
We consider the set of generators $[K,E]$ modulo the equivalence relation generated by  $[K,E]\sim [K+2\mu^*_i,E]$ for all $K$ and $E$. Equivalence classes may be identified with elements of $(\Char(\Gamma)\times \bP(V_G))/\Z$, where $1\in\Z$ acts on $\Char(\Gamma)$ by $2\mu_i^*$.

Fix an equivalence class, and let $W$ denote the $\bF[U]$-span of the generators in this class. We may write $W\iso\bigoplus_{s\in \Z} W_s$ where each $W_s\iso \bF[U]$. We order the $W_s$  so that $\scV_i\cdot W_s\subset W_{s+1}$ and $\scU_i\cdot W_s\subset W_{s-1}$. 
We make the following claims, from which the result will follow fairly easily:
\begin{enumerate}[label=($f$-\arabic*), ref=$f$-\arabic*]
\item\label{free-1} For each $s$, exactly one of $\scV_i\colon W_s\to W_{s+1}$ and $\scU_i\colon W_{s+1}\to W_s$ will be multiplication by 1, and the other will be multiplication by $U$. This follows from the fact that $\scU_i\cdot \scV_i$ acts by $U$.
\item\label{free-2} If $[K,E]$ is fixed, then $[K,E]$ is not in the image of $\scU^n_i$ or $\scV^n_i$ for arbitrarily large $n$. This follows immediately from Lemma~\ref{lem:power-series-rings}.
\item  \label{free-3}
If  $\scV_i\cdot [K,E]=[K+2\mu^*_i,E]$, then $ \scV_i\cdot [K+2\mu^*_i,E]=[K+4\mu^*_i,E].$  Similarly,  if $\scU_i\cdot [K,E]=[K-2\mu_i^*,E]$ then $\scU_i\cdot [K-2\mu_i^*,E]= [K-4\mu^*_{i},E]$.
\end{enumerate}

We now prove  claim~\eqref{free-3}, focusing on the argument for $\scV_i$ since the claim about $\scU_i$ is similar.
We recall from Section~\ref{sec:module-structure} that
\[
f(K+2\mu_i^*,E)=\begin{cases} f(K,E) & \text{ if } v_i\not \in E\\
f(K,E)+1& \text{ if } v_i\in E.
\end{cases}
\]
Hence, $\delta_i^+(K,E)=0$ if and only if there is a $J\subset E$ such that $f(K,J)=g(K,E)$ and $v_i\not \in J$. In particular, if $\scV_i\cdot [K,E]=[K+2\mu_i^*,E]$, then there exists such a $J$. Hence 
\[
g(K,E)=f(K,J)=f(K+4\mu^*_i,J)\ge g(K+4\mu^*_i,E)\ge g(K,E),
\]
so we have equality throughout. It follows that $\scV_i\cdot[K+2\mu_i^*,E]=[K+4\mu_i^*,E]$. 

Note that the claim~\eqref{free-3} implies there is no $[K,E]$ which is in the image of both $\scU_i$ and $\scV_i$, since if $[K,E]$ were in the image of both, then the above claims show that 
 \[
 \scU_i\scV_i [K-2\mu_i^*,E]=\scU_i[K,E]=[K-2\mu_i^*,E],
 \]
  which contradicts $\scU_i\scV_i=U$.
  
  Claims~\eqref{free-1} and~\eqref{free-2} imply that there exist generators $[K,E]$ in $W$ which are in the image of $\scU_i$, and there also exist generators which are in the image of $\scV_i$. (This rules out the module $\bF[\scU_i,\scV_i,\scV_i^{-1}]$ and $\bF[\scU_i,\scU_i^{-1}, \scV_i]$). 

From the above considerations, we obtain that there is a unique generator $[K,E]$ such that $\scU_i [K,E]=[K-2\mu^*_i,E]$ and $\scV_i[K,E]=[K+2\mu_i^*,E]$. By \eqref{free-3} this $[K,E]$ must be a free generator of $W$ over $\bF[\scU_i,\scV_i]$. 
\end{proof}

\subsection{Type-$D$ modules over $\cK$}

\label{sec:bordered-lattice}

If $G$ is a weighted plumbing tree (without arrow vertices) and $v$ is a distinguished vertex, we now describe how to view the lattice complex as a type-$D$ module over the algebra $\cK$, described by the third author \cite{ZemBordered}.

We first recall the algebra $\cK$  from \cite{ZemBordered}. It is an algebra over the idempotent ring $\ve{I}\iso \ve{I}_0\oplus \ve{I}_1$, where each $\ve{I}_\veps\iso \bF$. We define
\[
\ve{I}_0\cdot \cK \cdot \ve{I}_0\iso \bF[\scU,\scV],\quad \ve{I}_0\cdot \cK \cdot \ve{I}_1=0\quad \text{and} \quad \ve{I}_1\cdot \cK \cdot \ve{I}_1\iso \bF[\scU,\scV,\scV^{-1}].
\]
Finally, $\ve{I}_1\otimes \cK\otimes \ve{I}_0$ is isomorphic to the direct sum of two copies of $\bF[\scU,\scV,\scV^{-1}]$, viewed as being generated by two distinguished elements $\sigma$ and $\tau$. These elements satisfy
\[
\sigma \cdot \scU =U \scV^{-1}\cdot \sigma, \quad \sigma\cdot \scV=\scV\cdot \sigma
\]
\[
\tau\cdot \scU=\scV^{-1} \cdot\tau\quad \text{and} \quad \tau\cdot \scV=U\scV\cdot \tau.
\]
where $U=\scU\scV$.

In this section, we describe how to construct a type-$D$ module $\cX(G)^{\cK}$ from the data of $\mathbb{CF}(G)$. The construction of $\cX(G)^{\cK}$ may be equivalently  described as a tensor product of the Hopf, merge and solid torus modules from \cite{ZemBordered}, though we presently give a direct construction in terms of lattices. We define $\cX(G)^{\cK}$ at the end of the section, after we prove several properties about the lattice complex.

Write $\mathbb{CF}_0(G)$ for the codimension 1 subcube of $\mathbb{CF}(G)$ generated by tuples $[K,E]$ where $v\in E$. Write $\mathbb{CF}_1(G)$ for the codimension 1 subcube generated by tuples $[K,E]$ where $v\not \in E$. We view $\mathbb{CF}(G)$ as a mapping cone
\[
\mathbb{CF}(G)\iso \Cone\left(F_v^A+F_v^B\colon \mathbb{CF}_0(G)\to \mathbb{CF}_1(G)\right),
\]
where $F_v^A$ and $F_v^B$ are the summands of the differential which are weighted by $U^{a_v(K,E)}$ and $U^{b_v(K,E)}$, respectively.

We observe that $\mathbb{CF}_0(G)$ is exactly the link lattice complex if we designate the special vertex $v$ as the sole arrow vertex. In particular, Proposition~\ref{prop:free} implies that it is a completion of a free $\bF[[\scU,\scV]]$ module (where $\scU$ and $\scV$ are the actions for $v$). 

We may define actions of $\scU$ and $\scV$ also on $\mathbb{CF}_1(G)$, using the same formulas as in Section~\ref{sec:module-structure}. We first observe that the formulas have a comparatively easier description than on $\mathbb{CF}_0(G)$:

\begin{lem}\label{lem:actions-on-CF-1} On $\mathbb{CF}_1(G)$, we have
\[
\scU\cdot [K,E]=U[K-2\mu^*,E]\quad \text{and} \quad \scV\cdot[K,E]=[K+2\mu^*,E]
\]
for all $K$ and $E$ such that $v\not \in E$.
\end{lem}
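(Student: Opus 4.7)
The plan is to observe that both formulas follow by a direct computation from the definitions in Section~\ref{sec:module-structure}, once one notes that the hypothesis $v \notin E$ trivializes the functions $\delta^\pm$.

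Concretely, I would first recall from Section~\ref{sec:module-structure} that we have the identity
\[
f(K \pm 2\mu^*, I) = \begin{cases} f(K,I) \pm 1 & \text{if } v \in I \\ f(K,I) & \text{if } v \notin I. \end{cases}
\]
Since every subset $I \subset E$ satisfies $v \notin I$ (as $v \notin E$ by hypothesis), it follows that $f(K \pm 2\mu^*, I) = f(K,I)$ for all such $I$. Taking minima over $I \subset E$, we obtain
\[
g(K \pm 2\mu^*, E) = g(K,E),
\]
so $\delta^+(K,E) = 0$ and $\delta^-(K,E) = 0$.

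Plugging these values into the definitions \eqref{eq:def:U} and \eqref{eq:def:V} of $\scU$ and $\scV$, the case $\delta^-(K,E)=0$ yields $\scU \cdot [K,E] = U[K - 2\mu^*, E]$, while the case $\delta^+(K,E)=0$ yields $\scV \cdot [K,E] = [K + 2\mu^*, E]$, exactly as claimed.

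The proof is essentially a one-line unwinding of definitions, so there is no real obstacle; the only thing to check is the easy observation that subsets of a set not containing $v$ automatically do not contain $v$.
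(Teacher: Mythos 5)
Your proof is correct and follows exactly the same route as the paper: use $v\notin E$ to get $f(K\pm 2\mu^*,I)=f(K,I)$ for all $I\subset E$, deduce $g(K\pm 2\mu^*,E)=g(K,E)$, then read off the result from Equations~\eqref{eq:def:U} and~\eqref{eq:def:V}. Your write-up is in fact slightly more careful than the paper's (which abbreviates by only stating the $f$-equality for $I=E$), but the argument is the same.
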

\begin{proof}If $v\not \in E$, then $f(K,E)=f(K\pm 2\mu^*,E)$, and hence 
\begin{equation}
g(K,E)=g(K\pm 2\mu^*,E).
\label{eq:g-unchanged-adding-mu}
\end{equation} Both equations follow by applying this fact to the definition of $\scU$ and $\scV$ from Section~\ref{sec:module-structure}.
\end{proof}

As a consequence of the above, we may define an action of $\scV^{-1}$ on $\mathbb{CF}_1(G)$ via the formula $\scV^{-1}\cdot [K,E]=[K-2\mu^*,E]$. As an additional consequence of Lemma~\ref{lem:actions-on-CF-1}, we have the following easy analog to Proposition~\ref{prop:free}:

\begin{cor}\label{cor:free-idempotent-1}
 The complex $\mathbb{CF}_{1}(G)$ is the completion of a free module over $\bF[\scU,\scV,\scV^{-1}]$.
\end{cor}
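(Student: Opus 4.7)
The plan is to mimic the argument of Proposition~\ref{prop:free}, but to exploit the simpler behavior of the actions on $\mathbb{CF}_1(G)$ recorded in Lemma~\ref{lem:actions-on-CF-1}. Because $v \notin E$ for every generator $[K,E]$ of $\mathbb{CF}_1(G)$, we have $g(K,E) = g(K\pm 2\mu^*, E)$ (see \eqref{eq:g-unchanged-adding-mu}), and the actions of $\scU$ and $\scV$ reduce to the clean formulas $\scU \cdot [K,E] = U[K-2\mu^*, E]$ and $\scV\cdot[K,E] = [K+2\mu^*,E]$. In particular $\scV$ acts on generators by a free translation action of $\Z$ on $\Char(G)$ (no powers of $U$ are introduced), so it is injective and its action on generators admits an inverse, namely $\scV^{-1}\cdot[K,E] = [K-2\mu^*,E]$.

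First, I would partition the set $\{[K,E] : K \in \Char(G),\, E \subset V_G,\, v \notin E\}$ into equivalence classes under the relation $[K,E] \sim [K+2\mu^*, E]$. For each class choose a distinguished representative $[K_0,E]$, and let $W_{[K_0,E]}$ be the $\bF[[U]]$-span of the whole orbit, viewed as a submodule of $\mathbb{CF}_1(G)$. Then $\mathbb{CF}_1(G)$ is the direct product $\prod_{[K_0,E]} W_{[K_0,E]}$ (by its very definition as a direct product over generators), and each $W_{[K_0,E]}$ is preserved by the $\bF[\scU,\scV,\scV^{-1}]$-action.

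Second, I would show that each orbit submodule $W_{[K_0,E]}$ is the completion of a rank-one free $\bF[\scU,\scV,\scV^{-1}]$-module generated by $[K_0,E]$. For this, observe that the monomials $\scU^{a}\scV^{b}$ with $a \ge 0$ and $b \in \Z$ form an $\bF$-basis of $\bF[\scU,\scV,\scV^{-1}]$, and the module relation $\scU\scV = U$ (which holds in the action, even though not in the ring) gives
\[
\scU^{a}\scV^{b}\cdot[K_0,E] \;=\; U^{a}\cdot[K_0 + 2(b-a)\mu^*, E].
\]
The pairs $(a, b-a) \in \Z_{\ge 0}\times \Z$ are in bijection with $(a,m)$, and the elements $U^{a}[K_0 + 2m\mu^*, E]$ are $\bF$-linearly independent inside $W_{[K_0,E]}$. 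Hence the $\bF[\scU,\scV,\scV^{-1}]$-span of $[K_0,E]$ is the free rank-one submodule with $\bF$-basis $\{U^{a}[K_0 + 2m\mu^*, E] : a \ge 0,\, m \in \Z\}$, and $W_{[K_0,E]}$ is the completion obtained by allowing formal power series in $U$ at each $m$. Taking the direct product over orbits then identifies $\mathbb{CF}_1(G)$ with the completion of $\bigoplus_{[K_0,E]} \bF[\scU,\scV,\scV^{-1}]\cdot[K_0,E]$, as claimed.

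I do not anticipate any real obstacle; the content is essentially bookkeeping, and the work already done in Proposition~\ref{prop:free} (especially the argument that no generator lies in the image of both $\scU$ and $\scV$, which would contradict $\scU\scV = U$) transfers directly, but is in fact easier here because $\scV$ is literally invertible on generators and never gets twisted by a power of $U$. The only mild subtlety is matching the notion of \emph{completion} used in Proposition~\ref{prop:free}: just as in that proposition, the direct product defining $\mathbb{CF}(G)$ forces both the $U$-adic completion within each orbit and the passage from $\bigoplus$ to $\prod$ across orbits.
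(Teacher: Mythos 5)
Your proof is correct and takes the approach the paper intends: the paper presents this corollary as an ``easy analog to Proposition~\ref{prop:free}'' following immediately from the explicit action formulas in Lemma~\ref{lem:actions-on-CF-1}, and your orbit decomposition with the $\scU^a\scV^b\cdot[K_0,E]=U^a[K_0+2(b-a)\mu^*,E]$ bookkeeping is exactly the argument the paper leaves to the reader. The simplification you identify---that on $\mathbb{CF}_1(G)$ the actions of $\scU$ and $\scV$ never pick up extra $U$-powers because $g(K,E)=g(K\pm 2\mu^*,E)$ when $v\notin E$---is precisely why the paper records this as a corollary rather than repeating the more delicate analysis of Proposition~\ref{prop:free}.
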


Let $F_v^A$ denote the $A$-term of the differential which increments $v$. Write $F_v^B$ for the $B$-term of the differential which increments $v$.
\begin{lem} The map $F_v^A$ satisfies
\[
F_v^A(\scU\cdot \ve{x})=\scU\cdot F_v^A(\ve{x})\quad \text{and} \quad F_v^A(\scV\cdot \ve{x})=\scV\cdot F_v^A(\ve{x})
\]
for all $\ve{x}\in \mathbb{CF}_0(G)$. Similarly,
\[
F_v^B(\scU\cdot \ve{x})=\scV^{-1} \cdot F_v^B(\ve{x})\quad \text{and}\quad F_v^B(\scV\cdot \ve{x})=\scV U \cdot F_v^B(\ve{x}).
\]
\end{lem}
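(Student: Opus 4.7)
The plan is to verify all four identities by direct computation, expanding both sides in terms of the $g$-function and matching characteristic classes and $U$-exponents separately. In each case both sides will be shown to equal $U^n \cdot [K', E-v]$ for the same shifted characteristic class $K'$ and integer $n$.

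The workhorse of the argument will be the identity
\[
g(K\pm 2\mu^*, E') = g(K, E') \quad \text{whenever } v\notin E',
\]
which is the same observation that underlies Lemma~\ref{lem:actions-on-CF-1}: adding $2\mu^*$ leaves $f(K,I)$ unchanged for any $I$ not containing $v$. For the two relations involving $F_v^A$, both sides of each identity will land in $[K\pm 2\mu^*, E-v]$; on $\mathbb{CF}_1(G)$ I will use the simple formulas of Lemma~\ref{lem:actions-on-CF-1}, while on $\mathbb{CF}_0(G)$ I will use the formulas of Section~\ref{sec:module-structure}. Matching $U$-exponents then reduces to the telescoping identity
\[
\delta_v^{\pm}(K,E) + a_v(K\pm 2\mu^*, E) = a_v(K,E),
\]
which collapses directly via the invariance of $g$ applied with $E' = E-v$.

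The relations involving $F_v^B$ require slightly more care because the class $v^*\in H^2(X_G)$ appearing in the $B$-term of the differential is distinct from the dual $\mu^*$ used in the module action: $v^*$ is the Poincar\'e dual of the closed surface $v$ (so $v^*(u) = v\cdot u$), whereas $\mu^*$ is the co-core dual of $v$ regarded as an arrow vertex (so $\mu^*(u) = \delta_{v,u}$). After applying $F_v^B$ either before or after the module action, both sides of each relation will land in $[K\pm 2\mu^* + 2v^*, E-v]$, so characteristic classes will match automatically. The $U$-exponents will then be matched using
\[
B_v(K\pm 2\mu^*, E) = B_v(K,E) \pm 1,
\]
which follows from the formula $B_v(K,E) = (K(v)+v\cdot v)/2 + g(K+2v^*,E-v)$, the identity $(K\pm 2\mu^*)(v) = K(v)\pm 2$, and one more application of the invariance of $g$ to the term $g(K + 2v^* \pm 2\mu^*, E-v)$. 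These $\pm 1$ shifts in $B_v$ are exactly what produce the extra factors of $\scV^{-1}$ and $\scV U$ in the two $F_v^B$ identities.

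I do not anticipate any serious obstacle here: the entire argument is bookkeeping of $U$-exponents, and every appearance of $g$ on a subset not containing $v$ is invariant under $K\mapsto K\pm 2\mu^*$, so each identity ultimately collapses to a numerical statement involving $(K(v)+v\cdot v)/2$. The only thing to keep straight is the dual-basis versus natural-basis distinction between $\mu^*$ and $v^*$, which is why the $F_v^B$ identities look asymmetric between $\scU$ and $\scV$ while the $F_v^A$ identities do not.
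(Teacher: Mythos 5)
Your proposal is correct and follows essentially the same route as the paper's proof: both sides are expanded via the $g$-function and the key is the invariance $g(K\pm 2\mu^*,E')=g(K,E')$ for $v\notin E'$ (the paper's Equation~\eqref{eq:g-unchanged-adding-mu}). The only difference is presentational — you package the $U$-exponent bookkeeping into the telescoping identity $\delta_v^\pm+a_v(K\pm 2\mu^*,E)=a_v(K,E)$ and the shift $B_v(K\pm 2\mu^*,E)=B_v(K,E)\pm 1$, whereas the paper writes out and compares the full exponents directly.
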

\begin{proof} We begin with the map $F_v^A$. From direct computation,  
\[
\begin{split}
\scV\cdot  F_v^A([K,E])=&U^{g(K+2\mu^*,E-v)-g(K,E)}\cdot [K+2\mu^*,E-v]\\
=&F_v^A(\scV\cdot [K,E]).
\end{split}
\]
An entirely analogous argument shows that $\left[F_v^A,\scU\right]=0$.

Next, we consider the commutation of $F_v^B$ with $\scU$. We compute that
\[
\begin{split}
F_v^B(\scU\cdot [K,E])
&=U^{g(K+2v^*-2\mu^*,E-v)-g(K,E)+1+(K-2\mu^*)(v)/2+v^2/2}\cdot [K-2\mu^*+2v^*,E-v]\\
&=\scV^{-1}\cdot U^{g(K+2v^*,E-v)-g(K,E)+K(v)/2+v^2/2}\cdot [K+2v^*,E-v] \\
&=\scV^{-1}\cdot F_v^B\left([K,E]\right).
\end{split}
\]
Going from the first line to the second, we are using Equation~\eqref{eq:g-unchanged-adding-mu}.

Next, we consider the commutation of $F_v^B$ with $\scV$. We compute
\[
\begin{split}
F_v^B\left(\scV\cdot [K,E]\right)
&=U^{g(K+2\mu^*+2v^*,E-v)-g(K,E)+(K+2\mu^*)(v)/2+v\cdot v/2}\cdot[K+2\mu^*+2v^*,E-v] \\
&=U\scV\cdot  \left(U^{g(K+2v^*,E-v)-g(K,E)+K(v)/2+v\cdot v/2}\cdot [K+2v^*,E-v]\right)\\
&=U\scV\cdot F_v^B\left([K,E]\right).
\end{split}
\]
Going from the first line to the second, we use Equation~\eqref{eq:g-unchanged-adding-mu}. The proof is complete.
\end{proof}

We are now able to define the type-$D$ module $\cX(G)^{\cK}$. As a right $\ve{I}$-module, we write $\cX(G)=\bX_0\oplus \bX_1$, where each $\bX_\veps$ is a vector space over $\bF$. We define $\bX_0$ to be the $\bF$ vector space generated by a free $\bF[\scU,\scV]$-basis of $\mathbb{CF}_0(G)$ from Proposition~\ref{prop:free}. We define $\bX_1$ to be the $\bF$ vector space generated by a free $\bF[\scU,\scV,\scV^{-1}]$-basis of $\mathbb{CF}_1(G)$ from Corollary~\ref{cor:free-idempotent-1}.

We now define the structure map 
\[
\delta^1\colon \cX(G)\to \cX(G)\otimes_{\ve{I}} \cK.
\]
The construction is entirely analogous to the setting of the link surgery formula. See \cite{ZemBordered}*{Section~8.5} for a parallel construction. If $\xs$ is a basis element of $\mathbb{CF}_0(G)$ and $\d(\xs)$ has a summand of $\scU^i\scV^j\cdot \ys,$ where $\ys$ is a basis element, then we define $\delta^1(\xs)$ to have a summand of $\ys\otimes \scU^i\scV^j$. We make a similar definition for basis elements of $\mathbb{CF}_1(G)$. Next, if $F_v^A(\xs)=\scU^i\scV^j\cdot \ys$, we declare $\delta^1(\xs)$ to also have a summand of $\ys\otimes \scU^i\scV^j\sigma$. Similarly, if $F_v^B(\xs)=\scU^i\scV^j\cdot \ys$, then we declare $\delta^1(\xs)$ to have a summand of $\ys\otimes \scU^i\scV\tau$.  It is straightforward to verify that $\cX(G)^{\cK}$ satisfies the type-$D$ structure relations.
 
 We note that the underlying vector space of $\cX(G)^{\cK}$ is infinite dimensional, so completions play a subtle yet important role in the theory. We leave it to the reader to verify that the modules satisfy the \emph{Alexander module} condition described in \cite{ZemBordered}*{Section~6}.

\subsection{An example}
\label{sec:linklatticehomologyofHopflink}

In this subsection, we compute the link lattice homology of $T(2, 2)$, the positive Hopf link. 
The Hopf link in $S^{3}$ can be presented by the plumbing graph $\Gamma$ with one solid and two arrow vertices, together with two edges connecting the solid vertex with two arrow vertices respectively. The solid vertex has weight $-1$, we assign $-3$ and $-2$ to the arrow vertices.
 Denote the solid vertex by $v_0$ and the two arrow vertices by $v_1, v_2$. The link lattice complex $\mathbb{CFL}(\Gamma, V_{\uparrow})$ is generated by the elements $[K, E_1]$ and $[K, E_2]$ as a $\F[[U]]$-module where $K\in\Char(\Gamma)$ and
\[E_1=\{v_1, v_2\} \textup{ and } E_2=\{v_0, v_1, v_2\}.\]
It is not hard to see that
\[\Char(\Gamma)=\{K=[2n+1,2m_1+3,2m_2+2]\colon n,m_1,m_2\in\Z\},\]
where writing $K=[2n+1,2m_1+3,2m_2+2]$ means that
$K(v_0)=2n+1, K(v_1)=2m_1+3, K(v_2)=2m_2+2$.
We compute the differentials:
\begin{align*}
  \partial [K, E_1]&=0\\
  \partial [K, E_2]&=U^{a_{v_{0}}[K, E_2]}\otimes [K, E_1]+U^{b_{v_{0}}[K,E_2]} \otimes [K+2v_{0}^{\ast}, E_1].
\end{align*}
By direct computations, 
\begin{align*}
  g(K, E_2)&=\min\{0, n, m_1, m_2, m_1+n+1, m_2+n+1, m_1+m_2, m_1+m_2+n+2\}\\
  A_{v_{0}}(K, E_2)&=\min \{0, m_1, m_2, m_1+m_2\}\\
  B_{v_{0}}(K, E_2)&=\min\{n, m_1+n+1, m_2+n+1, m_1+m_2+n+2\}.
\end{align*}
Hence 
\begin{equation}
\partial [K,E_2]=
\begin{cases}
[K, E_1]+U^n [K+2v_{0}^{\ast}, E_1]&\text{ if } n\geq 0, m_1\geq 0, m_2\geq 0, \\
[K, E_1]+U^{n+1}[K+2v_{0}^{\ast}, E_1]& \text{ if } n\geq 0, m_1 <0, m_2\geq 0 \textup{ or } m_1\geq 0, m_2<0, \\
[K, E_1]+U^{n+2}[K+2v_{0}^{\ast}, E_1]& \text{ if } n\geq 0, m_1<0, m_2< 0, \\
U^{-n}[K, E_1]+[K+2v_{0}^{\ast}, E_1]& \text{ if } n< 0, m_1\geq 0, m_2\geq 0, \\
U^{-n-1}[K, E_1]+[K+2v_{0}^{\ast}, E_1]& \text{ if } n< 0, m_1 <0, m_2\geq 0 \textup{ or } m_1\geq 0, m_2<0, \\
U^{-n-2}[K, E_1]+[K+2v_{0}^{\ast}, E_1]& \text{ if } n< -1, m_1< 0, m_2< 0.\\
[K, E_1]+U[K+2v_{0}^{\ast}, E_1]& \text{ if } n=-1, m_1<0, m_2<0.
\end{cases}
\label{ex:toruslink}
\end{equation}
Therefore, the link lattice homology $\mathbb{HFL}(\Gamma, V_{\uparrow})$ is concentrated on the elements $[K, E_1]$ where $K=[-3, 2m_1+3, 2m_2+2]$ for $m_1<0, m_2<0$ and $K=[-1, 2m_1+3, 2m_2+2]$ otherwise. We now consider the module structure, that is the actions of $\scU_1, \scU_2, \scV_1, \scV_2$ on these elements.
By \eqref{eq:UV-def}, 
$$\scU_i\cdot [K, E_1]=U^{g(K-2\mu_i^\ast, E_1)-g(K, E_1)+1}[K-2\mu_i^\ast, E_1],$$
$$\scV_i\cdot [K, E_1]=U^{g(K+2\mu_i^\ast, E_1)-g(K, E_1)}[K+2\mu_i^\ast, E_1].$$ 
Suppose $K=[2n+1,2m_1+3,2m_2+2]$. Then 
$$g(K, E_1)=\min\{0, m_1, m_2, m_1+m_2\},$$
$$g(K\pm 2\mu_1^\ast, E_1)=\min\{0, m_1\pm 1, m_2, m_1\pm 1+m_2\}.$$
Similarly,
$$g(K\pm 2\mu_2^\ast, E_1)=\min\{0, m_1, m_2\pm 1, m_1+m_2\pm 1\}.$$
Hence, for $i=1$ or $2$, we have
\[
\scU_i [K,E_1]=
\begin{cases}
U[K-2\mu_i^\ast, E_1]& \text{ if } m_i> 0\\
[K-2\mu_i^\ast, E_1] & \text{ if } m_i\leq 0.
\end{cases}
\]
\[
\scV_i [K,E_1]=
\begin{cases}
U[K+2\mu_i^\ast, E_1]& \text{ if } m_i<0 \\
[K+2\mu_i^\ast, E_1] & \text{ if } m_i\geq  0.
\end{cases}
\]
It follows that when $m_1=m_2=0$ and $m_1=m_2=-1$ the corresponding generators $[K, E_1]$ are in the image neither of $\scU_i$ nor of $\scV_i$.  In particular, the lattice link homology of $T(2,2)$ is generated over $\F[\scU_1, \scU_2, \scV_1, \scV_2]$ by the elements
\[
\ve{X}:=[K_1,E_1]\quad \text{and} \quad \ve{Y}:=[K_2,E_1],
\]
where $K_1:=[-1,3,2]$ and $K_2:=[-3,1,0]$. 

Using Definition~\ref{def:Alexander-grading}, one easily computes
\[
A(\ve{X})=(0,0)\quad \text{and} \quad A(\ve{Y})=(-1,-1). 
\]
Additionally, since $\chi(X_{G})=2$ and $\sigma(X_{G})=-1$,  we compute using Equation~\eqref{eq:grading-gr-w-lattice-link} that
\[
\begin{split}
\gr_{w}(\ve{X})&=\frac{1}{4}(K|_{\XG}^2-3\sigma(\XG)-2\chi(\XG))=\frac{1}{4}(K|_{\XG}^2-1)=0\\
\gr_{w}(\ve{Y})&=-4+\frac{1}{4}(K|_{\XG}^2-1)=-4+2=-2.
\end{split}
\]

Finally,  if $i\neq j$, we have the following equalities as elements of homology:
\[
\scU_i\cdot \ve{X}=U[K_1-2\mu_i^\ast, E_1]=[K_1+2v_0^\ast-2\mu_i^\ast, E_1]=\scV_j\cdot \ve{Y}.
\]
 Comparing the gradings we see that these two relations (corresponding to $(i,j)=(1,2),(2,1)$)
generate all relations between the homology classes of $\ve{X}$ and $\ve{Y}$. 
In Theorem \ref{thm:toruslink}, we compute the link Floer homology of $T(n, n)$. Readers can verify that the above description coincides with the link Floer homology of $T(2, 2)$ (see Figure~\ref{fig:free-resolutions} for the link Floer complex).

Using these techniques, it is also possible to compute the $H$-function of $T(2,2)$. By using \eqref{eq:grading-gr-w-lattice-link}, one can compute $\gr_{w}([K_{m_1,m_2}, E_1])$ for $m_1,m_2\in\Z$ and
$K_{m_1,m_2}=[-3,2m_1+3,2m_2+2]$ if $m_1,m_2< 0$, $K_{m_1,m_2}=[-1,2m_1+3,2m_2+2]$ otherwise.  The function 
\[
(a,b)\mapsto -\frac12\gr_w([K_{a-1/2,b-1/2},E_1])
\]
agrees with the $H$-function for the Hopf link; see Figure~\ref{h-function}. 


\section{The equivalence with link Floer homology}\label{sec:equivalence}

In this section, we prove that link lattice homology and link Floer homology are isomorphic. The argument follows from similar logic to the case of 3-manifolds \cite{OSSLattice} \cite{ZemHFLattice}.

\begin{thm}\label{thm:lattice=HFL} Suppose that $\Gamma$ is an arrow-decorated plumbing tree  with vertex set $V_G\cup V_\uparrow$, such that $\YG$ is a rational homology 3-sphere. For each $\frs\in \Spin^c(\YG)$, there is an absolutely $(\gr_w,A)$-graded isomorphism of $A_\infty$-modules over $\ve{\scR}_\ell$: 
\[
\mathbb{CFL}(\Gamma,V_\uparrow,\frs)\simeq \ve{\cCFL}(\YG,L_\uparrow,\frs).
\]
Here, both $\mathbb{CFL}$ and $\ve{\cCFL}$ are equipped with the natural $A_\infty$-module structures which have only $m_1$ and $m_2$ non-trivial.
\end{thm}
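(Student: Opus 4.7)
The plan is to identify $\ve{\cCFL}(Y_\Gamma,L_\uparrow)$ with $\mathbb{CFL}(\Gamma,V_\uparrow)$ using the third author's bordered link surgery formula from \cite{ZemBordered}, together with the type-$D$ module $\cX(G)^{\cK}$ developed in Section~\ref{sec:bordered-lattice}. Since $L_\Gamma = L_G\cup L_\uparrow$ is an iterated connected sum of Hopf links indexed by $\Gamma$ and $(Y_\Gamma,L_\uparrow)\cong (S^3_\Lambda(L_G), L_\uparrow)$, the bordered surgery formula expresses $\ve{\cCFL}(Y_\Gamma,L_\uparrow)$ as an iterated box tensor product of elementary bordered modules: a Hopf module for each vertex of $\Gamma$, a merge module for each edge, and a solid torus module encoding the framing $\lambda_v$ for each $v\in V_G$. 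No solid torus module is tensored in for $v\in V_\uparrow$, so the corresponding $\scU_i,\scV_i$ variables survive in the output, yielding an $A_\infty$-module over $\ve{\scR}_\ell$.

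The key identification is that the type-$D$ piece obtained by tensoring the Hopf, merge, and solid torus modules for $V_G$ and the edges of $G$ agrees with $\cX(G)^{\cK}$ from Section~\ref{sec:bordered-lattice}. This is essentially tautological from the algebra relations in $\cK$: the generators $\sigma,\tau\in \cK$ were designed precisely so that the structure map $\delta^1$ on $\cX(G)^{\cK}$ encodes the $A$- and $B$-terms of the lattice differential in Equation~\eqref{eq:differential_on_CF}, as verified directly in Section~\ref{sec:bordered-lattice}. Pairing $\cX(G)^{\cK}$ with the type-$A$ Hopf modules associated to the arrow vertices then recovers $\mathbb{CFL}(\Gamma,V_\uparrow)$ on the nose, with the matching of free $\bF[[\scU_i,\scV_i]]$-generators supplied by Proposition~\ref{prop:free} and with the $\scU_i,\scV_i$ actions reading off from Equations~\eqref{eq:def:U} and \eqref{eq:def:V}.

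It remains to verify that the identification respects the absolute $(\gr_w,A)$-grading and the $\Spin^c$ decomposition. The absolute Maslov grading that the surgery formula puts on $\ve{\cCFL}(Y_\Gamma,L_\uparrow)$ comes from the 4-manifold quantity $\tfrac14(K^2 - 3\sigma(X_\Gamma) - 2\chi(X_\Gamma))$ together with the cube grading, matching Equation~\eqref{eq:grading-gr-w-lattice-link} on the nose; the Alexander grading matches via the identification $\Phi_\cG$ of characteristic vectors with the linking lattice in Equation~\eqref{eq:Spin-c-lattice-iso}; and the $\Spin^c$ splitting follows from Section~\ref{sec:spin^c}. Independence from the auxiliary framing chosen on $V_\uparrow$ is guaranteed by Proposition~\ref{prop:ind-framing}.

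The main obstacle is therefore not conceptual but bookkeeping: one must carefully verify that the iterated box tensor product really reproduces $\cX(G)^{\cK}$ as a type-$D$ module (rather than some merely quasi-isomorphic variant) and that, after pairing with the arrow-vertex type-$A$ factors, the resulting $A_\infty$-action over $\scR_\ell$ extends continuously to the completion $\ve{\scR}_\ell$, as demanded by Lemma~\ref{lem:power-series-rings}. The technical heart of the argument is a direct comparison of the local model at a single vertex (the Hopf link surgery calculation) with the corresponding idempotent summands of $\cX(G)^{\cK}$ from Lemma~\ref{lem:actions-on-CF-1} and Corollary~\ref{cor:free-idempotent-1}; once this is in hand, a formal assembly along $\Gamma$ propagates the local match to the global identification.
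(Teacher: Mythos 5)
There is a genuine gap in the proposal, and it occurs at what you call the ``key identification.'' You claim that the type-$D$ piece obtained by box-tensoring the Hopf, merge, and solid torus modules for $V_G$ ``agrees with $\cX(G)^{\cK}$ from Section~\ref{sec:bordered-lattice}'' and that ``this is essentially tautological from the algebra relations in $\cK$.'' It is not. The module $\cX(G)^{\cK}$ of Section~\ref{sec:bordered-lattice} is built \emph{from the lattice complex} $\mathbb{CF}(G)$: its generators at idempotent $0$ are a free $\bF[\scU,\scV]$-basis of $\mathbb{CF}_0(G)$ and its $\delta^1$ encodes the lattice differential. The bordered box tensor product of the elementary bimodules from \cite{ZemBordered}, on the other hand, computes the (full) Manolescu--Ozsv\'{a}th link surgery complex, which is a hypercube with higher-length arrows. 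Identifying these two type-$D$ modules up to homotopy is exactly the content of the main theorem of \cite{ZemHFLattice}; it is where the real work lives, and your argument silently assumes it. The algebra $\cK$ was designed to encode the surgery formula, not the lattice differential, so the match is a theorem, not a definitional tautology.

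Concretely, the step you skip is the passage from the honest hypercube $\cC_{\Lambda}(L_G,L_\uparrow)$ to the hypercube $\cZ$ that keeps only length-$1$ edges (compare Proposition~\ref{prop:chain-isomorphism=H-lattice-CFL}): the paper's proof replaces the hypercube structure by the modified model $\cC'_{\Lambda}(L_G,L_\uparrow)$ of \cite{ZemHFLattice} and uses the Hopf-grading filtration to show the homological-perturbation differential $d_{\veps,\veps'}$ vanishes for $|\veps-\veps'|_{L^1}>1$. That filtration argument, together with the $\ve{\cR}_\ell$-equivariance claims (\ref{equivariance-1}) and (\ref{equivariance-2}), is what makes the lattice model come out. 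Your proposal also leans on $\cX(G)^\cK$, which the paper defines with a \emph{single} distinguished vertex; if you want a genuinely bordered proof, you must first construct and justify a multi-arrow type-$D$ module over $\cK^{\otimes \ell}$ and prove it reproduces $\mathbb{CFL}(\Gamma,V_\uparrow)$ after pairing, rather than appealing to a single-vertex object. The grading and $\Spin^c$ bookkeeping you outline (Equations~\eqref{eq:grading-gr-w-lattice-link} and \eqref{eq:Spin-c-lattice-iso}, Proposition~\ref{prop:ind-framing}) is fine, but it is downstream of the main equivalence and does not fill this gap.
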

The proof of Theorem~\ref{thm:lattice=HFL} is completed in Subsection~\ref{sub:lattice=HFL}. We now provide a sketch of the proof. We will use a relative version of the Manolescu--Ozsv\'{a}th link surgery formula \cite{MOIntegerSurgery}, which computes link Floer homology as a subcube of the full link surgery hypercube. This is stated in Theorem~\ref{thm:Manolescu-Ozsvath-subcube}. From here, we follow the approach of \cite{ZemHFLattice} and view $L_\Gamma$ as a connected sum of Hopf links. Using a tensor product formula from \cite{ZemBordered} for the link surgery complex, one obtains a combinatorial model for the link surgery complex of $L_\Gamma$.

Following the approach of Ozsv\'{a}th, Stipsicz and Szab\'{o} \cite{OSSLattice}, one may identify the lattice complex with a simplified version of the link surgery hypercube obtained by taking the homology of the link surgery complex at each vertex of the cube $\{0,1\}^\ell$, and using only the length 1 maps of the link surgery hypercube. In \cite{ZemHFLattice}, the third author shows directly using the connected sum formula for the link surgery formula that the link surgery complex for $L_\Gamma$ is chain homotopy equivalent to this simplified model of the link surgery complex. When $b_1(\YG,\Q)=0$, we show that this homotopy equivalence induces a homotopy equivalence between the link lattice complex  and the corresponding quotient complex of the link surgery complex of $L_\Gamma$. We show that the morphisms in this homotopy equivalence are well-behaved with respect to the actions of $\scU_i$ and $\scV_i$, and give a homotopy equivalence of $A_\infty$-modules.

\subsection{The link surgery complex and sublinks}

As a first step, we describe a refinement of the Manolescu and Ozsv\'{a}th link surgery formula. If $L\subset S^3$ is a link equipped with integral framing $\Lambda$, then Manolescu and Ozsv\'{a}th construct a chain complex $\cC_{\Lambda}(L)$ over $\bF[[U]]$ (defined in terms of the link Floer complex $\cCFL(S^3,L)$, equipped with additional data)
and prove that
\[
H_*(\cC_{\Lambda}(L))\iso \ve{\HF}^-(S^3_{\Lambda}(L)).
\]

There is a refinement of this result which can be used to compute link Floer homology, as follows. Suppose that $M=J\cup L\subset S^3$ is a partitioned link with $|M|=n$ and $|L|=\ell$. Equip $J$ with a framing $\Lambda$. We may extend $\Lambda$ arbitrarily to a framing $\Lambda'$ on all of $M$ to obtain a link surgery complex $\cC_{ \Lambda'}(M)$ whose homology is $\ve{\HF}^-(S^3_{\Lambda'}(M))$.

The link surgery complex $\cC_{ \Lambda'}(M)$ is a \emph{hypercube} of chain complexes, which means that it admits a natural filtration by the integral points of the cube $\{0,1\}^{n}$, where $n=|M|$.
While more details are given in \cite{MOIntegerSurgery}*{Section~5},
we give some necessary background and introduce the notation. For any $\varepsilon\in\{0,1\}^n$, we consider the
multiplicatively closed subset $S_\veps\subset \scR_n$ generated by $\scV_i$ such that $\veps(i)=1$. The complex
$\cC_\varepsilon$ is defined as the algebraic completion of the localization $S_\veps^{-1}\cdot \cCFL(M)$.
We remark that the original definition in \cite{MOIntegerSurgery} is seemingly different, though the equivalence with the above description follows from
\cite{ZemBordered}*{Lemma~5.7}.

If $\veps,\veps'\in\{0,1\}^n$, we write $\veps\le \veps'$ if $\veps_i\le \veps_i'$ for all $i$. We write $\veps<\veps'$ if $\veps\le \veps'$ and $\veps\neq \veps'$. If $\veps,\veps'\in \{0,1\}^n$ and $\veps<\veps'$, Manolescu and Ozsv\'{a}th construct a map
$D_{\veps,\veps'}\colon\cC_\veps\to\cC_{\veps'}$; see \cite{MOIntegerSurgery}*{Section 5}. The chain complex $\cC_{\Lambda'}(M)$ is the direct
sum of complexes $\cC_\veps$ and the differential is the sum of the internal differentials in $\cC_\veps$ and of the maps $D_{\veps,\veps'}$.

We now describe a relative complex $\cC_{\Lambda}(J,L)$. Note that
each axis direction in $\{0,1\}^n$ corresponds to a component of $M$. Hence, we may consider the quotient complex $\cC_{\Lambda}(J,L)$ obtained by quotienting the subcomplex of $\cC_{ \Lambda'}(M)$ consisting of those $\cC_\varepsilon$ such that $\veps(i)=1$ 
for at least one index $i$ corresponding
to $L$.
Examining Manolescu and Ozsv\'{a}th's construction, it is evident that $\cC_{\Lambda}(J, L)$ is independent of the framings of the $L$ components.

Furthermore, the module $\cC_{\Lambda}(J,L)$ has a natural action of the ring $\ve{\scR}_\ell$, corresponding to the variables for $L$. The underlying spaces $\cC_\varepsilon$ are preserved by this $\ve{\scR}_\ell$-module structure. 
It follows from \cite{ZemBordered}*{Lemma~5.9} that the hypercube maps of $\cC_{\Lambda}(J,L)$ commute with the action of $\ve{\scR}_\ell$, i.e. the action of the variables from $L$. Note that in general the differential on $\cC_{\Lambda}(J,L)$ will \emph{not} commute with the actions of the variables from $J$. The next result is important for our purposes:

\begin{thm}\label{thm:Manolescu-Ozsvath-subcube} Suppose that $M\subset S^3$ is a link which is partitioned into two sublinks $M=J\cup L$. Let $\Lambda$ be an integral framing on $J$ and write $\ell=|L|$. Then there is a homotopy equivalence of chain complexes over $\ve{\scR}_\ell$:
\[
\ve{\cCFL}(S^3_\Lambda(J),L)\simeq \cC_{\Lambda}(J,L). 
\]
Furthermore, this isomorphism respects $\Spin^c$ structures under an isomorphism
\[
\Spin^c(S^3_\Lambda(J))\iso \bH(M)/(\Span(\mu^*_1,\dots, \mu^*_\ell)+H_2(W_\Lambda(J))),
\]
where $W_{\Lambda(J)}$ is the cobordism from $S^{3}$ to the surgery on the link $J$. 
\end{thm}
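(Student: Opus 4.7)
The plan is to deduce this theorem as a relative version of the Manolescu--Ozsv\'{a}th link surgery formula, using its extension to the full link Floer complex as in \cite{MOIntegerSurgery, ZemBordered}. First I would extend $\Lambda$ to some integral framing $\Lambda'$ on all of $M=J\cup L$ and consider the full hypercube $\cC_{\Lambda'}(M)$ over $\{0,1\}^n$, with $n=|J|+|L|$. Factoring the indexing cube as $\{0,1\}^{|J|}\times\{0,1\}^{|L|}$, the quotient $\cC_{\Lambda}(J,L)$ is identified with the sub-hypercube supported on $\{0,1\}^{|J|}\times\{\ve{0}\}$, equipped with the restricted hypercube maps. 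The $\ve{\scR}_\ell$-equivariance of these maps (cited from \cite{ZemBordered}*{Lemma~5.9}) makes this into an $\ve{\scR}_\ell$-linear complex and makes framing-independence of $\Lambda'|_L$ transparent.

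The main content is identifying this sub-hypercube with $\ve{\cCFL}(S^3_\Lambda(J), L)$. I would argue by induction on $|J|$. The base case $|J|=0$ is tautological: the sub-hypercube reduces to $\ve{\cCFL}(S^3,L)$ at the single vertex $\ve{0}$. For the inductive step, peel off one component $K$ of $J$, write $\Lambda_0=\Lambda|_{J-K}$, and apply the induction hypothesis to identify $\ve{\cCFL}(S^3_{\Lambda_0}(J-K), K\cup L) \simeq \cC_{\Lambda_0}(J-K, K\cup L)$ as hypercubes of $\ve{\scR}_{\ell+1}$-modules. Then apply the Ozsv\'ath--Szab\'o integer surgery formula in its relative form (surgery on the single component $K$ with $L$ retained as an unsurgered link, extended to full link Floer) to write $\ve{\cCFL}(S^3_\Lambda(J), L)$ as a mapping cone of the two cobordism-induced maps between the $\{0\}$- and $\{1\}$-localizations of $\cC_{\Lambda_0}(J-K, K\cup L)$ in the $K$-direction. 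This mapping cone adjoins the $K$-coordinate to the hypercube, and iterating over all components of $J$ recovers $\cC_\Lambda(J,L)$; higher-length maps are determined by the hypercube relations combined with the standard filling argument of \cite{MOIntegerSurgery}.

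The $\Spin^c$ assertion follows from the standard $\Spin^c$-decomposition of the MO hypercube: each $\cC_\veps$ splits over $\bH(M)/(H_2(W_\Lambda(J)) + \Span\{\mu_i^*: \veps(i)=1\})$, so restricting to vertices with $\veps(i)=0$ for all $i\in L$ yields $\bH(M)/(H_2(W_\Lambda(J))+\Span\{\mu_i^*: i\in L\})$, which matches $\Spin^c(S^3_\Lambda(J))$ via the usual surgery description.

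The main obstacle will be ensuring strict $\ve{\scR}_\ell$-equivariance of the mapping cones at each inductive step: one needs honest $\ve{\scR}_\ell$-linear hypercube maps matching the $D_{\veps,\veps'}$, rather than just an $A_\infty$-equivalence. This can be arranged using the tensor-product description of the link surgery complex in terms of Hopf, merge, and solid torus modules from \cite{ZemBordered}, which cleanly separates the role of $L$ (contributing free modules carrying the $\ve{\scR}_\ell$-action) from that of $J$ (contributing the hypercube structure from the solid torus modules). In fact, the induction can be sidestepped entirely by observing that $\cC_\Lambda(J,L)$ corresponds precisely to the tensor product in which the solid torus modules for the $L$ components have been omitted.
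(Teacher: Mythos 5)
The paper does not actually supply a proof of this theorem: it is stated as a folklore result, and immediately afterwards the text warns that adapting the Manolescu--Ozsv\'ath techniques (i.e.\ the route you outline first) ``requires a substantial amount of bookkeeping because of the role of algebraic truncations in the surgery formula,'' before deferring to \cite{ZemBorderedProperties}*{Corollary~9.2} for a proof that avoids truncations. So your main inductive argument takes essentially the route the paper explicitly flags as delicate, and it is there that you have a real gap: the complexes $\cC_{\Lambda'}(M)$ and $\cC_{\Lambda_0}(J-K,\,K\cup L)$ are completed, infinite-rank objects whose homotopy type over $\bF[[U]]$ is controlled only after passing to large truncations of the hypercube; your inductive step,\ ``write $\ve{\cCFL}(S^3_\Lambda(J),L)$ as a mapping cone of the two cobordism-induced maps \dots and iterate,'' together with ``the standard filling argument'' for the higher-length maps, is precisely the part that needs to be carried out compatibly with those truncations and with the chain homotopy equivalences supplied by the inductive hypothesis. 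You acknowledge the $\ve{\scR}_\ell$-equivariance issue, but not the truncation issue, which is the one the paper considers the heart of the difficulty.

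Your closing sentence---that the induction can be sidestepped by observing $\cC_\Lambda(J,L)$ is the tensor product of the bordered pieces from \cite{ZemBordered} with the solid-torus modules for the $L$ components omitted---is in fact the conceptually clean route, and it is essentially what the cited reference \cite{ZemBorderedProperties}*{Corollary~9.2} does. But as stated this is an observation about the algebra, not a proof: the nontrivial content is showing that this truncation-free tensor product is homotopy equivalent to $\ve{\cCFL}(S^3_\Lambda(J),L)$, which is exactly what the cited corollary establishes and which your sketch takes for granted. So your proposal correctly identifies both the naive approach and the better one, but does not close either; the paper itself deliberately does not attempt this and instead cites the external reference.
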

The above result is a folklore result. We believe the techniques of \cite{MOIntegerSurgery} can be adapted in a straightforward manner to prove this theorem. Nonetheless, experts in the Heegaard Floer surgery formulas may recognize that although conceptually simple, a rigorous proof requires a substantial amount of bookkeeping because of the role of algebraic truncations in the surgery formula. A conceptually simple proof, avoiding truncations, can be found in \cite{ZemBorderedProperties}*{Corollary~9.2}.

\subsection{Link lattice homology and the link surgery formula}

We now describe how to recast the link lattice complex in terms of the link surgery complex. This is an adaptation of \cite{OSSLattice}*{Proposition~4.4} to our present context of links.

Construct an $|L_G|$-dimensional hypercube of chain complexes as follows: For $\veps\in \{0,1\}^{|L_G|}$, define $Z_{\veps}:=H_*(\cC_\veps)$, where $\cC_{\veps}$ is the corresponding 
submodule of $\cC_{\Lambda}(L_G,L_\uparrow)$. If $\veps<\veps'$, construct a hypercube map $\delta_{\veps,\veps'}\colon Z_\veps\to Z_{\veps'}$ via the formula:
\[
\delta_{\veps,\veps'}:=\begin{cases}(D_{\veps,\veps'})_*& \text{ if } |\veps'-\veps|_{L^1}=1\\
0& \text{ otherwise}.
\end{cases}
\]
Write $\cZ=(Z_{\veps}, \delta_{\veps,\veps'})_{\veps\in \{0,1\}^{|L_G|}}$. Clearly $\cZ$ is a hypercube of chain complexes over $\ve{\scR}_{\ell}$.

Compare the following to \cite{OSSLattice}*{Proposition~4.4}:

\begin{prop}\label{prop:chain-isomorphism=H-lattice-CFL} Let $\Gamma$ be an arrow-decorated plumbing tree. The hypercube $\cZ=(Z_{\veps},\delta_{\veps,\veps'})_{\veps\in \{0,1\}^{|L_G|}}$ is isomorphic to the link lattice complex $\mathbb{CFL}(\Gamma,V_\uparrow)$. 
\end{prop}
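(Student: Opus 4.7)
The plan is to adapt the strategy of \cite{OSSLattice}*{Proposition~4.4} to the link setting, using the connected-sum/tensor-product description of the link surgery complex developed in \cite{ZemBordered} and \cite{ZemHFLattice}. First I would fix the bookkeeping: identify each $\veps \in \{0,1\}^{|L_G|}$ with the subset
\[
E_\veps = V_\uparrow \cup \{v \in V_G : \veps(v) = 0\},
\]
so that $|\veps' - \veps|_{L^1} = 1$ corresponds to removing a single vertex $v \in V_G$ from $E_\veps$, matching the structure of the lattice differential. With this dictionary in hand, the statement of the proposition reduces to two tasks: (i) produce, for each $\veps$, an identification of $\ve{\scR}_\ell$-modules between $Z_\veps = H_*(\cC_\veps)$ and the $\bF[[U]]$-submodule of $\mathbb{CFL}(\Gamma,V_\uparrow)$ spanned by $\{[K, E_\veps] : K \in \Char(X_\Gamma)\}$; (ii) verify that the induced length-$1$ maps $(D_{\veps,\veps'})_*$ realize the two summands of the lattice differential from~\eqref{eq:differential_on_CF}.

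For task (i), since $L_\Gamma \subset S^3$ is an iterated connected sum of Hopf links, its link Floer complex has an explicit combinatorial model via the bordered tensor product formula \cite{ZemBordered}. The complex $\cC_\veps$ is a completion of the localization $S_\veps^{-1}\cdot \cCFL(L_\Gamma)$, and in this model taking homology at each $\veps$ produces a direct product of copies of $\bF[[U]]$, one for each $K \in \Char(X_\Gamma)$; the generator indexed by $K$ is the one we label $[K, E_\veps]$. This is where I would use the bordered description of Section~\ref{sec:bordered-lattice} most directly: tensoring the type-$D$ modules $\cX(G)^{\cK}$ for each Hopf-link piece, taking homology is a local computation that recovers the $K$-indexing. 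The action of $\scU_i, \scV_i$ for $v_i \in V_\uparrow$ is inherited from the link Floer complex of $L_\Gamma$, and a direct check (essentially identical to the proof of Proposition~\ref{prop:free} and its idempotent-$1$ analog Corollary~\ref{cor:free-idempotent-1}) should show that it agrees with the formulas~\eqref{eq:UV-def}.

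For task (ii), each edge map $D_{\veps,\veps'}$ in Manolescu--Ozsv\'{a}th's construction has two components, associated to the two basepoints of the component of $L_\Gamma$ corresponding to $v$: an inclusion-type component (coming from the $\alpha$-basepoint) and a transfer-type component shifting the Alexander grading by $2 v^*$ (coming from the $\beta$-basepoint). On homology these should respectively induce
\[
[K,E_\veps]\mapsto U^{a_v(K,E_\veps)}[K, E_\veps\setminus v] \quad\text{and}\quad [K,E_\veps]\mapsto U^{b_v(K,E_\veps)}[K+2v^*,E_\veps\setminus v],
\]
which are exactly the $A$- and $B$-terms of~\eqref{eq:differential_on_CF}. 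The powers of $U$ would be extracted by identifying $g(K,E)$ with the $\gr_w$-grading of the highest-graded cycle in a localization, so that the difference of minima $g(K,E_\veps\setminus v) - g(K,E_\veps)$ (i.e.\ $a_v$) appears as the $U$-exponent obstructing lifting a generator across the edge map; the $B$-case is analogous with the additional characteristic shift $K \mapsto K + 2v^*$.

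The main obstacle is matching $U$-exponents: one must verify that the minima of $f(K,\cdot)$ entering the definitions of $a_v, b_v$ genuinely arise as grading shifts in the induced hypercube maps on the connected-sum model, rather than from some a priori different combinatorial source. Two secondary subtleties are (a) checking that the identification intertwines the $\Spin^c$-splitting of $\mathbb{CFL}(\Gamma,V_\uparrow)$ from Section~\ref{sec:spin^c} with the $\Spin^c$-splitting of $\cC_\Lambda(L_G,L_\uparrow)$ from Theorem~\ref{thm:Manolescu-Ozsvath-subcube}, and (b) confirming that the identification is compatible with completions, since $\mathbb{CFL}(\Gamma,V_\uparrow)$ is naturally a direct product over $\Char(X_\Gamma)$ while the surgery complex is defined via algebraic completions of localizations.
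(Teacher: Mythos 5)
Your outline is sound in broad strokes but rebuilds more than the paper does, and it leaves the hardest step as a flagged-but-open obstacle rather than resolving it. The paper's proof is much more economical: it simply cites \cite{OSSLattice}*{Proposition~4.4}, where this identification is already established at the level of $\bF[[U]]$-chain complexes (including the matching of the $A$- and $B$-terms with the length-one hypercube maps), so your entire task~(i), and most of task~(ii), is free. What is genuinely new in the paper's proof is two things that your outline leaves unfinished. First, the $U$-exponent matching you correctly identify as ``the main obstacle'' is handled by a dedicated inductive lemma, Lemma~\ref{lem:relate-gradings}, which proves $2g(K,E)=d(L_E,\phi_E(K))$ by peeling off one Hopf factor at a time; without something of this kind one cannot close the loop between the combinatorial minima defining $a_v,b_v$ and the $\gr_w$-shifts seen on homology. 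Second, the check that the two $\ve{\scR}_\ell$-actions agree is done in the paper by a short grading argument rather than anything like the idempotent analysis of Proposition~\ref{prop:free} and Corollary~\ref{cor:free-idempotent-1} that you point to: since both $\scU_i$ and its Heegaard-Floer counterpart $\cU_i$ send $[K,E]$ to $U^\epsilon[K-2\mu_i^*,E]$ with $\epsilon\in\{0,1\}$, and Lemma~\ref{lem:relate-gradings} forces them to have the same $\gr_w$-degree, the two exponents must coincide. Your bordered/tensor-product reconstruction would, if carried through, give the same conclusion, and in fact it foreshadows the argument the paper actually uses in the next subsection to upgrade this isomorphism to an $A_\infty$-equivalence; but for this proposition on its own it is more work than needed, and your deferral of the $g$-versus-grading identification to ``a verification'' understates that it is the essential new input.
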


Before proving Proposition~\ref{prop:chain-isomorphism=H-lattice-CFL}, we prove a  technical lemma which is helpful for relating the  $\ve{\scR}_\ell$-actions on $\mathbb{CFL}(\Gamma,V_\uparrow)$ and $\cZ$.
We note that the following lemma is essentially implicit in the definition of lattice homology and also Ozsv\'{a}th, Stipsicz, and Szab\'{o}'s construction of the spectral sequence (cf. \cite{OSSLattice}*{Proposition~4.4}), though we have been unable to find an exact reference which is suitable for our purposes. If $L\subset S^3$ and $\ve{s}\in \bH(L)$, write 
\[
d(L,\ve{s})=\max\left\{ \gr_w(x): x\in H_*\frA^-(L,\ve{s}), x \text{ is $U$-nontorsion}\right\}.
\] Here, $\gr_w$ denotes the internal Maslov grading from link Floer homology, and $\frA^{-}(L, \ve{s})\subset\cCFL(S^{3}, L)$ is a subcomplex corresponding to the Alexander grading $\ve{s}$. 

\begin{rem}
  For an oriented link $L\subset S^{3}$ in the 3-sphere, Gorsky and N\'{e}methi \cite{GorskyNemethiLattice} defined a link invariant, the so-called \emph{H-function},  by declaring $-2H_{L}(\ve{s})=d(L, \ve{s})$. The $H$-function is a generalization of the $h_k$-invariant considered by Rasmussen \cite{RasmussenKnots}*{Section 7}. For algebraic links, it can be related to the semigroup counting function \cite[Section 3.5]{GorskyNemethiLattice}. We later generalized the $H$-function for links in rational homology spheres, see Section \ref{sub:Alexander_and_Floer}.

\end{rem}

\begin{lem}\label{lem:relate-gradings} Let $G$ be a  forest of plumbing trees. If $E\subset V_G$, write $L_E\subset S^3$ for the sublink of $L_G$ containing exactly the components corresponding to vertices of $E$. Let $\phi_E\colon \Char(\XG)\to \bH(L_E)$ be the composition of the restriction map from $\Char(\XG)$ to $\Char(X_E)$ and the isomorphism from $\Char(X_E)$ to $\bH(L_E)$ in Equation~\eqref{eq:Spin-c-lattice-iso}. Then
\[
2g(K,E)=d(L_E,\phi_{E}(K)).
\]

\end{lem}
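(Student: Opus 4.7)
The plan is to proceed by induction on $|E|$, using that $G$ being a forest ensures $G[E]$ has a leaf. In the base case $E = \emptyset$, $g(K,\emptyset) = f(K,\emptyset) = 0$ and $L_\emptyset$ is the empty link in $S^3$, so $d(L_\emptyset,\,0) = d(S^3) = 0$ and both sides vanish.

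For the inductive step, I would pick a leaf $v$ of $G[E]$, so that $G[E-v]$ is still a forest. Splitting subsets $I\subseteq E$ according to whether $v\in I$, and using the identity $f(K,I) = f(K+2v^*, I\setminus\{v\}) + \tfrac{1}{2}(K(v)+v\cdot v)$ when $v\in I$, gives the lattice recursion
\[
g(K, E) \;=\; \min\!\bigl(\,g(K, E-v),\; g(K+2v^*, E-v) + \tfrac{1}{2}(K(v)+v\cdot v)\,\bigr).
\]
On the topological side, the component of $L_E$ corresponding to the leaf $v$ is either split from the rest (if $v$ is isolated in $G[E]$) or is an unknotted meridian of its unique neighbor in $G[E]$; in either case deleting it yields exactly $L_{E-v}$. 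I would then invoke the link surgery recursion for the $\frA^-$-complexes at the $v$-coordinate: sending $s_v \to +\infty$ truncates $\frA^-(L_E, \phi_E(K))$ to $\frA^-(L_{E-v}, \phi_{E-v}(K))$, while sending $s_v \to -\infty$ truncates it to $\frA^-(L_{E-v}, \phi_{E-v}(K+2v^*))$ with a Maslov shift of $K(v)+v\cdot v$. The two truncations fit into a mapping cone whose top non-torsion Maslov grading is the minimum of the two endpoints, yielding exactly twice the lattice recursion after the inductive hypothesis is applied.

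The main obstacle is verifying the exact form of the surgery recursion, in particular the Maslov shift. The underlying mechanism is the Manolescu--Ozsv\'ath surgery formula (cf.\ Theorem~\ref{thm:Manolescu-Ozsvath-subcube}), which realizes $\frA^-(L_E, \phi_E(K))$ as a subquotient of the link surgery hypercube; restricting attention to the $v$-axis exhibits it as a mapping cone between two copies of the analogous complex for $L_{E-v}$, and the Maslov shift is computed from the lattice identity $\tfrac{1}{4}((K+2v^*)|_{X_E}^2 - K|_{X_E}^2) = K(v)+v\cdot v$ together with the Ozsv\'ath--Szab\'o grading shift formula for two-handle cobordisms. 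An alternative more global approach is to replace the framings on $V_G$ by sufficiently large positive integers so that the resulting graph $G^+$ is negative definite and rational, so that $Y_{G^+}$ is an L-space (by Theorem~\ref{thm:nemethiLO}); then the large surgery formula identifies $H_*\frA^-(L_E, \ve{s})$ with $\HF^-(Y_{G^+}, \mathfrak{s}(\ve{s}))$ up to an explicit Maslov shift, and the Ozsv\'ath--Szab\'o formula $d(Y_{G^+},[K'])=\max_{K''}\tfrac{(K'')^2+|V_{G^+}|}{4}$ converts the maximum over characteristic representatives into the minimum defining $g(K, E)$.
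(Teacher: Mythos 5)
Your inductive strategy—pare off a leaf $v$ from $G[E]$ and relate $g(K,E)$ to $g(K,E-v)$ and $g(K+2v^*,E-v)$—is essentially the same engine as the paper's proof read in reverse: the paper builds up $L_G$ by successively taking a connected sum with a Hopf link, which is exactly the operation of adjoining a leaf. Your lattice-side recursion
\[
g(K,E)=\min\bigl(g(K,E-v),\;g(K+2v^*,E-v)+\tfrac12(K(v)+v\cdot v)\bigr)
\]
is correct and is the same split (over subsets $I$ with $v\in I$ or $v\notin I$) the paper exploits. The base case and the topological description of the leaf component as a meridian of its neighbor are also fine.

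The gap is on the Floer side. You assert that the two ``truncations'' assemble into a mapping cone whose top non-torsion Maslov grading is pinned to the minimum of the two endpoints (equivalently, that $d$ of the cone is the maximum of the two shifted $d$-invariants of $L_{E-v}$). That is not an automatic property of a mapping cone: in general the $d$-invariant of $\Cone(f\colon A\to B)$ depends on whether $f_*$ is nonzero on the $\bF[U]$-towers, and can land anywhere between the two obvious candidates. The paper deals with this by directly identifying the $\bF[U]$-nontorsion cycle representatives in $\cCFL(L_{E-v})\otimes\cCFL(H)$ (they are $\scV^k\ve{b}\otimes\zs$ and $\scV^k\ve{c}\otimes\zs$, resp.\ $\scU^k\ve{b}\otimes\zs$, $\scU^k\ve{c}\otimes\zs$, depending on the sign of $s_v$), and then using the monotonicity $H_{L_{E-v}}(\ve{s}'-\tfrac12 e_w)\le H_{L_{E-v}}(\ve{s}'+\tfrac12 e_w)+1$ to decide which representative wins. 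Some such input---either an explicit cycle identification or the $H$-function inequality---is needed to justify your ``min of two endpoints'' claim; invoking the Manolescu--Ozsv\'ath grading-shift formula alone does not tell you which tower survives. You flag this as the main obstacle, which is the right instinct, but the proposed cure is not sufficient as stated.

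Your alternative ``global'' route---inflate the framings so $G^+$ is negative definite and rational, identify $H_*\frA^-(L_E,\ve{s})$ with $\HF^-(Y_{G^+},\frs)$ via large surgery, and apply the Ozsv\'ath--Szab\'o $d$-invariant formula for negative-definite rational plumbings---is a genuinely different route and would avoid the leaf induction entirely. Its advantage is that the $\min/\max$ issue above evaporates (the $d$-invariant formula does the work); its cost is that one must keep careful track of the Maslov shift between the internal $\gr_w$ on $\frA^-$ and the absolute grading on $\HF^-(Y_{G^+})$, which is exactly what Lemma~\ref{lem:grading-shift-links} records, and one must justify the stabilization needed so that $\ve{s}$ remains in the large-surgery range for every $E$. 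This is plausible but only sketched.
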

\begin{proof}
Note that $g(K,E)$ (computed in $\XG$) is the same as $g(K|_{X_E},E)$ (computed in $X_E$). Hence, we may assume without loss of generality that $E=V_G$.

 Next, we observe that the framings on the components of $L_G$ play no role in the statement. Indeed, if $K\in \Char(\XG)$ and $\Phi_G(K)=(s_1,\dots, s_n)$, then Equation~\eqref{eq:Spin-c-lattice-iso} implies that
\begin{equation}
2g(K,V_G)=\min_{I\subset V_G}\left(\sum_{i\in I} 2s_i-v_{G-I}\cdot v_I\right) \label{eq:g-interms-H-lattice}
\end{equation}
where $v_{G-I}$ is the sum of $v_i$ for $i\in V_G\setminus I$ and $v_I$ is similar. 
Our proof will be by induction on the number of vertices. If $\ve{s}\in \bH(L_G)$, we will write $2g(L_G,\ve{s})$ for the quantity on the right-hand side of Equation~\eqref{eq:g-interms-H-lattice}. Similarly, if $I\subset V_G$ we write 
\begin{equation}
\label{def:f}
2f(\ve{s}, I)=\sum_{i\in I} 2 s_i -v_{G-I}\cdot v_I.
\end{equation} 
 We claim that $2g(L_G,\ve{s})=d(L_G, \ve{s})$.

We begin with the case that $L_G$ is an $n$-component unlink $\bU_n$. In this case, the homology $\cHFL(\bU_n)\iso \oplus_{\ve{s}\in \bH(L_G)} \frA^-(\bU_n, \ve{s})$ is well known to be $
\bF[\scU_1,\scV_1,\dots, \scU_n,\scV_n]/(\scU_i\scV_i-\scU_j\scV_j, i,j\in \{1,\dots n\}),$ with the class of $1$ having $(\gr_w,\gr_z)$-bigrading $(0,0)$. The plumbing diagram of the unlink $\bU_n$ consists of $n$ disjoint arrow vertices. After we assign the weight $-1$ to each arrow vertex,  it is straightforward to see that the link lattice homology is also isomorphic to $\bF[\scU_1,\scV_1,\dots, \scU_n,\scV_n]/(\scU_i\scV_i-\scU_j\scV_j, i,j\in \{1,\dots n\})$ as an $\bF[U]$-module with  the generator as $[K, E]$ where $K=[2s_1+1, \cdots, 2s_n+1]$ with $s_i\in \mathbb{Z}$ and $E$ consists of all arrow vertices equipped with the same gradings $(\gr_w, \gr_z)=(0, 0)$. That is, 
$$d(\bU_n, \ve{s})=2g(\bU_n, \ve{s})=\sum_{i=1}^{n} 2\min\lbrace 0, s_i\rbrace.$$

We now assume that claim is true for some forest $G$ of plumbing trees. We will prove the claim also holds for $L_G\# H$, where $H$ is the positive Hopf link and the connected sum operation is between one knot component in $L_G$ and one knot component of $H$. We recall that the complex of the positive Hopf link takes the following form:
\begin{equation}
\cCFL(H)\iso \begin{tikzcd}[labels=description,row sep=.8cm, column sep=1.3cm] \ve{a} \ar[d, "\scV_{n}"]\ar[r, "\scU_{n+1}"]& \ve{b}\\
\ve{c}& \ve{d} \ar[u, "\scU_{n}"] \ar[l, "\scV_{n+1}"]
\end{tikzcd}.
\label{eq:positive-Hopf-link}
\end{equation}
The Alexander bigrading of $\ve{a},\ve{b},\ve{c},\ve{d}$ are $(\tfrac{1}{2},-\tfrac{1}{2})$, $(\tfrac{1}{2},\tfrac{1}{2})$, $(-\tfrac{1}{2},-\tfrac{1}{2})$ and $(-\tfrac{1}{2},\tfrac{1}{2})$, respectively. The $(\gr_w,\gr_z)$-bigradings are $(-1,-1)$, $(0, -2)$, $(-2,0)$ and $(-1,-1)$, respectively.

We assume that $L_n\subset H$ is the component where the connected sum is taken, and $L_{n+1}\subset H$ for the remaining component.

Topologically, the link $L_G$ is obtained by taking an unlink, and iteratively taking the connected sum with Hopf links. We recall that that complex for an $\ell$-component unlink is obtained by tensoring the $\ell-1$ Koszul complexes
\[
C_i=\begin{tikzcd}[column sep=2cm] \xs \ar[r, "\scU_1\scV_1+\scU_i\scV_i"]& \ys
\end{tikzcd}
\]
for $i=2,\dots, \ell$. The $\ell=2$ case of this computation is verified using a genus 0 Heegaard diagram for a 2-component unlink, and the general case is proven by iteratively using the connected sum formula \cite{OSKnots}*{Section~7}. In Equation~\eqref{eq:positive-Hopf-link}, we observe that the Hopf link also has a similar 2-step filtration.

In particular, since a general $L_G$ is obtained by tensoring an unlink with a  collection of Hopf links, we may write $\cCFL(L_G)$ as 
\[
F_s\to F_{s-1}\to \cdots \to F_0.
\]
Call the index $i$ for $F_i$ the \emph{Hopf grading},
where each $F_i$ is a free $\scR_n$-module. By \cite{OSSLattice}*{Lemma~4.2} (cf. \cite{OSPlumbed}*{Lemma~2.6}), $L_G$ is an L-space link, so it follows that $\cHFL(L_G)$ is supported in just one Hopf grading.  We observe that the homology must be supported in $F_0$, since the map $F_1\to F_0$ is not surjective. To see that the map is not surjective, define a map from $F_0$ to $\bF$ which sends all $\scU_i$ and $\scV_i$ to $1$ and sends each basis element of $F_0$ to 1. Then the composition $F_1\to F_0\to \bF$ is zero, whereas the map $F_0\to \bF$ is non-zero.


Applying the above argument to $\cCFL(L_G\# H)$, we see that any homogeneously graded  cycle in $\cCFL(H)\otimes_{\bF[\scU_{n},\scV_{n}]}\cCFL(L_G)$ which represents an $\bF[U]$-non-torsion element of homology  may be written as a sum of an odd number of terms of the form $\a\cdot \ve{b}\otimes \ve{z}$ or $\b \cdot \ve{c}\otimes \ve{z}$, where  $\ve{z}\in \cCFL(L_G)$ is an $\bF[U]$-non-torsion cycle, and $\a,\b \in \bF[\scU_{n+1},\scV_{n+1}]$. 

Consider $\ve{s}=(s_1,\dots, s_{n+1})\in \bH(L_G\# H)$, where $s_{n+1}\in \tfrac{1}{2}+\Z$ corresponds to the component $L_{n+1}\subset H$. We break the proof into two cases: $s_{n+1}>0$, and $s_{n+1}<0$.

We consider first the case that $s_{n+1}>0$. Recall that $L_G\# H$ is an L-space link \cite{OSSLattice}*{Lemma~4.2}. In this case, $H_*\frA^-(L_G\# H,\ve{s})$ is generated over $\bF[U]$ by the elements $\scV_{n+1}^{s_{n+1}-1/2}\cdot \ve{b}\otimes \ve{z}$ and $\scV_{n+1}^{s_{n+1}+1/2}\cdot \ve{c}\otimes\zs$, where $\zs$ is $\bF[U]$-non-torsion. Write $\ve{s}'=(s_1,\dots, s_n)$. We obtain from the above argument that
\begin{equation}
\begin{split}
d(L_G\#H,\ve{s})=&\max\{ \gr_w(\ve{b})+d(L_G, \ve{s}'-\tfrac{1}{2}e_n), \gr_w(\ve{c})+d(L_G, \ve{s}'+\tfrac{1}{2}e_n) \}\\
=&\max\{ d(L_G, \ve{s}'-\tfrac{1}{2}e_n),-2+ d(L_G, \ve{s}'+\tfrac{1}{2}e_n)\}\\
=&d(L_G, \ve{s}'-\tfrac{1}{2}e_n).
\end{split}
\label{eq:d-invariant-comp-induction}
\end{equation}
Here the last equality follows from the facts that $d(L_G, \ve{s}'-\tfrac{1}{2}e_n)=-2H_{L_{G}}(\ve{s}'-\tfrac{1}{2}e_n)$, and $H_{L_{G}}(\ve{s}'-\tfrac{1}{2}e_n)\leq H_{L_{G}}(\ve{s}'+\tfrac{1}{2}e_n)+1$ \cite[Proposition 3.4]{BorodzikGorskyImmersed}. 
Let $I'\subset V_G$ be any subset.  We may view $I'$ also as subset of $V_G\cup \{v_{n+1}\}$. We compute easily that
\[
2f(\ve{s}, I')= 2f(\ve{s}'-\tfrac{1}{2}e_n,I'), \quad \text{and}
\]
\[
2f(\ve{s}, I'\cup \{v_{n+1}\})=\begin{cases} 2f(\ve{s}'-\tfrac{1}{2}e_n,I')+2s_{n+1}+1& \text{ if } v_n\in I'\\
2f(\ve{s}'-\tfrac{1}{2}e_n,I')+2s_{n+1}-1& \text{ if } v_n\not \in I',
 \end{cases}
\]
Since $s_{n+1}>0$, we observe from these equations that
\[
f(\ve{s},I')=f(\ve{s}'-\tfrac{1}{2}e_n,I')\le f(\ve{s},I'\cup \{v_{n+1}\}).
\]
for all $I'\subset V_G$ and $\ve{s}\in \bH(L_G)$. It follows easily that
\[
g(L_G\#H,\ve{s})=g(L_G,\ve{s}'-\tfrac{1}{2}e_n).
\]
By induction $2g(L_G,\ve{s}'-\tfrac{1}{2}e_n)=d(L_G,\ve{s}'-\tfrac{1}{2}e_n),$ so from Equation~\eqref{eq:d-invariant-comp-induction} we obtain $2g(L_G\# H,\ve{s})=d(L_G\# H, \ve{s})$ when $s_{n+1}>0$.

  We now consider the case $s_{n+1}<0$. In this case, $H_*\frA^-(L_G\# H,\ve{s})$ is generated by elements of the form $\scU_{n+1}^{-s_{n+1}+1/2}\cdot \ve{b}\otimes \zs$ and $\scU_{n+1}^{-s_{n+1}-1/2}\cdot \ve{c}\otimes \zs$. A similar argument to $s_{n+1}>0$ case yields that
\begin{equation}
d(L_G\# H,\ve{s})= d(\ve{s}'+\tfrac{1}{2}e_n)+2s_{n+1}-1. \label{eq:d-s_n+1<0}
\end{equation}
One computes directly that
\begin{equation}
\label{eq:s+}
2f(\ve{s}, I')= \begin{cases}
2f(\ve{s}'+\tfrac{1}{2}e_n,I')-2& \text{ if } v_n\in I'\\
2f(\ve{s}'+\tfrac{1}{2}e_n,I')& \text{ if } v_n\not \in I',
\end{cases}\qquad
\text{and}
\end{equation}
\[
2f(\ve{s}, I'\cup \{v_{n+1}\})=
2f(\ve{s}'+\tfrac{1}{2}e_n,I')+2s_{n+1}-1,
\]
for all $I'\subset V_{G}$.
Since $s_{n+1}<0$, we have
\[
2f(\ve{s},I'\cup \{v_{n+1}\})=2 f(\ve{s}'+\tfrac{1}{2} e_n,I')+2s_{n+1}-1\le 2f(\ve{s},I'),
\]
where the last inequality comes from \eqref{eq:s+}, so $2g(L_G\# H,\ve{s})=2 g(L_G,\ve{s}'+\tfrac{1}{2} e_n)+2s_{n+1}-1$. Combined with Equation~\eqref{eq:d-s_n+1<0}, we conclude  that $2g(L_G\# H,\ve{s})=d(L_G\# H,\ve{s})$, completing the proof of Lemma~\ref{lem:relate-gradings}.
\end{proof}

\begin{proof}[Proof of Proposition~\ref{prop:chain-isomorphism=H-lattice-CFL}] Note that \cite{OSSLattice}*{Proposition~4.4} proves the identification on the level of chain complexes of $\bF[[U]]$-modules. It suffices to show that the decomposition respects the refined actions of the ring $\ve{\scR}_\ell$. We recall the basics of their isomorphism. The lattice $\bH(L_\Gamma)$ represents the set of Alexander gradings supported by the link Floer complex $\cCFL(L_\Gamma)$. If $L\subset L_\Gamma$, and $\veps\in \{0,1\}^n$ is the indicator function for the components of $L$, then we may write $\frA^-_{\veps}(L_\Gamma,\ve{s})$ for the subcomplex of $S_\veps^{-1}\cdot \cCFL(L_\Gamma)$ in Alexander grading $\ve{s}$. According to \cite{OSSLattice}*{Lemma~4.2}, there is an isomorphism
\[
H_*(\frA^-_{\veps}(L_\Gamma,\ve{s}))\iso \bF[[U]].
\]
Additionally, there is an isomorphism $\bH(L_\Gamma)\to \Spin^c(X_\Gamma)$ (stated in Equation~\eqref{eq:Spin-c-lattice-iso}). This gives an isomorphism between $\cZ$ and $\mathbb{CFL}(\Gamma,V_\uparrow)$ as $\bF[[U]]$-modules. The Maslov grading of the generator of $H_*(\frA^-_{\veps}(L_\Gamma,\ve{s}))$ is $d(L_{\Gamma}, \ve{s})$ and the maps $D_{\epsilon, \epsilon'}$ for $|\epsilon'-\epsilon|=1$ are determined by the Maslov grading of the generators of the domain and target. Following Lemma \ref{lem:relate-gradings} and the same argument in 
\cite{OSSLattice}*{Proposition~4.4}, the differentials also coincide. It remains to show that the isomorphism respects the $\ve{\scR}_\ell$-module structure.

 To disambiguate the actions, let us write $
 \bF[[\scU_1,\scV_1,\dots, \scU_\ell,\scV_\ell]]
 $ for the action we have described on $\mathbb{CFL}(\Gamma,V_\uparrow)$, and let us write
 $\bF[[\cU_1,\cV_1,\dots, \cU_{\ell},\cV_\ell]]$ for the action induced by the identification $\mathbb{CFL}(\Gamma,V_\uparrow)\iso \cZ $. As a first step, note that by definition $\cU_i$ changes the Alexander grading $\ve{s}\in \bH(L_\Gamma)$ by $-e_i\in \Z^n$ (where $e_i$ is the unit vector with $i$-th component $1$, and other components $0$). It follows from Lemma~\ref{lem:relate-gradings}, that $\cU_i$ has the same $\gr_w$-grading as $\scU_i$, and similarly $\cV_i$ has the same $\gr_w$-gradings as $\scV_i$.
  Clearly, if $[K,E]$ is a generator, then $\scU_i\cdot [K,E]=U^\epsilon [K-2\mu^*_i,E]$ and $\cU_i\cdot [K,E]=U^{\epsilon'} [K-2\mu^*_i,E]$ for some $\epsilon,\epsilon'\in \{0,1\}$. 
  Since $\cU_i$ and $\scU_i$ have the same $\gr_w$-grading as endomorphisms, we must have that $\epsilon=\epsilon'$, so that $\scU_i$ and $\cU_i$ have the same action. The same argument implies $\scV_i=\cV_i$.
\end{proof}

\subsection{Proof of Theorem~\ref{thm:lattice=HFL}}\label{sub:lattice=HFL}

In this subsection we give the proof of Theorem~\ref{thm:lattice=HFL}, though we delay the discussion about absolute gradings until Subsection~\ref{sec:absolute-gradings}. The main steps of the proof follow from \cite{ZemHFLattice}*{Section~5.2}. We provide a summary and highlight the necessary changes for our present setting.

By Theorem~\ref{thm:Manolescu-Ozsvath-subcube}, we have a homotopy equivalence of chain complexes over $\ve{\cR}_{\ell}$: 
\[
\ve{\cCFL}(\YG,L_\uparrow)\simeq \cC_{\Lambda}(L_G,L_\uparrow).
\] 
In particular, the above chain homotopy equivalence may be viewed as a homotopy equivalence of $A_\infty$-modules over $\ve{\cR}_{\ell}$, where each module has $m_j=0$ for $j>2$. 
Above, we also constructed a hypercube $\cZ=(Z_{\veps},\delta_{\veps,\veps'})$ of $\ve{\scR}_\ell$-modules by taking the homology of $\cC_{\Lambda}(L_G,L_\uparrow)$ at each cube point, and using only the length 1 maps from $\cC_{\Lambda}(L_G,L_\uparrow)$. By Proposition~\ref{prop:chain-isomorphism=H-lattice-CFL}, we have a chain isomorphism
\[
\cZ\simeq \mathbb{CFL}(\Gamma,V_\uparrow).
\]
Hence, it suffices to construct an $A_\infty$-homotopy equivalence
\begin{equation}
\cZ\simeq \cC_{\Lambda}(L_G,L_\uparrow).\label{eq:H-C_lambda-equiv}
\end{equation}
This homotopy equivalence follows from the same logic as \cite{ZemHFLattice}*{Section~6.2}, which we summarize for the benefit of the reader.

We will write $\cC=(\cC_{\veps}, \Phi_{\veps,\veps'})$ for $\cC_{\Lambda}(L_G,L_\uparrow)$. The underlying complex $\cC_\veps$ at each vertex of $\cC_{\Lambda}(L_G,L_\uparrow)$ is obtained from $\cCFL(L_\Gamma)$ by localizing at the variables $\scV_i$ such that $\veps(i)=1$, and then taking an appropriate completion.  
The complex $\cCFL(L_\Gamma)$ is a tensor product of Hopf link complexes.
 The Hopf link has a 2-step filtration (see Equation~\eqref{eq:positive-Hopf-link}), and hence $\cCFL(L_\Gamma)$ has a description as
 \begin{equation}
\cCFL(S^3,L_G\cup L_\uparrow)\iso 
\left(\begin{tikzcd}
\cF^m \ar[r]& \cF^{m-1}\ar[r]&\cdots \ar[r]& \cF^0.
\end{tikzcd}\right)
 \label{eq:tree-Hopf-links}
 \end{equation}
 where each $\cF^i$ is a free $\scR_n$-module, $n=|V_\Gamma|$ and $m$ is the number of edges in $\Gamma$ (i.e. Hopf link components).
 Each $\cC_{\veps}$ has a similar filtration, which we denote by $\cF_\veps^i$. We call the superscript $i$ in $\cF^i$ the \emph{Hopf grading}. 

Since $L_\Gamma$ is an L-space link, each $Z_\veps$ is a direct product of copies of $\bF[[U]]$. Following \cite{ZemHFLattice}*{Proposition~6.2}, there is a natural way to construct a homotopy equivalence between each $\cC_{\veps}$ and $Z_{\veps}$, for each $\veps$. This is because the homology of $\cC_{\veps}$ is supported in $\cF^0_\veps$ so the projection map of $\cF^0_\veps$ onto $Z_{\veps}$ gives a quasi-isomorphism. Since in each Alexander grading (i.e. each $\ve{s}\in \bH(L_\Gamma)$) the homology of $Z_{\veps}$ is $\bF[[U]]$ (in particular, a projective $\bF[[U]]$ module), it is straightforward to construct a splitting over $\bF[[U]]$ of the sequence in Equation~\eqref{eq:tree-Hopf-links} in each Alexander grading. This gives us maps
\[
\pi_{\veps}\colon \cC_{\veps}\to Z_{\veps},\quad i_\veps\colon Z_{\veps}\to \cC_{\veps}\quad \text{and} \quad h_{\veps}\colon \cC_{\veps}\to \cC_{\veps},
\]
which satisfy 
\[
\pi_{\veps}\circ i_{\veps}=\id,\quad  i_{\veps}\circ \pi_{\veps}=\id+[\d, h_{\veps}],\quad h_{\veps}\circ h_{\veps}=0, \quad \pi_{\veps}\circ h_{\veps}=0,\quad h_{\veps}\circ i_{\veps}=0.
\]
\begin{rem}\label{rem:pi_e_is_equivariant}
Note that these maps are usually only $\bF[[U]]$-equivariant; but not necessarily $\ve{\scR}_\ell$-equivariant. The only exception is $\pi_{\veps}$,  because it is the canonical projection of $\cF^0_{\veps}$ to $\cF^0_{\veps}/\im \cF^1\iso H_*(Z_{\veps})$.
\end{rem}

The homological perturbation lemma for hypercubes (see \cite{HHSZDuals}*{Lemma~2.10}) induces hypercube structure maps $d_{\veps,\veps'}$ on $\bigoplus_{\veps\in\{0,1\}^n} Z_{\veps}$, which is homotopy equivalent to the hypercube $\cC_{\Lambda}(L_G,L_\uparrow)$. Let us write $\cW$ for the hypercube $(Z_{\veps},d_{\veps,\veps'})$. 
The structure maps $d_{\veps,\veps'}$ are given by the concrete formula
\begin{equation}
d_{\veps,\veps'}:=\sum_{\veps=\veps_1<\cdots <\veps_n=\veps'} \pi_{\veps_n}\circ \Phi_{\veps_{n-1},\veps_n}\circ h_{\veps_{n-1}}\circ \cdots \circ h_{\veps_2} \circ \Phi_{\veps_1,\veps_2}\circ i_{\veps_1}.
\label{eq:perturbed-hypercube-differential}
\end{equation}
There are also homotopy equivalences $\Pi\colon \cC\to \cW$, $I\colon \cW\to \cC$ and $H\colon \cC\to \cC$, given by similar formulas, which satisfy 
\[
\Pi\circ I=\id, \quad I\circ \Pi=\id+\d(H),\quad H\circ H=0,\quad \Pi\circ H=0,\quad \text{and}\quad H\circ I=0.
\]
 We will also need to understand the map $\Pi$, which is given by
\begin{equation}
\Pi_{\veps,\veps'}=\sum_{\veps=\veps_1<\dots<\veps_n=\veps'} \pi_{\veps_n}\circ \Phi_{\veps_{n-1},\veps_n}\circ h_{\veps_{n-1}}\circ \cdots \circ \Phi_{\veps_1,\veps_2}\circ h_{\veps_1}.
\label{eq:Pi-map-hypercubes}
\end{equation}

A natural strategy is to show that $\cW=\cZ$. Note that the underlying groups are identical, so it suffices to understand the structure maps. We observe that $\delta_{\veps,\veps'}=d_{\veps,\veps'}$ whenever $|\veps-\veps'|_{L^1}=1$. In general, it is not the case that $d_{\veps,\veps'}=\delta_{\veps,\veps'}$ when $|\veps-\veps'|_{L^1}>1$. Instead the main argument of \cite{ZemHFLattice} is to show that the analog of $\cC_{\Lambda}(L_G,L_\uparrow)$ is homotopy equivalent to a hypercube which has the same underlying internal chain complexes as $\cC_{\Lambda}(L_G,L_\uparrow)$ and for which the induced hypercube structure on the analog of $\cW$ coincides with that of $\cZ$. The argument in our present setting is essentially identical to the one in \cite{ZemHFLattice}*{Section~6.2}. In fact, we may take the modified hypercube structure on $\cC_{\Lambda}(L_G,L_\uparrow)$ to be the restriction of the one constructed in \cite{ZemHFLattice}, viewing $\cC_{\Lambda}(L_G,L_\uparrow)$ as a subcube of the full link surgery hypercube for $L_\Gamma$. We write $\cC'_{\Lambda}(L_G,L_\uparrow)$ for the resulting hypercube. We will also write 
\[
\cC'=(\cC_{\veps}, \Phi_{\veps,\veps'}')
\]
for $\cC'_{\Lambda}(L_G, L_\uparrow)$, and $\cW'$ for the hypercube constructed via homological perturbation. There are maps
\[
\Pi'\colon \cC'\to \cW',\quad I'\colon \cW'\to \cC' \quad \text{and} \quad H'\colon \cC'\to \cC'
\]
constructed similarly to the maps $\Pi$, $I$ and $H$, using formulas as in~\eqref{eq:Pi-map-hypercubes}.

 Concretely, $\cC_{\Lambda}(L_G,L_\uparrow)$ is constructed by tensoring the link surgery complex of Hopf links using the tensor product formula from \cite{ZemHFLattice}*{Equation~3.2}. The complex $\cC_{\Lambda}'(L_G,L_\uparrow)$ is constructed by tensoring the link surgery complexes of Hopf links together using an algebraically simplified model of the tensor product, where several terms have been deleted. (This simplified model appears in \cite{ZemHFLattice}*{Theorem~3.4}). The key property of the hypercube maps appearing in $\cC_{\Lambda}'(L_G,L_\uparrow)$ are the maps $\Phi_{\veps,\veps'}'$ are non-increasing in the Hopf grading from~\eqref{eq:tree-Hopf-links} only when $|\veps-\veps'|_{L^1}\le 1$. Furthermore, when $|\veps-\veps'|_{L^1}=1$, the map $\Phi_{\veps,\veps'}'$ preserves the Hopf grading. Since $h_{\veps}$ strictly increases the Hopf grading and $\pi_{\veps}$ is non-vanishing only on the lowest Hopf grading, the composition in~\eqref{eq:perturbed-hypercube-differential} will only be non-trivial when $|\veps-\veps'|_{L^1}=1$. Hence
 \[
 \cW'=\cZ.
 \]

 In particular, composing these homotopy equivalences, we obtain a homotopy equivalence of chain complexes
 \begin{equation}
\ve{\cCFL}(\YG, L_\uparrow)\simeq \cZ. \label{eq:homotopy-equivalence-refining}
\end{equation}

 It remains to show that the homotopy equivalence in~\eqref{eq:homotopy-equivalence-refining} may be extended to a homotopy equivalence of $A_\infty$-modules over $\ve{\cR}_\ell$. There are two subclaims:
\begin{enumerate}[label=($R$-\arabic*), ref=$R$-\arabic*]
\item\label{equivariance-1}  The homotopy equivalence between $\cC_\Lambda(L_G,L_\uparrow)$ and $\cC'_{\Lambda}(L_G,L_\uparrow)$ may be taken to be $\ve{\cR}_{\ell}$-equivariant.
\item\label{equivariance-2}  The homotopy equivalence $\cC'_{\Lambda}(L_G,L_\uparrow)\simeq \mathbb{CFL}(\Gamma,V_\uparrow)$ (from the homological perturbation lemma of hypercubes) extends to a homotopy equivalence of $A_\infty$-modules over $\ve{\cR}_{\ell}$. 
\end{enumerate} 
We address~\eqref{equivariance-1} first. In \cite{ZemHFLattice}*{Corollary~4.8}, it is shown that the simplified connected sum formula yielding $\cC_{\Lambda}'(L_G,L_\uparrow)$ is valid as long as in forming $L_\Gamma$ by an iterated connected sum, we never take the connected sum of two knot components which are both homologically essential after we surger on the other components of $L_G$. In the case that $b_1(\YG)=0$, we always avoid this configuration (cf. \cite{ZemHFLattice}*{Lemma~6.5}). The homotopy equivalence between $\cC_{\Lambda}(L_G,L_\uparrow)$ and $\cC_{\Lambda}'(L_G,L_\uparrow)$ is concrete and obtained by relating the connected sum formula in \cite{ZemHFLattice}*{Equations~(3.2)} and the simplified connected sum formula in \cite{ZemHFLattice}*{Theorem~3.4}. As described in \cite{ZemHFLattice}*{Corollary~4.8} relating these two models amounts to constructing a null-homotopy of an algebraically defined homology action on the link surgery formula. It is observed \cite{ZemHFLattice}*{Remark~4.3} that this null-homotopy may be taken to be $\ve{\cR}_{\ell}$-equivariant.

We now address~\eqref{equivariance-2}. Proposition~\ref{prop:chain-isomorphism=H-lattice-CFL} shows that $\cZ$ is chain isomorphic as an $\ve{\cR}_{\ell}$-module to $\mathbb{CFL}(\Gamma,V_\uparrow)$.  It is sufficient to show that the homotopy equivalence $\cC'\simeq \cZ$ of chain complexes is in fact a homotopy equivalence of $A_\infty$-modules over $\ve{\cR}_{\ell}$. To see this, it is in fact sufficient to show that the map $\Pi'\colon\cC'\to \cZ$ defined as in \eqref{eq:Pi-map-hypercubes},  commutes with the $\ve{\cR}_\ell$ action. This is sufficient because in the category of $A_\infty$-modules, quasi-isomorphisms are always invertible as $A_\infty$-morphisms. To establish the $\ve{\cR}_\ell$-equivariance, we observe that the projection maps $\pi_{\veps}$ are themselves equivariant since they are merely quotient maps; compare Remark~\ref{rem:pi_e_is_equivariant}. 
Next, we examine the expression for $\Pi_{\veps,\veps'}'$, as in~\eqref{eq:Pi-map-hypercubes}. By considering the Hopf grading similarly to how we did with $d_{\veps,\veps'}$ above, we observe that the maps $\Pi_{\veps,\veps'}'$ are non-trivial only when $\veps=\veps'$. In this case, the only non-vanishing contribution is from $\pi_{\veps}$, so $\ve{\cR}_\ell$-equivariance is established, and the proof is complete.
\qed

\begin{rem} By using the homological perturbation lemma, stated in Lemma~\ref{lem:homological-perturbation-modules}, a concrete homotopy equivalence of $A_\infty$-modules between $\cC'_{\Lambda}(L_G,L_\uparrow)$ and $\cZ$ may be constructed. Indeed the hypercube maps $\Pi'$, $I'$ and $H'$, constructed via the homological perturbation lemma for hypercubes, also satisfy the assumptions of the homological perturbation lemma for $A_\infty$-modules. These maps then induce an $A_\infty$-module structure on $\cZ$ over the ring $\ve{\cR}_{\ell}$, which is homotopy equivalent to $\ve{\cCFL}(Y_G,L_\uparrow)$. By considering the Hopf grading, similarly to the above, we obtain only a non-trivial $m_1$ and $m_2$ on $\cZ$. We observe also that the morphisms $\Pi'$, $H'$ and $I'$ extend to $A_\infty$-morphisms $\Pi_j'$, $H_j'$ and $I_j'$. Hopf grading considerations show that $\Pi_j'=0$ unless $j=1$, in which case the only contribution is from $\pi_{\veps}$. We observe that $I_j'$ may be non-trivial for $1\le j\le |L_\Gamma|$.
\end{rem}

\subsection{Absolute gradings}
\label{sec:absolute-gradings}

In this section, we prove the subclaim of Theorem~\ref{thm:lattice=HFL} concerning the absolute Maslov and Alexander gradings. Compare \cite{OSSLattice}*{Proposition~4.8}.

We begin by stating formulas for the absolute gradings on the link surgery complex, and its subcube refinement for sublinks. Although likely known to experts, these formulas have not appeared in the literature except in special cases.  For example, in the case of knots, the result is due to Ozsv\'{a}th and Szab\'{o} \cite{OSKnots}*{Section~4}. Detailed proofs of the absolute grading formula can be found in \cite{ZemBorderedProperties}*{Section 10}.

If $L\subset S^3$ is a link with framing $\Lambda$, let $W_{\Lambda}(L)$ denote the standard 2-handle cobordism from $S^3$ to $S^3_{\Lambda}(L)$. If $\veps\in \{0,1\}^n$, where $n=|L|$, and $\ve{s}\in \bH(L)$,  write $\cC_{\veps}(\ve{s})\subset \cC_{\Lambda}(L)$ for the subspace of $\cC_{\veps}$ which lies in internal Alexander grading $\ve{s}$. 
 Finally, if $\ve{s}\in \bH(L)$, write $\frz_{\ve{s}}\in \Spin^c(W_{\Lambda}(L))$ for the $\Spin^c$ structure which satisfies
 \begin{equation}
 \frac{\langle c_1(\frz_{\ve{s}}),\Sigma_i\rangle -\Sigma\cdot \Sigma_i}{2}=-s_i\label{eq:grading-z-s}
 \end{equation}
 for all $i\in \{1,\dots, n\}$. In the above, $\Sigma_i$ denotes the core of the 2-handle attached along component $K_i\subset L$, and $\Sigma$ denotes the sum of all $\Sigma_i$.

 \begin{lem}[\cite{ZemBorderedProperties}*{Theorem 10.2}]
\label{lem:Maslov-gradings-absolute}
 Suppose that $L\subset S^3$ is a link with framing $\Lambda$, and that $b_1(S^3_{\Lambda}(L))=0$. The homotopy equivalence $\ve{\CF}^-(S^3_{\Lambda}(L))\simeq  \cC_{\Lambda}(L)$ is absolutely graded if we equip $\cC_\veps(\ve{s})\subset \cC_{\Lambda}(L)$ with the Maslov grading
 \[
 \tilde{\gr}:=\gr_{w}+\frac{c_1(\frz_{\ve{s}})^2-2\chi(W_{\Lambda}(L))-3\sigma(W_{\Lambda}(L))}{4}+|L|-|\veps|,
 \]
 where $\gr_w$ is the Maslov grading from $\cCFL(L)$.
\end{lem}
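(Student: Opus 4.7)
The plan is to verify the formula by combining Ozsv\'ath--Szab\'o's cobordism grading formula with a careful analysis of how the link surgery hypercube encodes the 2-handle cobordism from $S^3$ to $S^3_{\Lambda}(L)$. The key input is that for any $\Spin^c$ cobordism $(W,\frz)\colon S^3\to Y$ with $b_1(Y)=0$, the induced Heegaard Floer cobordism map $F_{W,\frz}$ shifts the absolute Maslov grading by $(c_1(\frz)^2-2\chi(W)-3\sigma(W))/4$. Setting $(W,\frz)=(W_\Lambda(L),\frz_{\ve{s}})$ identifies the middle term of $\tilde\gr$ as a cobordism grading shift, so the task reduces to exhibiting subcomplexes of $\cC_\Lambda(L)$ realizing these cobordism maps at the chain level with the correct shifts in $\gr_w$.

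First I would handle the case $\veps=\vec{0}$. For each $\ve{s}\in\bH(L)$, the subspace $\cC_{\vec{0}}(\ve{s})\subset\cC_\Lambda(L)$ corresponds, via the Manolescu--Ozsv\'ath formula, to the Alexander-$\ve{s}$ piece of the link Floer complex embedded as the ``top corner'' of the cube. Tracing through the construction---for instance via the reformulation of \cite{ZemBordered} in which $\cC_\veps$ arises as the completion of a localization of $\cCFL(L)$---this embedding realizes the cobordism map $F_{W_\Lambda(L),\frz_{\ve{s}}}$ composed with an inclusion of $\ve{\CF}^-(S^3)$. Since the embedding is $\gr_w$-preserving at the chain level and $\ve{\CF}^-(S^3)$ is concentrated in $\gr_w=0$ in its top degree generator, the correct absolute Maslov grading on $\cC_{\vec{0}}(\ve{s})$ is $\gr_w$ shifted by exactly $(c_1(\frz_{\ve{s}})^2-2\chi-3\sigma)/4+|L|$, which is the stated formula at $\veps=\vec{0}$.

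Next I would propagate the formula to arbitrary $\veps$ by induction along the hypercube. The two constraints to check are: (i) the internal differential on each $\cC_\veps(\ve{s})$ drops $\tilde\gr$ by $1$; and (ii) each length-one hypercube map $\Phi^i_\veps\colon\cC_\veps\to\cC_{\veps+e_i}$ drops $\tilde\gr$ by $1$. Item (i) is immediate because this differential preserves $\veps$ and $\ve{s}$, so the grading shift terms are constant and the claim reduces to the known fact that the internal differential drops $\gr_w$ by $1$. For item (ii), the length-one map in the MO formula is a sum of an ``inclusion-into-localization'' term (preserving $\ve{s}$, with a $\gr_w$-shift of $-1$) and a ``framing-shift'' term (shifting $\ve{s}$ by the $i$-th column of $\Lambda$, with a $\gr_w$-shift determined by the framing). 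In both cases one checks, using \eqref{eq:grading-z-s} and the intersection form of $W_\Lambda(L)$, that the change in $(c_1(\frz_{\ve{s}})^2)/4$ together with the $+1$ decrement of $|\veps|$ cancels the difference between the two $\gr_w$-shifts, so that $\tilde\gr$ uniformly drops by $1$.

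The main obstacle is the bookkeeping in item (ii): computing how $c_1(\frz_{\ve{s}})^2$ shifts under $\ve{s}\mapsto\ve{s}+\Lambda\cdot e_i$ and matching it against the $\gr_w$-shift of the framing-shifted summand. Under the affine identification $\bH(L)\to\Spin^c(W_\Lambda(L))$ of \eqref{eq:grading-z-s}, translation of $\ve{s}$ by $\Lambda\cdot e_i$ corresponds to adding $2\,\PD[\Sigma_i]$ to $c_1(\frz_{\ve{s}})$, and the resulting quadratic change in $(c_1(\frz_{\ve{s}})^2)/4$ is precisely the quantity needed to account for the difference between the localization and framing-shifted pieces of $\Phi^i_\veps$. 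This is the technical core of the argument and is carried out in detail in \cite{ZemBorderedProperties}*{Theorem~10.2}; the remaining pieces above assemble into the lemma once this key identity is in hand.
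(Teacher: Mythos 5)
The paper itself gives no proof of this lemma: it is attributed to \cite{ZemBorderedProperties}*{Theorem~10.2}, and the ensuing remark only notes that the Ozsv\'ath--Szab\'o strategy of \cite{OSIntegerSurgeries} for knots should extend to links but is complicated by algebraic truncations. Your proposal follows the same outline at a high level --- pin the grading at one corner of the hypercube via the cobordism degree-shift formula and propagate along edges --- but the step carrying the entire weight, namely matching the quadratic change in $c_1(\frz_{\ve{s}})^2$ against the $\gr_w$-shift of the framing-changing edge map, you explicitly hand off to the very reference the lemma already cites. As written this is an annotated citation rather than a proof; that is consistent with what the paper does, but it means the proposal does not independently establish the statement.

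There is also a concrete slip in the edge-map bookkeeping. In the localization picture of \cite{ZemBordered}, the ``inclusion-into-localization'' component of the length-one hypercube map $D_{\veps,\veps+e_i}$ is the canonical map $S_\veps^{-1}\cCFL(L)\to S_{\veps+e_i}^{-1}\cCFL(L)$, which \emph{preserves} $\gr_w$, not shifts it by $-1$ as you assert. Indeed, this is precisely why the $|L|-|\veps|$ term is present in $\tilde{\gr}$: it supplies the missing $-1$ per edge so that the total differential drops $\tilde{\gr}$ by exactly one. If that edge map already dropped $\gr_w$ by one, the correction would overshoot. Similarly, your base case at $\veps=\vec{0}$ asserts that the subcomplex ``realizes the cobordism map $F_{W_\Lambda(L),\frz_{\ve{s}}}$'' and that this yields the additive $+|L|$, but neither is derived; the origin of the $+|L|$, and the role of completions in place of truncations, are exactly the bookkeeping that \cite{ZemBorderedProperties} carries out and that your sketch leaves undischarged.
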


In \cite{OSIntegerSurgeries}, Ozsv\'{a}th and Szab\'{o} prove this formula in the context of
knot surgery formula. Their main tool is computing the grading change
of a surgery cobordism map, which they denote by $f_3^+$.  A similar strategy to Ozsv\'{a}th and Szab\'{o}'s proof of the grading formula is likely possible in the context of the link surgery formula. Nonetheless, algebraic truncations make writing a simple proof challenging.

There is also a relative version of the statement. Suppose that we have a partitioned link $J\sqcup L\subset S^3$ and that $J$ is equipped with an integral framing $\Lambda$.
 If $K_i$ is a link component of $J\sqcup L$, there is a class $\Sigma_i\in H_2(W_{\Lambda}(J),\d W_{\Lambda}(J))$. If $K_i$ is in $J$, then $\Sigma_i$ is the core of the corresponding 2-handle. If $K_i$ is in $L$, then $\Sigma_i$ is an annulus, with boundary on the images of $K_i$ in $S^3$ and $S^3_{\Lambda}(J)$. We write $\Sigma$ for the sum of the classes for all components of $J\sqcup L$. If the component $K_i\subset L$  becomes rationally null-homologous in $S^3_{\Lambda}(J)$, we write $\hat \Sigma_i$ for the class in $H_2(W_{\Lambda}(J);\Q)$ obtained by capping with a rational Seifert surface. Finally, if $\ve{s}\in \bH(L)$,  we write $\frz_{\ve{s}}^{J}\in \Spin^c(W_{\Lambda}(J))$ for the $\Spin^c$ structure which satisfies Equation~\eqref{eq:grading-z-s} for all link components $K_i$ in $J$.

 \begin{lem}[\cite{ZemBorderedProperties}*{Theorem 10.8}] \label{lem:grading-shift-links}
Suppose that $J\cup L\subset S^3$ is a  partitioned link and $\Lambda$ is an integral framing on $J$ such that $b_1(S^3_{\Lambda}(J))=0$. Then the isomorphism $\ve{\cCFL}(S^3_{\Lambda}(J),L)\simeq \cC_{\Lambda}(J,L)$ from Theorem~\ref{thm:Manolescu-Ozsvath-subcube} is absolutely $\gr_w$-graded if we equip $\cC_{\veps}(\ve{s})\subset \cC_{\Lambda}(J,L)$ with the Maslov grading
\[
\tilde{\gr}_w=\gr_w+\frac{c_1\left(\frz_{\ve{s}}^{J}\right)^2-2\chi(W_{\Lambda}(J))-3\sigma(W_{\Lambda}(J))}{4}+|J|-|\veps|,
\]
where $\gr_w$ is the internal $\gr_w$-grading on $\cCFL(J\cup L)$. The isomorphism  is absolutely graded with respect to the Alexander grading $A=(A_1,\dots, A_{|L|})$ if we define $A_i$ on $\cC_{\veps}(\ve{s})$ via the formula 
\[
A_i=s_i+\frac{\langle c_1\left(\frz_{\ve{s}}^{J}\right),\hat{\Sigma}_i\rangle -\Sigma \cdot \hat \Sigma_i}{2}.
\]
\end{lem}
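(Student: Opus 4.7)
The plan is to reduce the relative statement to the absolute version already established in Lemma~\ref{lem:Maslov-gradings-absolute}. First I would extend the framing $\Lambda$ on $J$ to an integral framing $\Lambda'$ on $J\cup L$ by attaching sufficiently large positive integers to the $L$-components, chosen so that $S^3_{\Lambda'}(J\cup L)$ remains a rational homology sphere. By the construction recalled just before Theorem~\ref{thm:Manolescu-Ozsvath-subcube}, the complex $\cC_\Lambda(J,L)$ is then naturally identified with the subquotient of $\cC_{\Lambda'}(J\cup L)$ indexed by those $\varepsilon\in\{0,1\}^{|J\cup L|}$ with $\varepsilon(i)=0$ for every $i$ corresponding to a component of $L$; moreover this identification is $\ve{\scR}_\ell$-equivariant.

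Next I would transfer the absolute grading from Lemma~\ref{lem:Maslov-gradings-absolute} through this identification. On $\cC_\varepsilon(\ve{s})\subset \cC_{\Lambda'}(J\cup L)$, Lemma~\ref{lem:Maslov-gradings-absolute} gives
\[
\tilde{\gr}= \gr_w + \frac{c_1(\frz_{\ve{s}})^2-2\chi(W_{\Lambda'}(J\cup L))-3\sigma(W_{\Lambda'}(J\cup L))}{4}+|J\cup L|-|\varepsilon|.
\]
The main computation is then to show that, once one restricts to $\varepsilon$ with $\varepsilon|_L\equiv 0$, the right-hand side agrees with the expression in the lemma up to the stated Alexander shift. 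This reduces to a characteristic class identity: one compares $c_1(\frz_{\ve{s}})^2$ in $W_{\Lambda'}(J\cup L)$ with $c_1(\frz^J_{\ve{s}})^2$ in $W_\Lambda(J)$, noting that $W_{\Lambda'}(J\cup L)$ is obtained from $W_\Lambda(J)$ by attaching further 2-handles along (parallel copies of) the components of $L$ inside $S^3_\Lambda(J)$. The difference between the two Chern class squares, together with the change in Euler characteristic, signature, and $|J\cup L|-|\varepsilon|$ versus $|J|-|\varepsilon|$, must cancel against the contribution of $\ve{s}_L$ encoded in the Alexander grading. Since increasing the $L$-framings does not change $\cC_\Lambda(J,L)$, invariance of the shift under this choice is automatic and can be used as a sanity check.

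For the Alexander grading, I would use that the class $\hat\Sigma_i\in H_2(W_\Lambda(J);\Q)$ is exactly the capping of the annular class $\Sigma_i$ with a rational Seifert surface for $L_i\subset S^3_\Lambda(J)$. Pairing $c_1(\frz^J_{\ve{s}})$ against $\hat\Sigma_i$ records both the internal Alexander grading $s_i$ and the correction coming from the linking of the $i$-th $L$-component with the 2-handle cores of $J$ (which accounts for the term $\Sigma\cdot\hat\Sigma_i$). Concretely, the formula
\[
A_i=s_i+\frac{\langle c_1(\frz^J_{\ve{s}}),\hat\Sigma_i\rangle-\Sigma\cdot\hat\Sigma_i}{2}
\]
is the direct analogue of Ozsv\'ath--Szab\'o's grading formula from \cite{OSKnots}*{Section~4} in the multivariable setting, obtained by applying the link cobordism grading-change formula to the handle cobordism $W_\Lambda(J)$ decorated with the dividing arcs that define the $A_i$ on $\ve{\cCFL}(S^3_\Lambda(J),L)$.

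The hard part will be the rigorous bookkeeping of algebraic truncations in the link surgery formula and the passage between integral and rational second homology that is forced by $L$-components becoming only rationally null-homologous after surgery on $J$. As in the proofs of \cite{ZemBorderedProperties}*{Sections~10}, one must check that the chain-level homotopy equivalence of Theorem~\ref{thm:Manolescu-Ozsvath-subcube} can be arranged to respect the absolute gradings on each $\cC_\varepsilon(\ve{s})$ after truncation and completion, and that the Alexander shift is independent of the auxiliary choice of rational Seifert surface, i.e.\ of the particular lift $\hat\Sigma_i$. Once this bookkeeping is settled, the grading formula in the hypothesis of the lemma drops out of the absolute case by the difference computation sketched above.
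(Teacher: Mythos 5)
In the paper itself this statement carries no proof: it is cited verbatim from \cite{ZemBorderedProperties}*{Theorem 10.8}, and the surrounding discussion explicitly notes that algebraic truncations make a self-contained proof difficult to write down in this form. Your attempt to deduce the relative formula from Lemma~\ref{lem:Maslov-gradings-absolute} is therefore a genuinely different (and ambitious) route, not a reconstruction of the paper's argument. Unfortunately it has a real gap, which I want to isolate precisely.

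The central problem is that you treat the restriction of the Lemma~\ref{lem:Maslov-gradings-absolute} grading to the subquotient $\{\veps : \veps|_L \equiv 0\}$ as though it \emph{were} the grading asserted in Lemma~\ref{lem:grading-shift-links}, up to a shift you then wish to compute. But these two gradings cannot coincide and cannot differ by a framing-independent quantity. The formula from Lemma~\ref{lem:Maslov-gradings-absolute} involves $c_1(\frz_{\ve s})^2$, $\chi$, and $\sigma$ computed in $W_{\Lambda'}(J\cup L)$, hence depends on the auxiliary framing $\Lambda'$ you chose on $L$, while the formula in Lemma~\ref{lem:grading-shift-links} is computed entirely in $W_\Lambda(J)$ and is visibly framing-independent. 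You note, as a ``sanity check,'' that $\cC_\Lambda(J,L)$ itself does not depend on the $L$-framings; but this observation refutes rather than supports the naive transfer, since it shows the restricted Lemma~\ref{lem:Maslov-gradings-absolute} grading moves with $\Lambda'|_L$ whereas the target grading does not. So the ``difference'' between the two formulas is not a characteristic-class shift that ``drops out'': it is a framing-dependent quantity, and some additional structure must absorb it.

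More fundamentally, even a successful numerical comparison of the two formulas would not prove the lemma. Lemma~\ref{lem:Maslov-gradings-absolute} asserts that a particular chain homotopy equivalence $\cC_{\Lambda'}(J\cup L)\simeq \ve{\CF}^-(S^3_{\Lambda'}(J\cup L))$ is graded; Lemma~\ref{lem:grading-shift-links} asserts that a \emph{different} chain homotopy equivalence, namely the $\cC_\Lambda(J,L)\simeq \ve{\cCFL}(S^3_\Lambda(J),L)$ of Theorem~\ref{thm:Manolescu-Ozsvath-subcube}, is graded. Knowing the first is graded says nothing formal about the second: the subquotient is not quasi-isomorphic to the total complex, and the identification of the subquotient with $\ve{\cCFL}(S^3_\Lambda(J),L)$ is an independent theorem. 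To make your strategy work you would need to establish (i) a compatibility of homotopy equivalences, namely that feeding the Theorem~\ref{thm:Manolescu-Ozsvath-subcube} equivalence into the iterated mapping cone over the $L$-components reproduces the Lemma~\ref{lem:Maslov-gradings-absolute} equivalence (at least up to graded homotopy), and (ii) a rigidity statement that pins down the grading on $\cC_\Lambda(J,L)$ from such compatibility across varying $\Lambda'|_L$. Neither step is addressed in your sketch, and both are exactly where the ``bookkeeping of algebraic truncations'' you flag at the end actually bites. As written, the argument proves at most that the two grading formulas are numerically consistent in special situations, not that the relative isomorphism is graded.
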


\begin{prop} If $b_1(\YG)=0$, the isomorphism from Theorem~\ref{thm:lattice=HFL} respects the absolute Maslov and Alexander gradings.
\end{prop}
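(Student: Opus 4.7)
The strategy is to compare the absolute grading formulas of Lemma~\ref{lem:grading-shift-links}, which hold on the subcube surgery complex $\cC_\Lambda(L_G, L_\uparrow) \simeq \ve{\cCFL}(Y_G, L_\uparrow)$, against the lattice formulas \eqref{eq:grading-gr-w-lattice-link} and Definition~\ref{def:Alexander-grading}, using the bijection of Proposition~\ref{prop:chain-isomorphism=H-lattice-CFL}. Under this bijection, a generator $U^i[K,E]$ of $\mathbb{CFL}(\Gamma,V_\uparrow)$ corresponds to $U^i$ times the distinguished $\bF[[U]]$-generator of $H_*\cC_\veps(\ve{s})$, where $\ve{s}=\Phi_\Gamma(K)\in\bH(L_\Gamma)$ (computed after extending the framing arbitrarily to $V_\uparrow$, which is immaterial by Proposition~\ref{prop:ind-framing}) and $\veps\in\{0,1\}^{|V_G|}$ is defined by $\veps(v)=0 \Leftrightarrow v\in E$ for $v\in V_G$.

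The key preliminary step is to show that the internal Maslov grading $\gr_w$ (inherited from $\cCFL(L_\Gamma)$) of this distinguished generator equals $2g(K,E)$, with $E=V_\uparrow\cup\{v\in V_G:\veps(v)=0\}$. When $\veps=\ve{0}$ this is Lemma~\ref{lem:relate-gradings} applied to $\Gamma$ with an arbitrarily chosen framing on $V_\uparrow$ (so that $E=V_\Gamma$). For general $\veps$ I plan to adapt the inductive Hopf-link argument of Lemma~\ref{lem:relate-gradings} to the localized subcomplex $\frA^-_\veps(L_\Gamma,\ve{s})$; equivalently, inverting $\scV_i$ has the effect of passing to the sublink of $L_\Gamma$ obtained by deleting the $i$-th component, so the resulting $d$-invariant is computed by a direct application of Lemma~\ref{lem:relate-gradings} to the subset $E$.

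Once the internal grading is identified, the remainder is algebraic. From the defining condition $\tfrac{1}{2}(\langle c_1(\frz_{\ve{s}}^J),\Sigma_i\rangle - \Sigma\cdot\Sigma_i)=-s_i$ for $v_i\in V_G$, together with $\ve{s}=\Phi_\Gamma(K)$ and the identifications $\Sigma_i\leftrightarrow v_i$, $\Sigma\leftrightarrow v_\Gamma$ as (relative) 2-chains in $X_G$, a direct computation gives $c_1(\frz_{\ve{s}}^J)(v_i)=-K(v_i)$ for each $v_i\in V_G$. Hence $c_1(\frz_{\ve{s}}^J)=-K|_{X_G}$, and in particular $c_1(\frz_{\ve{s}}^J)^2=K|_{X_G}^2$. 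The combinatorial shift $|V_G|-|\veps|$ coincides with $|E|-|V_\uparrow|$, so substituting $\gr_w=2g(K,E)$ into Lemma~\ref{lem:grading-shift-links} reproduces \eqref{eq:grading-gr-w-lattice-link} precisely.

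For the Alexander grading, I substitute $s_i=\tfrac{1}{2}(K(v_i)+v_\Gamma\cdot v_i)$ and $c_1(\frz_{\ve{s}}^J)=-K|_{X_G}$ into $A_i=s_i+\tfrac{1}{2}(\langle c_1(\frz_{\ve{s}}^J),\hat\Sigma_i\rangle-\Sigma\cdot\hat\Sigma_i)$, using the identification $\hat\Sigma_i\leftrightarrow \hat v_i\in H_2(X_G;\Q)$. After simplification this yields $A_i=\tfrac{1}{2}(K(v_i-\hat v_i)+v_\Gamma\cdot(v_i-\hat v_i))$. Because $Y_G$ is a rational homology sphere, for any $v\in V_G$ one has $v\cdot v_i=v\cdot\hat v_i$ (the difference $\hat v_i-v_i$ pushes into the boundary, and the absolute class $v$ intersects it trivially), so the $V_G$ contribution cancels and the expression collapses to Definition~\ref{def:Alexander-grading}. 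The main obstacle throughout is the internal Maslov grading step above; although conceptually a straightforward generalization of Lemma~\ref{lem:relate-gradings}, spelling out the interaction between $\scV_i$-localization and the $d$-invariant of sublinks requires some careful bookkeeping.
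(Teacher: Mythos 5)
Your proposal is correct and follows essentially the same route as the paper's proof: identify the tower generator's internal $\gr_w$-grading with $2g(K,E)$ via Lemma~\ref{lem:relate-gradings}, establish $c_1(\frz_{\ve{s}}^{L_G}) = -K|_{\XG}$ from the definition of $\Phi_\Gamma$, and substitute into the absolute grading formulas of Lemma~\ref{lem:grading-shift-links}. The one point you flag as an obstacle is not actually one: Lemma~\ref{lem:relate-gradings} is already stated for an arbitrary subset $E$ of the vertex set (applied here to $\Gamma$ with an arbitrary framing extension on $V_\uparrow$, which Proposition~\ref{prop:ind-framing} renders harmless), so the ``direct application'' route in your parenthetical is exactly what the paper uses and no further inductive bookkeeping is required.
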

\begin{proof}
As a first step, we consider the absolute case when there are no arrow vertices.  We recall that we already constructed an isomorphism $\Phi_G\colon \Char(\XG)\to \bH(L)$ in Equation~\eqref{eq:Spin-c-lattice-iso}. In the present case, it is straightforward to verify from the definitions that
$
c_1(\frz_{\ve{s}})=-\Phi_G^{-1}(\ve{s}).
$
If $\ve{s}\in \bH(L_G)$ and $\ve{s}=\Phi_{G}(K)$, then this is equivalent to
\begin{equation}
c_1(\frz_{\ve{s}})=-K. \label{eq:c_1-z-s-and-K}
\end{equation}
 Next, we recall that in Lemma~\ref{lem:relate-gradings}, we identified the quantity $g([K,E])$ with the $\gr_w$-grading of the generator of the tower in the $\ve{s}$-graded subspace of the homology of $\cCFL(L_G)$, localized at the $\scV_i$ variables for vertices in $E$. Noting that $(-K)^2=K^2$, we obtain Equation~\eqref{eq:Maslov-grading}.

We now consider the case that there are arrow vertices. Suppose that $\Gamma$ is a tree with $V_\Gamma=V_G\cup V_\uparrow$. Lemma~\ref{lem:grading-shift-links} computes the Maslov grading shift. If $K\in \Char(X_{\Gamma})$ and $\ve{s}=\Phi_{\Gamma}(K)$, then we have similarly to Equation~\eqref{eq:c_1-z-s-and-K} that
\begin{equation}
c_1\left(\frz_{\ve{s}}^{L_G}\right)=-K|_{\XG}.
\label{eq:c_1-z-s-K-restriction}
\end{equation}
In particular, the statement from Lemma~\ref{lem:relate-gradings} implies that the Maslov grading on $\ve{\cCFL}(S^3_{\Lambda}(L_G),L_\uparrow)$ coincides with the one defined in Equation~\eqref{eq:grading-gr-w-lattice-link}.

We now consider the Alexander grading. We note that given $\ve{s}\in \bH(L_\Gamma)$, there are two $\Spin^c$ structures of interest: $\frz_{\ve{s}}^{L_G}\in \Spin^c(\XG)$ and $\frz_{\ve{s}}^{L_\Gamma}\in \Spin^c(X_\Gamma)$. It is straightforward to verify that $\frz_{\ve{s}}^{L_G}=\frz_{\ve{s}}^{L_\Gamma}|_{\XG}$.

If $\ve{s}=(s_1,\dots, s_{|L_\Gamma|})\in\bH(L_\Gamma)$, and $K_i\in L_\uparrow$, we view $\cC_{\Lambda}(L_G,L_\uparrow,\ve{s})$ as having internal Alexander grading $A_i$ equal to $s_i$.  Lemma~\ref{lem:grading-shift-links} implies that the isomorphism $\ve{\cCFL}(S^3_{\Lambda}(L_G),L_\uparrow)\simeq \cC_{\Lambda}(L_G,L_\uparrow)$ is Alexander graded if we shift the internal Alexander grading $s_i$ (for a component $K_i\in L_\uparrow$) of $\cC_{\Lambda}(L_G,L_\uparrow,\ve{s})$ by $(\langle c_1(\frz_{\ve{s}}^{L_{G}}),\hat \Sigma_i\rangle-\Sigma \cdot \hat \Sigma_i)/2$. 

By the definition of $\Phi_\Gamma$, if  $\Phi_{\Gamma}(K)=\ve{s}$, then
\[
s_i=\frac{\langle K,\Sigma_i\rangle+\Sigma\cdot \Sigma_i}{2},
\]
where the pairings occur in $X_\Gamma$. In particular, the generator $[K,E]$ in the lattice complex will be given $i^{\mathrm{th}}$ Alexander grading
\[
s_i+\frac{\langle c_1(\frz_{\ve{s}}^{L_G}), \hat{\Sigma}_i \rangle -\Sigma\cdot \hat \Sigma_i}{2}.
\]
By Equation~\eqref{eq:c_1-z-s-K-restriction}, we may write the above as
\[
\frac{K(v_i-\hat{v}_i)+\sum_{v\in V_\Gamma} v\cdot (v_i-\hat{v}_i)}{2}.
\]
We note that for $v\in V_G$, the pairing $v\cdot (v_i-\hat{v}_i)$ will vanish, and hence we can replace the sum in the above equation with a sum over only $v\in V_\uparrow$. This recovers the formula for the Alexander grading stated in Section~\ref{sec:Alexander-gradings}, so the proof is complete.
\end{proof}

\section{Plumbed L-space links}\label{sec:plumbed_l_space}

In this section, we compute the link Floer complexes of plumbed L-space links. In Section~\ref{sec:free-resolution-proof}, we prove that their complexes are formal (i.e. $\cCFL(\YG,L_\uparrow)$ is homotopy equivalent as an $A_\infty$-module to $(\cHFL(\YG,L_\uparrow),m_j)$, where $m_j=0$ unless $j=2$).  Consequently, the chain complexes are also homotopy equivalent to free resolutions of their homology. In Section~\ref{sub:Alexander_and_Floer} we recall work of Gorsky and N\'{e}methi \cite{GorskyNemethiLattice} which allows one to compute the module $\cHFL(\YG,L_\uparrow)$ when $\YG=S^3$ and $L_\uparrow$ is an L-space link. We extend their description to the case where $\YG$ is a rational homology L-space. Finally, in Section~\ref{sec:comparison}, we prove that our model of link lattice homology recovers the version of Gorsky and N\'{e}methi \cite{GorskyNemethiLattice} in the case of plumbed L-space links.

\subsection{Plumbed L-space links and free resolutions}
\label{sec:free-resolution-proof}
Suppose $\Gamma$ is an arrow-decorated plumbing tree.
Our goal is to show that the chain complex $\cCFL(\YG,L_\uparrow)$ is a free
resolution of its homology, in particular, that the chain complex is determined by the homology
up to chain homotopy equivalence. 

In this section, we consider plumbings where $L_\uparrow$ is an L-space link. We observe that this implies that $Y_G$ is a rational homology 3-sphere, and furthermore is itself an L-space.
In particular, by Proposition~\ref{prop:GN_algebraic},
the results in this subsection hold for links of embedded analytic singularities, as long as the underlying surface
singularity is rational.

\begin{thm}\label{thm:algebraic-free resolution}
Suppose that $\Gamma$ is an arrow-decorated plumbing tree, with $V_\Gamma=V_G\cup V_\uparrow$. Let $L_\uparrow\subset \YG$ be the associated link and assume that $\YG$ is a rational homology 3-sphere. If $L_\uparrow$ is an L-space link, then $\cCFL(\YG,L_\uparrow)$ is a free resolution over $\scR_\ell$ of $\cHFL(\YG,L_\uparrow)$. 
\end{thm}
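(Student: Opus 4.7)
The plan is to reduce to the lattice model via Theorem~\ref{thm:lattice=HFL} and then prove that the lattice complex is formal as an $A_\infty$-module by applying the homological perturbation lemma against a suitable grading. By Theorem~\ref{thm:lattice=HFL} together with Corollary~\ref{cor:free resolution=simplest-action}, the desired conclusion is equivalent to showing that $\mathbb{CFL}(\Gamma, V_\uparrow)$ is $A_\infty$-homotopy equivalent over $\ve{\scR}_\ell$ to its homology $\mathbb{HFL}(\Gamma, V_\uparrow) \cong \ve{\cHFL}(\YG, L_\uparrow)$, equipped with vanishing $m_j$ for $j \neq 2$. The descent from the completed statement to a resolution over the polynomial ring $\scR_\ell$ is a standard finiteness argument about finitely generated chain complexes and will not be pursued here.

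The central structural input is the $\Z_{\geq 0}$-grading $\epsilon := |E \cap V_G|$ on $\mathbb{CFL}(\Gamma, V_\uparrow)$: the differential in \eqref{eq:differential_on_CF} lowers $\epsilon$ by one, while the module operations $\scU_i$ and $\scV_i$ defined in Section~\ref{sec:module-structure} preserve $\epsilon$ because they do not alter $E$. The crux of the proof is to establish that $\mathbb{HFL}(\Gamma, V_\uparrow)$ is supported entirely in $\epsilon = 0$ under the L-space link hypothesis. My plan for this step is to fix an Alexander grading $\ve{s}$, use the identification of $\mathbb{CFL}(\Gamma, V_\uparrow, \ve{s})$ with the hypercube $\cZ(\ve{s})$ from Proposition~\ref{prop:chain-isomorphism=H-lattice-CFL}, and exploit that for an L-space link each graded piece $Z_\veps(\ve{s})$ is either zero or a single $\bF[[U]]$-tower (as in \cite{OSSLattice}*{Lemma~4.2}), while the length-one hypercube maps $\delta_{\veps, \veps'}$ act by powers of $U$. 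Since the total homology $\ve{\cHFL}(\YG, L_\uparrow, \ve{s})$ is itself a single $\bF[[U]]$-tower, and since cokernels of $U$-multiplication in the hypercube would contribute $U$-torsion that cannot be absorbed elsewhere, a direct combinatorial inspection of this hypercube of towers will force the surviving generator to lie at the top vertex $\veps = (1, \ldots, 1)$. Under the identification $|\veps| = |V_G| - \epsilon$ implicit in Proposition~\ref{prop:chain-isomorphism=H-lattice-CFL}, this corresponds exactly to $\epsilon = 0$; the $H$-function computation of Lemma~\ref{lem:relate-gradings} makes this concentration explicit and is already visible in the example of Section~\ref{sec:linklatticehomologyofHopflink}.

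Granted this concentration, the remainder is formal. In each Alexander grading I will choose $\bF$-linear splittings $i, \pi, h$ such that $i$ embeds the homology into the $\epsilon = 0$ part, $\pi$ projects onto it and annihilates $\epsilon \geq 1$, and $h$ strictly increases $\epsilon$; these satisfy the hypotheses of Lemma~\ref{lem:homological-perturbation-modules}. By the formulas in Figure~\ref{fig:homological-perturbation}, each induced higher operation $m_{j+1}$ with $j \geq 2$ is a composition containing at least one internal application of $h$, so its output before reaching $\pi$ lies in $\epsilon \geq 1$ and is killed by $\pi$. Hence $\mathbb{CFL}(\Gamma, V_\uparrow)$ is formal as an $A_\infty$-module over $\ve{\scR}_\ell$, which combined with Theorem~\ref{thm:lattice=HFL} and Corollary~\ref{cor:free resolution=simplest-action} yields the theorem. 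The principal obstacle is the concentration statement of the second paragraph; the $A_\infty$-transfer argument is essentially formal once it is established.
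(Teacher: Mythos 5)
Your overall strategy matches the paper's: reduce to the lattice model via Theorem~\ref{thm:lattice=HFL}, exploit that the cube grading $\epsilon=|E\cap V_G|$ is preserved by the $\ve{\scR}_\ell$-action and lowered by the differential, and run homological perturbation against that grading to kill $m_j$ for $j>2$, invoking Corollary~\ref{cor:free resolution=simplest-action}. However, there is a genuine gap at the pivotal concentration step, and it is precisely where the paper's proof diverges from yours.

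You assert that for an L-space link the homology of $\mathbb{CFL}(\Gamma,V_\uparrow)$ is concentrated in $\epsilon=0$, i.e.\ at the top vertex $\veps=(1,\dots,1)$ of the hypercube $\cZ$, and your justification---that ``cokernels of $U$-multiplication would contribute $U$-torsion that cannot be absorbed elsewhere, so a direct combinatorial inspection forces the generator to the top vertex''---is a heuristic, not an argument. It does not follow from the edge maps being $U$-power maps that the surviving tower sits at an extreme vertex: a hypercube of $\bF[[U]]$-towers with total homology a single tower can in principle concentrate at an interior vertex, or split across several vertices as graded groups even when the total homology is a tower. The paper proves only the weaker statement that the homology is supported in a \emph{single} (unspecified) cube grading $Z_i$, and it proves that statement by a genuinely different mechanism from yours: it observes that a splitting across two cube gradings would force $\ve{\cHFL}(\YG,L_\uparrow)$ to decompose as a nontrivial direct sum of $\ve{\scR}_\ell$-modules, and it rules this out by localizing at all monomials and identifying the localization with $\ve{\cHFL}^\infty$, which is cyclic. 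This localization argument works across all Alexander gradings simultaneously, whereas your proposed Alexander-grading-by-Alexander-grading inspection would at best establish a per-$\ve{s}$ concentration and would not even guarantee the distinguished vertex is the same as $\ve{s}$ varies. Having established single-grading concentration, the paper then \emph{does not claim} that grading is the minimum; it handles the case $i\neq 0$ by an explicit inductive peeling argument, splitting off $Z_0$ and part of $Z_1$ via a homological-perturbation step that preserves the $\ve{\scR}_\ell$-action because $\ker f_{1,0}$ is an $\ve{\scR}_\ell$-submodule. Your proposal is missing both ingredients: a proof that the homology lies in a single cube grading, and either a proof that this grading is the bottom one or the inductive reduction that makes that hypothesis unnecessary.
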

\begin{proof}
  To simplify the notation, we assume that $\YG$ is an integer homology 3-sphere. For rational homology 3-spheres, one may apply the same argument to each $\Spin^c$ structure.
  
  Next, we observe that it is sufficient to show  that $\ve{\cCFL}(Y_G,L_\uparrow)$ is homotopy equivalent to a free resolution of $\ve{\cHFL}(Y_G,L_\uparrow)$. This may be seen as follows: The $\cR_\ell$-module $\cHFL(Y_G,L_\uparrow)$ is finitely generated and hence admits a finitely generated free resolution over $\cR_\ell$ by Hilbert's syzygy theorem \cite{HilbertSyzygy}. See e.g. \cite[Theorem 15.2]{PeevaBook} for a modern exposition. Furthermore, it is straightforward to see that if $C$ and $C'$ are two free, finitely generated chain complexes over $\cR_\ell$ which are both $(\gr_{w},\gr_{z})$-graded, then $C$ and $C'$ are homotopy equivalent over $\cR_{\ell}$ if and only if $C\otimes_{\cR_{\ell}} \ve{\cR}_{\ell}$ and $C'\otimes_{\cR_\ell} \ve{\cR}_\ell$ are homotopy equivalent. Moreover, the completion of a free resolution of $\cHFL(Y_G,L_\uparrow)$ will be a free resolution  of $\ve{\cHFL}(Y_G,L_\uparrow)$, by similar reasoning. Hence, $\cCFL(Y_G,L_\uparrow)$ will be homotopy equivalent to a free resolution of $\cHFL(Y_G,L_\uparrow)$ if and only if $\ve{\cCFL}(Y_G,L_\uparrow)$ is homotopy equivalent to a free resolution of $\ve{\cHFL}(Y_G,L_\uparrow)$.

Since $L_\uparrow\subset \YG$ is an L-space link, $\cHFL(\YG,L_\uparrow)$ is a torsion-free $\bF[U]$-module. By Theorem~\ref{thm:lattice=HFL}, $\ve{\cCFL}(\YG, L_{\uparrow})$ is homotopy equivalent to $\mathbb{CFL}(\Gamma,V_\uparrow)$ as an $A_\infty$-module over $\ve{\cR}_{\ell}$.

The link lattice complex has a cube grading:
\[
\mathbb{CFL}(\Gamma,V_\uparrow)=(
\begin{tikzcd}
Z_n\ar[r, "f_{n,n-1}"]& Z_{n-1} \ar[r]& \cdots \ar[r]& Z_1\ar[r, "f_{1,0}"]& Z_0,
\end{tikzcd})
\]
where $Z_q$ is spanned by $U^p[K,E]$ where $|E|=q$, $p\ge 0$.
Furthermore, each $Z_i$ is itself an $\ve{\cR}_{\ell}$-module. (In particular, the action of $\ve{\cR}_{\ell}$ preserves the cube grading).

We claim that the homology of $\mathbb{CFL}(\Gamma,V_\uparrow)$ is supported in a single $Z_{i}$. We observe that if there are two $Z_i$ which support the homology, then $\ve{\cHFL}(Y_G,L_\uparrow)$ will split as a direct sum of $\ve{\cR}_\ell$-modules.  We claim that this is impossible. 

 To see this, we argue by considering the localization at the multiplicatively closed subset $S\subset \ve{\cR}_{\ell}$ spanned by monomials. Since  $L_\uparrow\subset Y_G$ is an L-space link, it follows that $s\cdot x\neq 0$ for all $s\in S$ and non-zero $x\in \ve{\cHFL}(Y_G,L_\uparrow)$. Therefore, the localization map is injective, so it suffices to show that $S^{-1} \ve{\cHFL}(Y_G,L_\uparrow)$ does not split as the direct sum of two $\ve{\cR}_\ell$-modules.

 We claim that  $S^{-1}\mathbb{CFL}(\Gamma,V_\uparrow)$ is isomorphic to $\ve{\cHFL}^\infty(S^3, \bU_\ell)$, which is spanned by a single generator under the $\ve{\cR}_\ell$-action. To see this, we observe that localization is an exact functor, so it suffices to consider the localization of the chain complex. We pick an arbitrary $(s_1,\dots, s_n)\in \bH(Y,L)$. We observe that after localizing, we may perform a change of basis and replace each basis element $\xs$ with a basis element
\[
\scV_1^{-A_1(\xs)+s_1}\cdots \scV_\ell^{-A_\ell(\xs)+s_\ell} \xs. 
\]
(We use $\ve{s}$ so that the powers of $\scV_i$ are integral). With this choice of basis, all generators of $\ve{\cCFL}(Y_G,L_\uparrow)$ are now concentrated in Alexander grading $\ve{s}$. Hence, each component of the differential is weighted by powers of $\scU_i\scV_i$. Setting $\scV_i$ equal to 1 recovers $\ve{\CF}^\infty(Y_G,L_\uparrow)$, so we conclude that
\[
\ve{\cCFL}^\infty(Y_G,L_\uparrow)\iso \ve{\CF}^\infty(Y_G,w_1,\dots, w_\ell)\otimes_{\bF[U_1,\dots, U_\ell]} \ve{\cR}_\ell,
\]
where $U_i$ acts on $\ve{\cR}_\ell$ by $\scU_i\scV_i$.  Here, $\ve{\CF}^-(Y_G,w_1,\dots, w_\ell)$ denotes the $\ell$-pointed Floer complex for $Y_G$. It follows from \cite{OSProperties}*{Theorem~10.1} that the singly pointed Floer complex $\ve{\CF}^-(Y_G,w_1)$ is homotopy equivalent to $\bF[[U,U^{-1}]$, 
and by \cite{OSLinks}*{Proposition~6.5}, adding the basepoint $w_i$ has the effect of tensoring with the Koszul complex $\begin{tikzcd}\xs\ar[r, "U_1+U_i"] & \ys\end{tikzcd}$. In particular,
it follows that $\ve{\cHFL}^\infty(Y_G,L_\uparrow)$ is isomorphic to 
\[
S^{-1} \ve{\cR}_{\ell}/(\scU_i\scV_i-\scU_j\scV_j: i,j\in \{1,\dots, \ell\})
\]
 This does not decompose as a direct sum of $\ve{\cR}_\ell$-modules, since it has a single generator (the image of $1\in \ve{\cR}_{\ell}$) over the ring $S^{-1} \ve{\cR}_\ell$.

  Consider first the case that the homology is supported in cube grading $i=0$. In this case, we may pick a splitting over $\bF$ of the complex $\mathbb{CFL}(\Gamma,V_\uparrow)$.  Such a splitting determines a homotopy equivalence of $\mathbb{CFL}(\Gamma,V_\uparrow)$ with its homology. We may apply the homological perturbation lemma to obtain an induced $A_\infty$-module structure over $\ve{\cR}_{\ell}$ on the homology $ \mathbb{HFL}(\Gamma,V_\uparrow)$. A filtration argument like the one in the proof of Theorem~\ref{thm:lattice=HFL}  implies that the induced $A_\infty$-module structure on $Z_{0}/\im Z_{1}\iso H_* \mathbb{CFL}(\Gamma,V_\uparrow)$
  has $m_j=0$ unless $j=2$. By Corollary~\ref{cor:free resolution=simplest-action}, this implies that $\ve{\cCFL}(\YG,L_\uparrow)$ is homotopy equivalent over $\ve{\cR}_{\ell}$ to a free resolution of its homology.

  We now consider the case that $\mathbb{CFL}(\Gamma,V_\uparrow)$ is supported at $Z_{i}$ for some $i\neq 0$.    Our argument proceeds by induction, with the base case $i=0$ covered above.
 We will use techniques described in Subsection~\ref{sub:free}.

As $i>0$, in particular $H_*(Z_{0})=0$, so $f_{1,0}$ is surjective. We may pick a splitting $i_{0,1}$ of $f_{1,0}$ as a map of vector spaces, which induces a splitting of $Z_{1}$ as $Z_{1}=Z_{1}^l\oplus Z_{1}^r$ (where the direct sum is of $\bF$-vector spaces), and $Z_{1}^l=\ker(f_{1,0})$ and $Z_{1}^r=\im(i_{0,1})$. Note that $Z_{1}^r$ is not in general an $\ve{\cR}_{\ell}$-module, as it is the image of an $\bF$-linear map, however $\ker(f_{1,0})$ is always an $\ve{\cR}_{\ell}$-submodule since $f_{1,0}$ is $\ve{\cR}_{\ell}$-equivariant.

 There is a chain complex $\cZ'$ obtained by deleting $Z_{1}^r$ and $Z_{0}$, and the above maps determine a chain homotopy equivalence between $\mathbb{CFL}(\Gamma,V_\uparrow)$ and $\cZ'$ as chain complexes over $\bF$. Via the homological perturbation lemma for $A_\infty$-modules, we may equip $\cZ'$ with an $A_\infty$-module structure over $\ve{\cR}_{\ell}$ which is homotopy equivalent to $\mathbb{CFL}(\Gamma,V_\uparrow)$. The map $h$ appearing in the homological perturbation lemma is the map $i_{0,1}$. The inclusion and projection maps are the obvious ones. Compare Section~\ref{sub:free}. Since $\ker(f_{1,0})$ is closed under the action of $\ve{\cR}_{\ell}$, the action on $\cZ'$ from the homological perturbation lemma is the standard one.
 This reduces the index at which the homology of $\mathbb{CFL}(\Gamma,V_\uparrow)$ is supported.
  Proceeding by induction we reduce to the base case $i=0$, completing the proof.
\end{proof}

\subsection{Computing the Floer chain complex from Alexander polynomials}
\label{sub:Alexander_and_Floer}

In this subsection, we consider the $H$-function for oriented $\ell$-component links $L$ in rational homology spheres $Y$. We begin by defining a lattice $\bH(Y,L)$, which is an affine space over $H_1(Y\setminus L,\Z)$. The set $\bH(Y,L)$ is a subspace of $\Q^{\ell}\times \Spin^c(Y)$. The simplest definition of $\bH(Y,L)$ is that it is the set of $(\ve{s},\frt)\in \Q^{\ell}\times \Spin^c(Y)$ such that $\widehat{\HFK}(Y,L,\frt)$ is non-trivial in some Alexander grading $\ve{s}'\in \Q^\ell$ satisfying $\ve{s}-\ve{s}'\in \Z^\ell$. 
Since $\widehat{\HFK}(Y,L,\frt)\neq 0$ for each $\frt$ when $Y$ is a rational homology 3-sphere, this construction gives a well-defined set $\bH(Y,L)$. The action of an element $\g\in H_1(Y\setminus L)$ is given by 
\[
\g\cdot (s_1,\dots, s_\ell,\frt)=(s_1-\lk(\g,K_1),\dots, s_\ell-\lk(\g,K_\ell),\frt+\PD[i_*\g]) 
\]
where $\lk(\g,K_i)\in \Q$ is the rational linking number, and $i\colon Y\setminus L\to Y$ is inclusion.

A more topological description may be obtained by presenting $Y\setminus L$ as surgery on a link $L'$ in the complement of an $\ell$-component unlink in $S^3$. Such a presentation induces  link cobordism $(W,\Sigma)$ from the complement of an $\ell$-component unlink to $Y\setminus L$ such that $W$ is a 2-handlebody, and $\Sigma$ consists of $\ell$ annuli,  each of which  cobounds an unknot component in $S^3$ and a knot component of $L$. If $\frt\in \Spin^c(Y)$, the fiber over $\frt$ under the map $\bH(Y,L)\to \Spin^c(Y)$ consists of $(\ve{s},\frt)$ where
\begin{equation}
\ve{s}\in \left(\frac{\langle c_1(\frt'), \hat{\Sigma}_1\rangle-[\hat {\Sigma}_1]\cdot [\Sigma]}{2},\cdots, \frac{\langle c_1(\frt'),\hat{\Sigma}_\ell\rangle-[\hat {\Sigma}_\ell]\cdot [\Sigma]}{2}\right)+\Z^\ell, 
\label{eq:Alexander-grading-link}
\end{equation}
and $\frt'\in \Spin^c(W)$ is any lift of $\frt\in \Spin^c(Y)$. Here, we view $\Sigma$ as the union of $\Sigma_1,\dots, \Sigma_\ell$. We write $[\Sigma]$ and $[\Sigma_i]$ for the induced classes in $H_2(W,\d W;\Z)$, and we write $[\hat {\Sigma}_i]$ for the lifts under the map $H_2(W;\Q)\to H_2(W,\d W;\Q)$. 

This may be seen to coincide with the definition in terms of Alexander gradings on $\widehat{\HFL}(Y,L)$ using a small modification of the cobordism argument from \cite[Section~5.5]{ZemAbsoluteGradings} (which is stated for integrally null-homologous links). See \cite[Section~3.2]{HHSZDuals} for an exposition in the setting of rationally null-homologous knots. It is an easy consequence of the cobordism description of the Alexander grading that $\widehat{\CFL}(Y,L)$ is supported on $\bH(Y,L)$.

\begin{lem} As affine spaces over $H_1(Y\setminus L;\Z)$, there is an isomorphism $\bH(Y,L)\iso H_1(Y\setminus L;\Z)$.
\end{lem}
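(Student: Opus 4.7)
The strategy is to show that the given action of $H_1(Y\setminus L;\Z)$ on $\bH(Y,L)$ is both free and transitive. I would use the forgetful projection $\pi\colon \bH(Y,L)\to\Spin^c(Y)$, $(\ve{s},\frt)\mapsto\frt$, together with the standard short exact sequence
\[
0 \longrightarrow \Z^\ell \overset{\mu}{\longrightarrow} H_1(Y\setminus L;\Z) \overset{i_*}{\longrightarrow} H_1(Y;\Z) \longrightarrow 0,
\]
where $\mu$ sends the $i$-th standard basis vector to the meridian $\mu_i$ of the component $K_i$. Surjectivity of $i_*$ follows from Mayer--Vietoris on $Y = (Y\setminus L)\cup\nu(L)$, and the meridians lie in $\ker(i_*)$ because they bound small meridional disks in $\nu(L)\subset Y$. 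Injectivity of $\mu$ follows from the rational linking pairing $\lk(\mu_i, K_j) = \delta_{ij}$, which is well defined since $Y$ is a rational homology sphere and detects any nontrivial $\Z$-combination of meridians.

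Next, I would analyze the action of $H_1(Y\setminus L;\Z)$ fiberwise over $\Spin^c(Y)$. The meridian $\mu_i$ sends $(\ve{s},\frt)\mapsto(\ve{s}-e_i,\frt)$, so $\mu(\Z^\ell)$ preserves each fiber and acts by integer translations on the Alexander grading. From the cobordism formula~\eqref{eq:Alexander-grading-link}, the fiber $\pi^{-1}(\frt)$ is a single coset $\ve{s}_0(\frt) + \Z^\ell \subset \Q^\ell$; one checks that changing the lift $\frt'\in\Spin^c(W)$ of $\frt$ by an element of $\ker(\Spin^c(W)\to\Spin^c(Y))$ shifts each pairing $\langle c_1(\frt'),\hat\Sigma_i\rangle$ by an even integer, so the coset is well defined. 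Hence $\mu(\Z^\ell)$ acts freely and transitively on each fiber. On the base, an arbitrary $\g\in H_1(Y\setminus L)$ shifts $\frt$ by $\PD[i_*\g]$, which descends to the standard free and transitive action of $H_1(Y)\cong H^2(Y)$ on $\Spin^c(Y)$.

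Combining the fiber and base analyses via a short-exact-sequence-of-torsors argument (the affine analog of the five lemma, applied to the obvious commutative diagram of actions) shows that the whole action of $H_1(Y\setminus L;\Z)$ on $\bH(Y,L)$ is free and transitive, giving the desired isomorphism. Concretely, for freeness: if $\g$ fixes some $(\ve{s},\frt)$, then $i_*\g=0$ so $\g=\mu(\ve{n})$ for some $\ve{n}\in\Z^\ell$, and vanishing of the Alexander shift then forces $\ve{n}=0$. For transitivity: given $(\ve{s},\frt)$ and $(\ve{s}',\frt')$, first use a lift of the unique $\eta\in H_1(Y)$ with $\frt'=\frt+\PD[\eta]$ to move $\frt$ to $\frt'$, and then use meridians to adjust the (now integral) discrepancy in Alexander grading. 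The main obstacle I anticipate is the bookkeeping needed to justify the fiber description: namely, surjectivity of $\pi$ (which amounts to the observation, noted in the excerpt, that for $Y$ a rational homology sphere $\widehat{\HFL}(Y,L,\frt)\neq 0$ for every $\frt$) and well-definedness of $\ve{s}_0(\frt)$ independent of the chosen cobordism presentation $(W,\Sigma)$, which is handled by standard invariance under handle-slides and stabilizations.
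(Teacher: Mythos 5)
Your proof is correct and follows essentially the same approach as the paper: both compare the short exact sequence of affine spaces $0\to\Z^\ell\to\bH(Y,L)\to\Spin^c(Y)\to 0$ with $0\to\Z^\ell\to H_1(Y\setminus L;\Z)\to H_1(Y;\Z)\to 0$ and conclude via a five-lemma-for-torsors argument (the paper picks a base point to produce an equivariant comparison map and invokes the five lemma; you unwind this into an explicit verification of freeness and transitivity). Your proposal merely fills in details the paper leaves implicit, such as the fiber being a single $\Z^\ell$-coset and the well-definedness of that coset.
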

\begin{proof} This follows from the short exact sequence of affine spaces
  \begin{equation}\label{eq:short_spin}
0\to \Z^\ell\to \bH(Y,L)\to \Spin^c(Y)\to 0
\end{equation}
which is parallel to the short exact sequence of homology groups
\[
0\to \Z^\ell\to H_1(Y\setminus L;\Z)\to H_1(Y;\Z)\to 0.
\]
If we pick a base element $(\ve{s}, \frt)\in \bH(Y,L)$, we obtain a map of affine spaces from $H_1(Y\setminus L;\Z)$ to $\bH(Y,L)$ which makes the natural diagram commute. By the five-lemma, we obtain that $\bH(Y,L)$ and $H_1(Y\setminus L;\Z)$ are isomorphic as affine spaces.
\end{proof}

We are now ready to define the $H$-function of a link in a rational homology sphere.
\begin{define}
  For an oriented link $L\subset Y$ in a rational homology sphere $Y$ and $(\ve{s},\frt)\in\bH(Y,L)$, we define the 
  $H_{L}\colon\bH(Y,L)\to\Q$
  by saying that $-2H_{L}(\ve{s}, \frt)$ is the maximal $\gr_w$-grading of a non-zero element in the free part of 
  $H_{\ast}(\frA^{-}(L, \ve{s},\frt))$ where $\frA^{-}(L, \ve{s},\frt)$ is the subcomplex of $\cCFL(Y,L,\frt)$ lying in Alexander grading $\ve{s}$.
\end{define}

The $H$-function of L-space links in the 3-sphere can be computed from Alexander polynomials of the link and all sublinks \cite{GorskyNemethiLattice, BorodzikGorskyImmersed}. In order to generalize the result to links in rational homology sphere, we first recall \emph{generalized Alexander polynomials}. Friedl, Juh\'{a}sz and Rasmussen  \cite{FJRDecat}*{Theorem~1} prove that $\widehat{\HFL}(Y,L)$ categorifies the Turaev torsion $\Delta(Y,L)$ of $Y\setminus L$, which we view as an element in $\bF[H_1(Y\setminus L)]$, well-defined up to multiplication by monomials. When $Y$ is a rational homology 3-sphere, we refer to $\Delta(Y,L)$ as the \emph{generalized Alexander polynomial}.

 In our present setting, it is helpful to view $\Delta(Y,L)$ as taking values in the $\bF[H_1(Y\setminus L)]$-module $\bF[\bH(Y,L)]$ instead of $\bF[H_1(Y\setminus L)]$ itself, via the following formula:
\[
\Delta(Y,L)=\chi(\widehat{\HFL}(Y,L)):=\sum_{(\ve{s},\frt)\in \bH(Y,L)} t^{(\ve{s},\frt)}\cdot \chi(\widehat{\HFL}(Y,L,\ve{s},\frt))\in \bF[\bH(Y,L)].
\]
We may think of $\chi(\widehat{\HFL}(Y,L))$ as a normalized version of the Turaev torsion. Similarly to \cite{OSLinks}*{Proposition~8.1}, it is not hard to see that the Euler characteristic of $\widehat{\HFL}(Y,L)$ is symmetric with respect to the involution of $\bH(Y,L)$ given by $(\ve{s},\frt)\mapsto (-\ve{s},\bar{\frt}+\PD[L])$ (cf. \cite{ZemAbsoluteGradings}*{Proposition~8.3}).

To make more transparent connections with integer homology spheres, we refine the definition of $\Delta(Y,L)$. For $\frt\in\Spin^c(Y)$, we set:
\begin{equation}
\label{eq:Alexander}
  \Delta(Y,L,\frt)=\chi(\widehat{\HFL}(Y,L,\frt)):=\sum_{\ve{s}\colon (\ve{s},\frt)\in \bH(Y,L)} t^{\ve{s}}\cdot \chi(\widehat{\HFL}(Y,L,\ve{s},\frt))\in \bF[\bH(Y,L)].
\end{equation}

We recall that the graded Euler characteristics of $\widehat{\HFL}$ and $\HFL^-$ are related by the formula
\[
\chi(\HFL^-(Y,L))=\frac{1}{\prod_{i=1}^\ell (1-t_i)} \chi(\widehat{\HFL}(Y,L)).
\]
See \cite{OSLinks}*{Proposition~9.2}.

For any sublink $L'\subset L$, define the natural forgetful map:
  \[
\pi_{L,L-L'}\colon \bH(Y,L)\to \bH(Y,L- L')
\]
as follows.  
In the case that $L'=K_1$, the map $\pi_{L,L-L'}$ is given by the formula
\[
\pi_{L,L-L'}(s_1,\dots, s_\ell, \frt)=\left(s_2-\dfrac{\lk(K_1,K_2)}{2},\dots, s_\ell-\dfrac{\lk(K_1,K_\ell)}{2}, \frt \right),
\]
where $L=K_1\cup\cdots \cup K_\ell$. For general $L'$, the formula is a composition of several of these maps. We refer reader to \cite[Section 3.7]{MOIntegerSurgery} for explicit formulas of the forgetful map for links in $S^{3}$. Compare  \cite[Section 3]{BorodzikGorskyImmersed}.

Given $(\ve{s}, \frt), (\ve{s}', \frt')\in \bH(Y, L)$, we say $(\ve{s}, \frt)\geq (\ve{s}', \frt')$ if and only if $\frt=\frt'$ and $\ve{s}\geq \ve{s}'$. That is, $s_i\geq s'_i$ for all $i$, where $\ve{s}=(s_1, \dots, s_\ell)$ and $\ve{s}'=(s'_1, \dots, s'_\ell)$. 

\begin{lem}\label{lem:is_determined}
  For an oriented $L$-space link $L\subset Y$ in a rational homology sphere with $\frt\in \Spin^c(Y)$, the $H$-function $H_{L}$ satisfies: 
  \begin{equation}
  \label{eq:H-functionformula}
  H_{L}(\ve{s}, \frt)=\sum_{L'\subset L} (-1)^{| L'|-1} \sum_{\substack{(\ve{s}', \frt)\in \bH(Y, L')\\ (\ve{s}', \frt)\ge \pi_{L, L'}(\ve{s}+\ve{1}, \frt)}} \chi(\HFL^{-}(Y, L', \ve{s}',\frt)).
  \end{equation}
  where $\ve{1}=(1, \dots, 1)$.
  In particular, the $H$-function is determined by the generalized Alexander polynomials $\Delta(Y,L',\frt)$ of all sublinks $L'\subset L$. 
\end{lem}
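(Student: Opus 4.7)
The approach follows the strategy of Gorsky and N\'{e}methi \cite{GorskyNemethiLattice}*{Section~3} for L-space links in $S^3$, adapted to the rational homology 3-sphere setting. The starting observation is that, since $L$ is an L-space link, the subcomplex $\frA^-(L,\ve{s},\frt)$ has homology a single $\bF[U]$-tower whose top generator lies in Maslov grading $-2H_{L}(\ve{s},\frt)$ by definition. Hence $H_L$ is fully encoded by the $\bF[U]$-rank distribution of $\frA^-$ across Alexander gradings, and the task is to extract it from Euler characteristics of $\HFL^-$ of sublinks.

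The main step is a recursion via short exact sequences. For each component $K_i\subset L$, there is a short exact sequence of the form
\[
0\to \frA^-(L,\ve{s},\frt)\to \frA^-(L,\ve{s}-e_i,\frt)\to Q_i(\ve{s},\frt)\to 0,
\]
where the quotient $Q_i$ is identified, up to a grading shift, with a complex built from $\frA^-(L-K_i,\pi_{L,L-K_i}(\ve{s}),\frt)$ (this is essentially the $\scU_i$- vs.\ $\scV_i$-filtration structure of $\cCFL^-$). Passing to graded Euler characteristics yields a recursion relating the contribution of $L$ in Spin$^c$ structure $\frt$ to the Euler characteristics of $\HFL^-$ of $L$ and $L-K_i$. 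Iterating over all components and applying M\"{o}bius inversion produces the claimed inclusion-exclusion formula with signs $(-1)^{|L'|-1}$; finiteness of the inner sum follows from $H_L(\ve{s},\frt)\to 0$ as the coordinates of $\ve{s}$ grow in each direction, so only finitely many terms contribute nontrivially after telescoping. Once \eqref{eq:H-functionformula} is established, the final sentence follows from the categorification of the generalized Alexander polynomial due to Friedl, Juh\'{a}sz and Rasmussen \cite{FJRDecat}, which expresses $\chi(\widehat{\HFL}(Y,L',\frt))$ in terms of $\Delta(Y,L',\frt)$, and hence determines $\chi(\HFL^-(Y,L',\ve{s}',\frt))$ via the relation between the hat and minus versions.

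The main obstacle is the identification of the quotient $Q_i$ with sublink data, and the careful bookkeeping of Spin$^c$ structures and Alexander grading shifts under $\pi_{L,L-K_i}$. In a rational homology 3-sphere, $\pi_{L,L-K_i}$ involves rational linking numbers, and $\bH(Y,L)$ is only an affine space over $H_1(Y\setminus L;\Z)$ sitting inside $\Q^\ell\times \Spin^c(Y)$, so one must verify directly that the short exact sequence above is compatible with the fiber structure of \eqref{eq:short_spin}. I expect no new conceptual phenomenon beyond the $S^3$ case, but the indexing requires careful setup before the inductive argument and Möbius inversion run cleanly, and tracking the offset $+\ve{1}$ that appears in the statement of the formula against the combinatorics of the affine shifts is where most of the work lies.
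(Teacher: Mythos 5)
Your overall plan matches what the paper does: the paper's proof of this lemma consists of a single sentence stating that the arguments of Gorsky--N\'emethi \cite{GorskyNemethiLattice}*{Theorem 2.10} ``can be repeated verbatim'' in the rational-homology-sphere setting, followed by the observation you also make that $\chi(\HFL^-)$ is reconstructed from $\chi(\widehat{\HFL})$ and hence from the generalized Alexander polynomials via Friedl--Juh\'asz--Rasmussen. So the paper delegates everything to Gorsky--N\'emethi, and your Möbius-inversion framework, the inductive removal of components, the attention to the affine indexing of $\bH(Y,L)$ within $\Q^\ell\times\Spin^c(Y)$, and the final categorification step are all in the right place.

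However, one structural step in your outline is not how that argument actually runs, and would not work as stated. You propose a short exact sequence $0\to\frA^-(L,\ve{s},\frt)\to\frA^-(L,\ve{s}-e_i,\frt)\to Q_i\to 0$ with the quotient $Q_i$ ``identified, up to a grading shift, with a complex built from $\frA^-(L-K_i,\pi_{L,L-K_i}(\ve{s}),\frt)$.'' No such identification holds at generic $\ve{s}$: the quotient of two adjacent Alexander subcomplexes of $\cCFL(Y,L)$ is not a sublink subcomplex. What is true, and what Gorsky--N\'emethi actually use, is the \emph{stabilization} phenomenon: for $s_i$ sufficiently large, $\frA^-(L,\ve{s},\frt)$ becomes quasi-isomorphic to $\frA^-(L-K_i,\pi_{L,L-K_i}(\ve{s}),\frt)$ (suitably shifted), so that $H_L(\ve{s},\frt)$ stabilizes to $H_{L-K_i}(\pi_{L,L-K_i}(\ve{s}),\frt)$ in the limit. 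Their derivation proceeds not by term-by-term SESs but by combining two facts: (a) the inclusion-exclusion identity $\chi(\HFL^-(Y,L,\ve{v},\frt))=\sum_{E\subseteq\{1,\dots,\ell\}}(-1)^{|E|-1}H_L(\ve{v}-e_E,\frt)$, which comes from the hypercube computing $\HFL^-$ from the $\frA^-$ complexes for an L-space link, and (b) the asymptotic stabilization above, which provides the boundary terms needed to invert (a) and yields exactly the $+\ve{1}$ shift in the statement. If you replace your SES step with this pair of ingredients, the Möbius inversion you describe runs exactly as you expect, and the finiteness of the inner sum follows because $\chi(\HFL^-(Y,L',\cdot,\frt))$ is supported in a region bounded above in every coordinate. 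The rest of your outline, including the adaptation of $\pi_{L,L'}$ to rational linking numbers and the passage from $\Delta(Y,L',\frt)$ to $\chi(\HFL^-)$, is correct.
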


\begin{proof}
  The arguments of \cite{GorskyNemethiLattice}*{Theorem~2.10}  relating $H_L$ to $\chi(\HFL^{-}(Y, L))$ in the case of links in $S^3$ can
  be repeated verbatim for the case of $H_{L}$ and $\chi(\HFL^{-}(Y, L))$ to get \eqref{eq:H-functionformula}, though here we follow the convention in \cite[Theorem 3.15]{BorodzikGorskyImmersed} and assume $\HFL^{-}(Y, \emptyset)=0$. The right-hand side of \eqref{eq:H-functionformula} is determined by $\chi(\widehat{\HFL}(Y, L'))$ for all sublinks $L'$, which can be computed from  Alexander polynomials of $L'$ by \eqref{eq:Alexander}. Therefore, the $H$-function is determined by the Alexander polynomials of all sublinks $L'\subset L$. 

\end{proof}

\begin{example}
We normalize the multivariable Alexander polynomial of the unlink $\bU$ in the 3-sphere to be $0$, and the $H$-function for an $\ell$-component unlink in $S^{3}$ is the following:
$$H_{\bU_{\ell}}(s_1, \dots, s_\ell)=\sum_{i} (|s_{i}|-s_i)/2.$$
If  $L\subset S^{3}$ is  the Hopf link in the 3-sphere, its Alexander polynomial $\Delta(t_1, t_2)=1$ and $s_{i}\in \mathbb{Z}+1/2$.  By the formula in \cite{BorodzikGorskyImmersed, LiuSurgeries}, the value of the  H-function at each lattice point $(s_1, s_2)$ is  shown as follows:
\end{example}

\begin{figure}[H]
  \begin{tikzpicture}
    \def\myshift{(-0.25,-0.25)}
    \draw[thick,->] (-2,0) -- (2.5,0);
    \draw[thick,->] (0,-2) -- (0,2.5);
    \foreach \position  in {(0.5,0.5),(1.0,0.5),(1.5,0.5),(2.0,0.5),(0.5,1.0),(0.5,1.5),(0.5,2.0),(1.0,1.0),(1.0,1.5),(1.0,2.0),(1.5,1.0),(2.0,1.0)}
         \draw \position ++ \myshift node[scale=0.8] {$0$};
    \foreach \position in {(0,0.5),(0,1.0),(0,1.5),(0,2.0),(0.5,0),(1.0,0),(1.5,0),(2.0,0),(0,0)}
    \draw[blue!20!black] \position ++ \myshift node[scale=0.8] {$1$};
    \foreach \position in {(-0.5,0.5),(-0.5,1.0),(-0.5,1.5),(-0.5,2.0),(0.5,-0.5),(1.0,-0.5),(1.5,-0.5),(2.0,-0.5),(-0.5,0.0),(0.0,-0.5)}
    \draw[green!20!black] \position ++ \myshift node[scale=0.8] {$2$};
    \foreach \position in {(-1.0,0.5),(-1.0,1.0),(-1.0,1.5),(-1.0,2.0),(1.0,-1.0),(1.5,-1.0),(2.0,-1.0),(-0.5,-0.5),(-1.0,0.0),(0.0,-1.0),(0.5,-1.0)}
    \draw[purple!20!black] \position ++ \myshift node[scale=0.8] {$3$};
    \foreach \position in {(-1.0,-0.5),(-0.5,-1.0)}
    \draw[purple!20!black] \position ++ \myshift node[scale=0.8] {$4$};
    \draw[red!30!black] (-1.0,-1.0) ++\myshift node[scale=0.8] {$5$};
\foreach \x in {-1.0,-0.5,...,2.0} \draw (\x,-1.5) ++ \myshift node [scale=0.8] {$\vdots$};
\foreach \y in {-1.0,-0.5,...,2.0} \draw (-1.5,\y) ++ \myshift node [scale=0.8] {$\cdots$};
\foreach \x in {-1.0,-0.5,...,1.0} \draw (\x,2.5) ++ \myshift node [scale=0.8] {$\vdots$};
\foreach \y in {-1.0,-0.5,...,1.0} \draw (2.5,\y) ++ \myshift node [scale=0.8] {$\cdots$};
\draw (2.3,1.7) ++ \myshift node [scale=0.8] {$H(s_1,s_2)=0$};
\draw (2.7,0) node [scale=0.8] {$s_1$};
\draw (0,2.7) node [scale=0.8] {$s_2$};
\draw (2.7,0.5)++\myshift -- ++ (0.3,0) -- ++ (0.5,0.5) -- node [midway, above, scale=0.7] {$s_2=1/2$} ++ (0.8,0);
\draw (0.5,2.7)++\myshift -- ++ (0,0.2) -- ++ (0.5,0.3) -- node [midway, above, scale=0.7] {$s_1=1/2$} ++ (0.8,0);
  \end{tikzpicture}
\caption{The $H$-function of the Hopf link  \label{h-function}}
\end{figure}


\begin{thm}
\label{thm:alexchain_rational}
Suppose that $\Gamma$ is an arrow-decorated plumbing tree, with $V_\Gamma=V_G\cup V_\uparrow$. Let $L_\uparrow\subset \YG$ be the associated link and assume that $\YG$ is a rational homology 3-sphere. If $L_\uparrow$ is an L-space link, then the full link Floer complex $\cCFL(\YG,L_\uparrow)$ is determined by the generalized Alexander polynomials of $L_\uparrow$ and its sublinks. 
\end{thm}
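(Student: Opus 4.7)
The plan is to assemble the theorem from results already established in the paper, with the only nontrivial new input being the observation that $\cHFL(\YG, L_\uparrow)$ as an $\scR_\ell$-module is entirely reconstructed from the $H$-function $H_{L_\uparrow}$. First, I would invoke Theorem~\ref{thm:algebraic-free resolution}, which asserts that $\cCFL(\YG, L_\uparrow)$ is chain homotopy equivalent (as an $\scR_\ell$-module, hence in particular as an $A_\infty$-module) to a free resolution of $\cHFL(\YG, L_\uparrow)$. By Corollary~\ref{cor:free resolution=simplest-action}, producing a finitely generated free resolution of $\cHFL(\YG, L_\uparrow)$ (which exists by Hilbert's syzygy theorem) recovers $\cCFL(\YG, L_\uparrow)$ up to homotopy equivalence, and this step is algorithmic. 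Therefore it suffices to show that $\cHFL(\YG, L_\uparrow)$ as a graded $\scR_\ell$-module is determined by $\{\Delta(\YG, L', \frt): L'\subset L_\uparrow,\, \frt\in \Spin^c(\YG)\}$.

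The key intermediate step is to reconstruct the module $\cHFL(\YG, L_\uparrow)$ from the $H$-function. For each $(\ve{s}, \frt)\in \bH(\YG, L_\uparrow)$, since $L_\uparrow$ is an L-space link, the Alexander-graded summand $H_*(\frA^-(L_\uparrow, \ve{s}, \frt))$ is a single copy of $\bF[U]$ whose top generator sits in Maslov grading $-2H_{L_\uparrow}(\ve{s},\frt)$. Decomposing $\cHFL(\YG, L_\uparrow)$ as a direct sum over $\bH(\YG, L_\uparrow)$ of such towers, it remains to pin down the $\scU_i$ and $\scV_i$ actions, which shift Alexander grading by $\mp e_i$ and $\gr_w$ by $-2$ and $0$ respectively (Lemma~\ref{lem:gr-w-properties}, Lemma~\ref{lem:alexandergrading}). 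Between two adjacent towers at $(\ve{s},\frt)$ and $(\ve{s}-e_i,\frt)$, the map $\scU_i$ must send the top generator of the first to $U^{a}$ times the top generator of the second, where $a = H_{L_\uparrow}(\ve{s}-e_i,\frt)-H_{L_\uparrow}(\ve{s},\frt)$; analogously for $\scV_i$ one gets the exponent $H_{L_\uparrow}(\ve{s}+e_i,\frt)-H_{L_\uparrow}(\ve{s},\frt)+1$, as in the formulas of Section~\ref{sec:module-structure} (this is made explicit in Lemma~\ref{lemma:generating} of Section~\ref{sec:examples}). Thus the entire $\scR_\ell$-module structure on $\cHFL(\YG, L_\uparrow)$ is read off from $H_{L_\uparrow}$.

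Finally, Lemma~\ref{lem:is_determined} expresses $H_{L_\uparrow}$ through the graded Euler characteristics $\chi(\HFL^-(\YG, L', \frt))$ of all sublinks $L'\subset L_\uparrow$, and these Euler characteristics are determined in each $\Spin^c$ structure by the generalized Alexander polynomials $\Delta(\YG, L', \frt)$ via the relation $\chi(\HFL^-)=\chi(\widehat{\HFL})/\prod_i(1-t_i)$ together with equation~\eqref{eq:Alexander}. Chaining these reductions together: the Alexander polynomials determine the $H$-function, the $H$-function determines $\cHFL(\YG, L_\uparrow)$ as a graded $\scR_\ell$-module, and the $\scR_\ell$-module $\cHFL(\YG, L_\uparrow)$ determines $\cCFL(\YG, L_\uparrow)$ up to homotopy equivalence via any free resolution.

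The one subtle point worth checking, and the main obstacle in writing out the argument cleanly, is the bookkeeping of absolute gradings and $\Spin^c$ decomposition in the rational homology sphere setting: one must verify that the $H$-function as defined over $\bH(\YG, L_\uparrow)$ indeed packages all the Maslov gradings needed to specify the actions of $\scU_i$, $\scV_i$ uniquely (not merely up to an overall shift within each tower), and that the short exact sequence \eqref{eq:short_spin} used to describe $\bH(\YG, L_\uparrow)$ matches the grading normalization of Remark~\ref{rem:grading}. Once this is tracked, the theorem follows directly from Theorem~\ref{thm:algebraic-free resolution} and Lemma~\ref{lem:is_determined}.
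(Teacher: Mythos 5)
Your overall strategy coincides with the paper's proof: apply Theorem~\ref{thm:algebraic-free resolution} to reduce the problem to reconstructing the $\scR_\ell$-module $\cHFL(\YG,L_\uparrow)$, decompose it over $\bH(\YG,L_\uparrow)$ into $\bF[U]$-towers whose tops have Maslov grading $-2H_{L_\uparrow}(\ve{s},\frt)$, recover the $\scU_i,\scV_i$ actions from these gradings (each is multiplication by $1$ or $U$, pinned down by degree), and feed in Lemma~\ref{lem:is_determined} to pass from Alexander polynomials to the $H$-function. The citation of Corollary~\ref{cor:free resolution=simplest-action} is a harmless supplement.

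The one concrete slip is in your explicit exponent formulas. With the conventions of this paper ($\scU_i$ drops $\gr_w$ by $2$, $\scV_i$ preserves $\gr_w$; Lemma~\ref{lem:gr-w-properties}), comparing $\gr_w$ of $\scU_i X_{\ve{s}}$ (which is $-2H_{L_\uparrow}(\ve{s},\frt)-2$) with $\gr_w(U^a X_{\ve{s}-e_i}) = -2a - 2H_{L_\uparrow}(\ve{s}-e_i,\frt)$ gives
\[
a = H_{L_\uparrow}(\ve{s},\frt) - H_{L_\uparrow}(\ve{s}-e_i,\frt)+1,
\]
and for $\scV_i$ the exponent is $H_{L_\uparrow}(\ve{s},\frt)-H_{L_\uparrow}(\ve{s}+e_i,\frt)$. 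These are the complements (in $\{0,1\}$) of the values you wrote, and they also match the lattice formulas of Section~\ref{sec:module-structure} under the identification $g = -H$. As written, your formulas would assign multiplication by $1$ exactly where the correct answer is multiplication by $U$, and vice versa, which would give the wrong module. This is a careless transcription error rather than a gap in the idea: the underlying grading comparison is sound and, carried out correctly, yields the right answer. The paper's own proof sidesteps explicit formulas entirely by observing only that the choice between $1$ and $U$ is fixed by the $\gr_w$-gradings, which the $H$-function encodes; it would be cleanest to adopt that phrasing or correct the signs.
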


\begin{proof}
By Theorem \ref{thm:algebraic-free resolution}, the chain complex $\cCFL(\YG, L_\uparrow)$ is a free resolution of $\cHFL(\YG, L_\uparrow)$.
In particular, $\cCFL(\YG,L_\uparrow)$ is determined by the homology group $\cHFL(\YG,L_\uparrow)$ as an $\cR_\ell$-module.
Therefore, it suffices to prove that the generalized Alexander polynomials determine the homology $\cHFL(\YG, L_\uparrow)$. 
By Lemma \ref{lem:is_determined},  the $H$-function is determined by the generalized Alexander polynomials of $L_\uparrow$ and its sublinks, 
it remains to prove that the $H$-function determines the $\cR_\ell$-module structure of $\cHFL(\YG,L_\uparrow)$.

There is a decomposition of $\bF[U]$-modules
 \[
 \cHFL(\YG, L_\uparrow)=\bigoplus_{(\ve{s}, \frt)\in \mathbb{H}(\YG, L_{\uparrow})}\cHFL(Y_G,L_\uparrow, \ve{s}, \frt).
 \] 
 Since $L_{\uparrow}$ is an L-space link in $\YG$, $\cHFL(Y_G,L_\uparrow, \ve{s}, \frt)\cong \mathbb{F}[U]$ for all $(\ve{s}, \frt)\in \mathbb{H}(\YG, L_{\uparrow})$. The $(\gr_w,A)$-grading of the generator of $\cHFL(Y_G,L_\uparrow, \ve{s}, \frt)$ is determined by the $H$-function. That is, the Alexander gradings of the generator equal $\ve{s}$ and the Maslov grading $\gr_w$ equals $-2H_{L_{\uparrow}}(\ve{s, \frt})$.  Hence,  it suffices to see the $\scU_i$ and $\scV_i$ actions on $\cHFL(\YG, L_\uparrow, \frt)$ are also determined by the $H$-function.

To see this, note that we may view $\scU_i$ as restricting to a map 
\begin{equation}
\cHFL(Y_G,L_\uparrow, \ve{s}, \frt)\iso \bF[U]\to \cHFL(Y_G,L_\uparrow, \ve{s}-e_{i}, \frt)\iso \bF[U].
\label{eq:identification-A-F[U]}
\end{equation}
Since the map $\scV_i$ goes in the opposite direction and $\scU_i\scV_i=U$, the map $\scU_i$ is given by either multiplication by $1$ or $U$, with respect to identifications in~\eqref{eq:identification-A-F[U]}.
We observe that the map $\scU_i$ has $\gr_w$-grading $-2$, and hence the choice of being $U$ or $1$ is determined by the $\gr_w$-gradings of the copies of $\bF[U]$ in ~\eqref{eq:identification-A-F[U]}, which is encoded by the $H$-function. 
 Similarly, the action of  $\scV_i$ is also determined by the $H$-function since $\scU_i\scV_i=U$, and the action of $\scU_i$ is determined by the $H$-function. Therefore, for L-space links, the Alexander polynomials determine the full link Floer chain complexes. 
\end{proof}

\subsection{Comparison to Gorsky and N\'{e}methi's link lattice homology}
\label{sec:comparison}

We now compare our chain complex $\mathbb{CFL}(\Gamma,V_\uparrow)$ to the definition of link lattice homology due to Gorsky and N\'{e}methi \cite{GorskyNemethiLattice}.

As a first step, we recall their definition. Let $L\subset S^3$ be a link of $\ell$ components. 
Write $C_i$ for the mapping cone complexes
\[
 C_i:=\Cone(\scV_i\colon\cR_\ell\to \cR_\ell),\ i=1,\dots,\ell.
 \]
  Write $\scK(L)$ for the (non-free) complex of $\cR_\ell$-modules obtained by tensoring the homology group $\cHFL(L)$ with $C_1,\dots, C_\ell$, over the ring $\cR_\ell$. 

\begin{rem} Note that $C_1\otimes_{\cR_\ell}\cdots \otimes_{\cR_{\ell}} C_\ell$ is the \emph{Koszul complex} for the regular sequence $(\scV_1,\dots, \scV_\ell)$ in $\cR_{\ell}$. See \cite{Weibel}*{Section~4.5}.
\end{rem}

If $L$ is an L-space link, Gorsky and N\'{e}methi define the link lattice complex to be the chain complex $\scK(L)$. Gorsky and N\'{e}methi prove the following:

  \begin{thm}[\cite{GorskyNemethiLattice}*{Theorem 2.9}]\label{thm:Gorsky-Nemethi} If $L$ is an algebraic link, then
\[H_*\scK(L)\iso H_*\left(\cCFL(L)/(\scV_1,\dots, \scV_\ell)\right).
\]
\end{thm}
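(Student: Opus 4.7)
The plan is to deduce the isomorphism from Theorem~\ref{thm:algebraic-free resolution} by interpreting both sides as computing the same $\Tor$ group. First I would invoke Gorsky and N\'emethi's result that algebraic links in $S^3$ are L-space links, together with the fact that they are plumbed by definition, so that Theorem~\ref{thm:algebraic-free resolution} applies and $\cCFL(L)$ is (homotopy equivalent to) a free resolution of its homology $\cHFL(L)$ over $\scR_\ell$. Consequently, for any $\scR_\ell$-module $M$,
\[
H_*\bigl(\cCFL(L)\otimes_{\scR_\ell} M\bigr)\iso \Tor^{\scR_\ell}_*\bigl(\cHFL(L),M\bigr).
\]

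Next I would examine the complex $\scK(L)=\cHFL(L)\otimes_{\scR_\ell}C_1\otimes_{\scR_\ell}\cdots\otimes_{\scR_\ell}C_\ell$. Since each $C_i$ is the two-term complex $\scR_\ell \xrightarrow{\scV_i}\scR_\ell$, the tensor product $K^{\bullet}:=C_1\otimes\cdots\otimes C_\ell$ is exactly the Koszul complex associated to the sequence $(\scV_1,\dots,\scV_\ell)$. Because $\scR_\ell$ is a polynomial ring and $(\scV_1,\dots,\scV_\ell)$ is a regular sequence, $K^{\bullet}$ is a free resolution of $\scR_\ell/(\scV_1,\dots,\scV_\ell)$. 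Therefore
\[
H_*\scK(L)\iso \Tor^{\scR_\ell}_*\bigl(\cHFL(L),\,\scR_\ell/(\scV_1,\dots,\scV_\ell)\bigr).
\]

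Finally, I would apply the first observation with $M=\scR_\ell/(\scV_1,\dots,\scV_\ell)$ to obtain
\[
\Tor^{\scR_\ell}_*\bigl(\cHFL(L),\,\scR_\ell/(\scV_1,\dots,\scV_\ell)\bigr)\iso H_*\bigl(\cCFL(L)/(\scV_1,\dots,\scV_\ell)\bigr),
\]
which combined with the previous display yields the claimed isomorphism. The only serious input is Theorem~\ref{thm:algebraic-free resolution}; everything else is the standard fact that $\Tor$ can be computed from a free resolution in either variable. The minor thing left to check is compatibility of the Maslov and Alexander gradings on both sides, which follows by tracking the bidegrees of $\scV_i$ through the two resolutions. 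One may note that this approach actually yields the stronger conclusion of Corollary~\ref{cor:GN-lattice}, namely a chain-level homotopy equivalence $\scK(L)\simeq \cCFL(L)/(\scV_1,\dots,\scV_\ell)$, since the quasi-isomorphism $\cCFL(L)\simeq \cHFL(L)$ survives tensoring with the free complex $K^{\bullet}$.
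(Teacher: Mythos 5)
Your argument is correct and is essentially the route the paper takes: Section~\ref{sec:comparison} recovers Theorem~\ref{thm:Gorsky-Nemethi} from Theorem~\ref{thm:algebraic-free resolution} precisely by observing that both $\scK(L)$ and $\cCFL(L)/(\scV_1,\dots,\scV_\ell)$ compute the derived tensor product of $\cHFL(L)$ with $\scR_\ell/(\scV_1,\dots,\scV_\ell)$, resolving the first factor versus the second. The only cosmetic difference is that the paper phrases the balancing of resolutions in the Lipshitz--Ozsv\'{a}th--Thurston $\boxtimes$ formalism (type-$D$ modules and $DA$-bimodules) rather than in classical $\Tor$ language, and, as you note, this also yields the chain-level refinement stated as Theorem~\ref{thm:rai_GN} and Corollary~\ref{cor:GN-lattice}.
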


\begin{rem}
Gorsky and N\'{e}methi prove Theorem~\ref{thm:Gorsky-Nemethi} at the level of graded $\bF$ vector spaces. There are additional module actions of $\bF[[\scU_1,\scV_1,\dots, \scU_\ell, \scV_\ell]]$ on both sides. Gorsky and Hom \cite{GorskyHom}*{Proposition~3.7} equip $\scK(L)$ with commuting endomorphisms $\scU_1,\dots, \scU_\ell$. In our notation, their action of $\scU_i$ coincides with the standard action of $\scU_i$ on the $\cHFL(L)$ tensor factor of $\scK(L)$. Of course, one can also consider the action of $\scV_i$, defined symmetrically. Note that as an endomorphism of $\scK(L)$, each $\scV_i$ is null-homotopic since $\scK(L)$ is defined by tensoring $\cHFL(L)$ with the Koszul complex of the sequence $(\scV_1,\dots, \scV_\ell)$. More explicitly, we can write $\scK(L)$ as a mapping cone 
\[
\scK(L)=\begin{tikzcd} \Cone\big(\scK_i^0(L) \ar[r, "\scV_i"] &\scK_i^1(L)\big), \end{tikzcd}
\]
where $\scK_i^0(L)$ (resp. $\scK_i^1(L)$) is the codimension one subcube which has $i$-coordinate $0$ (resp. $1$). As an endomorphism of $\scK(L)$, the module action of $\scV_i$ preserves both $\scK_i^0(L)$ and $\scK_i^1(L)$. We define an endomorphism $H$ of $\scK(L)$, which sends $\scK_i^1(L)$ to $\scK_i^0(L)$ via the identity, and observe that 
on $\scK(L)$:
\[
\scV_i=[\d, H].
\] 
\end{rem}

We now explain how our Theorem~\ref{thm:algebraic-free resolution} quickly recovers Theorem~\ref{thm:Gorsky-Nemethi}, and also to prove a refinement which takes into account the $\cR_\ell$-action. To state our result, we equip $\scK(L)$ with an $A_\infty$-module structure which has only $m_1$ and $m_2$ non-trivial. The action of $m_2$ corresponds to the standard the action on the $\cHFL(L)$ factor of the tensor product.

\begin{thm}\label{thm:rai_GN}
If $L\subset S^3$ is a plumbed L-space link, then
there is a homotopy equivalence of $A_\infty$-modules over $\bF[\scU_1,\scV_1,\dots, \scU_\ell,\scV_\ell]$
\[
\scK(L)\simeq\cCFL(L)/(\scV_1,\dots, \scV_\ell).
\]
\end{thm}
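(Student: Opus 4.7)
The plan is to view both sides as models for the derived tensor product $\cHFL(L)\otimes^{L}_{\cR_\ell}\cR_\ell/(\scV_1,\dots,\scV_\ell)$ and interpolate between them through a single bicomplex.

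First, I would observe that the complex $P:=C_1\otimes_{\cR_\ell}\cdots\otimes_{\cR_\ell} C_\ell$ is precisely the Koszul complex of the sequence $(\scV_1,\dots,\scV_\ell)$ in $\cR_\ell$. Since this sequence is regular (the $\scV_i$ are algebraically independent over $\bF[\scU_1,\dots,\scU_\ell]$), $P$ is a free resolution of $\cR_\ell/(\scV_1,\dots,\scV_\ell)$ as a $dg$-module over $\cR_\ell$, and the canonical projection $\pi_P\colon P\to \cR_\ell/(\scV_1,\dots,\scV_\ell)$ is a quasi-isomorphism of $dg$-modules. By definition, $\scK(L)=\cHFL(L)\otimes_{\cR_\ell}P$.

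Next, by Theorem~\ref{thm:algebraic-free resolution}, $\cCFL(L)$ is a free resolution of $\cHFL(L)$ over $\cR_\ell$, so the canonical projection $\pi_{CFL}\colon \cCFL(L)\to \cHFL(L)$ is a quasi-isomorphism of $dg$-modules. I would then form the bicomplex
\[
B:=\cCFL(L)\otimes_{\cR_\ell}P,
\]
which carries two natural $\cR_\ell$-linear chain maps:
\[
\id\otimes \pi_P\colon B\longrightarrow \cCFL(L)\otimes_{\cR_\ell}\cR_\ell/(\scV_1,\dots,\scV_\ell)=\cCFL(L)/(\scV_1,\dots,\scV_\ell),
\]
\[
\pi_{CFL}\otimes \id\colon B\longrightarrow \cHFL(L)\otimes_{\cR_\ell}P=\scK(L).
\]
Since $\cCFL(L)$ is free (hence flat) over $\cR_\ell$, the first map is a quasi-isomorphism; since $P$ is free over $\cR_\ell$, the second map is also a quasi-isomorphism.

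Finally, I would promote this zig-zag of $dg$-module quasi-isomorphisms to a homotopy equivalence of $A_\infty$-modules. Viewing each $dg$-module as an $A_\infty$-module over $\cR_\ell$ with $m_j=0$ for $j>2$, the two projections above are strict $A_\infty$-morphisms. Because quasi-isomorphisms in the category of $A_\infty$-modules are invertible (as recalled after Proposition~\ref{prop:used_for_free_resolution} and in \cite{KellerNotes}*{Section~4}), we can invert $\pi_{CFL}\otimes\id$ and compose to obtain the desired homotopy equivalence
\[
\scK(L)\simeq B\simeq \cCFL(L)/(\scV_1,\dots,\scV_\ell)
\]
of $A_\infty$-modules over $\cR_\ell$. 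The only nontrivial input is Theorem~\ref{thm:algebraic-free resolution}; once that is in hand, the argument is entirely formal, and the main thing to verify carefully is that the flatness of $\cCFL(L)$ and $P$ really does ensure the two projection maps are quasi-isomorphisms (rather than merely morphisms in the derived category), so that no replacement of $B$ by a totalization or spectral sequence degeneration argument is needed.
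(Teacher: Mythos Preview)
Your proof is correct and follows essentially the same underlying idea as the paper: both sides are models for the derived tensor product $\cHFL(L)\otimes^L_{\cR_\ell}\cR_\ell/(\scV_1,\dots,\scV_\ell)$, and Theorem~\ref{thm:algebraic-free resolution} is the only substantive input.

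The packaging differs. The paper works in the type-$D$/type-$A$ formalism of Lipshitz--Ozsv\'ath--Thurston: it views $C_{1,\dots,\ell}$ as a type-$D$ module (and then a $DA$-bimodule to track the $\cR_\ell$-action), writes $\scK(L)=\cHFL(L)_{\cR_\ell}\boxtimes {}^{\cR_\ell}C_{1,\dots,\ell}$, and invokes the $A_\infty$-equivalence $\cHFL(L)\simeq\cCFL(L)$ directly at the level of box tensor products. Your argument instead stays in classical homological algebra, forming the bicomplex $B=\cCFL(L)\otimes_{\cR_\ell}P$ and using flatness on each side to get a zig-zag of honest $dg$-module quasi-isomorphisms, then inverting one leg in the $A_\infty$ category. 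Your route is arguably more elementary (no bordered machinery) and makes the ``balanced Tor'' structure transparent; the paper's route gives smaller explicit models and handles the $\cR_\ell$-action via the $DA$-bimodule extension rather than by observing that all maps are already $\cR_\ell$-linear.
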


\begin{proof}
We first consider the proof only at the level of chain complexes, and then subsequently consider the $\cR_\ell$-module structure.

 Note that $C_i$ is a free resolution of $\cR_\ell/\scV_i$ as an $\cR_\ell$-module. Furthermore, the $\ell$-dimensional cube-shaped complex $C_1\otimes \dots \otimes C_\ell$ is a free resolution of $\bF[\scU_1,\scV_1,\dots, \scU_\ell,\scV_\ell]/(\scV_1,\dots, \scV_{\ell})$.

 We use the algebraic formalism of type-$D$ and type-$A$ modules of Lipshitz, Ozsv\'{a}th and Thurston \cite{LOTBordered} \cite{LOTBimodules} to give a small model of the derived tensor product of $A_\infty$-modules. We may view $C_i$ as a type-$D$ module ${}^{\cR_\ell} C_i$ whose underlying $\bF$ vector space has two generators, $\xs_i$ and $\ys_i$, and whose structure map $\delta^1$ is given by
 \[
 \delta^1(\xs_i)=\scV_i\otimes \ys_i.
 \]
 Similarly, the complex $C_1\otimes \cdots\otimes C_{\ell}$ naturally corresponds to a type-$D$ module over $\cR_\ell$, whose underlying vector space is generated by the points of an $\ell$-dimensional cube. We write ${}^{\cR_\ell} C_{1,\dots, \ell}$ for this cube-shaped complex.
 
 By definition, 
 \[
 \scK(L):=\cHFL(L)_{\cR_\ell}\boxtimes {}^{\cR_\ell}C_{1,\dots, \ell}.
 \]

 By Theorem~\ref{thm:algebraic-free resolution}, if $L$ is a plumbed L-space link, $\cHFL(L)$ and $\cCFL(L)$ are homotopy equivalent as $A_\infty$-modules over $\cR_\ell$. On the other hand, $\cCFL(L)_{\cR_{\ell}}$ is free over $\cR_{\ell}$, which translates to the fact that there is a type-$D$ module $\cCFL(L)^{\cR_{\ell}}$ such that
 \[
\cCFL(L)_{\cR_{\ell}}\iso  \cCFL(L)^{\cR_{\ell}}\boxtimes {}_{\cR_{\ell}} \cR_{\ell}{}_{\cR_{\ell}}.
 \]
 
 We observe that since ${}^{\cR_\ell}C_{1,\dots, \ell}$ is a free resolution of $\cR_{\ell}/(\scV_1,\dots, \scV_{\ell})$, it follows that
${}_{\cR_{\ell}} C_{1,\dots, \ell}\simeq {}_{\cR_{\ell}} \cR_{\ell}/(\scV_1,\dots, \scV_{\ell})$.

 Putting these relations together, we obtain
 \begin{equation}
 \begin{split}\scK(L)&=\cHFL(L)_{\cR_\ell}\boxtimes {}^{\cR_\ell}C_{1,\dots, \ell}\\
 &\simeq\cCFL(L)_{\cR_\ell}\boxtimes {}^{\cR_\ell}C_{1,\dots, \ell}\\
 &=\cCFL(L)^{\cR_\ell}\boxtimes {}_{\cR_{\ell}} \cR_{\ell}{}_{\cR_\ell}\boxtimes {}^{\cR_\ell}C_{1,\dots, \ell}\\
 &\simeq\cCFL(L)^{\cR_\ell}\boxtimes {}_{\cR_\ell} \cR_{\ell}/(\scV_1,\dots, \scV_\ell).
 \end{split}
\label{eq:KL=CFL-manipulation}
 \end{equation}

We now consider the $\cR_\ell$-module structures. We note that the complexes ${}^{\cR_{\ell}}C_{1,\dots, \ell}$ extends to a $DA$-bimodule ${}^{\cR_\ell} (C_{1,\dots, \ell})_{\cR_\ell}$. This $DA$-bimodule has the same generators and $\delta^1$ as ${}^{\cR_\ell} C_{1,\dots, \ell}$. Additionally, there is a $\delta_2^1$ action given by $\delta_2^1(\xs, a)=a\otimes \xs$ for any $a\in \cR_\ell$ and $\xs\in C_{1,\dots, \ell}$. We observe that by definition 
\[
\scK(L)_{\cR_\ell}= \cHFL(L)_{\cR_\ell}\boxtimes {}^{\cR_\ell} (C_{1,\dots, \ell})_{\cR_\ell}. 
\]
Additionally, it is easy to check that 
\[
{}_{\cR_\ell} \cR_{\ell}{}_{\cR_\ell} \boxtimes {}^{\cR_\ell} (C_{1,\dots, \ell})_{\cR_{\ell}}\simeq {}_{\cR_\ell} (\cR_\ell/(\scV_1,\dots, \scV_\ell))_{\cR_\ell},
\]
so that the manipulation from Equation~\eqref{eq:KL=CFL-manipulation} extends to an equivalence of $A_\infty$-modules. 
\end{proof}

\section{Computations}\label{sec:examples}

In this section, we give some computational tools. In Subsection~\ref{sub:generators}, we provide an algorithm to compute the link
Floer homology of an L-space link from its $H$-function. Next, in Subsection~\ref{sec:type-A-Tnn}, we describe the $\cR_n$-module $\cHFL(T(n,n))$ based on Gorsky and Hom's computation of the $H$-function of $T(n,n)$. By our Theorem~\ref{thm:algebraic-free resolution}, this $\cR_n$-module contains equivalent information to $\cCFL(T(n,n))$. In Subsections~\ref{sec:T33-free} and \ref{sec:T44-free}, we compute explicit free resolutions of $\cHFL(T(3,3))$ and $\cHFL(T(4,4))$.


\subsection{Generators and relations for the homology of an L-space link}\label{sub:generators}

In this section, we describe generators and relations for the modules $\cHFL(L)$ when $L$ is an L-space link. We focus on the case  $L\subset S^3$ to simplify the notation, but this is not essential.

 If $\ve{s}\in \bH(L)$, we write $X_{\ve{s}}\in \cHFL(L)$ for the generator of the $\bF[U]$-tower in Alexander grading $\ve{s}$. By definition, 
\[
A(X_{\ve{s}})=\ve{s}\quad \text{and} \quad \gr_w(X_{\ve{s}})=-2H_L(\ve{s}).
\]

We let $\cG$ denote the set of $X_{\ve{s}}\in \cHFL(L)$ which satisfy
\begin{equation}
\label{eq:freehomology}
H_L(\ve{s}-e_i)-H_L(\ve{s})=1 \quad \text{and}\quad  H_L(\ve{s})-H_L(\ve{s}+e_i)=0
\end{equation}
for all $i\in \{1,\dots, \ell\}$. Here, $e_i$ denotes the unit vector $(0,\dots, 1 ,\dots 0)$.

\begin{lem}\label{lemma:generating} Let $L$ be an $\ell$-component L-space link in $S^3$.
\begin{enumerate}
\item The set $\cG$ is finite and is the unique $\cR_\ell$-module generating set of $\cHFL(L)$ of minimal length.
\item The kernel of the natural map 
$$\cR_{\ell}^{n}\rightarrow \cHFL(L)$$
is spanned by the following generating set:
\begin{enumerate}[label=($r$-\arabic*), ref=$r$-\arabic*]
\item All elements of the form 
$$\a X_{\ve{s}}+\b X_{\ve{s}'}$$
where $X_{\ve{s}}, X_{\ve{s}'}\in \cG$, and $\a, \b$ are monomials such that $w_U(\a)=w_U(\b)=0$, $\gcd(\a,\b)=1$ with
\[
A(\a)+\ve{s}=A(\b)+\ve{s}'\quad \text{and} \quad \gr_w(\a)-2H_L(\ve{s}) =\gr_w(\b)-2 H_L(\ve{s}'). 
\]
We use the notation $w_U(\alpha)=\min\lbrace i_1, j_1\rbrace+\cdots+\min\lbrace i_{\ell}, j_{\ell}\rbrace$ for any monomial $\alpha=\scU_{1}^{i_{1}}\cdots \scU_{\ell}^{i_{\ell}}\scV_{1}^{j_{1}}\cdots \scV_{\ell}^{j_{\ell}}.$ 
\item \label{relation-2} For each $i$, $j$ and $X_{\ve{s}}\in \cG$ the element
\[
(\scU_i \scV_i+\scU_j\scV_j)X_{\ve{s}}.
\] 
\end{enumerate}
\end{enumerate}
\end{lem}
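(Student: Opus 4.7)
The plan is to leverage the identification $\cHFL(L)\cong \mathbb{HFL}(\Gamma,V_\uparrow)$ from Theorem~\ref{thm:lattice=HFL} together with the concrete formulas relating the actions of $\scU_i,\scV_i$ to the $H$-function. Matching Maslov gradings ($\gr_w(X_{\ve{s}})=-2H_L(\ve{s})$, $\gr_w(\scU_i)=-2$, $\gr_w(\scV_i)=0$) and using $\scU_i\scV_i=U$, one derives
\[
\scU_i X_{\ve{s}} = U^{\,1+H_L(\ve{s})-H_L(\ve{s}-e_i)} X_{\ve{s}-e_i},\qquad \scV_i X_{\ve{s}} = U^{\,H_L(\ve{s})-H_L(\ve{s}+e_i)} X_{\ve{s}+e_i},
\]
with exponents in $\{0,1\}$ (as in the proof of Theorem~\ref{thm:alexchain_rational}). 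The conditions defining $\cG$ then say precisely that \emph{both} $\scU_i$ and $\scV_i$ act on $X_{\ve{s}}$ by $1$ (not by $U$) for every $i$; equivalently, $X_{\ve{s}}$ is not obtained from a neighbor by a unit-coefficient action.

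For Part~(1), I would first establish finiteness of $\cG$ by bounding each $s_i$: if $s_i\gg 0$ then $H_L(\ve{s})=H_L(\ve{s}-e_i)=0$, violating condition 1; if $s_i\ll 0$ then $H_L$ strictly decreases in the $+e_i$ direction, violating condition 2. To show $\cG$ generates, I would run a reduction algorithm: starting from arbitrary $X_{\ve{s}}$, as long as $H_L(\ve{s})-H_L(\ve{s}+e_i)=1$ for some $i$, replace $\ve{s}$ by $\ve{s}+e_i$ (so that $X_{\ve{s}}=\scU_i X_{\ve{s}+e_i}$); this phase terminates because $H_L$ strictly decreases and is bounded below by $0$. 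Once stabilized, I would then replace $\ve{s}$ by $\ve{s}-e_j$ whenever $H_L(\ve{s}-e_j)=H_L(\ve{s})$ (so that $X_{\ve{s}}=\scV_j X_{\ve{s}-e_j}$); this phase stays on a fixed level set of $H_L$, which is bounded from below, hence terminates at a point of $\cG$. Finally, I would invoke the graded Nakayama lemma: the image of $\cG$ forms an $\bF$-basis of $\cHFL(L)\otimes_{\cR_\ell}\bF$ where $\bF=\cR_\ell/(\scU_1,\scV_1,\dots,\scU_\ell,\scV_\ell)$, since $X_{\ve{s}}\otimes 1=0$ precisely when $X_{\ve{s}}=\scU_i X_{\ve{s}+e_i}$ or $\scV_j X_{\ve{s}-e_j}$ with unit coefficient. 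Uniqueness and minimality of $\cG$ then follow.

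For Part~(2), I would first verify that relations (r-1) and (r-2) lie in the kernel: (r-2) is just $\scU_i\scV_i=\scU_j\scV_j=U$, while for (r-1), any two monomial multiples $\alpha X_{\ve{s}}$ and $\beta X_{\ve{s}'}$ in the same $(\gr_w,A)$-bidegree necessarily lie in the one-dimensional $\bF$-subspace of $\cHFL(L)$ of that bidegree (since each Alexander graded piece is a free $\bF[U]$-module of rank one), and so must be equal. For completeness, given any relation $\sum_k \alpha_k X_{\ve{s}_k}=0$ with $X_{\ve{s}_k}\in\cG$, I would use (r-2) to normalize each $\alpha_k=U^{w_U(\alpha_k)}\beta_k$ with $w_U(\beta_k)=0$, then split the relation according to Alexander grading. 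Each graded piece is a sum in $\cHFL(L,\ve{t})\cong\bF[U]\cdot X_{\ve{t}}$, and since we work over $\F_2$, the terms must cancel in pairs of the form $\alpha X_{\ve{s}}+\beta X_{\ve{s}'}$ with equal $(\gr_w,A)$-grading. Stripping a common $U$-factor via (r-2) reduces any such pair to a relation in (r-1).

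The main obstacle will be the termination of the generation algorithm in Step~(4) of Part~(1), specifically showing that after the ``$H$-reducing'' phase the subsequent moves within a level set of $H_L$ can always be completed to reach an element of $\cG$ without oscillating. A secondary technical point is the clean extraction of a $\gcd=1$ relation in Step (r-1) of Part~(2): one must check that any common factor shared by the paired monomials $\alpha,\beta$ is either a power of $U$ (handled by (r-2)) or else itself a monomial that may be absorbed into $X_{\ve{s}}$ via unit action, using the explicit action formulas above.
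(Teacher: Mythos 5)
Your proof of Part~(2) is essentially the same as the paper's: split a relation by Alexander and Maslov bidegree, note the one-dimensionality of each such graded piece of $\cHFL(L)$ over $\bF$ forces cancellation in pairs (since we work over $\bF_2$), and then use $(r$-$2)$ to reduce each pair to the normal form required in $(r$-$1)$.

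For Part~(1) you take a genuinely different route, and the core of it has a gap that you correctly flag. The paper goes straight to a graded Nakayama argument: $\cHFL(L)$ is a finitely generated graded $\cR_\ell$-module, so it admits a finite minimal homogeneous generating set; a homogeneous element can lie in a minimal generating set only if it is outside $\frm\cdot\cHFL(L)$, and since each Alexander--Maslov graded piece of $\cHFL(L)$ is one dimensional over $\bF$, this is equivalent to being in the image of no $\scU_i$ and no $\scV_i$, i.e.\ to \eqref{eq:freehomology}. Thus the minimal homogeneous generating set sits inside $\cG$, which gives finiteness of $\cG$ and the fact that $\cG$ generates in one stroke, and the opposite containment $\cG\subset$ (minimal set) is then deduced from the same 1-dimensionality. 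Your plan instead establishes finiteness by bounding each $s_i$ directly (which works, modulo the small imprecision that for $s_i\gg 0$ one only gets $H_L(\ve{s})=H_L(\ve{s}-e_i)$, not that both vanish), proves generation by a reduction algorithm, and then appeals to Nakayama only for minimality and uniqueness. The stated obstacle is real: after the $+e_i$ phase stabilizes and you take a $-e_j$ step within a level set of $H_L$, a $+e_i$ move may become available again, so the two phases do not decouple and the naive termination argument fails.

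There are two ways out. First, your algorithm does in fact terminate, but for a different reason: every move, whether $X_{\ve{s}}=\scU_iX_{\ve{s}+e_i}$ or $X_{\ve{s}}=\scV_jX_{\ve{s}-e_j}$ with unit coefficient, strictly increases $\gr_w+\gr_z$ by $2$ (equivalently strictly decreases $2H_L(\ve{s})+\sum_i s_i$). Since $\scR_\ell$ is generated by elements of $\gr_w+\gr_z$-degree $-2$ and $\cCFL(L)$ is a finitely generated free module, $\gr_w+\gr_z$ is bounded above on $\cHFL(L)$, so any sequence of moves terminates; and the place where no move is available is by definition in $\cG$. Second, and more economically, you do not need the reduction algorithm at all: pushing your own Nakayama step a little further, as the paper does, shows the minimal generating set is contained in $\cG$, which already yields generation and finiteness without any combinatorics on the $H$-function.
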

\begin{proof} Since $\cCFL(L)$ is a finitely generated $\cR_{\ell}$-module, and $\cCFL(L)$ admits Maslov and Alexander gradings, the $\cR_\ell$-module $\cHFL(L)$ is spanned by finitely many homogeneously graded vectors $X_{\ve{s}}$. Equation~\eqref{eq:freehomology} is equivalent to the statement that $X_{\ve{s}}$ is in the image of neither $\scU_i$ nor $\scV_i$ for any $i\in \{1,\dots, \ell\}$. Therefore any minimal length generating set can only contain elements of $\cG$. In particular, $\cG$ generates $\cHFL(L)$.

We now claim that any minimal length generating set must contain all elements of $\cG$. To see this, we write $\cG=\{X_{\ve{s}_1},\dots, X_{\ve{s}_n}\}$ and suppose
\[
X_{\ve{s}_j}=\sum_{i\neq j} \a_i X_{\ve{s}_i}
\]
for some homogeneously graded $\a_i\in \cR_\ell$. We may assume each $\a_i X_{\ve{s}_i}$ has the same $\gr_w$ and Alexander gradings as $X_{\ve{s}_j}$. However $\cHFL(L)$ is rank 1 over $\bF$ in each Maslov and Alexander grading in which it is supported. Therefore if $\a_i\neq 0$, then $X_{\ve{s}_j}=\a_i X_{\ve{s}_i}$. However Equation~\eqref{eq:freehomology} implies that $\a_i=1$ since none of the $X_{\ve{s}_i}$ admit non-trivial factorizations. Therefore every minimal length generating set contains all of $\cG$.

We now consider the claim about relations.  Suppose that 
\[
\sum_{i=1}^n \a_i X_{\ve{s}_i}=0.
\]
We may assume that each $\a_i$ is a monomial (or zero) and all $\a_i X_{\ve{s}_i}$ have the same Alexander and Maslov gradings. When all of the $\a_i X_{\ve{s}_i}$ have the same homogeneous Alexander and Maslov gradings, $\sum_{i=1}^n \a_i X_{\ve{s}_i}=0$  if and only if $\# \{i: \a_i\neq 0\}$ is even, since $\cHFL(L)$ has rank 1 over $\bF$ in each of the gradings in which it is supported. In particular, pairing elements of $\{i: \a_i \neq 0\}$ arbitrarily, we may write the sum $\sum_{i=1}^n \a_i X_{\ve{s}_i}$ as an $\cR_\ell$-linear combination of elements of the form $\a X_{\ve{s}}+\b X_{\ve{s}'}$, where $\a,\b\neq 0$ and such that $\a X_{\ve{s}}$ and $\b X_{\ve{s}'}$ have the same Maslov and Alexander gradings. By canceling common factors, we may assume that $\gcd(\a, \b)=1$. 

If $\max(w_U(\a),w_U(\b))>0$, we claim that we can still reduce this relation further. For concreteness, assume that $w_U(\a)>0$. This means that $\scU_j \scV_j$ divides $\alpha$, for some $j$.  Note that $\b\neq 1$, so assume that either $\scV_i$ or $\scU_i$ divides $\b$, for some $i$. By using ~\eqref{relation-2}, we may replace the factor of $\scU_j\scV_j$  in $\a$ with $\scU_i\scV_i$, and assume $\scU_i \scV_i$ divides $\a$. 
The resulting monomials $\a$ and $\b$ have $\gcd(\a,\b)\neq 1$, so we divide out common factors. This operation is non-increasing in the total degree of $\a$ and $\b$, so we may repeat this procedure until $\gcd(\a,\b)=1$ and $w_U(\a)=w_U(\b)=0$. This completes the proof.  
\end{proof}

\begin{rem}\label{rem:more-concrete-rels} We can describe the relations ~\eqref{relation-2} more concretely, as follows. Let $X_{\ve{s}}, X_{\ve{s}'}\in \cG$ and write
\[
\ve{s}=(s_1,\dots, s_\ell), \quad \text{and} \quad \ve{s}'=(s_1',\dots, s_\ell'). 
\]
 We let 
\[
\scP_{i}=
\begin{cases}
\scV_i& \text{ if } s_i<s_i'\\
\scU_i & \text{ if } s_i>s_i'
\end{cases}
\quad \text{and} \quad 
\scQ_i=
\begin{cases}
\scU_i & \text{ if } s_i<s_i'\\
\scV_i& \text{ if } s_i>s_i'.
\end{cases}
\]
If $s_i=s_i'$, then we view $\scP_i=\scQ_i=1$. The relations labeled~\eqref{relation-2} may be rewritten as those of the form
\[
\scP_1^{i_1}\cdots \scP_\ell^{i_\ell}\cdot X_{\ve{s}}=\scQ_1^{j_1} \cdots \scQ_{\ell}^{j_\ell}\cdot X_{\ve{s}'}
\]
ranging over all $X_{\ve{s}},X_{\ve{s}'}\in \cG$ and all sequences $i_1,\dots, i_\ell\ge 0$ and $j_1,\dots, j_\ell\ge 0$ such that
\[
i_k+j_k=|s_k-s_k'| \quad \text{and} \quad \gr_w(\scP_1^{i_1}\cdots \scP_\ell^{i_\ell}\cdot X_{\ve{s}})=\gr_w(\scQ_1^{j_1} \cdots \scQ_{\ell}^{j_\ell}\cdot X_{\ve{s}'}).
\]
for all $k$. Note in particular that there are only finitely many relations of the form~\eqref{relation-2}. 
\end{rem}

\subsection{The type-$A$ module of the $T(n,n)$ torus link}
\label{sec:type-A-Tnn}
We now compute the Heegaard Floer module $\cHFL(T(n, n))$ of torus link $T(n, n)$ in the three sphere. Recall that $\cR_n=
\bF[\scU_1,\scV_1,\dots, \scU_n,\scV_n]$. 
\begin{thm}
\label{thm:toruslink}
As an $\cR_n$-module, the  group $\cHFL(T(n,n))$ has a unique minimal generating set, consisting of $n$ generators, $X_1,\dots,X_n$. The relations are spanned by the following
\begin{align}
  \left(\prod_{i\in I_k}\scU_i\right) X_k&=\left(\prod_{j\in \{1, \dots, n\}\setminus I_k}\scV_j\right) X_{k+1}\label{eq:staircase}\\
  \scU_i\scV_i X_k&=\scU_j\scV_j X_k\label{eq:homology}.
\end{align}
Here $I_k$ runs through all subsets of the set $\{1,\dots,n\}$ of length $k$ (so \eqref{eq:staircase} has $\binom{n}{k}$ relations for each $k$), and in \eqref{eq:homology}, $i,j$, and $k$ range from $1$ to $n$.
\end{thm}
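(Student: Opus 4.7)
My plan is to deduce the theorem from Lemma~\ref{lemma:generating} applied to the $H$-function of $T(n,n)$, which has already been worked out by Gorsky and Hom \cite{GorskyHom}*{Section~5}. Since $T(n,n)$ is an algebraic link (the link of the reducible plane curve singularity $x^n = y^n$), it is in particular a plumbed L-space link, so Theorem~\ref{thm:algebraic-free resolution} guarantees that $\cCFL(T(n,n))$ is a free resolution of $\cHFL(T(n,n))$ and Lemma~\ref{lemma:generating} applies. Thus the statement is entirely a matter of identifying the set $\cG$ of minimal generators and checking that the relations listed in \eqref{eq:staircase}--\eqref{eq:homology} generate all the pairwise relations of Lemma~\ref{lemma:generating}(2).

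\textbf{Step 1 (Generators).} From the Gorsky--Hom description of $H_{T(n,n)}$, I will identify the lattice points $\ve{s}\in \bH(T(n,n))\subset \Q^n$ satisfying the ``corner'' condition
\[
H(\ve{s}-e_i) - H(\ve{s}) = 1, \qquad H(\ve{s}) - H(\ve{s}+e_i) = 0
\]
of \eqref{eq:freehomology}. By the $S_n$-symmetry of $T(n,n)$ together with conjugation symmetry, I expect to find exactly $n$ such points, forming an arithmetic progression $\ve{s}_k = \ve{s}_1 - (k-1)(1,\dots,1)$, with $\ve{s}_1$ uniquely determined by the normalization of the Alexander grading. (The Hopf link $T(2,2)$ computation in Section~\ref{sec:linklatticehomologyofHopflink} already verifies this for $n=2$, where $\ve{s}_1 = (0,0)$ and $\ve{s}_2 = (-1,-1)$.) This gives $\cG = \{X_1,\dots,X_n\}$.

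\textbf{Step 2 (Relations).} Given the generators, I will apply Remark~\ref{rem:more-concrete-rels}. The relations~\eqref{eq:homology} are immediate from the general relation~\eqref{relation-2} of Lemma~\ref{lemma:generating}(2). The relations of type~$(r\text{-}1)$ take the form
\[
\left(\prod_i \scP_i^{a_i}\right) X_k \;=\; \left(\prod_i \scQ_i^{b_i}\right) X_{k'},
\]
where $\scP_i,\scQ_i\in\{\scU_i,\scV_i\}$ depend on the sign of the coordinate difference. Since $\ve{s}_k - \ve{s}_{k'} = (k'-k)(1,\dots,1)$, every coordinate difference has the same sign, so all the $\scP_i$ and $\scQ_i$ take a uniform form. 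I will then reduce to the case $k' = k+1$ (the $|k'-k|>1$ cases follow by composing two adjacent relations) and show that the Maslov-grading constraint $\gr_w(\prod \scP_i^{a_i}) = \gr_w(\prod\scQ_i^{b_i})$ combined with $\sum(a_i+b_i) = n$ forces $a_i,b_i\in\{0,1\}$ with $a_i+b_i = 1$, i.e. each relation is indexed by a subset $I_k\subset\{1,\dots,n\}$ of size $k$, recovering \eqref{eq:staircase}.

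\textbf{Main obstacle.} The principal difficulty will be Step~1 — extracting from Gorsky--Hom's description of $H_{T(n,n)}$ the precise list of corners and showing that there are exactly $n$ of them at the stated Alexander gradings. In particular, I will need to check that between $\ve{s}_k$ and $\ve{s}_{k+1}$ the $H$-function drops by exactly the right amount so that the Maslov grading computation in Step~2 pinpoints $\binom{n}{k}$ admissible relations for each $k$. I expect this to reduce to a direct combinatorial computation using the formula for $H_{T(n,n)}$ in terms of the multivariable Alexander polynomial of $T(n,n)$ and its sublinks (via Lemma~\ref{lem:is_determined}), but writing it out cleanly will require some care because of the fractional shifts in $\bH(T(n,n))$ coming from the nontrivial pairwise linking numbers.
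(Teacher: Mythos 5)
Your overall strategy matches the paper's: apply Lemma~\ref{lemma:generating} with the Gorsky--Hom formula for $H_{T(n,n)}$. Step~1 is carried out in the paper essentially as you describe, using the explicit formula
\[
H(s_1,\dots,s_n)=h\Bigl(s_1-\tfrac{n-1}{2}\Bigr)+h\Bigl(s_2-\tfrac{n-1}{2}+1\Bigr)+\cdots+h\Bigl(s_n-\tfrac{n-1}{2}+n-1\Bigr)
\]
(for $s_1\le\cdots\le s_n$), and directly checking that the only corners are the $n$ diagonal points $s_1=\cdots=s_n=\frac{n+1}{2}-k$, $1\le k\le n$; this is a routine verification once the formula is in hand.

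The genuine gap is your parenthetical claim in Step~2 that ``the $|k'-k|>1$ cases follow by composing two adjacent relations.'' This is not true as stated: reducing a long relation $\scU^I X_p=\scV^J X_q$ with $q>p+1$ to consecutive ones requires the relations~\eqref{relation-2} in an essential way, and the reduction is a genuine combinatorial induction, not a concatenation. A concrete example is $n=4$, $p=1$, $q=4$, $I=(3,3,0,0)$, which gives the valid relation $\scU_1^3\scU_2^3 X_1=\scV_3^3\scV_4^3 X_4$ (check $\gr_w$ using $\gr_w(X_k)=-k(k-1)$): the monomial $\scU_1^3\scU_2^3$ involves only two distinct $\scU$'s, so after chaining from $X_1$ to $X_3$ you cannot peel off a third distinct $\scU_i$ to feed into a consecutive $X_3\to X_4$ relation without first trading a $\scU_i\scV_i$ factor for $\scU_j\scV_j$ with a fresh index $j$. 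The paper's proof handles this by exhibiting an intermediate multi-index $I'$ with $0\le I'\le I$, $|I'|_{L^1}=q-1$, and $|I'|_{L^\infty}\le 2$ (it explicitly remarks that $|I'|_{L^\infty}\le 1$ is not always achievable), and when $|I'|_{L^\infty}=2$ using relations~\eqref{relation-2} to rewrite the intermediate monomial. This induction is the main technical content of the proof, so you have also misplaced the principal obstacle: Step~1 is the easy part, and Step~2 is where the work is.
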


\begin{proof}
Since the torus link $T(n, n)$ is an L-space link, by Lemma \ref{lemma:generating} the minimal generating set consists of the set of all $X_{\ve{s}}$ which satisfy
\begin{equation}
\label{freehomology}
H(\ve{s}-e_i)-H(\ve{s})=1 \quad \text{and}\quad  H(\ve{s})-H(\ve{s}+e_i)=0,
\end{equation}
for each $i\in \{1,\dots, n\}$. Here,  $H(\ve{s})$ denotes the $H$-function of the torus link $T(n, n)$.

The $H$-function of the torus link $T(n, n)$ is computed in \cite{GorskyHom}. Its Alexander polynomial equals to 
$$\Delta(t_1, \dots, t_n)=((t_1\cdots t_n)^{1/2}-(t_1\cdots t_n)^{-1/2})^{n-2}, $$
which is symmetric in the variables $t_1,\dots, t_n$. Because of the symmetry of the Alexander polynomial of $T(n,n)$, the $H$-function for $T(n,n)$ is also symmetric in  $s_1,\dots, s_n$. Therefore we consider the case that  $s_1\leq s_2\leq \cdots \leq s_n$. 
By \cite[Theorem 4.3]{GorskyHom}
\begin{equation}
\label{computationH}
H(s_1, \dots s_n)=h\left(s_1-\dfrac{n-1}{2}\right)+h\left(s_2-\dfrac{n-1}{2}+1\right)+\cdots +h\left(s_n-\dfrac{n-1}{2}+n-1\right)
\end{equation}
where $h(s)=(|s|-s)/2$. We note that $h(s)$ is the $H$-function of the unknot.  It is not hard to compute that 
\begin{equation}\label{eq:H1}
H(s_1, \dots, s_n)=0
\end{equation} 
if $s_i\geq (n-1)/2$ for all $i$, and  
\begin{equation}\label{eq:H2}
H(\ve{s})=-(s_1+s_2+\cdots+s_n)
\end{equation} 
if $s_{i}\leq -(n-1)/2$ for all $i$.

We first consider the diagonal vertices, that is, $s_1=\cdots=s_n=m$, $m\in\Z+(n-1)/2$. 
If $m>(n-1)/2$, then 
\[
H(s_1, \dots, s_n)=H(s_1-1, s_2, \dots, s_n)=0
\]
by \eqref{eq:H1}, 
which does not satisfy  \eqref{freehomology}. Similarly, if $m<-(n-1)/2$, then 
$$H(s_1, \dots, s_n)=H(s_1, \dots, s_n+1)+1$$
by \eqref{eq:H2},
which also does not satisfy  \eqref{freehomology}. 
Now we consider the $n$ diagonal vertices where $-(n-1)/2\leq m \leq (n-1)/2$, that is, 
\[
s_1=s_2=\cdots=s_{n}=\dfrac{n+1}{2}-k
\]
for all integers $k$ between $1$ and $n$. 
The corresponding generators are $X_1,\dots, X_n$ with Alexander gradings
\[
\left(\dfrac{n+1}{2}-k, \dfrac{n+1}{2}-k, \dots, \dfrac{n+1}{2}-k\right).
\]
 By a straightforward computation,  Equation~\eqref{freehomology} is satisfied by values of the $H$-function at these vertices, so $X_1,\dots,X_n$ are all contained in the minimal generating set from Lemma~\ref{lemma:generating}. 
  We claim these diagonal vertices are the only ones where the $H$-function satisfies  Equation~\eqref{freehomology}. It suffices to prove  that the non-diagonal vertices do not satisfy Equation~\eqref{freehomology}.  Recall that we assume $s_1\leq s_2\leq \cdots \leq s_n$. Suppose that $s_n=s$ is the maximal value among the $s_i$, and there are exactly $\lambda$ coordinates equal to $s$ where $\lambda<n$, i.e., $s_{n-\lambda+1}=s_{n-\lambda+2}=\cdots=s_{n}=s$, and $s_{n-\lambda}<s$.

 If $s>(\lambda-1)-(n-1)/2$,  then by \eqref{computationH} 
\[
H(s_{1}, \dots, s_{n-\lambda}, s_{n-\lambda+1}-1, \dots, s_n)=H(s_1, \dots, s_n)
\]
which does not satisfy  \eqref{freehomology}. 

If $s\leq (\lambda-1)-(n-1)/2$, then by \eqref{computationH}
\[
H(s_1, \dots, s_{n-\lambda}+1, s_{n-\lambda+1}, \dots, s_n)+1=H(s_1, \dots, s_n),
\]
which also  does not satisfy  \eqref{freehomology}.  By Lemma \ref{lemma:generating},  $X_1, \cdots, X_n$ form a unique minimal  generating set of $\cHFL(T(n, n))$ over $\cR_n$. Based on the values of the $H$-function, one can compute that
\[
\gr_w(X_k)=-k(k-1).
\]

We now consider the relations satisfied by $X_1,\dots, X_n$ over $\cR_n$. For convenience, if $I=(i_1,\dots, i_n)$ is a sequence of nonnegative numbers, write $\scU^I$ for $\scU_1^{i_1}\cdots \scU_n^{i_n}$. Define $\scV^J$ similarly.  Lemma~\ref{lemma:generating} and Remark~\ref{rem:more-concrete-rels} immediately imply that if $1\le p<q\le n$, then the relations involving $X_{p}$ and $X_q$ are spanned by $(\scU_i\scV_i+\scU_j\scV_j) X_p$ and $(\scU_i \scV_i+\scU_j \scV_j) X_q$ for $i,j\in \{1,\dots, n\}$, as well as  sums
\[
\scU^I X_p+ \scV^J X_q
\] 
ranging over sequences of nonnegative integers $I$ and $J$ such that
\begin{equation}
I+J=(q-p,\dots, q-p)\quad \text{and}\quad  |I|_{L^1}=\sum_{s=p}^{q-1}s=\frac{(p+q-1)(q-p)}{2} .\label{eq:relations-indices}
\end{equation}
 If $q=p+1$, these are exactly the relations in the statement.

 \smallskip
 We will show by induction on $q$ that the relations between $X_p$ and $X_{q}$ are in the span of the relations between consecutive $X_i$ and $X_{i+1}$, as well as the relations $(\scU_i \scV_i+\scU_j\scV_j) X_k=0$ (labeled \eqref{relation-2} above). The case that $q=p+1$ is vacuous, so we suppose that $q>p+1$.

We let $I=(I_1, \dots, I_n)$ and $J=(J_1,\dots, J_n)$ be tuples of non-negative integers such that $I+J=(q-p,\dots, q-p)$ and  $|I|_{L^1}=p+\cdots+q-1$. We claim first that there is a tuple $I'$ such that 
\begin{equation}
0\le I'\le I,\quad |I'|_{L^1}=q-1,\quad \text{and} \quad  |I'|_{L^\infty}\le 2.
\label{eq:I'-props}
\end{equation}
  (We remark that it may not be possible to find such an $I'$ which satisfies $|I'|_{L^\infty}\le 1$).
For $k\in \N$, we write 
\[
a_k:=\# \{i: I_i \ge k\}.
\]
We observe that
\begin{equation}
|I|_{L^1}=\sum_{k=1}^{q-p} a_k=\frac{(p+q-1)(q-p)}{2}\quad \text{and} \quad a_1\ge a_2\ge \cdots \ge a_{q-p}. \label{eq:relations-a_i}
\end{equation}
Our claim about the existence of an $I'$ as above is equivalent to the claim that $a_1+a_2\ge q-1$. If, to the contrary  $a_1+a_2<q-1$, then we observe that
$a_2<(q-1)/2$ and therefore
\[
\sum_{k=1}^{q-p} a_k=a_1+a_2+\sum_{n=3}^{q-p} a_k<q-1+(q-p-2)\frac{q-1}{2}=\frac{(q-p)(q-1)}{2}.
\]
However this contradicts Equation~\eqref{eq:relations-a_i}, so we conclude that an $I'$ satisfying Equation~\eqref{eq:I'-props} must exist.


By induction, if $J'$ denotes $(q-p-1,\dots, q-p-1)-(I-I')$, then the relation
\begin{equation}
\scU^{I-I'} X_p=\scV^{J'} X_{q-1} \label{eq:relation-induction}
\end{equation}
is in the span of the claimed relations between consecutive $X_i$ and $X_{i+1}$, as well as the relations~\eqref{relation-2}.
 We observe that our relations imply
\[
\scU^I X_p= \scU^{I'} \scU^{I-I'} X_p=\scU^{I'} \scV^{J'} X_{q-1}.
\]

If $|I'|_{L^\infty}=1$, then we have the  relation
\[
\scU^{I'} X_{q-1}=\scV^{(1,\dots, 1)-I'} X_{q},
\]
which implies with Equation~\eqref{eq:relation-induction} that $\scU^I X_p=\scV^J X_{q}$, which would complete the proof. 

If instead $|I'|_{L^\infty}=2$, then we observe that any coordinate $i$ such that $I'_i=2$ has the property that $(I-I')_i<q-p-1$ and hence $J'_i>0$. In particular, $\scU^{I'} \scV^{J'}$ has a factor of $\scU_i \scV_i$ for each $i$ such that $I'_i=2$. Using relation~\eqref{relation-2} we can trade each of these $\scU_i \scV_i$ for a $\scU_j\scV_j$ where $I_j'=0$. Proceeding in this manner, we may relate $\scU^{I'} \scV^{J'} X_{q-1}$ with some $\scU^{I''} \scV^{J''} X_{q-1}$ where $|I''|_{L^1}=q-1$ and $|I''|_{L^\infty}=1$. The relations between $X_{q-1}$ and $X_q$ now show that
\[
\scU^{I'} \scV^{J'} X_{q-1}=\scU^{I''} \scV^{J''} X_{q-1}=\scV^{J} X_q,
\]
completing the proof.
\end{proof}

In general, a free resolution of the homology $\cHFL(T(n, n))$ can be computed algorithmically, see \cite{PeevaBook}, or, for a concrete
value of $n$, using a computer algebra system such as Macaulay2 \cite{M2}.

\subsection{The free complex of the $T(3,3)$ torus link}
\label{sec:T33-free}

 We present a free resolution of the torus link $T(3, 3)$.  The homology of the torus link $T(3, 3)$ is generated by $X_1, X_2 , X_3$ with the following relations:
$$\scU_i X_1= \prod_{j\in \{1, 2, 3\}\setminus \{i\}}\scV_j X_{2}, \quad \scV_i X_3= \prod_{j\in \{1, 2, 3\}\setminus \{i\}} \scU_j X_2; $$
$$\scU_{i}\scV_i X_{k}=\scU_j\scV_j X_{k}.$$
Then a free resolution of the  homology is 
\[
0\to C_3\xrightarrow{\d_3} C_2\xrightarrow{\d_2} C_1\xrightarrow{\d_1} C_0\to 0,
\]
with the spaces $C_0,C_1,C_2,C_3$ and the maps $\d_1,\d_2,\d_3$ defined as follows.

 The space $C_0=\scR_3^3$ is generated by $X_1,X_2,X_3$.
 Take the space $C_1=\scR_3^8$ generated by $b_1,b_2,b_3$, $B_1,B_2,B_3$,
 and $Z_1,Z_2$. For symmetry, it is helpful to consider an extra variable $Z_3$ which satisfies $Z_3=Z_1+Z_2$;
 $Z_3$ is not a generator of $C_1$.
  The differential $\partial_1\colon C_1\to C_0$ is
 given by
 \begin{align*}
   \partial_1 b_i&=\scU_i X_1+ \prod_{j\in \{1, 2, 3\}\setminus \{i\}}  \scV_j X_2, &
   \partial_1 B_i&=\scV_{i}X_3+\prod_{j\in \{1, 2, 3\}\setminus\{i\}} \scU_j X_2,\\
   \partial_1 Z_1&=\scU_2\scV_2 X_2+\scU_3\scV_3 X_2, & \partial_1 Z_2&=\scU_1\scV_1 X_2+\scU_3\scV_3 X_2\\ 
   \partial_1 Z_3&=\partial_1 (Z_1+Z_2)=\scU_1 \scV_1 X_2+\scU_2\scV_2 X_2.
\end{align*}
The link Floer homology of $T(3,3)$ is $\coker\partial_1$. Indeed,
the relations $\scU_i\scV_iX_k=\scU_j\scV_jX_k$ for $k=1$ and $k=3$
follow from other relations. For example,
\[0=\scV_1(\scU_1X_1-\scV_2\scV_3X_2)-\scV_2(\scU_2X_1-\scV_1\scV_3X_2)=(\scU_1\scV_1-\scU_2\scV_2)X_1.\]
We define the module $C_2=\scR_3^6$
with generators $c_1,c_2,c_3,d_1,d_2,d_3$ and let $\partial_2$ be the differential
\begin{align*}
  \partial_2 c_{k}&=\scU_i b_j+\scU_j b_i+\scV_k Z_k\\
  \partial_2 d_{k}&=\scV_i B_j+\scV_j B_i+\scU_k Z_k.
\end{align*}
Here $\{i,j,k\}$ ranges through all permutations of the set $\{1,2,3\}$.
There is a relation between $c_1,\dots,d_3$. That
is, there is a module $C_3=\scR_3$ generated by $e$ with
$\partial_3\colon C_3\to C_2$ given by
\[\partial_3 e=\scU_1 c_1+\scU_2 c_2+\scU_3 c_3+\scV_1 d_1+\scV_2 d_2+\scV_3 d_3.\]

It can be checked either directly, or using a computer algebra system, that $\ker\partial_1=\im\partial_2$, $\ker\partial_2=\im\partial_3$
and $\ker\partial_3=0$. That is to say, the complex we constructed is a free resolution of $\coker\partial_1$.

\begin{rem} By examining the resolution of $\cHFL(T(3,3))$ shown in Figure~\ref{fig:T33}, we see that the relations described in Lemma~\ref{lemma:generating} have some redundancy. For example $(\scU_1 \scV_1+\scU_2 \scV_2) X_1=0$ is a consequence of the relations $\scU_1 X_1=\scV_2 \scV_3 X_2$ and $\scU_2 X_2=\scV_1 \scV_3 X_2$. 
\end{rem}

\begin{figure}[h]
\begin{tikzcd}[row sep=1.4cm,column sep=.75cm,labels=description] 
&&&[.8cm]&
e
	\ar[dllll,"\scU_1"]
	\ar[dlll,"\scU_2"]
	\ar[dll,"\scU_3"]
	\ar[drr,"\scV_1"]
	\ar[drrr,"\scV_2"]
	\ar[drrrr,"\scV_3"]
&&[.8cm]&&&\\
c_1
	\ar[d,"\scU_2", pos=.7]
	\ar[drrr,"\scV_1", pos=.75]
	\ar[dr,"\scU_3", pos=.7]
&
c_2
	\ar[dl,crossing over,"\scU_1", pos=.7]
	\ar[dr,crossing over,"\scU_3", pos=.7]
	\ar[drrrr,"\scV_2"]
&c_3
	\ar[d,crossing over,"\scU_2", pos=.7]
	\ar[dl,crossing over,"\scU_3", pos=.7]
	\ar[drrr,"\scV_3",crossing over,pos=.45]
	\ar[dr,"\scV_3",crossing over, pos=.58]
&&&&
d_1
	\ar[dlll,crossing over,"\scU_1", pos=.45]
&d_2
	\ar[dll,crossing over,"\scU_2", pos=.6]
&d_3
	\ar[d,"\scV_2", pos=.7,crossing over]
	\ar[dlll,"\scU_3",pos=.7, crossing over]
	\ar[dlllll,"\scU_3",pos=.5, crossing over]
	\\[1cm]
b_3
	\ar[drrrr,"\scV_1\scV_2", pos=.65]
	\ar[dr,"\scU_3",crossing over]
&
b_2
	\ar[drrr,"\scV_1\scV_3"]
	\ar[d,"\scU_2",crossing over]
&
b_1
	\ar[drr,"\scV_2\scV_3", pos=.35]
	\ar[dl,"\scU_1",crossing over]
&Z_1
	\ar[dr,"\scU_2\scV_2+\scU_3\scV_3" near start]
&
&Z_2
	\ar[dl,"\scU_1\scV_1+\scU_3\scV_3" near start]
&
B_3
	\ar[dll,"\scU_1\scU_2", pos=.35]
	\ar[from=u,crossing over, "\scV_2", pos=.7]
	\ar[from=ur,crossing over, "\scV_1", pos=.7]
&B_2
	\ar[dlll,"\scU_1\scU_3"]
	\ar[from=ul,crossing over, "\scV_3", pos=.7]
	\ar[from=ur,crossing over, "\scV_1", pos=.7]	
&B_1
	\ar[dllll,"\scU_2\scU_3", pos=.65]
	\ar[from=ul, "\scV_3",crossing over, pos=.7]
\\[1cm]
&X_1
	\ar[from=u, crossing over,"\scU_2"]
&&&X_2&&&
X_3
	\ar[from=ul,crossing over,"\scV_3"]
	\ar[from=u, crossing over,"\scV_2"]
	\ar[from=ur, crossing over,"\scV_1"]
\end{tikzcd}
\caption{The complex $\cCFL(T(3,3))$ as a free resolution.}
\label{fig:T33}
\end{figure}
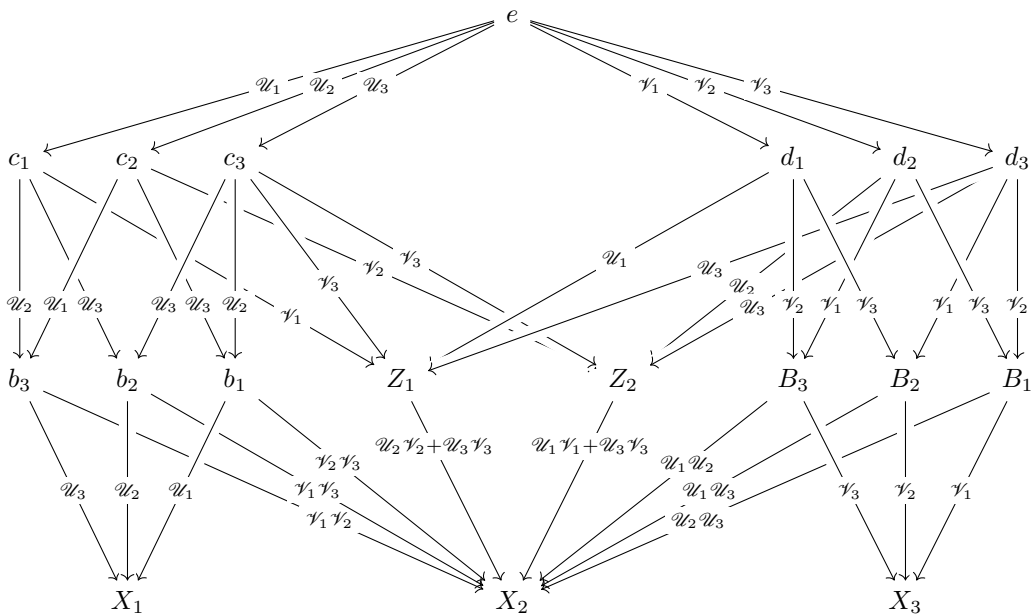

\subsection{The free complex of the  $T(4,4)$ torus link}
\label{sec:T44-free}
To stress the usefulness and power of Theorem~\ref{thm:algebraic-free resolution}, we show how to compute
the link Floer chain complex of the $T(4,4)$ torus link. We start with the model of the link Floer homology for $T(4,4)$
as described in Theorem~\ref{thm:toruslink} and compute its free resolution. It is pretty straightforward
to find candidates for the relations between generators of the $T(4,4)$ torus link, and then candidates for the 
relations among relations (second syzygies), and to iterate
this procedure. This amounts to creating a complex $(\cC_i,\partial_i)$, whose homology at the zero grading is $\cHFL(T(4,4))$. Showing
that the relations are complete, that is, that the complex we construct is acyclic in higher gradings, is rather tedious. We have used
Macaulay \cite{M2} to verify this fact.

We can give a quick description of the free resolution of the $T(4,4)$ torus link. The free resolution has length four, as follows:
\[
0\to\cC_4\xrightarrow{\partial_4}\cC_3\xrightarrow{\partial_3}\cC_2\xrightarrow{\partial_2}\cC_1\xrightarrow{\partial_1}
\cC_0\to 0
\] 
Let $\cC_0$ be the free $\scR_4$ module
generated by $X_1,\dots,X_4$. Consider the module $\cC_1\cong\cR^{20}_4$
generated by $Z^k_{ij}$ with $k=2,3$, $j=i+1$, $a_{1},\dots,a_4$, $B_{ij}$ with $1\le i<j\le 4$ and $A_1,\dots,A_4$. 
As in the $T(3,3)$ case, for symmetry, we add variables $Z^k_{ij}$ for $j>i+1$, satisfying
$Z^k_{ij}=Z^k_{i,i+1}+Z^k_{i+1,i+2}+\dots+Z^k_{j-1,j}$. If $j<i$ we set $B_{ij}:=B_{ji}$.
There is a map
$\partial_1\colon \cC_1\to\cC_0$ given by
\begin{align*}
  \partial_1 Z^k_{i,i+1}&=(\scU_i\scV_i+\scU_{i+1}\scV_{i+1})X_k & \partial_1 a_i&=\scU_iX_1+\prod_{j\neq i}\scV_jX_2\\
  \partial_1 B_{ij}&=\scU_i\scU_jX_2+\prod_{\substack{i'<j'\\ \{i,j,i',j'\}=\{1,2,3,4\}}} \scV_{i'}\scV_{j'}X_3 &
  \partial_1 A_i & = \prod_{j\neq i}\scU_j X_3+\scV_i X_4.
\end{align*}
By Theorem~\ref{thm:toruslink}, the link Floer homology of $T(4,4)$ is $\coker \partial_1$.
We define now the module $\cC_2\cong\cR^{28}_4$. It is generated by $a_{ij}$, $A_{ij}$ with $1\leq i< j\leq 4$, and $c^{ij}_{k}, C^{k}_{ij}$.

The indexing of $c$ and $C$ generators is a bit complex.
We choose, $1\le k\le 4$, and $\{i,j\}$ is a subset of $\{1,2,3,4\}\setminus\{k\}$ with $j>i$. To obtain a smaller resolution,
we reduce the number of generators by declaring that the $c$-generators are $c^{23}_1$ and $c^{34}_1$, $c^{13}_2$ and $c^{34}_2$, $c^{12}_3$
and $c^{24}_3$, as well as $c^{12}_4$ and $c^{23}_4$. Whenever another configuration of indices appears in the differential, we declare it to be the sum of the other two generators with the same subscript, like $c^{24}_1$ is not a generator, but $c^{24}_1=c^{23}_1+c^{34}_1$ etc.

For $C$-generators, for each three-element subset $\{a,b,c\}\subset\{1,2,3,4\}$ we choose two out of three permutations $\{i,j,k\}$ with
$i<j$ and these indices are $C$-generators. The third object is declared to be the sum of the other two. More specifically,
we choose 
$C^1_{23}, C^2_{13}$ to be generators and $C^{3}_{12}=C^1_{23}+C^2_{13}$,
$C^1_{24}, C^4_{12}$ as generators, and  $C^2_{14}=C^1_{24}+C^4_{12}$,
$C^1_{34}, C^3_{14}$ as generators, with $C^4_{13}=C^1_{34}+C^3_{14}$. Finally,
$C^2_{34}, C^3_{24}$ are generators, whereas $C^4_{23}=C^2_{34}+C^3_{24}$.

Therefore, we have $8$ $c$-generators and $8$ $C$-generators.
The  map $\partial_2\colon\cC_2\to\cC_1$ is  given by 
\begin{align*}
  \partial_2 a_{ij}&=\scU_j a_i+\scU_i a_j+\prod_{k\neq i, j} \scV_k Z^{2}_{ij} \\
  \partial_2 A_{ij}&= \scV_j A_i+\scV_i A_j+\prod_{k\neq i, j} \scU_k Z^{3}_{ij}\\
  \partial_2 C^{k}_{ij}&=\scU_j B_{ik}+\scU_{i} B_{jk}+\scV_{\ell} Z^{3}_{ij}\\ 
  \partial_2 c^{ij}_{k}&=\scV_{i} B_{ik}+\scV_{j} B_{jk}+\scU_{k} Z^{2}_{ij}, 
 \end{align*}
 where $\ell=\{1,2,3,4\}\setminus\{i,j,k\}$.

 The module $\cC_3\cong\cR^{14}_4$ is generated by $a_{ijk}, A_{ijk}, B'_{ij}$ where $1\leq i<j<k\leq 4$ with the map $\partial_3\colon \cC_3\to\cC_2$ given by 
\begin{align*}
  \partial_3a_{ijk}&=\scU_{i} a_{jk}+\scU_{j} a_{ik}+\scU_{k} a_{ij}+\scV_\ell  (\scV_i c^{jk}_{i}+\scV_j  c_{j}^{ik}+\scV_k  c_{k}^{ij})\\
  \partial_3 A_{ijk}&=\scV_{i} A_{jk}+\scV_j A_{ik}+\scV_k A_{ij}+\scU_{\ell }(\scU_{i} C_{jk}^{\ell }+\scU_j C_{ik}^{\ell }+\scU_k C_{ij}^{\ell })\\
  \partial_3 B'_{ij}&=\scV_{k} C^{k}_{ij}+\scV_{\ell } C^{\ell }_{ij}+\scU_i c^{k\ell }_{j}+\scU_j c^{k\ell }_{i},
\end{align*}
where we let $\ell=\{1,2,3,4\}\setminus\{i,j,k\}$.

The module $\cC_4\cong\cR^{2}_4$ is generated by $d_{1234},D _{1234}$ with the   map $\partial_4$ is  given by 
\begin{align*}
  \partial_4(d_{1234})&=\sum_{i}\scU_{i} a_{jk\ell}+\sum_{\substack{i<j}}\scV_i\scV_j B'_{ij}\\
  \partial_4(D_{1234})&=\sum_i \scV_{i} A_{jk\ell}+\sum_{\substack{i<j,\ k<\ell\\\{i,j,k,\ell\}=\{1,2,3,4\}}} \scU_{i}\scU_{j} B'_{kl}.
\end{align*}
In each of the two equations, where the sum runs over the indices $i$, we let $1\le j<k<\ell\le 4$ be such that
$\{i,j,k,\ell\}$ is a permutation of $\{1,2,3,4\}$.

It can be verified that the above description of $\cCFL(T(4,4))$  is acyclic except at resolution grading $0$, with $\coker\partial_1$ being the link Floer homology of $T(4,4)$.

\bibliographystyle{custom}
\def\MR#1{}
\bibliography{biblio}

\end{document}